\titleformat{\section}{\Large\bfseries}{\thesection.}{4pt}{}
\titleformat{\subsection}{\large\bfseries}{\thesection.\arabic{subsection}.}{4pt}{}
\titleformat{\subsubsection}{\bfseries}{\thesection.\arabic{subsection}.\arabic{subsubsection}.}{4pt}{}
\titleformat*{\paragraph}{\bfseries}
\titleformat*{\subparagraph}{\bfseries}
\newtheorem{theorem}{Theorem}[section]
\newtheorem{corollary}[theorem]{Corollary}
\newtheorem{lemma}[theorem]{Lemma}
\newtheorem{proposition}[theorem]{Proposition}
\theoremstyle{definition}
\newtheorem{definition}[theorem]{Definition}
\newtheorem{remark}[theorem]{Remark}
\newtheorem{classification}[theorem]{Classification}
\newcommand{\ep}{\varepsilon}
\newcommand{\Rb}{\mathbb{R}}
\newcommand{\Hc}{\mathcal{H}}
\newcommand{\Gc}{\mathcal{G}}
\newcommand{\be}{\begin{equation}}
\newcommand{\ee}{\end{equation}}
\newcommand{\pa}{\partial}
\newcommand{\indic}{1\!\!1}
\newcommand{\cbf}{\mathbf c}
\newcommand{\eps}{\epsilon}
\newcommand{\as}{\underline{a}}
\numberwithin{equation}{section}
\author[C.~Collot]{Charles Collot}
\address{CNRS and AGM (UMR 8088) laboratory of CY Cergy Paris Universit\'e, 2 rue Adolphe Chauvin, 95300 Pontoise, France}
\email{ccollot@cyu.fr}
\author[T.~Duyckaerts]{Thomas Duyckaerts}
\address{LAGA (UMR 7539), Universit\'e Sorbonne Paris Nord, Institut Galil\'ee, 99 avenue Jean-Baptiste Cl\'ement, 93430 Villetaneuse, France}
\email{duyckaer@math.univ-paris13.fr}
\author[C.~Kenig]{Carlos Kenig}
\address{University of Chicago, Department of Mathematics, 5734 University Avenue, Chicago, IL 60637-1514, USA}
\email{ckenig@uchicago.edu}
\author[F.~Merle]{Frank Merle}
\address{Institut des Hautes \'Etudes Scientifiques, and AGM (UMR 8088) laboratory of CY Cergy Paris Universit\'e 2 rue Adolphe Chauvin, 95300 Pontoise, France}
\email{frank.merle@cyu.fr}
\title[Non-radiative solutions for energy-critical wave equations] 
{On classification of non-radiative solutions for various energy-critical wave equations}
\thanks{ 
\today}
\begin{document}

\begin{abstract}

Non-radiative solutions of energy critical wave equations are such that their energy in an exterior region $|x|>R+|t|$ vanishes asymptotically in both time directions. This notion, introduced by \cite{DuKeMe11a}, has been key in solving the soliton resolution conjecture for these equations in the radial case. In the present paper, we first classify their asymptotic behaviour at infinity, showing that they correspond to a $k$-parameters family of solutions where $k$ depends on the dimension. This generalises the previous results \cite{DuKeMe13,DuKeMaMe22} in three and four dimensions. We then establish a unique maximal extension of these solutions.

\end{abstract}

\maketitle

\section{Introduction and main results}

\subsection{Non-radiative solutions, channels of energy, and soliton resolution}

This paper is about non-radiative finite energy solutions of radial energy critical wave type equations. Prototypes of such equations are the energy critical semilinear wave equation in dimensions $N\geq 3$, the $k$-equivariant wave maps equation in $N=2$ dimensions, and the radial $4+1$ dimensional Yang Mills equation:
\begin{align}
\label{id:nonlinearwave}  \pa_t^2 u-\Delta u&=|u|^{\frac{4}{N-2}}u,\\
\label{id:wavemaps} \pa_t^2 u-\Delta u&=-\frac{k^2}{2r^2}\sin 2u,\\
\label{id:YangMills}
\partial_t^2u-\partial_r^2u-\frac{1}{r}\partial_ru&=2u(1-u^2). 
\end{align}
where we introduced $|x|=r$. A solution is said to be non-radiative for $|x|>R+|t|$ if
\begin{equation} \label{eq:nonradiativeintro}
\sum_{\pm } \lim_{t\to \pm \infty}  \int_{|x|>R+|t|} |\nabla_{t,x}u(t,x)|^2dx=0
\end{equation}
where we write $\nabla_{t,x}u=(\pa_t u,\pa_{x_1}u,...,\pa_{x_d}u)$. To track down this property, a stronger notion of energy channels was introduced by the last three authors in \cite{DuKeMe11a} and \cite{DuKeMe13}, which takes the form of an estimate
\begin{equation} \label{eq:channelsintro}
\sum_{\pm } \lim_{t\to \pm \infty}  \int_{|x|>R+|t|} |\nabla_{t,x}u(t,x)|^2dx\geq c_0 \sum_{\pm }  \int_{|x|>R} |\nabla_{t,x}u(0,x)|^2dx, \qquad c_0>0,
\end{equation}
for both linear and nonlinear problems. In odd dimensions $N\geq 3$, for the linear wave equation
\begin{equation} \label{eq:freewaveintro}
\left\{ 
\begin{array}{l l} \pa_t^2 u-\Delta u=0,\\
\vec u(0)=(u_0,u_1),
\end{array}
\right.
\end{equation}
the channels of energy estimate \eqref{eq:channelsintro} holds true on the orthogonal of a finite-dimensional subspace of non-radiative solutions. Its dimension grows with the spatial one $N$, and a clear difference appears between high dimensions, and low ones where rigidity occurs. Namely, for $N=3$ \cite{DuKeMe13} proves that the only non-radiative solutions of \eqref{id:nonlinearwave} are the solitons, which allows them to show the soliton resolution for \eqref{id:nonlinearwave}. For higher dimensions $N\geq 5$ other non-radiative solutions exist as is proved in the present article. Nevertheless \cite{DuKeMe19Pb} could still obtain the soliton resolution, reducing its proof to showing that a collision of solitons made up part of the energy of the solution channel. This latter property was proved by contradiction, deducing from the channels of energy estimate \eqref{eq:channelsintro} an explicit system of ODEs for the scaling parameters of the solitons as well as an estimate for the largest scale, which were shown to be incompatible.

 Even dimensions $N\geq 4$ are more degenerate, since the estimate \eqref{eq:channelsintro} for solutions to \eqref{eq:freewaveintro} only holds for one component of the initial data $(u_0,u_1)$ and fails for the other. Nevertheless, in $N=4$ dimensions \cite{DuKeMaMe22} could still obtain the soliton resolution for \eqref{id:nonlinearwave} and the co-rotational ($k=1$) wave maps equation \eqref{id:wavemaps}, using as a key intermediate result that the only non-radiative solutions were solitons, in a strategy similar to \cite{DuKeMe13}. For higher dimensions $N\geq 6$ other non-radiative solutions exist, which is proved in the present article. The strategy of \cite{DuKeMe19Pb} could still be adapted to the $N=6$ dimensional case by \cite{CoDuKeMe22}, by introducing new weakened channels of energy estimates in the vicinity of a multisoliton like \eqref{eq:channelsintro} but with a logarithmic loss \cite{CoDuKeMe22}, and by classifying the behaviour of non-radiative solutions at infinity, which is the purpose of the present article. We believe this analysis can be extended to $k\geq 2$-equivariant wave maps \eqref{id:wavemaps} as well as the radial $4+1$ dimensional Yang-Mills system since both the weakened channels of energy estimates of \cite{CoDuKeMe22} and the classification of non-radiative solutions are valid. 
 
 After the co-rotational result ($k=1$ in \eqref{id:wavemaps}), due to \cite{DuKeMaMe22}, appeared on arXiv, the soliton resolution for general $k$ was later  obtained by Jendrej-Lawrie \cite{JendrejLawrie21P}. The general strategy of \cite{JendrejLawrie21P} follows the one introduced in \cite{DuKeMe19Pb}, of proving the inelastic collision of solitons. As shown in \cite{DuKeMe19Pb}, this strategy gives the passage from a sequential decomposition to a continuous in time one. In \cite{JendrejLawrie21P}, the mechanism to prove the inelastic collision of solitons is not through a rigidity theorem, as in \cite{DuKeMe13}, \cite{DuKeMe19Pb}, \cite{DuKeMaMe22} and \cite{CoDuKeMe22Pv1}, but through the use of the modulation equations (introduced by these authors in their work \cite{JendrejLawrie18} on two-bubble dynamics of threshold solutions) combined with a delicate ``no-return analysis'' in the neighborhood of a multisoliton. This was inspired by earlier work, in the neighborhood of a single soliton, due to Duyckaerts-Merle \cite{DuMe08}, Nakanishi-Schlag (see \cite{NaSc11Bo} and references therein), and Krieger-Nakanishi-Schlag (see \cite{KrNaSc15} and references therein).

 We believe the extension of our result here to the semilinear wave equation \eqref{id:nonlinearwave} in dimensions $N\geq 8$ to be more technical, but amenable by combining the analysis of \cite{CoDuKeMe22}, \cite{CoDuKeMe22Pv1},\cite{CoDuKeMe22Pc}, and of the present paper.

After \cite{CoDuKeMe22Pv1} was posted on arXiv in January 2022, Jendrej-Lawrie \cite{JendrejLawrie22P} posted a new preprint on arXiv, in which they extend the full resolution for \eqref{id:nonlinearwave} proved in odd dimension in \cite{DuKeMe13}, \cite{DuKeMe19Pb}, in dimension $N=4$ in \cite{DuKeMaMe22}, and in dimension $N=6$ in \cite{CoDuKeMe22Pv1} to all $N\geq 4$ (all these results are in the radial setting). The approach in \cite{JendrejLawrie22P} is to prove the inelastic collision of solitons (shown in \cite{DuKeMe13}, \cite{DuKeMe19Pb}, \cite{DuKeMaMe22} and \cite{CoDuKeMe22Pv1}) by the ``no return method'' as in \cite{JendrejLawrie21P}. The results in \cite{JendrejLawrie22P} have no overlap with the ones in this paper.
 
\subsection{Classification of non-radiative solutions}

Non-radiative solutions of the linear wave equation \eqref{eq:freewaveintro} are classified. There exist explicit polynomials $(p_m)_{0\leq m< \frac{N}{2}-1}$ of degree $m$, with $p_m$ even or odd if $m$ is even or odd, such that for each $0\leq m<\frac{N}{2}-1$, the function
$$
\phi_m(t,x)= \frac{1}{|x|^{N-2-m}}p_{m}(\frac{t}{|x|})
$$
satisfies $\pa_t^2\phi_m-\Delta \phi_m=0 $ for $|x|\neq 0$, and is non-radiative for any $R\in \mathbb R$ in the sense that it satisfies \eqref{eq:nonradiativeintro} (the integral being well-defined if $|t|>-R$). These properties are also satisfied by linear combinations of such functions, that is, for any $\cbf =(c_{m})_{0\leq m<\frac{N}{2}-1}\in \mathbb R^{\lfloor \frac{N-1}{2} \rfloor}$, for the function $a_F=a_F[\cbf]$ defined by
\begin{equation}
\label{def_aF}
a_F(t,x)=\sum_{0\leq m< \frac{N}{2}-1}c_m\phi_m(t,x).
\end{equation}

A proof is recalled in Lemma \ref{lem:freenonrad}. These functions describe all finite energy non-radiative solutions:

\begin{classification}[Classification of non-radiative linear waves, see \cite{KeLaLiSc15}, \cite{DuKeMaMe22}, \cite{LiShenWei21P}] \label{pr:nonradiativefree}

Let $N\geq 2$ and $R>0$. A radial solution $u$ to \eqref{eq:freewaveintro} is non-radiative for $|x|>R+|t|$ if and only if there exists $\cbf\in \mathbb R^{\lfloor \frac{N-1}{2}\rfloor}$ such that $u(t,x)=a_F[\cbf](t,x)$ for all $|x|>R+|t|$.

\end{classification}

Note that for $N=2$, the conclusion of Classification \ref{pr:nonradiativefree} has to be interpreted in the sense that $u(t,x)=0$ for all $|x|>R+|t|$. In \cite{KeLaLiSc15}, \cite{DuKeMaMe22} and \cite{LiShenWei21P} a much stronger statement is showed: that on the orthogonal of these non-radiative solutions, a channels of energy estimate \eqref{eq:channelsintro} holds true (only for half of the data in even dimensions, see also \cite{CoKeSc14}).

Much of the difficulty in even dimensions is linked to the existence of a resonance
\begin{equation} \label{id:defphiN/2-1}
\phi_{\frac N2 -1}(t,x)= \frac{1}{|x|^{\frac{N}{2}-1}}p_{\frac N2 -1}(\frac{t}{|x|}),
\end{equation}
for $p_{\frac N2 -1}$ a polynomial of degree $\frac{N}{2}-1$ that has the same parity as $\frac{N}{2}-1$. It solves $\pa_t^2\phi_{\frac N2 -1}-\Delta \phi_{\frac N2 -1}=0 $ for $|x|\neq 0$ but critically fails to belong to the energy space, see Section \ref{sec:resonance}.

In this article, we extend Classification \ref{pr:nonradiativefree} to non-linear wave equations in $N\geq 3$ dimensions,
\begin{equation} \label{eq:nonlinearwaveintro}
\left\{ 
\begin{array}{l l} \pa_t^2 u-\Delta u=\varphi(|x|,u),\\
\vec u(0)=(u_0,u_1),
\end{array}
\right.
\end{equation}
with analytic energy critical nonlinearities
\begin{equation} \label{eq:nonlinearityanalytic}
\varphi (|x|,u)=\sum_{k\geq 2}^\infty \varphi_k |x|^{(k-1)(\frac N2-1)-2}u^k,
\quad \forall \tau>0,\quad \lim_{k\to\infty} \tau^k\varphi_k=0,
\end{equation}
or energy critical power nonlinearities in $N=4,6$ or $N$ odd dimensions:
\begin{equation} \label{eq:nonlinearitypower}
\varphi (|x|,u)=|u|^{\frac{4}{N-2}}u.
\end{equation}
Nonlinearities of the form \eqref{eq:nonlinearityanalytic} and \eqref{eq:nonlinearitypower} include the aforementioned equations \eqref{eq:nonlinearitypower}, \eqref{id:wavemaps} and \eqref{id:YangMills} after a suitable dimensional reduction for the last two. We expect that the techniques of the present paper and those of \cite{CoDuKeMe22} can treat nonlinearities \eqref{eq:nonlinearitypower} in higher even dimensions.

Our first result concerns solutions with small energy in the exterior cone $|x|>R+|t|$. We introduce for $\cbf\in \mathbb R^{\lfloor \frac{N-1}{2}\rfloor}$ and $R>0$ the norm $|\cbf|_R=\big|(R^{-\frac N2+1+m}c_m)_{0\leq m <\frac{N}{2}-1}\big|$ where $|\cdot |$ denotes the usual Euclidean norm. This is the energy of non-radiative free waves for $|x|>R+|t|$ as:
\begin{equation}
\label{bd:energyaFiscbfR}
\delta |\cbf|_R^2 \leq \sup_{t\in \mathbb R} \ \int_{|x|> R+|t|} |\nabla_{t,x}a_F[\cbf](t,x)|^2dx\leq \frac{1}{\delta} |\cbf|_R^2
\end{equation}
where $\delta >0$ is independent of $R$. We remark that $a_F$ enjoys the additional decay:
\begin{equation}
\label{add_decay_aF}
\forall \tilde R>R+|t|>0, \qquad  \int_{|x|> \tilde R} |\nabla_{t,x}a_F[\cbf](t,x)|^2dx \lesssim \left(\frac{R}{\tilde R}\right)^{2\kappa_N} |\cbf|_R^2
\end{equation} 
with $\kappa_N=\frac 12$ if $N$ is odd, and $\kappa_N=1$ if $N$ is even. For $R>0$, we introduce for functions defined for $|x|>R$:
$$
\| u\|_{L^2_R}^2= \int_{|x|\geq R} |u|^2 dx, \qquad \| u\|_{\dot H^1_R}=\| \nabla u \|_{L^2_R}
$$
and denote by $L^2_R$ and $\dot H^1_R$ their associated Hilbert spaces (where in addition we require that $\lim_{|x|\to \infty}u(x)=0$ for $u\in \dot H^1_R$). We define
\begin{align}
 \label{id:defPidotH1} \Pi_{\dot H^1,R}=\Pi_{\dot H^1_R}\left(\mbox{Span}\left(\left(\frac{1}{|x|^{N-2-2l}}\right)_{0\leq l \leq \lfloor \frac{N-3}{4} \rfloor} \right) \right) , \qquad \Pi_{\dot H^1,R}^\perp=1-\Pi_{\dot H^1,R}\\
 \label{id:defPiL2} \Pi_{L^2,R}=\Pi_{L^2_R}\left(\mbox{Span}\left(\left(\frac{1}{|x|^{N-2-2l}}\right)_{0\leq l \leq \lfloor \frac{N-5}{4}\rfloor} \right) \right), \qquad  \Pi_{L^2,R}^\perp=1- \Pi_{L^2,R}
\end{align}
where $\Pi_{H}(E)$ is the orthogonal projection in the Hilbert space $H$ onto the closed set $E$, and
\begin{equation}\label{id:defPimathcalH}
\Pi_{\mathcal H,R}(u_0,u_1)=(\Pi_{\dot H^1,R}u_0,\Pi_{L^2,R}u_1), \qquad \Pi_{\mathcal H,R}^\perp(u_0,u_1)=(\Pi^\perp_{\dot H^1,R}u_0,\Pi^\perp_{L^2,R}u_1).
\end{equation}
We remark by Classification \ref{pr:nonradiativefree} above that such spaces characterise the initial data of free non-radiative waves as a solution $u$ to \eqref{eq:freewaveintro} with $(u_0,u_1)\in \mathcal H$ satisfies $u(t,x)=a(t,x)$ for all $|x|>R+|t|$ if and only if $(u_0,u_1)=\Pi_{\mathcal H,R}(u_0,u_1)$.

 Our main results are the following:

\begin{theorem}[Classification of small non-radiative nonlinear waves] \label{th:main}

Assume either $N\geq 4$ and $\varphi$ of the form \eqref{eq:nonlinearityanalytic}, or $N\geq 3$ is odd, or $N\in\{4,6\}$, and $\varphi(u)=|u|^{\frac{4}{N-2}}u$, and let $\delta=1$ or $\delta=\frac{4}{N-2}$ in the former or latter cases respectively. Then there exist $\epsilon,\epsilon'>0$ such that the following holds true for any $R>0$. 

\begin{enumerate}
\item \emph{Existence}. Assume that $\cbf \in \mathbb R^{\lfloor \frac{N-1}{2} \rfloor}$ satisfies $|\cbf|_R\leq \epsilon$. Then there exists a unique solution $a=a[\cbf]$ to \eqref{eq:nonlinearwaveintro} that is non-radiative for $|x|>R+|t|$ such that
$$
\Pi_{\mathcal H_R} \vec a (0)=\vec a_F[\cbf](0) \qquad \mbox{and} \qquad \| \Pi^\perp_{\mathcal H_R} \vec a (0)\|_{\mathcal H_R}\lesssim |\cbf|^{1+\delta}_R,
$$
and
\begin{equation}
\label{bound_a_aF}
\forall \tilde R>R+|t|, \qquad \int_{|x|> \tilde R} |\nabla_{t,x}a(t)-\nabla_{t,x} a_F(t)|^2dx\lesssim \left(\frac{R}{\tilde R}\right)^{2\kappa_N} |\cbf|_R^{2(1+\delta)}.
\end{equation}
\item \emph{Uniqueness}. Conversely, if $u$ is any solution to \eqref{eq:nonlinearwaveintro} that is non-radiative for $|x|>R+|t|$ with $\int_{|x|> R} |\nabla_{t,x}u(0)|^2dx\leq \epsilon^{'2}$, then there exists $\cbf \in \mathbb R^{\lfloor \frac{N-1}{2}\rfloor}$ with $|\cbf|_R\leq \epsilon$ such that for $a[\cbf]$ described in i. above:
$$
\forall |x|>R+|t|, \qquad u(t,x)=a[\cbf](t,x).
$$
\end{enumerate}
\end{theorem}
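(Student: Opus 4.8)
The plan is to work entirely in the exterior region $\{|x|>R+|t|\}$, where by finite speed of propagation a solution of \eqref{eq:nonlinearwaveintro} satisfies
$$\vec u(t)=S(t)\vec u(0)+\int_0^t S(t-s)\big(0,\varphi(|x|,u(s))\big)\,ds,$$
$S(t)$ the radial free propagator, only the restriction of the data to $|x|>R$ and of the source to $|x|>R+|s|$ being relevant. I would first record a decay lemma — obtained by combining the channels-of-energy bound with exterior energy estimates (and, in low dimensions, explicit analysis in the self-similar variable $t/|x|$) — stating that a non-radiative solution with small exterior energy automatically enjoys a $(R/\tilde R)^{2\kappa_N}$ decay of the form \eqref{add_decay_aF}, so that its nonlinear source $\varphi(|x|,u)$ decays fast enough in $|x|$ for the operator below to apply; here the critical structure of $\varphi$ in \eqref{eq:nonlinearityanalytic}--\eqref{eq:nonlinearitypower} is essential. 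The engine of the argument is then a \emph{non-radiative solution operator}: for a source $f$ of this type and $\vec p\in\mathrm{Ran}\,\Pi_{\mathcal H_R}$, there is a unique $v=\Tc[\vec p]f$ with $\pa_t^2 v-\Delta v=f$ in the cone, non-radiative for $|x|>R+|t|$, and $\Pi_{\mathcal H_R}\vec v(0)=\vec p$. Uniqueness is Classification \ref{pr:nonradiativefree} (the difference of two such solutions is a free non-radiative wave with vanishing $\Pi_{\mathcal H_R}$-datum, hence vanishes in the cone), and existence reduces to a finite linear system for the correction $\Pi^\perp_{\mathcal H_R}\vec v(0)$ — solvable and bounded because the channels-of-energy estimate \eqref{eq:channelsintro} makes the radiation map on free waves an isomorphism off the non-radiative subspace; one checks moreover that $\Tc[\vec a_F[\cbf](0)]$ preserves the $\kappa_N$-decay of \eqref{add_decay_aF}.

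For \emph{Existence} I would run a contraction. Fixing $\cbf$ with $|\cbf|_R\leq\epsilon$, iterate $a^{(0)}=a_F[\cbf]$ and $a^{(n+1)}=\Tc[\vec a_F[\cbf](0)]\big(\varphi(|x|,a^{(n)})\big)$ in the complete metric space of functions on the cone carrying the energy norm weighted by $\sup_{\tilde R>R+|t|}(\tilde R/R)^{2\kappa_N}$, inside the ball of radius $\sim|\cbf|_R$ around $a_F[\cbf]$. Since $\varphi$ is superlinear of order $1+\delta$ with exactly the critical structure, on that ball the relevant source-norm of $\varphi(|x|,a^{(n)})-\varphi(|x|,\tilde a^{(n)})$ is $\lesssim|\cbf|_R^{\delta}\,\|a^{(n)}-\tilde a^{(n)}\|$, so together with the boundedness of $\Tc$ the map contracts with ratio $\lesssim|\cbf|_R^{\delta}<1$; its fixed point is $a=a[\cbf]$. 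By construction $\Pi_{\mathcal H_R}\vec a(0)=\vec a_F[\cbf](0)$, the fixed-point bound forces $\|\Pi^\perp_{\mathcal H_R}\vec a(0)\|_{\mathcal H_R}\lesssim|\cbf|_R^{1+\delta}$, and \eqref{bound_a_aF} follows from the $\kappa_N$-decay of $\Tc$ applied to the quadratic-in-size source, together with \eqref{add_decay_aF}.

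For \emph{Uniqueness} — both the ``unique'' in (i) and all of (ii) — let $u$ be a non-radiative solution on the cone with $\int_{|x|>R}|\nabla_{t,x}u(0)|^2\,dx\leq\epsilon^{'2}$. By Classification \ref{pr:nonradiativefree} and \eqref{bd:energyaFiscbfR} the map $\cbf\mapsto\Pi_{\mathcal H_R}\vec a_F[\cbf](0)$ is a linear isomorphism onto $\mathrm{Ran}\,\Pi_{\mathcal H_R}$, so there is a unique $\cbf$ with $\vec a_F[\cbf](0)=\Pi_{\mathcal H_R}\vec u(0)$, and $|\cbf|_R\lesssim\|\Pi_{\mathcal H_R}\vec u(0)\|_{\mathcal H_R}\leq\epsilon'$, hence $|\cbf|_R\leq\epsilon$ for $\epsilon'$ small. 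By the decay lemma $\varphi(|x|,u)$ is an admissible source, and $u$ is a non-radiative solution of $\pa_t^2 v-\Delta v=\varphi(|x|,u)$ with $\Pi_{\mathcal H_R}$-datum $\vec a_F[\cbf](0)$; uniqueness of $\Tc$ then gives $u=\Tc[\vec a_F[\cbf](0)]\big(\varphi(|x|,u)\big)$, i.e. $u$ is a fixed point of the \emph{same} contraction as $a[\cbf]$, so $u=a[\cbf]$ on $|x|>R+|t|$. Specialising to $u=a[\cbf]$ also yields the uniqueness in part (i).

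The step I expect to be the main obstacle is the construction and uniform boundedness of $\Tc$ in even dimensions: there the channels-of-energy estimate controls only one of the two components of the free data, so the linear system for $\Pi^\perp_{\mathcal H_R}\vec v(0)$ is not manifestly invertible, and it must be closed using the precise asymptotics of the almost-resonance $\phi_{\frac N2-1}$ (Section \ref{sec:resonance}) and the improved $(R/\tilde R)^{2\kappa_N}$, $\kappa_N=1$, decay — which is exactly where the restriction to $N\in\{4,6\}$ for the power nonlinearity, and the weakened channels of \cite{CoDuKeMe22}, enter. A secondary, purely technical difficulty is propagating the weighted exterior bounds over the whole cone, whose time-extent grows like $|x|-R$, with estimates sharp enough to absorb the full (and, for the power nonlinearity, only finitely smooth) nonlinear term; the reduced gain $\delta=\frac4{N-2}$ instead of $1$ in that case is the visible trace of that limited smoothness.
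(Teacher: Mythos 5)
Your existence argument is essentially the paper's: build a non-radiative solution operator $\Tc$ (Propositions \ref{pr:nonradiativeforcingmainodd}, \ref{pr:nonradiativeforcingmaineven}) and run a contraction in a space carrying the energy plus the $W^{\kappa_N}_R$ decay; this is Proposition \ref{pr:constructionnonradia}. The gap is entirely in your uniqueness step, and you have mis-diagnosed where the obstacle sits.

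Your uniqueness argument is: given a small non-radiative $u$, choose $\cbf$ so that $\Pi_{\mathcal H_R}\vec u(0)=\vec a_F[\cbf](0)$, observe that $u=\Tc[\vec a_F[\cbf](0)]\varphi(u)$ by uniqueness of $\Tc$, and then invoke uniqueness of the fixed point to conclude $u=a[\cbf]$. But the last step requires that $u$ (more precisely $u-a_F$, with its extra regularity) already lie in the ball of the weighted space $X$ where $\Phi$ is a contraction — i.e., carry the full $(R/\tilde R)^{\kappa_N}$ decay of \eqref{add_decay_aF} \emph{before} you know $u=a[\cbf]$. Your ``decay lemma'' supplies exactly this, but there is no such lemma: a small non-radiative solution has, a priori, only the $(R/\tilde R)^0$ estimate from the energy, and upgrading it to $\kappa_N$ is the entire content of the uniqueness proof, not an input to it. In even dimensions this is not a technicality: the decay rate $\kappa_N=1$ coincides with the decay of the resonance $\phi_{\frac N2-1}$ of \eqref{id:defphiN/2-1}, which critically fails to be in $\Hc$; a non-radiative $u$ could, at each scale $R$, be close to $a_F[\cbf[R]]+\tilde c[R]\phi_{\frac N2-1}$ with $\tilde c[R]\neq 0$ without contradicting smallness of the energy at that scale. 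Your scheme cannot see this, because the resonance never appears in it. The actual proof (Section \ref{sec:uniquenesseven}) introduces the extra resonant parameter $\tilde c$ in the approximate solution $u_{ap}[\cbf,\tilde c,R]$, gains a derivative first (Lemmas \ref{even:lem:gainreg}, \ref{even:lem:gainreg2}), then compares scales dyadically (Lemma \ref{even:lem:dyadicdifferences}) and rules out $\tilde c\neq 0$ by showing it would force infinite energy (Lemma \ref{lem:criticalnondispersivestabilitycontrad}). In odd dimensions the paper also avoids your route: for $N\in\{3,5\}$ or analytic $\varphi$, Proposition \ref{pr:uniqueness1} is a direct Strichartz argument on $w=u-v$ via Proposition \ref{pr:nonradiativeforcing} and needs no a priori weighted decay; for $N\geq 7$ odd it first invokes the (large-$R$, non-quantitative) decay of \cite{DuKeMe21a} and again closes with Strichartz, not a fixed point. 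So the obstacle you flagged — building a bounded $\Tc$ in even dimensions — is in fact resolved by Proposition \ref{pr:nonradiativeforcingmaineven}; what you are missing is that uniqueness cannot be reduced to ``$u$ is a fixed point of the same contraction'' because you cannot place $u$ in the contraction's domain without first killing the resonant direction, which is the heart of the even-dimensional argument.
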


We refer to the Subsection \ref{sub:notations} for the definition of the space $S(\{|x|>R+|t|\})$.

We next have the extension of non-radiative solutions to a maximal region $\{|x|>R_*+|t|\}$, which allows us to pass from a small data result to a large data result, with a blow-up alternative when $R_*>0$. 

\begin{theorem}[Maximal non-radiative solutions]
\label{th:maximal}
Assume $N\geq 4$ and $\varphi$ of the form \eqref{eq:nonlinearityanalytic}, or $N\geq 3$ is odd, or $N\in\{4,6\}$, and $\varphi(u)=|u|^{\frac{4}{N-2}}u$. Let $R_0> 0$, and $u$ be a radial non-radiative solution of \eqref{eq:nonlinearwaveintro} for $|x|>R_0+|t|$. Then there exists $R_*\in [0,R_0]$, and an extension $u_*$ of $u$ to $\{|x|>R_*+|t|\}$, such that 
\begin{enumerate}
 \item for all $R>R_*$, $\vec{u}_*\in C^0(\Rb,\Hc_{R+|t|})$, $u_*\in S(\{|x|>R+|t|\})$ and $u_*$ is non-radiative for $\{|x|>R+|t|\}$,
 \item if $R_*>0$, then $(u_*(0),\partial_tu_*(0))\notin \Hc_{R_*}$, or $u^*\notin S(\{|x|>R_*+|t|\})$.
\end{enumerate}
\end{theorem}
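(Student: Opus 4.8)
The plan is to build the maximal extension $u_*$ by a continuation argument, using Theorem \ref{th:main} as the local step and the decay estimate \eqref{bound_a_aF} together with the exterior energy bound to control the size of the solution near the light cone. First I would define
$$
R_* = \inf\Big\{ R\in[0,R_0] : u \text{ extends to a non-radiative solution on } \{|x|>R+|t|\} \text{ with } \vec u_*\in C^0(\Rb,\Hc_{R'+|t|}),\ u_*\in S(\{|x|>R'+|t|\})\ \forall R'>R\Big\}.
$$
The set over which we take the infimum is nonempty (it contains $R_0$) and, I claim, is an interval: if the extension exists on $\{|x|>R+|t|\}$ then it trivially exists on $\{|x|>R'+|t|\}$ for all $R'\in(R,R_0]$ by restriction. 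So $R_*$ is well defined, and the extensions for varying $R>R_*$ are consistent (uniqueness of the non-radiative solution with given exterior data, which follows from the uniqueness part of Theorem \ref{th:main} applied on a thin exterior collar, or from finite speed of propagation and local well-posedness), hence they glue to a single $u_*$ on $\{|x|>R_*+|t|\}$. This gives item (i) directly from the definition.

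The substance is item (ii): the blow-up alternative at $R_*$ when $R_*>0$. I would argue by contraposition. Suppose $R_*>0$ but $(u_*(0),\pa_t u_*(0))\in\Hc_{R_*}$ and $u_*\in S(\{|x|>R_*+|t|\})$. The goal is to extend strictly past $R_*$, contradicting minimality. The idea: choose $R$ slightly larger than $R_*$. On the exterior cone $\{|x|>R+|t|\}$ the solution $u_*$ is non-radiative, and by the uniqueness part of Theorem \ref{th:main} (after checking the exterior energy at $|x|>R$ is $\leq \epsilon'^2$, which holds for $R$ close enough to $R_*$ by \eqref{add_decay_aF}-type decay and the fact that $\vec u_*(0)\in\Hc_{R_*}$ so its exterior energy tends to $0$ as $R\to R_*^-$... actually as the truncation radius increases) it must coincide with some $a[\cbf_R]$ for $|x|>R+|t|$. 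One then shows that the parameter $\cbf_R$ (suitably normalized, i.e. the coefficients $c_m$ themselves, which are intrinsic to the asymptotic expansion and hence independent of $R$) stabilizes as $R\downarrow R_*$, yielding a single $\cbf$; the associated $a[\cbf]$ is then a non-radiative solution on $\{|x|>R_*+|t|\}$ (in fact on a slightly larger cone by the existence part and openness of the smallness condition) agreeing with $u_*$. Since $a[\cbf]$ is defined and in the right spaces on $\{|x|>\tilde R+|t|\}$ for some $\tilde R<R_*$, this extends $u_*$ past $R_*$, the desired contradiction. The small-data hypothesis needed to invoke Theorem \ref{th:main} is supplied precisely by the assumed membership $(u_*(0),\pa_t u_*(0))\in\Hc_{R_*}$: the exterior energy $\int_{|x|>R}|\nabla_{t,x}u_*(0)|^2\,dx\to 0$ as $R$ increases through values $>R_*$... more carefully, one uses that the full energy on $|x|>R_*$ is finite plus a continuity/small-collar argument so that for $R$ close to $R_*$ the relevant norm is below $\epsilon'$.

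The main obstacle I anticipate is the last gluing step: showing that the locally-defined non-radiative extensions $a[\cbf_R]$ for $R\downarrow R_*$ are compatible and that the limiting object genuinely lives in $\Hc_{R+|t|}$ and $S(\{|x|>R+|t|\})$ for \emph{some} $R<R_*$ — i.e. that no concentration of energy occurs exactly on the cone $|x|=R_*+|t|$ as $R\to R_*$. This is where the quantitative decay \eqref{bound_a_aF}, which says $a[\cbf]$ is uniformly close to the explicit $a_F[\cbf]$ on $\{|x|>\tilde R\}$ with a power gain $(R/\tilde R)^{2\kappa_N}$, does the work: it prevents the profile from degenerating and pins down $\cbf$ as a limit of the $\cbf_R$, while the explicit profile $a_F[\cbf]$ is manifestly a global (for $|x|\ne 0$) non-radiative free-type solution with controlled energy. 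A secondary technical point is verifying the finite-speed-of-propagation/uniqueness statement in the exterior cone geometry — that two non-radiative solutions agreeing on $\{|x|>R+|t|\}$ for one $R$ agree on the common domain — which I would get from the standard exterior local Cauchy theory for \eqref{eq:nonlinearwaveintro} in the space $S$ (referenced via Subsection \ref{sub:notations}) combined with the uniqueness clause of Theorem \ref{th:main} for thin collars.
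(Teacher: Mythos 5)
Your overall architecture—define $R_*$ as the infimum of radii to which a consistent non-radiative extension exists, and obtain item (ii) by contraposition—matches the paper. The fatal problem is in the extension step: you want to invoke the uniqueness and existence parts of Theorem \ref{th:main}, and this requires the \emph{exterior energy at $R$ to be below $\epsilon'^2$} for $R$ slightly above $R_*$. That is simply not true in general. What shrinks as $R\downarrow R_*$ is the \emph{difference} $\int_{R_*<|x|<R}|\nabla_{t,x}u_*(0)|^2\,dx$, not the exterior energy $\int_{|x|>R}|\nabla_{t,x}u_*(0)|^2\,dx$; the latter increases monotonically to $\int_{|x|>R_*}|\nabla_{t,x}u_*(0)|^2\,dx$, which under the contraposition hypothesis is finite but can be far larger than $\epsilon'^2$. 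So Theorem \ref{th:main} is inapplicable, the coefficients $\cbf_R$ cannot even be defined via the small-data classification, and the whole chain collapses. Your phrase ``continuity/small-collar argument so that for $R$ close to $R_*$ the relevant norm is below $\epsilon'$'' identifies the weak spot but does not close it.

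What the paper does instead is prove a \emph{large-data} local extension result (Proposition \ref{pr:extension}) by a fixed-point argument in which smallness comes from the \emph{width} $R-\widetilde{R}$ of the collar, not from the size of the solution. Concretely: after establishing a uniform bound of the form $|u_*(t,r)|\lesssim r^{-(N-1)/2}$ outside the cone (Step 1), they set up an auxiliary function $\as$ satisfying a free equation with the nonlinearity of $u_*$ as a source, construct $h$ via Proposition \ref{pr:T10} (the invertibility of the radiation map on thin collars), and reduce to finding $u$ supported in $\{\widetilde{R}+|t|<|x|<R+|t|\}$ solving a wave equation with a forcing supported in that collar; the contraction constant in \eqref{StabilityPsi}--\eqref{ContractionPsi} carries a factor $(R-\widetilde{R})/R^{N/(N-2)}$, which is made small by taking $\widetilde{R}$ close to $R$. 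That is precisely the mechanism that works at arbitrary energy, and it is also what supplies Proposition \ref{pr:nonradiative_compactforcing} (compactly supported forcing, thin collar) in the linear step. Separately, the paper must also verify, under the contraposition hypothesis, that $u_*$ \emph{is} non-radiative down to $R_*$ (radiation profile vanishes on $[R_*,\infty)$) before invoking the extension Proposition; your sketch omits this verification. If you want to repair your proof along your original lines, you would essentially have to reprove Proposition \ref{pr:extension} from scratch, since the small-data router through Theorem \ref{th:main} does not cover the large-data situation the blow-up alternative is actually about.
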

\begin{remark}
 The restriction on the dimension $N$ in Theorem \ref{th:maximal} in the case where $\varphi(u)=|u|^{\frac{4}{N-2}}u$ is technical. On one hand, the proof of Theorem \ref{th:maximal} given in Section \ref{sec:maximal} yields the conclusion of Theorem \ref{th:maximal} with the additional assumption 
 \begin{equation}
 \label{extra_as}
 |u(t,r)|\lesssim |x|^{\frac{1-N}2},\quad |x|>R_0+|t|,
 \end{equation} 
 for all $N\geq 3$. On the other hand, it is possible to prove the same result, for general $N\geq 6$, without the assumption \eqref{extra_as}, using the well-posedness theory developped in \cite{BuCzLiPaZh13}.
\end{remark}

\paragraph*{Comments.}

\emph{Previous results in low dimensions}. We recover the results of \cite{DuKeMe13} and \cite{DuKeMaMe22}, concerning the energy critical semilinear wave equation in dimensions $3$ and $4$ and the critical $k=1$-equivariant wave maps.

\emph{Extension to supercritical nonlinearities}. Our results and proofs adapt straightforwardly to energy supercritical power-like analytic non-linearities $u^p$ (where $p>\frac{N+2}{N-2}$ is an odd integer) with spherically symmetric data.

\emph{Rigidity in dimensions $3$ and $4$}. We recover the results of \cite{DuKeMe13} and \cite{DuKeMaMe22} that the only non-radiative solutions are stationary states. 
Note that such a classification result is used to prove a strong version of the soliton resolution, namely that any radial solution $u$ of \eqref{id:nonlinearwave} with $N=3$ or $N=4$ such that 
$$\liminf_{t\to T_+}\left\|(u(t),\partial_tu(t))\right\|_{\dot{H}^1\times L^2}<\infty$$
is indeed bounded on $[0,T_+)$ and satisfies the soliton resolution.

 \paragraph*{Applications and novelties.} Theorem \ref{th:main} is one of the key ingredients in the proof of the soliton resolution in $N=6$ dimensions \cite{CoDuKeMe22Pc}, \cite{CoDuKeMe22Pv1}, where the only non-radiative solutions (in the stronger sense that \eqref{eq:nonradiativeintro} holds for any $R\in \mathbb R$) are proved to be the solitons.
 
The first main novelty of the present article lies in the introduction of a fixed point scheme between radiation fields and initial data. This is enough for the proofs of Theorems \ref{th:main} and \ref{th:maximal} in the non-degenerate case of odd dimensions. The main difficulty resides in the treatment of degenerate directions in even dimensions. First, we show a regularity gain of a derivative for $u_0$ and $u_1$. Then, a resonant direction needs to be ruled out. We do so by approximating with solutions of the form $r^{-d/2+1}\phi (t/r)$, which introduces an elliptic operator with a resonance. Careful computations show that depending on the nonlinearity, quadratic effects either do not see the resonance, or lead to an energy increase between nearby dyadic regions. In both cases, an excitation of the resonance is prohibited since it is shown to lead to infinite energy data. We expect such solutions to be relevant in other contexts. This is the main part of the proof and the main innovation; the method is carried out for various nonlinear terms, thus showing the robustness of the use of such solutions.

The extension Theorem \ref{th:maximal} for non-radiative solutions, which allows us to pass from the small data case to the large data case, is the second main novelty of the paper.

\section*{Acknowledgements}

This work was supported by the National Science Foundation [DMS-2153794 to C.K.]; the CY Initiative of Excellence Grant
"Investissements d'Avenir" [ANR-16-IDEX-0008 to C.C. and F.M.]; and the France and Chicago Collaborating in the Sciences [FACCTS award \#2-91336 to C.C. and F.M.]

\section{Notation and preliminaries}
\subsection{Notation}
\label{sub:notations}
We introduce:
$$
m_0=\lfloor \frac{N-3}{2} \rfloor , \qquad l_0=\lfloor \frac{N-3}{4} \rfloor , \qquad l_1=\lfloor \frac{N-5}{4}\rfloor ,
$$
where $\lfloor \cdot \rfloor$ denotes the floor function, and recall that $l_0+1$, $l_1+1$ and $m_0+1$ correspond to the ranks of $\Pi_{\dot H^1,R}$, $\Pi_{L^2, R}$ and $\Pi_{\mathcal H,R}$ given by \eqref{id:defPidotH1}, \eqref{id:defPiL2} and \eqref{id:defPimathcalH} respectively. Note that $l_0+l_1+2=m_0+1$.

We denote
$$
r=|x|, \qquad \sigma =\frac{t}{|x|}.
$$
We define the energy and $L^2$ scaling transformations by:
$$
f_{(\lambda)}(t,x)=\frac{1}{\lambda^{N/2-1}}f(\frac{t}{\lambda},\frac{x}{\lambda}), \qquad g_{[\lambda]}(x)=\frac{1}{\lambda^{N/2}}g(\frac{x}{\lambda}).
$$
We define for $R>0$ and $\kappa\in \mathbb R$ the following semi-norms:
\begin{align}
\label{id:defWkappa} & \| u\|_{W_R^\kappa}= \sup_{t\in \mathbb R}\sup_{\tilde R>R+|t|} \tilde R^{\kappa}\| (u (t),\pa_t u(t))\|_{\mathcal H_{\tilde R}},\\
\label{id:deftildeWkappa} &\| u\|_{\tilde W^\kappa_R} =\left\{ 
\begin{array}{l l}
\sup_{t\in \mathbb R} \sup_{\tilde R>R+|t|} \tilde R^{\kappa} \| u(t)\|_{\dot H^1_{\tilde R}}  \qquad \mbox{if }N\equiv 4\mod 4,\\
\sup_{t\in \mathbb R} \sup_{\tilde R>R+|t|} \tilde R^{\kappa} \| \pa_t u(t)\|_{L^2_{\tilde R}}  \qquad \mbox{if }N\equiv 6\mod 4,
\end{array} \right. \\
\label{id:defW'kappa}&\| f\|_{W^{'\kappa}_R}=\sup_{\tilde t \in \mathbb R} \sup_{\tilde R>R+|\tilde t|} \tilde R^{\kappa} \|\indic(|x|>\tilde R+|t-\tilde t|) f \|_{L^1L^2},
\end{align}
and denote by $W_R^\kappa$, $\tilde W_R^\kappa$ and $W^{'\kappa}_R$ their associated spaces. 

We denote the even and odd in time parts of a function $f$ as
\begin{equation}\label{notation:id:fpm}
f_+(t) =\frac{1}{2}(g(t)+ g(-t)) \quad \mbox{and} \quad f_-(t) =\frac{1}{2}(g(t)- g(-t)).
\end{equation}
We shall write $a\lesssim b$ if there exists a constant $C$ that is independent of the parameters of the problem at hand such that $a\leq Cb$. We shall write $a\approx b$ if $a\lesssim b$ and $b\lesssim a$.

If $N\geq 4$ and the nonlinearity $\varphi$  is of the form \eqref{eq:nonlinearityanalytic}, we let
\begin{equation}
\label{defS1}   
\|u\|_{S(\Rb\times \Rb^N)}=
\left\|r^{\frac{N-6}{4}} u\right\|_{L^2(\Rb,L^4(\Rb^N))}.
\end{equation} 
If $N\geq 3$ and the nonlinearity is of the form \eqref{eq:nonlinearitypower} we let
\begin{equation}
\label{defS2}
S\left(\Rb\times \Rb^N\right)=L^{\frac{2(N+1)}{N-2}}\left(\Rb\times \Rb^N\right).
\end{equation}
We denote by $S(\{r>R+|t|\})$ the space of radial functions $u$ defined on $\{r>R+|t|\}$ such that the extension of $u$ by $0$ to $\Rb\times \Rb^N$ is in $S(\Rb\times \Rb^N)$.

If $R>0$ and $V$ is the space of functions on $\Rb^N$, we will denote by $V_R$ the spaces of restriction to $\{|x|>R\}$ of radial functions that are in $V$. We will specially use the notations $\Hc_R$, $L^2_R$, $\dot{H}_R^1$. If $I$ is an interval, the notation 
$u\in C^0(I,V_{R+|t|})$ means that $u$ is the restriction to $\{|x|>R+|t|\}$ of a function which is in $C^0(I,V)$. 

\subsection{Weighted Strichartz estimates}
\label{sub:Strichartz}

To treat the case of analytic nonlinearities, we will use, similarly to \cite{JendrejLawrie18}, weighted Strichartz estimates:

\begin{lemma}
\label{lem:weighted_Strichartz}
Assume $N\geq 4$.
 Let $u$ be a radial solution of $\partial_t^2u-\Delta u=F$ on $\Rb\times \Rb^N$, with $\vec{u}(0)\in \Hc$ and $F\in L^1(\Rb,L^2(\Rb^N))$. Then
 \begin{equation}
  \label{Weight_Strichartz}
  \left\|r^{\frac{N-6}{4}} u\right\|_{L^2(\Rb,L^4(\Rb^N))}\lesssim \|(u_0,u_1)\|_{\Hc}+\left\|F\right\|_{L^1L^2}.
 \end{equation} 
 Furthermore, if $R\geq 0$ 
\begin{equation}
  \label{Weight_StrichartzR}
  \left\|r^{\frac{N-6}{4}} u \indic(|x|>R+|t|)\right\|_{L^2L^4}
  \lesssim \big\|(u_0,u_1)\big\|_{\Hc_R}+\big\|F\indic(|x|>R+|t|)\big\|_{L^1L^2}.
 \end{equation}
 \end{lemma}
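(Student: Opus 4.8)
The plan is to prove the two weighted Strichartz estimates \eqref{Weight_Strichartz} and \eqref{Weight_StrichartzR} by reducing the radial wave equation in $\Rb^N$ to a one-dimensional wave equation and then invoking a classical (one-dimensional) Strichartz-type inequality. First I would make the change of unknown $v(t,r)=r^{\frac{N-1}{2}}u(t,r)$, which transforms $\pa_t^2 u-\Delta u=F$ (with $\Delta=\pa_r^2+\frac{N-1}{r}\pa_r$ acting on radial functions) into a perturbed one-dimensional wave equation $\pa_t^2 v-\pa_r^2 v+\frac{c_N}{r^2}v=r^{\frac{N-1}{2}}F$ on the half-line $r>0$, with $c_N=\frac{(N-1)(N-3)}{4}\geq 0$. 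The weight in \eqref{Weight_Strichartz} is chosen precisely so that $\left\|r^{\frac{N-6}{4}}u\right\|_{L^2_tL^4_x(\Rb^N)}\approx \left\|r^{-1/2}v\right\|_{L^2_tL^4_r(r\,dr)}$ — indeed $\|r^{\frac{N-6}{4}}u\|_{L^4_x}^4\approx\int_0^\infty r^{N-6}u^4\,r^{N-1}dr=\int_0^\infty r^{-2}v^4 \,dr$, up to the surface-measure constant — so the target estimate becomes a purely one-dimensional statement about $v$.

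Next I would establish the one-dimensional estimate $\left\|r^{-1/2}v\right\|_{L^2_tL^4_r}\lesssim \|\vec v(0)\|_{\dot H^1\times L^2(\Rb_+)}+\|r^{\frac{N-1}{2}}F\|_{L^1_tL^2_r}$ for solutions of $\pa_t^2v-\pa_r^2v+\frac{c_N}{r^2}v=\text{source}$ with Dirichlet-type behaviour at $r=0$. This is where I would borrow the argument used in \cite{JendrejLawrie18}: either treat $\frac{c_N}{r^2}v$ as an admissible (positive) potential and use the conservation/monotonicity it provides together with the free 1D estimate, or work directly with the operator $-\pa_r^2+\frac{c_N}{r^2}$ whose spectral theory is explicit (Bessel functions). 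Concretely, I would (a) prove the homogeneous estimate via the explicit representation $v=\cos(t\sqrt{A})v_0+\frac{\sin(t\sqrt A)}{\sqrt A}v_1$ with $A=-\pa_r^2+\frac{c_N}{r^2}$, using that the weighted $L^4$ bound follows from a $\dot H^{1/2}$-type Sobolev embedding adapted to $A$ plus a $TT^*$ / stationary-phase dispersive bound; and (b) pass to the inhomogeneous case by the Duhamel formula and Minkowski's inequality, which turns the $L^1_t$ in the source norm into the sum over time of homogeneous contributions. The equivalence $\|\vec v(0)\|_{\dot H^1\times L^2(\Rb_+)}\approx\|(u_0,u_1)\|_{\Hc}$ and $\|r^{\frac{N-1}{2}}F\|_{L^1_tL^2_r}\approx\|F\|_{L^1_tL^2_x}$ under the radial reduction then yields \eqref{Weight_Strichartz}.

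For the exterior estimate \eqref{Weight_StrichartzR} I would exploit finite speed of propagation. Given the solution on $\Rb\times\Rb^N$, decompose the data and forcing into their restrictions to $\{|x|>R\}$ and $\{|x|\le R\}$ (resp.\ $\{|x|>R+|t|\}$ and its complement for $F$); by Huygens/finite speed of propagation the part of the solution supported in $\{|x|>R+|t|\}$ depends only on the exterior data $(u_0,u_1)\indic_{|x|>R}$ and the exterior forcing $F\indic_{|x|>R+|t|}$. Applying \eqref{Weight_Strichartz} to this exterior solution — after extending the exterior data by a bounded extension operator on $\dot H^1_R\times L^2_R$, or more cleanly by running the same 1D argument on the half-line $r>R$ with the potential $\frac{c_N}{r^2}$ (which is now smooth and bounded, so causes no trouble) — gives the claimed bound with constants independent of $R$, since the 1D estimate is invariant under translation in $r$ up to the harmless contribution of the potential on $r>R$.

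The main obstacle I expect is handling the inverse-square potential $\frac{c_N}{r^2}$ near $r=0$ cleanly in step two: one cannot simply cite the free 1D Strichartz estimate, and the dispersive estimate for $e^{it\sqrt A}$ with $A=-\pa_r^2+\frac{c_N}{r^2}$ requires either a separate Bessel-function computation or an abstract Strichartz machinery (Keel–Tao) fed by a pointwise-in-time decay bound for the half-wave propagator associated to $A$. A secondary subtlety is that $\frac{N-6}{4}$ makes the estimate scaling-critical (the weighted $L^2L^4$ norm is scale invariant), so no room is lost but also none is gained; one must be careful that the endpoint case $N=6$, where the weight disappears and $S=L^2_tL^4_x$, is genuinely covered — here the estimate is the standard radial Strichartz pair $(2,4)$ in dimension $6$, which is admissible, providing a useful consistency check. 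For the exterior version the only real point to verify is that the extension/causality argument does not introduce $R$-dependent constants, which it does not because all the norms in play are homogeneous and finite speed of propagation is exact.
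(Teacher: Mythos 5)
Your overall strategy — perform a radial change of variable that exposes an inverse-square potential, then invoke or prove a Strichartz estimate for the resulting operator — is the same as the paper's, but the specific reduction you choose is less efficient and contains an arithmetic slip. The paper sets $v(r)=r^{\frac N2-3}u(r)$ and views $v$ as a radial function on $\Rb^6$; then $v$ solves $\partial_t^2v-\Delta_6 v+\bigl(\tfrac N2-3\bigr)\bigl(\tfrac N2+1\bigr)r^{-2}v=r^{\frac N2-3}F$, and the weighted $L^2_tL^4_x$ norm of $u$ becomes the \emph{unweighted} $L^2_tL^4_x(\Rb^6)$ norm of $v$. In dimension $6$ the exponent pair $(2,4)$ is a standard $\dot H^1$-level Strichartz pair, so one can directly cite the weighted Strichartz estimates for wave equations with inverse-square potentials of Planchon--Stalker--Tahvildar-Zadeh \cite[Corollary~3.9]{PlStTZ03}, and it is precisely the admissibility condition $\nu>\lambda+\alpha-n/q$ there (with $\nu=\frac{N-2}{2}$) that forces $N\geq 4$ — a point the lemma's hypothesis reflects and which your proposal never locates.

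Your substitution $v=r^{\frac{N-1}{2}}u$ instead lands the problem in one spatial dimension. There are two concrete problems. First, your asserted weight equivalence is wrong: with $v=r^{\frac{N-1}{2}}u$ one finds $\|r^{\frac{N-6}{4}}u\|_{L^4(\Rb^N)}^4\approx\int_0^\infty r^{2N-7}\,u^4\,dr=\int_0^\infty r^{-5}\,v^4\,dr$, not $\int r^{-2}v^4\,dr$; the factor $r^3$ is real and would derail the scaling bookkeeping. Second, and more seriously, of the two routes you offer for the resulting one-dimensional estimate, the first (``treat $c_N r^{-2}v$ as a positive perturbation and use the free 1D estimate'') is a dead end: the free wave equation in one spatial dimension admits no Strichartz estimate beyond energy conservation — the dispersive decay is absent — so the potential is not a perturbation but the entire source of dispersion. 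You in fact acknowledge this later (``one cannot simply cite the free 1D Strichartz estimate''), which contradicts the earlier option. Your second route (spectral analysis of $A=-\partial_r^2+c_Nr^{-2}$ via Bessel functions, $TT^*$, and Keel--Tao) is viable and is essentially the content of \cite{PlStTZ03}, but it amounts to re-deriving that machinery from scratch. Your own remark that at $N=6$ the weight vanishes and one is looking at the classical $(2,4)$ pair is exactly the clue that the reduction should target $\Rb^6$ rather than $\Rb$. The finite speed of propagation reduction of \eqref{Weight_StrichartzR} to \eqref{Weight_Strichartz} is carried out correctly.
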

\begin{proof}
Using finite speed of propagation, it is sufficient to prove \eqref{Weight_Strichartz}. Let $v(r)=r^{\frac{N}{2}-3}u(r)$, that we consider as a radial function on $\Rb^6$. Then by explicit computation,
 \begin{equation}
  \label{LWD6}
  \partial_t^2v-\Delta_6v+\left(\frac{N}{2}-3\right)\left(\frac N2+1\right)\frac{1}{r^2}v=r^{\frac{N}{2}-3}F,
 \end{equation}
where $\Delta_6=\frac{\partial^2}{\partial_r^2}+\frac{5}{r}\frac{\partial}{\partial_r}$.
 By explicit computation and Hardy's inequality, we see that $\vec{v}(0)\in (\dot{H}^1\times L^2)(\Rb^6)$. Using Strichartz estimates for the wave equation with an inverse square potential \cite[Corollary 3.9]{PlStTZ03}, we obtain 
 $$ \sup_{t\in \Rb} \|\vec{v}(t)\|_{(\dot{H}^1\times L^2)(\Rb^6)}+\|v\|_{L^2(\Rb,L^4(\Rb^6))}\lesssim \|\vec{v}(0)\|_{(\dot{H}^1\times L^2)(\Rb^6)}.$$
 We note that with the notations of \cite[Corollary 3.9]{PlStTZ03}, $\nu^2=\frac{N^2}{4}-N+1$ and $\lambda+\alpha-\frac{n}{q}=1/2$ (in the case of the $L^2L^4$ Strichartz estimate), so that the assumption $\nu>\lambda+\alpha-\frac{n}{q}$ is satisfied if and only if $N\geq 4$. Going back to $u$, and observing that, by Hardy's inequality, for a large constant $C$,
 $$ \|v(t)\|_{\dot{H}^1(\Rb^6)}\approx \|v(t)\|_{\dot{H}^1(\Rb^6)}+C\|r^{-1}v(t)\|_{L^2(\Rb^6)}\approx \|u(t)\|_{\dot{H}^1(\Rb^N)},$$
 we obtain \eqref{Weight_Strichartz}.
\end{proof}

We claim
\begin{lemma}
\label{lem:lipschitz}
Assume $N\in \{3,4,5,6\}$ and $\varphi$ is of the form \eqref{eq:nonlinearitypower}. Let $u,v \in L^{\frac{N+2}{N-2}}L^{\frac{2(N+2)}{N-2}}(\{r>R+|t|\})$ for some $R\in\Rb$. Then $\varphi(u)$ and $\varphi(v)$ are in $L^1L^2(\{r>R+|t|\})$ and
\begin{multline}
 \label{lipschitz1}
 \|\varphi(u)-\varphi(v)\|_{L^1L^2(\{r>R+|t|\})}\\
 \lesssim \|u-v\|_{L^{\frac{N+2}{N-2}}(\{r>R+|t|\})}\left(\|u\|_{L^{\frac{N+2}{N-2}}(\{r>R+|t|\})}^{\frac{4}{N-2}}+\|v\|_{L^{\frac{N+2}{N-2}}(\{r>R+|t|\})}^{\frac{4}{N-2}}\right).
\end{multline}
 Assume $N\geq 4$ and $\varphi$  is of the form \eqref{eq:nonlinearityanalytic}. Let $u,v\in C^0(\{|x|>R+|t|\})$ such that 
 \begin{gather}
 \label{assumption_u_v}
 \sup_{\substack{t\in \Rb\\ r>R+|t|}} r^{\frac{N}{2}-1}\Big(|u(t,r)|+|v(t,r)|\Big)=M<\infty\\
  \label{assumption_u_v'}
 r^{\frac{N-6}{4}} u,\;r^{\frac{N-6}{4}} v  \in L^2L^4(\{|x|>R+|t|\}).
 \end{gather} 
 Then $\varphi(u)$ and $\varphi(v)$ are in $L^1L^2(\{r>R+|t|\})$ and
\begin{multline}
 \label{lipschitz2}
 \|\varphi(u)-\varphi(v)\|_{L^1L^2(\{r>R+|t|\})}\\
 \lesssim_M \left\|r^{\frac{N-6}{4}}(u-v)\right\|_{L^2L^4(\{r>R+|t|\})}\left(\left\|r^{\frac{N-6}{4}}u\right\|_{L^2L^4(\{r>R+|t|\})}+\left\|r^{\frac{N-6}{4}}v\right\|_{L^2L^4(\{r>R+|t|\})}\right).
\end{multline}
\end{lemma}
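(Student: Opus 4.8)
The plan is to treat the two cases separately, in each case reducing the $L^1_tL^2_x$ bound on the difference $\varphi(u)-\varphi(v)$ to a product of Strichartz-type norms by pointwise algebra combined with H\"older's inequality in $(t,x)$. First I would handle the power nonlinearity $\varphi(u)=|u|^{\frac{4}{N-2}}u$. Here I would use the elementary pointwise bound $|\varphi(a)-\varphi(b)|\lesssim |a-b|\,(|a|^{\frac{4}{N-2}}+|b|^{\frac{4}{N-2}})$, valid since $\frac{4}{N-2}>0$ (the map is $C^1$ for $N\le 6$, hence Lipschitz on bounded sets with this weight), and then integrate. On the region $\{r>R+|t|\}$ I apply H\"older in space with exponents making $|a-b|$ land in $L^{\frac{2(N+2)}{N-2}}_x$ and each factor $|a|^{\frac{4}{N-2}}$ land in $L^{\frac{(N+2)(N-2)}{2\cdot 2}\cdot\frac{2}{4}}$... more precisely with the three exponents $p_1=\frac{2(N+2)}{N-2}$ for the difference and $p_2=\frac{(N+2)(N-2)}{4}\cdot\frac{1}{?}$ chosen so that $\frac{1}{2}=\frac{1}{p_1}+\frac{4/(N-2)}{p_1}$, i.e. all factors measured in $L^{\frac{2(N+2)}{N-2}}_x$; this is the standard Strichartz bookkeeping for the energy-critical wave equation. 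Then H\"older in $t$ with exponents $\frac{N+2}{N-2}$ for the difference and $\frac{N+2}{2}$ split among the two remaining factors, using $1 = \frac{N-2}{N+2}+\frac{4}{N+2}$, yields \eqref{lipschitz1}; the membership $\varphi(u)\in L^1L^2$ follows by taking $v=0$.

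For the analytic nonlinearity \eqref{eq:nonlinearityanalytic} the idea is the same but now the series structure must be controlled using the $L^\infty$ weighted bound \eqref{assumption_u_v}. I would write $\varphi(u)-\varphi(v)=\sum_{k\ge 2}\varphi_k\, r^{(k-1)(\frac N2-1)-2}\,(u^k-v^k)$ and factor $u^k-v^k=(u-v)\sum_{j=0}^{k-1}u^{j}v^{k-1-j}$. Using \eqref{assumption_u_v}, each monomial $u^{j}v^{k-1-j}$ is bounded by $M^{k-1}r^{-(k-1)(\frac N2-1)}$, so the weight $r^{(k-1)(\frac N2-1)-2}$ in front cancels the growth of the monomial and leaves an overall factor $M^{k-1}r^{-2}$; meanwhile I keep two of the factors in $L^4_x$ with the prefactor $r^{\frac{N-6}{4}}$ to match the $S$-norm. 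Concretely, for fixed $k$ I extract $u-v$ and one more factor of $u$ or $v$ to carry the two weights $r^{\frac{N-6}{4}}$ (so that two of the factors are exactly in the $L^2L^4$ Strichartz space), bound the remaining $k-3$ factors and the residual powers of $r$ in $L^\infty$ by $M^{k-2}$ times a bounded power of $r$ that is absorbed because $r^{N-6-2}\cdot r^{-(k-1)(\frac N2-1)}\cdot r^{(k-1)(\frac N2-1)-2}$ simplifies to a negative, integrable power — more carefully, after pulling out the $L^4$ factors the spatial exponent on $r$ is $(k-1)(\frac N2 -1)-2 - (k-1)(\frac N2-1) - 2\cdot\frac{N-6}{4} = -2 - \frac{N-6}{2} = -\frac{N-2}{2}$, which combined with $L^2_x$ of the two $L^4$-normalised factors gives the correct Strichartz pairing. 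Then H\"older in $(t,x)$ exactly as in Lemma \ref{lem:weighted_Strichartz} produces the product $\|r^{\frac{N-6}{4}}(u-v)\|_{L^2L^4}\big(\|r^{\frac{N-6}{4}}u\|_{L^2L^4}+\|r^{\frac{N-6}{4}}v\|_{L^2L^4}\big)$, and the series $\sum_k |\varphi_k| M^{k-2} k$ converges by the hypothesis $\lim_k \tau^k\varphi_k=0$ for every $\tau>0$ applied with $\tau>M$, giving the implicit constant $\lesssim_M$ in \eqref{lipschitz2}; again $v=0$ gives $\varphi(u)\in L^1L^2$.

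The main obstacle is the careful bookkeeping of the $r$-powers in the analytic case: one must verify that, after reserving two factors for the $L^4_x$-norm with weight $r^{\frac{N-6}{4}}$ and $(u-v)$ for one of them, the leftover power of $r$ multiplied against the $L^\infty$-normalised monomials produces exactly the negative exponent needed for the $L^2_x$ H\"older step, uniformly in $k$, and that the combinatorial factor $k$ from the finite sum $\sum_{j=0}^{k-1}$ is dominated by the decay of $\varphi_k$. The power-nonlinearity case is comparatively routine and is the standard Strichartz contraction estimate; the restriction $N\le 6$ there is exactly what makes $u\mapsto |u|^{\frac{4}{N-2}}u$ at least $C^1$ so that the pointwise difference bound holds without further subtlety.
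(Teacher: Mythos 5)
Your approach is correct and essentially matches the paper. In the power case you use exactly the same pointwise Lipschitz bound $\bigl||u|^{\frac{4}{N-2}}u-|v|^{\frac{4}{N-2}}v\bigr|\lesssim|u-v|\bigl(|u|^{\frac{4}{N-2}}+|v|^{\frac{4}{N-2}}\bigr)$ followed by H\"older, as does the paper. In the analytic case the paper does not expand term-by-term; instead it writes $\varphi(|x|,u)-\varphi(|x|,v)=|x|^{-\frac{N}{2}-1}\bigl(\vartheta(|x|^{\frac N2-1}u)-\vartheta(|x|^{\frac N2-1}v)\bigr)$ with $\vartheta(s)=\sum_{k\ge 2}\varphi_k s^k$, applies the mean value theorem to get the single pointwise estimate $|\vartheta(s)-\vartheta(\sigma)|\le C|s-\sigma|(|s|+|\sigma|)$ for $|s|,|\sigma|\le M$, unwinds to $|\varphi(u)-\varphi(v)|\le C\,|x|^{\frac N2-3}|u-v|(|u|+|v|)$, and concludes by one H\"older application. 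Your term-by-term expansion produces the same quadratic pointwise bound after resumming, so the two routes are genuinely equivalent; the paper's packaging via $\vartheta$ is just more compact and sidesteps the explicit check that $\sum_k k|\varphi_k|M^{k-2}<\infty$ (which, as you observe, follows from $\lim_k\tau^k\varphi_k=0$ with $\tau>M$).

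That said, your explicit bookkeeping has two slips that you would need to repair. First, the factorization $u^k-v^k=(u-v)\sum_{j=0}^{k-1}u^jv^{k-1-j}$ gives $k$ factors per summand, two of which ($u-v$ and one power of $u$ or $v$) carry the weight $r^{\frac{N-6}{4}}$ and go into $L^2_tL^4_x$; the remaining $k-2$ factors (not $k-3$) are bounded in $L^\infty$ by \eqref{assumption_u_v}, which is in fact consistent with your later factor $M^{k-2}$. Second, and more importantly, the residual exponent on $r$ is $0$, not $-\frac{N-2}{2}$:
\[
(k-1)\Bigl(\tfrac N2-1\Bigr)-2 \;-\;(k-2)\Bigl(\tfrac N2-1\Bigr)\;-\;2\cdot\tfrac{N-6}{4}
\;=\;\Bigl(\tfrac N2-1\Bigr)-2-\tfrac{N-6}{2}\;=\;0.
\]
You subtracted $(k-1)(\tfrac N2-1)$ where it should be $(k-2)(\tfrac N2-1)$. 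Landing exactly on $r^0$ is not incidental: a nonzero residual power of $r$ would spoil the $L^4\times L^4\to L^2$ H\"older step (or introduce a spurious $R$-dependence), and the fact that the exponents cancel exactly is precisely why $r^{\frac{N-6}{4}}$ is the correct Strichartz weight in \eqref{assumption_u_v'}. With the corrected arithmetic your argument goes through and yields \eqref{lipschitz2}.
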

\begin{remark}
By the radial Sobolev embedding, if $u$ and $v$ are in 
$C^0(\Rb,\dot{H}_{R+|t|}^1)$, and satisfy
 $$\sup_{t\in \Rb}\left(\|u(t)\|_{\dot{H}^1_{R+|t|}}+\|v(t)\|_{\dot{H}^1_{R+|t|}}\right)=M/C,$$
 then \eqref{assumption_u_v} is satisfied.
\end{remark}

\begin{proof}
The estimate \eqref{lipschitz1} follows from the pointwise inequality 
$$\left| |u|^{\frac{4}{N-2}}u-|v|^{\frac{4}{N-2}}v\right|\lesssim |u-v|\left(|u|^{\frac{4}{N-2}}+|v|^{\frac{4}{N-2}}\right)$$
and H\"older's inequality. 

To prove \eqref{lipschitz2}, we observe that 
\begin{equation}
\label{NLLipsch1}
\varphi(|x|,u)-\varphi(|x|,v)=|x|^{-\frac{N}{2}-1}\left(\vartheta\big(|x|^{\frac{N}{2}-1}u\big)-\vartheta\big(|x|^{\frac{N}{2}-1}v\big)\right), 
\end{equation} 
where $\vartheta$ is the analytic function,
$$ \vartheta(s)=\sum_{k\geq 2} \varphi_k s^k,\quad |s|< 2M.$$
By the mean value theorem, there exists $C>0$ such that for $\max(|s|,|\sigma|)\leq M$, one has $\left|\vartheta(s)-\vartheta(\sigma)\right|\leq C|s - \sigma|\left(|s|+|\sigma|\right)$. Combining with \eqref{assumption_u_v}, we obtain
\begin{equation}
 \label{NLLipsch2}
 \forall t\in \Rb,\quad \forall r>R+|t|,\quad \left|\vartheta(r,u)-\vartheta(r,v)\right|\leq C |x|^{\frac N2-3}|u-v|(|u|+|v|),
\end{equation} 
and the estimate \eqref{lipschitz2} follows from H\"older's inequality.
\end{proof}

\subsection{Local well-posedness outside wave cones}

We refer to \cite{DuKeMe21a} for the notion of solution of \eqref{eq:nonlinearwave2} outside wave cones. We have the following well-posedness result:
\begin{proposition}
\label{pr:well-posedness}
Assume that $N\geq 4$ and that $\varphi$ is of the form \eqref{eq:nonlinearityanalytic}, or that $N\geq 3$ and that $\varphi$ is of the form \eqref{eq:nonlinearitypower}. Let $R>0$ and $(u_0,u_1)\in \Hc_R$, radial. Then there exist $T_-<0<T_+$ (which can be infinite), and a solution $u$ of \eqref{eq:nonlinearwaveintro} on $\{|x|>R+|t|,\; T_-<t<T_+\}$ such that $\vec{u}\in C^0((-T_-,T_+),\Hc_R)$, $u\in S(\{|x|>R+|t|, t\in I\})$ for all compact interval $I\subset (T_-,T_+)$. Furthermore, if $T_+<\infty$ (respectively $|T_-|<\infty$), one has $u\notin S(\{|x|>R+|t|, t\in (0,T_+)\})$ (respectively 
$u\notin S(\{|x|>R+|t|, t\in (T_-,0)\})$).
\end{proposition}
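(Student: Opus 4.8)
The plan is to follow the standard Cauchy-theory template adapted to the exterior cone geometry. First I would set up the integral (Duhamel) formulation of \eqref{eq:nonlinearwaveintro} restricted to $\{|x|>R+|t|\}$: by finite speed of propagation, extending data by zero outside $\{|x|>R\}$ and solving on all of $\Rb\times\Rb^N$ agrees, inside the cone, with any notion of solution outside the wave cone, so it suffices to produce a fixed point for the map $\Phi(u)(t)=S(t)(u_0,u_1)+\int_0^t\frac{\sin((t-s)\sqrt{-\Delta})}{\sqrt{-\Delta}}\varphi(|x|,u(s))\,ds$ in a ball of $S(\{|x|>R+|t|,\,t\in I\})\cap C^0(I,\Hc_R)$ for a short interval $I$ around $0$. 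The linear estimate is the free Strichartz/energy estimate (for the power case) or the weighted Strichartz estimate of Lemma \ref{lem:weighted_Strichartz} (for the analytic case), and the nonlinear estimate is precisely the Lipschitz bound of Lemma \ref{lem:lipschitz}, using the indicator-function versions so that everything is localized to the truncated cone. For $\|\varphi(u)\indic(|x|>R+|t|)\|_{L^1L^2}$ small on a short interval one uses either the smallness of the time interval (power case, Hölder in time) or, in the analytic case, absolute continuity of the $L^2L^4$ norm together with the a priori pointwise smallness coming from $\vec u(0)\in\Hc_R$ and the radial Sobolev embedding, shrinking $I$ until the relevant norms are below the threshold $M$ in \eqref{assumption_u_v}. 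Contraction then gives existence and uniqueness of $u$ on a maximal interval $(T_-,T_+)$.

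Next I would prove the blow-up alternative. Suppose $T_+<\infty$ but $u\in S(\{|x|>R+|t|,\,t\in(0,T_+)\})$; then one shows the solution can be continued past $T_+$, contradicting maximality. The mechanism is the usual one: finiteness of the $S$-norm on $(0,T_+)$ combined with the Strichartz estimate forces $\vec u(t)$ to have a limit in $\Hc_R$ as $t\uparrow T_+$ — split $(0,T_+)$ into finitely many subintervals on which the $S$-norm is small, propagate the energy bound, and use the Duhamel formula to see $\vec u(t)$ is Cauchy in $\Hc_{R+|t|}$ — and then one restarts the local theory from time $T_+$ with data $\vec u(T_+)\in\Hc_{R+T_+}$. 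One subtlety is that the radius of the exterior cone shrinks in time, i.e.\ at time $t$ the data lives in $\Hc_{R+|t|}$ rather than $\Hc_R$; but this only helps, since $\Hc_{R}\hookrightarrow\Hc_{R+|t|}$, and the local existence time from the fixed point depends only on the size of the data in the relevant exterior-cone norm (and, in the analytic case, on $M$), which remains controlled. This yields the stated contradiction and hence $u\notin S(\{|x|>R+|t|,\,t\in(0,T_+)\})$ when $T_+<\infty$, and symmetrically for $T_-$.

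The main obstacle I anticipate is not the abstract fixed point but the bookkeeping forced by the time-dependent truncation $\{|x|>R+|t|\}$: one must check that all the Strichartz and energy estimates survive the insertion of the characteristic function $\indic(|x|>R+|t|)$ — this is where finite speed of propagation for the linear flow, plus the fact that $\varphi(|x|,u)$ is supported where $u$ is, must be used carefully so that $\Phi$ genuinely maps the truncated-cone space to itself — and that in the analytic case the pointwise smallness hypothesis \eqref{assumption_u_v} needed for Lemma \ref{lem:lipschitz} is propagated, not just assumed at $t=0$. Concretely, one wants $\sup_{t}r^{N/2-1}|u(t,r)|\le M$ on the evolution interval; this follows from $\sup_t\|u(t)\|_{\dot H^1_{R+|t|}}$ staying small via the radial Sobolev embedding in the remark after Lemma \ref{lem:lipschitz}, which in turn follows from the energy estimate once $I$ is short — so the argument closes, but the order of choosing constants ($M$ first, then the Strichartz smallness, then $|I|$) needs to be pinned down. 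The remaining details — continuity in time, uniqueness in the Strichartz class, and the "respectively" statement for $T_-$ — are then routine adaptations of \cite{DuKeMe21a}.
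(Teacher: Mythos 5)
Your proposal follows exactly the route the paper intends: the paper itself defers to \cite{DuKeMe21a} for the power-nonlinearity case and states that the analytic case is obtained by the same finite-speed-of-propagation fixed point combined with Lemma~\ref{lem:weighted_Strichartz}, Lemma~\ref{lem:lipschitz} and the radial Sobolev embedding, which is precisely what you lay out. The one small slip is that $u_0$ should be extended inside $\{|x|<R\}$ by the constant value $u_0(R)$ (not by zero) so that the extension lies in $\dot H^1$; otherwise the local fixed point and the blow-up alternative are the standard arguments you describe.
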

In the case where $u=|u|^{\frac{4}{N-2}}u$, the proof is an adaptation, using finite speed of propagation, of the well-posedness theory (from \cite{KeMe08}, \cite{BuCzLiPaZh13}). We refer to \cite{DuKeMe21a} for the details.

The proof can easily be adapted to nonlinearities of the form \eqref{eq:nonlinearityanalytic}, using finite speed of propagation, radial Sobolev embedding, and Lemma \ref{lem:weighted_Strichartz} and \ref{lem:lipschitz}.

\subsection{Decay of non-radiative solutions in odd dimension}
We next state a decay estimate for non-radiative solutions, which will be needed in Subsection \ref{sub:uniqueness2}, and follows from the main result of \cite{DuKeMe21a}.
\begin{proposition}
 \label{pr:aprioribound}
 Assume that $N\geq 3$ is odd. Let $R_0\geq 0$, and let $u$ be a nonradiative solution of \eqref{id:nonlinearwave} for $\{|x|>R_0+|t|\}$. Then there exists $R_1\geq R$ and $C_1$, depending on $u$, such that 
 \begin{equation}
 \label{aprioribound}
  \forall R\geq R_1+|t|,\quad \|u(t)\|_{\Hc_R}\leq \frac{C_1}{R^{1/2}},\quad |u(t,R)|\leq \frac{C_1}{R^{\frac{N-1}{2}}}.
 \end{equation} 
\end{proposition}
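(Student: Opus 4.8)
The statement is, as indicated in the text, almost immediate from the main result of \cite{DuKeMe21a} combined with the radial Sobolev embedding; my plan is to quote that result for the energy decay and then upgrade it to the pointwise decay. First I would invoke \cite{DuKeMe21a}: since $u$ is non-radiative for the full exterior cone $\{|x|>R_0+|t|\}$, that theorem gives that $u$ decays, inside the cone, at least as fast as the non-radiative solutions of the free wave equation, and therefore provides $R_1\ge R_0$ and a constant $C$ depending on $u$ such that
\[
\forall\,t\in\mathbb R,\quad\forall\,R\ge R_1+|t|,\qquad \|u(t)\|_{\Hc_R}\le C R^{-1/2}.
\]
This is consistent with \eqref{bd:energyaFiscbfR}--\eqref{add_decay_aF}: if $u(t,x)=a_F[\cbf](t,x)+\tilde u(t,x)$ near spatial infinity with $\tilde u$ decaying in $\Hc_R$ no slower than $R^{-1/2}$, then, using $\kappa_N=\tfrac12$ for odd $N$, one has $\|\vec a_F[\cbf](t)\|_{\Hc_R}\lesssim (\rho/R)^{1/2}|\cbf|_\rho\lesssim_u R^{-1/2}$ for any fixed inner radius $\rho>R_0$ and $R>\rho+|t|$, and summing the two contributions (and enlarging $R_1$ to absorb $\rho$) yields the displayed bound, which is the first inequality in \eqref{aprioribound}. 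The uniformity in $t$ over the range $|t|\le R-R_1$ is already present in the quoted statement, which treats the whole cone at once, and the polynomial factors $p_m(t/r)$ in $a_F$ stay bounded there because $|t/r|<1$.

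Next I would pass to the pointwise bound. By the previous step, $u(t)$ restricted to $\{|x|>R_1+|t|\}$ lies in $\dot H^1_{R+|t|}$ and tends to $0$ as $|x|\to\infty$, so the one-dimensional radial Sobolev embedding applies: writing $u(t,R)=-\int_R^\infty\partial_r u(t,s)\,ds$ and applying Cauchy--Schwarz in the measure $s^{N-1}\,ds$ (the integral $\int_R^\infty s^{-(N-1)}\,ds$ converges since $N\ge 3$) gives $|u(t,R)|\lesssim R^{-\frac{N-2}{2}}\|u(t)\|_{\dot H^1_R}$ for every $R\ge R_1+|t|$. Inserting the energy bound from the first step yields $|u(t,R)|\lesssim_u R^{-\frac{N-2}{2}}\cdot R^{-\frac12}=R^{-\frac{N-1}{2}}$, i.e.\ the second inequality in \eqref{aprioribound}, after relabelling the constant.

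I do not expect a genuine obstacle here: the only substantive input is the decay theorem of \cite{DuKeMe21a} used in the first step, and the remainder is the routine radial argument. The point worth keeping in mind is that the two exponents in \eqref{aprioribound} are exactly the decay rates of the slowest non-radiative free wave $\phi_{m_0}$ (with $m_0=\lfloor\frac{N-3}{2}\rfloor=\frac{N-3}{2}$, so that $N-2-m_0=\frac{N-1}{2}$ and $\|\phi_{m_0}(t)\|_{\Hc_R}\approx R^{-1/2}$), hence no faster rate can be asserted at this level of generality; for solutions with vanishing top coefficient $c_{m_0}$ (for instance solitons when $N\ge 5$) the bounds hold with room to spare, but only the crude version above is needed in Subsection \ref{sub:uniqueness2}.
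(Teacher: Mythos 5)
Your argument matches the paper's proof: both deduce the $R^{-1/2}$ energy decay from the main result of \cite{DuKeMe21a} (the paper cites its Theorem~2 explicitly, with the decomposition $u(t)=\ell\Xi+O(R^{-(k+1/2)})$ and $\|\Xi\|_{\Hc_R}\approx R^{-(k-1/2)}\leq R^{-1/2}$) and then obtain the pointwise bound by the radial Sobolev inequality $|u(t,R)|\lesssim R^{-(N-2)/2}\|u(t)\|_{\dot H^1_R}$. Your ``consistency check'' paragraph is harmless extra commentary, and the Cauchy--Schwarz computation for the Sobolev step is correct.
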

\begin{proof}
 By the radial Sobolev inequality, the second inequality in \eqref{aprioribound} follows from the first one. The first one is a consequence of Theorem 2 of \cite{DuKeMe21a}, according to which there exists large positive constants $R_1$ and $C_1$ (depending on $u$), $\ell\in \Rb$ and
 $$\Xi \in \bigcup_{0\leq l \leq \lfloor \frac{N-3}{4} \rfloor}\left(\frac{1}{|x|^{N-2-2l}},0\right)\cup \bigcup _{0\leq l \leq \lfloor \frac{N-5}{4}\rfloor} \left(0,\frac{1}{|x|^{N-2-2l}}\right)$$
 such that for $R\geq R_1+|t|$,
 $$ \left\| u(t)-\ell \Xi\right\|_{\Hc(R)}\leq C_1\max\left(\frac{1}{R^{(k-\frac{1}2)\frac{N+2}{N-2}}},\frac{1}{R^{k+\frac 12}}\right),$$
 where $1\leq k\leq \frac{N-1}{2}$ is the integer such that there exists $c>0$,
 $$ \left\|\Xi\right\|_{\Hc_R}=\frac{c}{R^{k-\frac{1}{2}}}.$$
 The inequality \eqref{aprioribound} follows immediately.
 \end{proof}

\section{Existence of non-radiative solutions}

We assume throughout this section that $N\geq 3$.

\subsection{The free wave equation}

We start by studying solutions to the free wave equation in the energy space
\begin{equation} \label{eq:freewave}
\left\{ 
\begin{array}{l l} \pa_t^2 u-\Delta u=0,\\
\vec u(0)=(u_0,u_1)\in \mathcal H.
\end{array}
\right.
\end{equation}
We recall that $u$ is non-radiative for $|x|>R+|t|$ if it satisfies \eqref{eq:nonradiativeintro}.

\begin{lemma} \label{lem:freenonrad}

For any $0\leq l\leq l_0$, there exists an even polynomial $p_{2l}$ of degree $2l$ with $p_{2l}(0)=1$ such that
$$
\phi_{2l}(t,x)=\frac{1}{|x|^{N-2-2l}} p_{2l}\left(\frac{t}{|x|} \right)
$$
satisfies $\pa_t^2 \phi_{2l} -\Delta \phi_{2l}=0$ for all $t\in \mathbb R$ and $x\neq 0$, and, for any $R\in \mathbb R$, is non-radiative for $|x|>R+|t|$.

For any $0\leq l\leq l_1$, there exists an odd polynomial $p_{2l+1}$ of degree $2l+1$ with $\pa_\sigma p_{2l+1}(0)=1$ such that
$$
\phi_{2l+1}(t,x)=\frac{1}{|x|^{N-2l-3}} p_{2l+1}\left(\frac{t}{|x|} \right)
$$
satisfies $\pa_t^2 \phi_{2l+1} -\Delta \phi_{2l+1}=0$ for all $t\in \mathbb R$ and $x\neq 0$, and, for any $R\in \mathbb R$, is non-radiative for $|x|>R+|t|$.

\end{lemma}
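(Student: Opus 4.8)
The plan is to look for solutions of the free wave equation of the self-similar form $\phi(t,x)=|x|^{-\alpha}p(\sigma)$ with $\sigma=t/|x|$, plug this ansatz into $\partial_t^2\phi-\Delta\phi=0$, and reduce to an ODE for $p$ in the variable $\sigma$. For radial functions, $\Delta\phi=\partial_r^2\phi+\frac{N-1}{r}\partial_r\phi$; with $\phi=r^{-\alpha}p(t/r)$ one computes $\partial_t\phi=r^{-\alpha-1}p'$, $\partial_t^2\phi=r^{-\alpha-2}p''$, and $\partial_r\phi=-\alpha r^{-\alpha-1}p-t r^{-\alpha-2}p'=r^{-\alpha-1}(-\alpha p-\sigma p')$, and similarly for $\partial_r^2\phi$. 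Collecting the overall factor $r^{-\alpha-2}$, the PDE becomes a second-order linear ODE of the form
\begin{equation}
\label{selfsim_ode}
(1-\sigma^2)p''(\sigma)+b(\alpha)\,\sigma\, p'(\sigma)+c(\alpha)\,p(\sigma)=0,
\end{equation}
where $b(\alpha)$ and $c(\alpha)$ are explicit affine/quadratic expressions in $\alpha$ and $N$; in particular $c(\alpha)=-\alpha(\alpha+2-N)$ vanishes exactly when $\alpha=N-2$ or $\alpha=0$, and the relevant exponents are $\alpha=N-2-2l$ (the even branch) and $\alpha=N-3-2l$ (the odd branch). First I would carry out this computation and record \eqref{selfsim_ode} with the exact coefficients.

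Next I would produce the polynomial solutions. The key algebraic observation is that when $\alpha=N-2-m$ (for an integer $m\ge 0$), the recursion obtained by inserting a power series $p(\sigma)=\sum_j a_j\sigma^j$ into \eqref{selfsim_ode} truncates: the coefficient multiplying $a_j$ in the relation expressing $a_{j+2}$ in terms of $a_j$ vanishes when $j=m$, so one may choose $a_{m+2}=a_{m+4}=\dots=0$ and obtain a genuine polynomial solution of degree $m$. Since \eqref{selfsim_ode} only couples coefficients of the same parity, for $m=2l$ even one gets an even polynomial $p_{2l}$, normalised by $p_{2l}(0)=1$ (i.e.\ $a_0=1$), and for $m=2l+1$ odd an odd polynomial $p_{2l+1}$, normalised by $p'_{2l+1}(0)=1$ (i.e.\ $a_1=1$). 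The constraints $0\le l\le l_0=\lfloor\frac{N-3}{4}\rfloor$ and $0\le l\le l_1=\lfloor\frac{N-5}{4}\rfloor$ are exactly what is needed to guarantee the resulting $\phi_m$ lies in the energy space: near infinity $|\nabla_{t,x}\phi_m|\sim r^{-\alpha-1}$, and $\int_{r>1}r^{-2\alpha-2}r^{N-1}\,dr<\infty$ iff $2\alpha+2>N$, i.e.\ $\alpha>\frac{N}{2}-1$, i.e.\ $m<\frac{N}{2}-1$; combined with the parity constraint $m=2l$ or $m=2l+1$ this gives precisely the stated ranges. That $\partial_t^2\phi_m-\Delta\phi_m=0$ for $x\neq0$ is immediate from the construction.

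Finally I would verify the non-radiative property. The point is that $\phi_m(t,x)=r^{-\alpha}p(t/r)$ is scale- and essentially boost-covariant along the light cone: for fixed $R$ and $t\to\pm\infty$, on the region $|x|>R+|t|$ one has $|\sigma|=|t|/|x|<|t|/(R+|t|)<1$, so $p(\sigma)$ stays bounded, and
\begin{equation}
\label{nonrad_tail}
\int_{|x|>R+|t|}|\nabla_{t,x}\phi_m(t,x)|^2\,dx\lesssim \int_{R+|t|}^\infty r^{-2\alpha-2}r^{N-1}\,dr\lesssim (R+|t|)^{N-2-2\alpha}\xrightarrow[|t|\to\infty]{}0,
\end{equation}
since $N-2-2\alpha<0$ in the admissible range (the exponent is $\le -1$ in fact). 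This holds for every $R\in\mathbb R$ (for $R\le0$ the region is $|x|>|t|-|R|$, still a truncated cone, and the same estimate applies once $|t|$ is large). Then \eqref{eq:nonradiativeintro} follows by summing over $\pm$. I would remark that the only subtlety, and the step deserving the most care, is the truncation-of-the-recursion argument: one must check that the coefficient of $a_j$ in the two-term recursion indeed has a simple zero at $j=m$ precisely for the exponents $\alpha=N-2-m$, and that with this choice the tail of the series can be set to zero consistently — everything else (the PDE verification, the energy-space membership, the non-radiative decay) is a routine consequence of the homogeneous structure of the ansatz.
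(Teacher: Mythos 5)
Your proposal is correct and follows essentially the same route as the paper: reduce to the second-order ODE \eqref{selfsim_ode} in $\sigma=t/|x|$ (the paper's operator $\mathcal L_{N,m}$ with $m=N-2-\alpha$), observe that the two-term power-series recursion truncates at degree $m$ because the numerator $\mu_{N,m,m}=m^2+(N-2-2m)m-(N-2-m)m$ vanishes, choose the parity-preserving start $a_0=1$ or $a_1=1$, and verify non-radiativeness by the homogeneous decay bound; the only elaboration worth adding is the remark that $\mu_{N,m,j}\neq 0$ for $0\le j<m$ (its second root $-(N-2-m)$ is negative in the admissible range), which guarantees the polynomial has degree exactly $m$.
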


\begin{proof}

For $m\in  \mathbb N$ we have the identity:
\begin{equation}\label{id:freenonrad1}
(\pa_t^2-\Delta)\left( \frac{1}{|x|^{N-2-m}}f(\frac{t}{|x|}) \right)=\frac{1}{|x|^{N-m}} (-\mathcal L_{N,m} f)(\frac{t}{|x|}),
\end{equation}
where
\begin{equation}\label{id:freenonrad2}
\mathcal L_{N,m}=-(1-\sigma^2)\pa_\sigma^2+(N-1-2m)\sigma\pa_\sigma -(N-2-m)m.
\end{equation}
We have
$$
\mathcal L_{N,m}(\sigma^{\alpha})=\mu_{N,m,\alpha}\sigma^{\alpha}-\alpha (\alpha-1)\sigma^{\alpha-2}, \quad \mu_{N,m,\alpha}=\alpha^2+(N-2-2m)\alpha-(N-2-m)m.
$$
We remark that for $0\leq m\leq \min (2l_0,2l_1+1)$ and $\alpha\in \mathbb N$, $\mu_{N,m,\alpha}=0$ if and only if $\alpha=m$. Therefore, writing $m=2l+\delta$ where $\delta =0$ or $\delta=1$, and looking for a zero of $\mathcal L_{N,m}$ of the form $p_{m}=\sum_{j=0}^{l} c_{N,m,j}\sigma^{\delta+2j}$ we get:
$$
\mathcal L_{N,m}\left(\sum_{j=0}^{l} c_{N,m,j}\sigma^{\delta+2j} \right) = \sum_{j=0}^{l-1} (c_{N,m,j}\mu_{N,m,2j+\delta}-(\delta+2j+2)(\delta+2j+1)c_{N,m,j+1})\sigma^{2j+\delta}.
$$
We therefore choose $c_{N,m,0}=1$ and iteratively $c_{N,m,j+1}=\frac{c_{N,m,j}\mu_{N,m,2j+\delta}}{(\delta+2j+2)(\delta+2j+1)}$ for $j=1,...,l$, and get the desired polynomial $p_{m}$. The fact that the solutions $(\phi_{m})_{0\leq m \leq m_0}$ are non-radiative for $|x|>R+|t|$ for any $R\in \mathbb R$ is a direct computation.

\end{proof}

We write $\cbf_0=(c_{0,l})_{0\leq l \leq l_0}\in \mathbb R^{l_0+1}$, $\cbf_1=(c_{1,l})_{0\leq l \leq l_1}\in \mathbb R^{l_1+1}$ and $\cbf=(c_{0,0},c_{1,0},c_{0,1},c_{1,1},...)\in \mathbb R^{m_0+1}$ (i.e. $c_{m}=c_{0,\frac m2}$ for $m$ even, and $c_{m}=c_{1,\frac{m-1}{2}}$ for $m$ odd), and define:
\begin{align}
\label{id:aF} a_F[\cbf](t,x)&=\sum_{0\leq l\leq l_0} \frac{c_{0,l}}{|x|^{N-2l-2}}p_{0,l}(\frac{t}{|x|})+\sum_{0\leq l \leq l_1} \frac{c_{1,l}}{|x|^{N-2l-3}}p_{1,l}(\frac{t}{|x|})
&=\sum_{0\leq m\leq m_0}c_m \phi_m(t,x).
\end{align}
By a direct estimate:
\begin{equation}
\label{bd:energyaFiscbfR2}
 |\cbf|_R^2 \approx \int_{|x|> R} |\nabla_{t,x}a_F[\cbf](0,x)|^2dx \approx \sup_{t\in \mathbb R} \ \int_{|x|> R+|t|} |\nabla_{t,x}a_F[\cbf](t,x)|^2dx,
\end{equation}
with implicit constants independent of $R$. It follows from Lemma \ref{lem:freenonrad} that $\pa_t^2 a_F -\Delta a_F=0$ for all $t\in \mathbb R$ and $x\neq 0$, and, for any $R\in \mathbb R$, that $a_F$ is non-radiative for $|x|>R+|t|$. It also follows for the initial datum that 
\begin{equation}\label{id:aF0}
\vec a_F(0)=\left(\sum_{0\leq l \leq l_0} \frac{c_{0,l}}{|x|^{N-2l-2}} \ , \  \sum_{0\leq l \leq l_1} \frac{c_{1,l}}{|x|^{N-2l-2}} \right)
\end{equation}
Non-radiative free waves are classified and correspond to the examples of Lemma \ref{lem:freenonrad}, as was recalled in the Classification \ref{pr:nonradiativefree}. On their orthogonal, there holds the channels of energy estimate, with restriction to symmetric data in even dimensions:

\begin{proposition}[Channels of energy estimate \cite{KeLaLiSc15,CoKeSc14,DuKeMaMe22,LiShenWei21P}] \label{pr:channels}

There exists $C>0$ such that the following holds true for any $R>0$. Assume $u$ solves \eqref{eq:freewave}, and in addition if $N$ is even that $u$ is even in time if $N\equiv 4 \mod 4$ or odd in time if $N\equiv 6\mod4$. Then:
\begin{equation}\label{bd:channels}
\| \Pi_{\mathcal H,R} \vec u(0)\|_{\mathcal H_R}^2\leq C \sum_{\pm}\lim_{t\to \pm \infty} \int_{|x|>R+|t|} |\nabla_{t,x}u(t,x)|^2dx.
\end{equation}

\end{proposition}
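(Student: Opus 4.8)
\textbf{Proof plan for Proposition \ref{pr:channels}.}

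The statement is a channels-of-energy estimate for the \emph{linear} wave equation, restricted to data orthogonal (in $\Hc_R$) to the explicit non-radiative profiles $\phi_m$ of Lemma \ref{lem:freenonrad}, and in even dimensions further restricted to the relevant time-parity class. My plan is to reduce it to the known one-sided channels estimates for the free wave equation in the literature cited in the statement, rather than reprove those from scratch. First I would recall that, by Classification \ref{pr:nonradiativefree}, the subspace of initial data giving rise to non-radiative solutions for $|x|>R+|t|$ is exactly the span of $\vec\phi_m(0)$, $0\le m\le m_0$, which is precisely the range of $\Pi_{\Hc,R}$ by \eqref{id:defPidotH1}--\eqref{id:defPimathcalH} and the remark preceding the Proposition. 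Thus $\Pi_{\Hc,R}^\perp\vec u(0)=0$ forces $u\equiv a_F[\cbf]$ on the exterior cone, which has zero exterior energy flux; the content of the Proposition is the quantitative converse: the exterior energy flux controls the $\Hc_R$-norm of the \emph{projection onto} the non-radiative part, i.e.\ the estimate is for $\Pi_{\Hc,R}\vec u(0)$, so there is no contradiction — one is extracting the ``non-radiative component'' of the data from the radiated energy.

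The core mechanism is the exterior energy lower bound for the free wave flow. In odd dimensions $N\ge 3$, by \cite{KeLaLiSc15} (building on \cite{DuKeMe11a,DuKeMe13}), for radial data $(u_0,u_1)\in\Hc_R$ one has
\[
\sum_{\pm}\lim_{t\to\pm\infty}\int_{|x|>R+|t|}|\nabla_{t,x}u|^2\,dx \;\ge\; c_0\,\big\|\Pi_{\Hc,R}(u_0,u_1)\big\|_{\Hc_R}^2,
\]
which is exactly \eqref{bd:channels} with $C=c_0^{-1}$. In even dimensions the free exterior energy estimate degenerates: by \cite{CoKeSc14} (for $N=4$) and \cite{DuKeMaMe22,LiShenWei21P} for general even $N$, the lower bound holds only for one of the two components of the data — the $\dot H^1$-component when $N\equiv 4\bmod 4$ and the $L^2$-component when $N\equiv 6\bmod 4$ — which corresponds precisely to the time-parity restriction in the hypothesis (even-in-time solutions are determined by $u_0$, odd-in-time ones by $u_1$). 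So under that parity restriction the relevant half of the data is controlled, and that half is exactly what $\Pi_{\Hc,R}$ sees on the admissible parity class, again yielding \eqref{bd:channels}.

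The steps, in order: (i) fix $R>0$ and decompose $\vec u(0)=\Pi_{\Hc,R}\vec u(0)+\Pi_{\Hc,R}^\perp\vec u(0)$; (ii) observe the solution with data $\Pi_{\Hc,R}^\perp\vec u(0)$ coincides with $0$ on $\{|x|>R+|t|\}$ (it is $a_F$ with $\cbf=0$) up to the non-radiative contribution, so by finite speed of propagation and linearity the exterior flux of $u$ equals that of the solution with data $\Pi_{\Hc,R}\vec u(0)$ modulo the already-known fact that the $\phi_m$ carry zero flux — more precisely, split off $a_F[\cbf]$ where $\cbf$ is read from $\Pi_{\Hc,R}\vec u(0)$ via \eqref{id:aF0}; (iii) apply the cited free exterior energy lower bound to $u-a_F[\cbf]$, whose data has vanishing $\Pi_{\Hc,R}$-part, to see its flux is $\ge c_0\|\Pi_{\Hc,R}(u-a_F)\|^2$... wait — here one must be slightly careful and instead apply the estimate directly to $u$, using that the cited results bound the flux of \emph{any} radial (parity-restricted) solution below by a constant times the norm of its non-radiative data component; (iv) conclude. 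The main obstacle, and the only real point of substance, is matching the precise normalization of $\Pi_{\Hc,R}$ in \eqref{id:defPidotH1}--\eqref{id:defPiL2} (spans of $|x|^{-(N-2-2l)}$) with the subspaces appearing in \cite{KeLaLiSc15,DuKeMaMe22,LiShenWei21P}, and in even dimensions verifying that the time-parity hypothesis is exactly what is needed to pick out the non-degenerate component; both are bookkeeping against the cited theorems rather than new analysis, so this Proposition is essentially a citation with the projections rewritten in the paper's notation.
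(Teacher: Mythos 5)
Your plan---reduce the statement to a citation of the exterior energy lower bounds in \cite{KeLaLiSc15,CoKeSc14,DuKeMaMe22,LiShenWei21P}, after translating their orthogonality conditions and parity restrictions into the notation \eqref{id:defPidotH1}--\eqref{id:defPimathcalH}---matches what the paper itself does: Proposition \ref{pr:channels} is stated as a known result and no proof is written. In that sense the approach is the correct one. However, your rationalization in the first paragraph and your steps (ii)--(iii) contain a systematic swap of $\Pi_{\mathcal H,R}$ and $\Pi_{\mathcal H,R}^\perp$, and the patch you propose at the end of step (iii) is false.

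Recall that $\Pi_{\mathcal H,R}$ projects \emph{onto} the span of $(\vec\phi_m(0))_{0\le m\le m_0}$, i.e.\ onto the initial data of non-radiative free waves (see the paragraph just before the Proposition and \eqref{id:defPidotH1}--\eqref{id:defPimathcalH}). A bound of the form
\[
\left\| \Pi_{\mathcal H,R} \vec u(0)\right\|_{\mathcal H_R}^{2}\leq C \sum_{\pm}\lim_{t\to \pm \infty} \int_{|x|>R+|t|} |\nabla_{t,x}u(t,x)|^{2}\,dx
\]
cannot hold: take $u=\phi_m$ for any $0\le m\le m_0$; the right-hand side vanishes while $\Pi_{\mathcal H,R}\vec u(0)=\vec\phi_m(0)\neq 0$. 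The statement proved in the references, and the one the paper actually uses, has $\Pi^\perp_{\mathcal H,R}=1-\Pi_{\mathcal H,R}$ on the left: the exterior flux controls the component of the data orthogonal to the non-radiative subspace. This is confirmed by Corollary \ref{cor:inhomogeneousradiation} two pages further, which has $\Pi^\perp_{\mathcal H,R}$, $\Pi^\perp_{\dot H^1,R}$, $\Pi^\perp_{L^2,R}$ on the left of \eqref{bd:channels2} and whose proof says it ``follows from Proposition \ref{pr:channels} and standard energy estimates.'' So the $\Pi_{\mathcal H,R}$ in display \eqref{bd:channels} is a typo for $\Pi^\perp_{\mathcal H,R}$, and your attempt to rationalize it as stated (``extracting the non-radiative component of the data from the radiated energy'') is not a possible reading: radiated energy cannot detect the non-radiative component. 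This then propagates: in step (ii) you assert that the solution with data $\Pi_{\mathcal H,R}^\perp\vec u(0)$ vanishes on the exterior cone, but that is the radiative part and is certainly nonzero there; it is the solution with data $\Pi_{\mathcal H,R}\vec u(0)$ that equals $a_F[\cbf]$ for $|x|>R+|t|$. In step (iii), after correctly noticing that bounding flux $\gtrsim\|\Pi_{\mathcal H,R}(u-a_F)\|^2$ is trivially $0\ge 0$, you repair it by positing a version of the cited theorems that bounds the flux of any solution below by its \emph{non-radiative} data component; no such theorem exists, for the reason above. The actual cited theorems bound the flux below by $\|\Pi^\perp_{\mathcal H,R}\vec u(0)\|^2$ in odd $N$, and by the norm of the nondegenerate half of the data in even $N$, which under the parity hypothesis is exactly the $\Pi^\perp_{\dot H^1,R}u_0$ or $\Pi^\perp_{L^2,R}u_1$ piece. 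Once you flip the projection, your bookkeeping about normalizations and the identification of time-parity with the nondegenerate component is correct, and the ``proof by citation'' closes.
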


\begin{remark} \label{re:failurechannels}

If $N$ is even but $u$ is not even in time if $N\equiv 4 \mod 4$ or odd in time if $N\equiv 6\mod4$ then \eqref{bd:channels} is no longer true, see \cite{CoKeSc14}.

\end{remark}

Solutions to the free wave equation \eqref{eq:freewave} evolve asymptotically as $t\to \pm \infty$ towards self-similar solutions, associated to so-called radiation profiles. Non-radiative solutions and channels of energy estimates are intimately related to them.

\begin{proposition}[Radiation profiles for free waves \cite{Friedlander62,Friedlander80,DuKeMe19,LiShenWei21P}] \label{construction:pr:radiation}

For any radial solution $u$ to \eqref{eq:freewave}, there exist $G_{\pm}\in L^2(\mathbb R)$ such that 
\begin{equation}\label{construction:id:radiation}
\sum_{\pm} \lim_{t\rightarrow \pm \infty} \int_{0}^\infty \left|r^{\frac{N-1}{2}} (\pa_t u,\pa_r u)(t,r)-(G_\pm,\mp G_\pm)(r-|t|)\right|^2dr =0.
\end{equation}
The maps $\vec u(0)\mapsto \sqrt{2|\mathbb S^{N-1}|} G_\pm$ are isometries between $\mathcal H$ and $L^2(\mathbb R)$, with
\begin{equation}\label{construction:id:radiationrelation}
G_+(\rho)=(-1)^{\frac{N-1}{2}}G_-(-\rho) \mbox{ for }N\mbox{ odd} \quad \mbox{and}\quad G_+(\rho)=(-1)^{\frac{N}{2}} (\mathfrak H G_-)(-\rho)\mbox{ for }N\mbox{ even}
\end{equation}
for $\mathfrak H$ the Hilbert transform. In addition, $\vec u(0)\in \dot H^2\times \dot H^1$ if and only if $G_+ \in \dot H^1$ (equivalently, if and only if $G_- \in \dot H^1$) in which case $(- \pa_\rho G_+,\pa_\rho G_-)$ are the radiation profiles of $\pa_t u$.

\end{proposition}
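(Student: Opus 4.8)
\emph{Proof proposal.} The plan is to establish everything first for radial data in a dense subclass of $\Hc$ — say $\vec u(0)\in C_c^\infty$ — using the explicit representation of radial free waves, then to propagate to all of $\Hc$ by the resulting isometry, and finally to read off the relation between $G_+$ and $G_-$ from that same representation. For nice data in odd dimension $N$, the strong Huygens principle writes $u$ as an explicit differential operator of order $(N-3)/2$ applied to a one–dimensional d'Alembert wave built from suitably weighted, reflected versions of $u_0$ and $u_1$; differentiating this formula shows that $r^{\frac{N-1}{2}}\pa_t u(t,r)$ and $r^{\frac{N-1}{2}}\pa_r u(t,r)$, viewed as functions of $\rho=r-|t|$, converge uniformly on compact $\rho$-sets, and hence — the relevant $\rho$-supports staying bounded — also in $L^2(dr)$, to $G_+(\rho)$ and $-G_+(\rho)$ as $t\to+\infty$, and to $G_-(\rho)$ and $G_-(\rho)$ as $t\to-\infty$. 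This is \eqref{construction:id:radiation} for such data. For $N$ even I would run the same argument after the method of descent, realising the given radial solution in $\mathbb R^{N}$ as the descent of a radial solution in $\mathbb R^{N+1}$; the extra integration over the last variable is an Abel-type transform that still produces convergent profiles, the solution now carrying a tail which is of lower order at the level of $\pa_{t,x}u$.

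Next, conservation of the free energy gives that $\int_{\mathbb R^N}|\pa_{t,x}u(t)|^2dx$ is constant; writing it in polar coordinates as $|\mathbb S^{N-1}|\int_0^\infty\big(|r^{\frac{N-1}{2}}\pa_t u|^2+|r^{\frac{N-1}{2}}\pa_r u|^2\big)dr$ and passing to the limit $t\to\pm\infty$ using the first step yields $\|\vec u(0)\|_{\Hc}^2=2|\mathbb S^{N-1}|\,\|G_\pm\|_{L^2(\mathbb R)}^2$. Therefore $\vec u(0)\mapsto\sqrt{2|\mathbb S^{N-1}|}\,G_\pm$ extends by continuity to an isometry $\Hc\to L^2(\mathbb R)$, and \eqref{construction:id:radiation} persists for all $\vec u(0)\in\Hc$ by a routine approximation argument, the error being controlled uniformly in $t$ via energy conservation. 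Surjectivity follows by running the construction backwards: for $G\in C_c^\infty(\mathbb R)$ the explicit formula produces a nice solution whose forward (resp.\ backward) profile is $G$, so the range contains a dense set, hence is everything, and both maps are unitary.

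Since $\sqrt{2|\mathbb S^{N-1}|}\,G_+$ and $\sqrt{2|\mathbb S^{N-1}|}\,G_-$ are both unitaries $\Hc\to L^2(\mathbb R)$, we have $G_+=SG_-$ for a unitary $S$ of $L^2(\mathbb R)$, and it remains to identify $S$ — which I would do by evaluating both maps on the explicit solutions of the first step (equivalently, on the radial Fourier/Hankel side, where the outgoing part $e^{it|\xi|}$ feeds $G_+$ and the incoming part $e^{-it|\xi|}$ feeds $G_-$). In odd dimension the representation formula is \emph{local} in $\rho$ and built from $(N-1)/2$ pairings of the reflection $\rho\mapsto-\rho$ with a derivative, so $S$ is forced to be that reflection up to the sign $(-1)^{\frac{N-1}{2}}$, which is \eqref{construction:id:radiationrelation} for $N$ odd. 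In even dimension the descent integral is an Abel transform, which conjugates differentiation to the Hilbert transform, so that at the level of radiation fields $S$ becomes $G_-\mapsto(-1)^{\frac N2}(\mathfrak H G_-)(-\,\cdot\,)$, the constant being fixed by one explicit computation. Pinning down $S$ — in particular producing the Hilbert transform together with the precise power of $-1$ in the even case — is the one genuinely delicate point; the rest is bookkeeping.

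Finally, for the $\dot H^2\times\dot H^1$ statement: if $\vec u(0)\in\dot H^2\times\dot H^1$ then $\vec v(0):=(u_1,\Delta u_0)\in\Hc$ and $v:=\pa_t u$ is the corresponding free wave, so by the first two steps $v$ has radiation profiles; using $r^{\frac{N-1}{2}}\pa_t v=\pa_t\big(r^{\frac{N-1}{2}}\pa_t u\big)$ together with the already-established $L^2$ convergence, which legitimises interchanging $\lim_{t\to\pm\infty}$ with $\pa_t$, these profiles are $(-\pa_\rho G_+,\pa_\rho G_-)$, and in particular $G_\pm\in\dot H^1$. Conversely, if $G_+\in\dot H^1$ then $\pa_\rho G_+\in L^2(\mathbb R)$ is the forward profile of some $\Hc$-solution which, profiles being injective, must be $\pa_t u$; hence $(u_1,\Delta u_0)\in\Hc$, i.e.\ $\vec u(0)\in\dot H^2\times\dot H^1$. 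That $G_+\in\dot H^1\iff G_-\in\dot H^1$ is then immediate from \eqref{construction:id:radiationrelation}, as reflection and the Hilbert transform are bounded on $\dot H^1(\mathbb R)$.
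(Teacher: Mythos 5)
The paper proves only the final ``gain of regularity'' statement ($\vec u(0)\in\dot H^2\times\dot H^1\iff G_\pm\in\dot H^1$ and the identification of the profiles of $\pa_t u$); everything else is cited from \cite{Friedlander62,Friedlander80,DuKeMe19,LiShenWei21P}. Your blueprint re-derives the full existence/isometry/conjugation package, which is reasonable as self-contained exposition but is not where the content of the paper's proof lies. On the part the paper actually proves, your argument has a genuine gap in each direction.

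Forward direction: once you know $v=\pa_t u$ has radiation profiles $H_\pm$ (because $\vec v(0)=(u_1,\Delta u_0)\in\Hc$), you need to identify $H_+=-\pa_\rho G_+$. Writing ``interchanging $\lim_{t\to\pm\infty}$ with $\pa_t$'' is not a justification: from
$r^{\frac{N-1}{2}}\pa_t u(t,\cdot)-G_+(\cdot-t)\to 0$ in $L^2$ one cannot in general differentiate in $t$. This can be patched, e.g., by testing against $\varphi\in C_c^\infty$: set $F(t)=\int_0^\infty r^{\frac{N-1}{2}}\pa_t u(t,r)\varphi(r-t)\,dr$; then $F(t)$ and $F'(t)$ both converge as $t\to\infty$, forcing $\lim F'=0$ and hence $\int H_+\varphi=\int G_+\varphi'$, i.e. $H_+=-\pa_\rho G_+$ distributionally and then in $L^2$. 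But this step has to be supplied.

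Converse direction: your argument is circular. You produce an $\Hc$-solution $v$ with forward profile $-\pa_\rho G_+$ and assert ``profiles being injective, $v$ must be $\pa_t u$.'' Injectivity of $\vec w(0)\mapsto G_+(w)$ applies only to solutions with data in $\Hc$; invoking it for $\pa_t u$ presupposes $(u_1,\Delta u_0)\in\Hc$, which is precisely the conclusion. The paper's device — the key missing idea — is the finite-difference solution $u_h(t,x)=h^{-1}(u(t+h,x)-u(t,x))$, whose forward profile is $h^{-1}(G_+(\cdot-h)-G_+)$. When $G_+\in H^1$ this converges in $L^2$, so by the isometry $\vec u_h(0)$ converges in $\Hc$; comparing with the distributional limit $\vec u_h(0)\to(u_1,\Delta u_0)$ identifies the limit and closes the argument simultaneously in both directions. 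Without something like this bridge between $L^2$-convergence of profiles and $\Hc$-convergence of the finite differences, the ``converse'' does not go through.
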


Only the last gain of regularity property is not proved in \cite{Friedlander62,Friedlander80,DuKeMe19,LiShenWei21P}, so we prove it in Appendix \ref{sec:reggrain}. We shall need a localised gain of regularity:

\begin{lemma} \label{lem:gainregradiation2}

Assume that $u$ solves \eqref{eq:freewave} and its radiation profiles $G_\pm$ satisfy $\pa_\rho G_+(u)\in L^2(\rho \geq R)$ and $\pa_\rho G_-(u)\in L^2(\rho \geq R)$ for some $R>0$. Then $(u_1,\Delta u_0)\in \mathcal H_{8R}$ with
\begin{equation}\label{bd:gainregradiation2}
\| (u_1,\Delta u_0)\|_{\mathcal H_{8R}}\lesssim \| \pa_\rho G_+\|_{L^2(\rho\geq R)}+\| \pa_\rho G_-\|_{L^2(\rho\geq R)}+\frac 1R \| G_+\|_{L^2(\mathbb R)}.
\end{equation}

\end{lemma}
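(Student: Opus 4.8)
\textbf{Proof plan for Lemma \ref{lem:gainregradiation2}.}

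The strategy is to transfer the localized regularity hypothesis on the radiation profiles into a localized regularity statement on the data by running Proposition \ref{construction:pr:radiation} ``in reverse'', using finite speed of propagation to localize. First I would pick a smooth radial cutoff $\chi$ with $\chi\equiv 1$ on $\{\rho\geq 2R\}$ and $\chi\equiv 0$ on $\{\rho\leq R\}$, and set $\tilde G_\pm=\chi G_\pm$, so that $\tilde G_\pm\in \dot H^1(\mathbb R)$ with $\|\partial_\rho \tilde G_\pm\|_{L^2}\lesssim \|\partial_\rho G_\pm\|_{L^2(\rho\geq R)}+\frac1R\|G_\pm\|_{L^2(\mathbb R)}$ (the $1/R$ factor coming from $\|\chi'\|_\infty\lesssim 1/R$). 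Note $\tilde G_\pm$ still satisfy the compatibility relation \eqref{construction:id:radiationrelation} only up to a smoothing error supported in $\{R\leq \rho\leq 2R\}$; to avoid dealing with that, I would instead define $\tilde G_-=\chi G_-$ and let $\tilde G_+$ be \emph{determined} from $\tilde G_-$ by \eqref{construction:id:radiationrelation}, which keeps both in $\dot H^1$ (the Hilbert transform is bounded on $\dot H^1$ in the even case) and lets Proposition \ref{construction:pr:radiation} produce a genuine finite-energy solution $\tilde u$ to \eqref{eq:freewave} with these radiation profiles and, by the last sentence of that proposition, with $\vec{\tilde u}(0)\in \dot H^2\times \dot H^1$ satisfying $\|(\tilde u_1,\Delta \tilde u_0)\|_{\mathcal H}\approx \|\partial_\rho \tilde G_+\|_{L^2}+\|\partial_\rho\tilde G_-\|_{L^2}\lesssim$ the right-hand side of \eqref{bd:gainregradiation2}.

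The second step is to compare $u$ and $\tilde u$ in the exterior region. Since $G_-=\tilde G_-$ on $\{\rho\geq 2R\}$ and the $\pm$ profiles encode the same information through the isometry, the difference $w=u-\tilde u$ has radiation profiles vanishing on $\{\rho\geq 2R\}$ (in the $-$ direction, hence by \eqref{construction:id:radiationrelation} up to the finite-speed-of-propagation region in the $+$ direction as well). By the characterization of the support of data via radiation profiles — concretely, by finite speed of propagation applied backward from $t\to-\infty$, a solution whose incoming profile $G_-$ is supported in $\{\rho\leq 2R\}$ has data supported in $\{|x|\leq 2R\}$ (up to the analogous control from $G_+$, which by the isometry is also then localized) — one deduces that $\vec w(0)$ is supported in a ball of radius comparable to $R$. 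Concretely I would argue: $u(t)=\tilde u(t)$ for $|x|>2R+|t|$ by using that for $t$ very negative both solutions agree with the same self-similar profile $(\tilde G_-(r-|t|),\ldots)$ outside this cone up to an error tending to $0$, combined with uniqueness/finite speed of propagation for \eqref{eq:freewave}; letting the time at which we compare tend to $-\infty$ gives agreement on $\{|x|>2R\}$ at $t=0$, possibly after replacing $2R$ by $8R$ to absorb the cutoff transition region and constants. Hence on $\{|x|>8R\}$ we have $(u_1,\Delta u_0)=(\tilde u_1,\Delta\tilde u_0)$, and the bound \eqref{bd:gainregradiation2} follows from the global estimate on $\tilde u$.

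The main obstacle I anticipate is the even-dimensional case, where the relation \eqref{construction:id:radiationrelation} between $G_+$ and $G_-$ involves the \emph{nonlocal} Hilbert transform, so that localizing $G_-$ does \emph{not} localize $G_+$ — which a priori breaks the finite-speed-of-propagation argument and the claim that $\vec w(0)$ is compactly supported. The resolution is that we only need a one-sided statement: $w$ has \emph{incoming} radiation $G_-(w)$ supported in $\{\rho\leq 2R\}$ (exactly, after the $\tilde G_+$-from-$\tilde G_-$ construction), and by finite speed of propagation for the backward Cauchy problem this already forces $\vec w(0)$ to be supported in $\{|x|\leq 2R\}$ regardless of what $G_+(w)$ looks like — the strong Huygens principle is not needed here, only the support property of the inverse of the (forward or backward) wave propagator acting on self-similar incoming data. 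One must be a little careful that ``$G_-(w)$ supported in $\{\rho\leq 2R\}$'' is used through the limit $t\to-\infty$ in \eqref{construction:id:radiation} together with an approximation argument (the convergence there is in $L^2(dr)$, not pointwise), but this is standard. The factor $8$ rather than $2$ in $\mathcal H_{8R}$ is exactly the slack absorbing the cutoff region $\{R\leq \rho\leq 2R\}$ and the constants in these finite-speed comparisons.
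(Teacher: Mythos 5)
Your plan reduces to a single support claim: that a radial free wave $w$ whose incoming radiation profile $G_-(w)$ vanishes for $\rho\geq 2R$ must have $\vec{w}(0)$ supported in $\{|x|\lesssim R\}$. This is false, and it is a genuine gap, not one to be absorbed by adjusting radii. The counterexamples are precisely the non-radiative free waves of Lemma \ref{lem:freenonrad}: for $N=3$, the data $(u_0,u_1)=(\min(|x|^{-1},1),0)$ generates a solution that is stationary in $\{|x|>1+|t|\}$, with $G_+\equiv G_-\equiv 0$, yet $u_0$ is not supported in any ball. What \emph{is} true — and is the hard technical core of the paper's proof — is the much weaker Lemma \ref{gainofreg:lem:compactradiation}: compact support of $G_+$ (not $G_-$) in $\{|\rho|\leq R\}$ implies $(u_1,\Delta u_0)\in\mathcal H_{2R}$, i.e.\ an extra derivative of the data outside a larger ball, not vanishing of the data there. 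That lemma is established via the explicit radial representation kernel of \cite{LiShenWei21P} and a delicate sequence of integrations by parts, and has no analogue in your plan. Finite speed of propagation also runs the wrong way for your purposes: agreement of $u$ and $\tilde u$ at time $-T$ on $\{r>2R+T\}$ propagates at $t=0$ only to $\{r>2R+2T\}$, a region that empties out as $T\to\infty$; you would need agreement at time $-T$ on $\{r>2R-T\}$, which no one-sided radiation-field condition can give.

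There is a second, intertwined issue: cutting off $G_-$ alone does not localize both profiles of the comparison solution. For $N$ odd, imposing $\tilde G_+(\rho)=(-1)^{(N-1)/2}\tilde G_-(-\rho)$ on $\tilde G_-=\chi G_-$ (with $\chi=1$ on $\rho\geq 2R$, $\chi=0$ on $\rho\leq R$) forces $\tilde G_+(\rho)=\chi(-\rho)G_+(\rho)$, which vanishes for $\rho\geq -R$. Hence $G_+(w)=G_+$ on all of $\{\rho\geq -R\}$, while the hypothesis only controls $\partial_\rho G_+$ on $\{\rho\geq R\}$: nothing bounds $G_+(w)$ in $H^1(\mathbb R)$. (Cutting off $G_+$ instead leaves $G_-(w)$ uncontrolled, by symmetry.) The paper sidesteps this by first \emph{upgrading} the two one-sided hypotheses to a genuine two-sided estimate $\|\partial_\rho G_+\|_{L^2(|\rho|>2R)}\lesssim \|\partial_\rho G_+\|_{L^2(\rho>R)}+\|\partial_\rho G_-\|_{L^2(\rho>R)}+R^{-1}\|G_+\|_{L^2}$ via the Hilbert-transform Lemma \ref{gainofreg:lem:hilbert} and Corollary \ref{gainofreg:cor:gainouter}; only then does it split $G_+$ itself into a globally-$H^1$ tail $(1-\chi)G_+$ handled by the last assertion of Proposition \ref{construction:pr:radiation}, and a compactly supported $\chi G_+$ handled by Lemma \ref{gainofreg:lem:compactradiation}. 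Both of these ingredients are missing from your proposal, and both are load-bearing.
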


The proof is done in Appendix \ref{sec:reggrain}. 

We next fix $R>0$ and consider a radial solution of \eqref{eq:freewave} for $|x|>R+|t|$, with initial data $(u_0,u_1)\in \Hc_R$. By  Proposition \ref{construction:pr:radiation} and finite speed of propagation, there exists $G_{\pm}\in L^2([R,\infty))$ such that 
\begin{equation}
 \label{T11}
 \sum_{\pm} \lim_{t\rightarrow \pm \infty} \int_{R+|t|}^\infty \left|r^{\frac{N-1}{2}} (\pa_t u,\pa_r u)(t,r)-(G_\pm,\mp G_\pm)(r-|t|)\right|^2dr =0.
\end{equation} 
If furthermore $R'>R$ and $(u_0,u_1)(r)=0$ for a.a. $r>R'$, then by finite speed of propagation, $G_{\pm}(\rho)=0$ for a.a. $\rho>R'$. Denote by \begin{equation}
\label{T12}
\Hc_{R,R'}=\{(u_0,u_1)\in \Hc_R,\quad (u_0,u_1)(r)=0 \text{ for a.a. }r>R'\},                                                                                                    \end{equation} 
so that by the considerations above the map $\Gc_{R,R'}:(u_0,u_1)\mapsto (G_{+},G_-)$ is a bounded map from $\Hc_{R,R'}$ to $\left(L^2([R,R'])\right)^2$, where $L^2([R,R'])$ is identified with the subspace of $L^2([R,\infty))$ of functions that vanish for almost all $r>R'$, and is endowed with the norm defined by 
$$\|\varphi\|_{L^2([R,R'])}^2=\int_R^{R'} |\varphi(\rho)|^2d\rho.$$
\begin{proposition}\label{pr:T10}
There exists $\delta_0$ with the following property.
Let $R,R'$ be two positive numbers with $R<R'$ and $\frac{R'-R}{R}\leq \delta_0$.
The map $\Gc_{R,R'}$ defined above is an isomorphism from $\Hc_{R,R'}$ to $\left(L^2([R,R'])\right)^2$: for any $G_{\pm}\in \left(L^2([R,R'])\right)^2$, there exists a unique $(u_0,u_1)\in \Hc_{R,R'}$ so that the corresponding solution $u$ of \eqref{eq:freewave} satisfies \eqref{T11}. Furthermore,
$$ \|(u_0,u_1)\|_{\Hc_R}\lesssim \sum_{\pm}\|G_{\pm}\|_{L^2([R,R'])}.$$
\end{proposition}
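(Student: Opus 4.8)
The plan is to establish the isomorphism by showing that $\Gc_{R,R'}$ is an injective, bounded linear map between Hilbert spaces whose inverse is bounded, using a perturbative argument that exploits the smallness of $\delta_0 = (R'-R)/R$. The natural comparison point is the exterior region where the free wave equation is essentially a one-dimensional wave equation after the change of variables $w(t,r) = r^{\frac{N-1}{2}}u(t,r)$: outside the light cone, $w$ solves $\pa_t^2 w - \pa_r^2 w + \frac{(N-1)(N-3)}{4r^2}w = 0$, and for $r$ in a thin annulus $[R,R']$ the potential term $\frac{(N-1)(N-3)}{4r^2}$ is of size $O(R^{-2})$ uniformly, which when integrated against the finite-speed-of-propagation time interval contributes only a factor $O((R'-R)/R) = O(\delta_0)$. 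First I would record the boundedness of $\Gc_{R,R'}$, which is already contained in Proposition \ref{construction:pr:radiation} together with finite speed of propagation (the discussion preceding the statement). Then I would treat the ``$1$D model'': for the genuinely one-dimensional equation $\pa_t^2 w - \pa_r^2 w = 0$ with data supported in $[R,R']$, d'Alembert's formula gives the radiation profiles explicitly, and one checks by hand that the map data $\mapsto (G_+,G_-)$ is an isometry (up to an explicit constant) onto $(L^2([R,R']))^2$ — this is the $N=1$ avatar of the isometry in Proposition \ref{construction:pr:radiation}. So invertibility holds exactly for the model.

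Next I would set up the actual equation as a perturbation of the model. Writing $u$ for the solution of \eqref{eq:freewave} with data in $\Hc_{R,R'}$ and $v$ for the solution of the model equation with the same data, Duhamel's formula expresses $u - v$ in terms of the source $-\frac{(N-1)(N-3)}{4r^2}w$, integrated only over the region $\{|x|>R+|t|\}$, which because the data is supported in $[R,R']$ forces $|t| \le R' - R$ by finite speed of propagation. Hence the Duhamel integral runs over a time interval of length $\lesssim R'-R$, and with the potential bounded by $\lesssim R^{-2}$ one obtains
\begin{equation*}
\|(u-v)(0)\|_{\Hc_R} + \sum_{\pm}\|G_\pm(u) - G_\pm(v)\|_{L^2([R,R'])} \lesssim \frac{R'-R}{R}\,\|(u_0,u_1)\|_{\Hc_R}.
\end{equation*}
Equivalently, at the level of the radiation-field maps, $\Gc_{R,R'} = \Gc^{0}_{R,R'} + E_{R,R'}$ where $\Gc^0_{R,R'}$ is the (invertible, isometric up to constant) model map and $\|E_{R,R'}\| \lesssim \delta_0$ as an operator. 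Choosing $\delta_0$ small enough that $\|E_{R,R'}\|\,\|(\Gc^0_{R,R'})^{-1}\| < 1$, a Neumann series shows $\Gc_{R,R'}$ is invertible with $\|\Gc_{R,R'}^{-1}\| \le 2\|(\Gc^0_{R,R'})^{-1}\| \lesssim 1$, which is precisely the claimed bound $\|(u_0,u_1)\|_{\Hc_R} \lesssim \sum_{\pm}\|G_\pm\|_{L^2([R,R'])}$. Uniqueness of $(u_0,u_1)$ for given $(G_+,G_-)$ is the injectivity contained in invertibility.

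The main obstacle I anticipate is making the perturbation estimate clean: one must be careful that the energy of $u - v$ and, more importantly, the \emph{difference of radiation fields} are both controlled by the Duhamel source, since the radiation field is defined by a limit $t \to \pm\infty$ and one needs the energy estimate to propagate all the way to $t = \pm\infty$ — but here finite speed of propagation saves the day, because for $|t| > R'-R$ both $u$ and $v$ vanish identically on $\{|x| > R+|t|\}$, so the radiation fields are entirely determined by the evolution on the compact time interval $|t|\le R'-R$ and no estimate ``at infinity'' is actually needed. A secondary technical point is to verify that the data $v(0) = (u_0,u_1)$, viewed through the substitution $w = r^{\frac{N-1}{2}}u$, lands in the correct one-dimensional energy space on $[R,R']$ with comparable norms; this is a routine computation using that $r \approx R$ on the support, so the weight $r^{\frac{N-1}{2}}$ and its derivatives are harmless, together with a Hardy-type inequality on the annulus to absorb the $\pa_r(r^{\frac{N-1}{2}})$ term. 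Once the norm equivalences and the $\delta_0$-smallness of the perturbation are in hand, the rest is a standard Neumann-series argument.
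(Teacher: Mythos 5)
Your overall strategy matches the paper's: both reduce to a one-dimensional model via $w=r^{\frac{N-1}{2}}u$ (the paper writes this as the explicit $u_{\mathrm{app}}$ built from d'Alembert's formula), treat the full equation as a perturbation by the potential $\frac{(N-1)(N-3)}{4r^2}$, estimate the Duhamel term, and conclude by a Neumann-type argument for $\delta_0=(R'-R)/R$ small. The paper decomposes into even/odd parts first and works with the scalar maps $\Psi_0,\Psi_1$ rather than the vector-valued $\Gc_{R,R'}$, and then phrases invertibility via the estimate \eqref{Tbound} (small perturbation of an explicit multiplication operator), but this is a cosmetic difference from your Neumann-series framing.

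There is, however, a concrete error in your justification of the $\delta_0$-smallness. You assert that, by finite speed of propagation, ``for $|t|>R'-R$ both $u$ and $v$ vanish identically on $\{|x|>R+|t|\}$,'' so that the Duhamel integral runs over a time interval of length $\lesssim R'-R$. This is false: for data supported in $\{R\leq|x|\leq R'\}$ the outgoing wave occupies the shell $\{R+|t|\leq|x|\leq R'+|t|\}$ for \emph{all} times $t$, and this shell always intersects $\{|x|>R+|t|\}$. Hence the source $\indic(|x|>R+|t|)\,\frac{1}{r^2}u_{\mathrm{app}}$ is nonzero for every $t$, and the time integration is over all of $\Rb$. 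The mechanism that produces the factor $\frac{R'-R}{R}$ is different, and it is what the paper's Step 2 computes: the time integral converges because the potential decays like $(R+|t|)^{-2}$ on the outgoing shell, contributing a factor $\frac{1}{R}$, and the remaining factor $R'-R$ comes from the thinness of the spatial support, via the Cauchy--Schwarz bound $|u_0(r)|\lesssim(R'-R)^{1/2}\|\partial_ru_0\|_{L^2(dr)}$ and hence $\|u_0\|_{L^2_R}\lesssim(R'-R)\,R^{\frac{N-1}{2}}\|\partial_ru_0\|_{L^2(dr)}$. Your final inequality $\|(u-v)(0)\|_{\Hc_R}+\sum_\pm\|G_\pm(u)-G_\pm(v)\|_{L^2}\lesssim\frac{R'-R}{R}\|(u_0,u_1)\|_{\Hc_R}$ is correct, but the reasoning you give for it is not; replace the time-localization step by the $L^1_tL^2_x$ estimate of the source as above and the argument closes.
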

\begin{proof}
\noindent\textbf{Step 1. Reduction.}
Let $u$ be a solution of \eqref{eq:freewave} for $\{|x|>R+|t|\}$. We notice that
 \begin{equation}
\label{T13}
 \left\{
 \begin{aligned}
  u \text{ is odd in time }&\iff u_0=0 \text{ for }|x|>R\Longrightarrow G_+=G_-\text{  on }[R,\infty)\\
 u \text{ is even in time }&\iff u_1=0 \text{ for }|x|>R\Longrightarrow G_+=-G_-\text{  on }[R,\infty)
 \end{aligned}\right.
 \end{equation}
(here and in all the proof, the equality of $L^2$ functions have to be understood as holding almost everywhere).

We will decompose the solution $u$ into its even and odd parts, and treat separately the two cases. Namely, we will consider the maps $\Psi_0$ and $\Psi_1$ from $L^2([R,R'])$ to $L^2([R,R'])$, where 
\begin{itemize}
 \item $\Psi_0:\varphi\mapsto G_+$
where $G_+$ satisfies \eqref{T11} where $u$ is the solution of \eqref{eq:freewave} with initial data $(u_0,0)$ where $u_0\in \dot{H}^1_R$ is given by
\begin{equation}
\label{give_u0}
u_0(r)=\begin{cases}
           0 & \text{ if }r>R'\\
           \int_{R'}^r \varphi(\rho)d\rho&\text{ if }R<r<R'.
          \end{cases} 
\end{equation} 
\item $\Psi_1:u_1\mapsto G_+$, where $G_+$ satisfies \eqref{T11} for the solution $u$ of \eqref{eq:freewave} with initial data $(0,u_1)$ (with the convention $u_1(r)=0$ for $r\geq R'$).
\end{itemize}
Note that (with $u_0$, $\varphi$ and $G_+$ as in the definition of $\Psi_0$):
$$ \|u_0\|^2_{\dot{H}_R}\lesssim R^{N-1} \|\varphi\|^2_{L^2([R,R'])},$$
and thus, by Proposition \ref{construction:pr:radiation}, $\|G_+\|_{L^2([R,R'])}\lesssim \|\varphi\|_{L^2([R,R'])}$, so that $\Psi_0$ is a bounded linear operator. Similarly $\Psi_1$ is a bounded linear operator.

We will prove that if $(R'-R)/R$ is small enough, then $\Psi_0$ and $\Psi_1$ are invertible and have bounded inverse. Assuming this, and noting that, by \eqref{T13},
$$ \Gc_{R,R'}(u_0,u_1)=\Big(\Psi_0(\partial_r u_0)+\Psi_1(u_1),-\Psi_0(\partial_ru_0)+\Psi_1(u_1)\Big),$$
we will would obtain the desired conclusion.

We will deduce the invertibility of $\Psi_0$ and $\Psi_1$ when $\frac{R'-R}{R}$ is small from the inequalities
\begin{gather}
 \label{Tbound}
 \forall \varphi\in L^2([R,R']),\quad \left\| -2\rho^{\frac{1-N}{2}}\Psi_0(\varphi)(\rho)-\varphi(\rho)\right\|_{L^2_{\rho}([R,R'])}\lesssim \frac{R'-R}{R} \|\varphi\|_{L^2([R,R'])}\\
\label{Tbound'}
 \forall \varphi\in L^2([R,R']),\quad \left\| 2\rho^{\frac{1-N}{2}}\Psi_1(\varphi)(\rho)-\varphi(\rho)\right\|_{L^2_{\rho}([R,R'])}\lesssim \frac{R'-R}{R} \|\varphi\|_{L^2([R,R'])}.
\end{gather} 
 We will focus on proving \eqref{Tbound} for $j=0$, and omit the proof of \eqref{Tbound'} which is very similar.

\medskip

\noindent\textbf{Step 2. Approximation and end of the proof.}

To prove \eqref{Tbound} for $j=0$, we consider $\varphi\in L^2([R,R'])$, and $u$ the solution of \eqref{eq:freewave} with initial data $(u_0,0)$, where $u_0$ is given by \eqref{give_u0}. We will approximate $u$ by 
$$u_{app}(t,r)=\frac{1}{2r^{\frac{N-1}{2}}}\left( (r-t)^{\frac{N-1}{2}}u_0(r-t)+(r+t)^{\frac{N-1}{2}}u_0(r+t) \right).$$
By explicit computation,
$$ (\partial_t^2-\Delta)u_{app}=-\frac{(N-1)(N-3)}{4r^2}u_{app}.$$
Letting $h=u-u_{app}$, we deduce
$$ (\partial_t^2-\Delta)h=\frac{(N-1)(N-3)}{4r^2}u_{app},\quad |x|>R+|t|,\quad \vec{h}(0)=(0,0).$$
By energy estimates and finite speed of propagation, for all $t\in \Rb$,
$$\|\vec{h}(t)\|_{\Hc_{R+|t|}}\lesssim \left\|\frac{1}{r^2} u_{app} \right\|_{(L^1L^2)(\{|x|>R+|t|\})}.$$
By explicit computations, we have $\left\|\frac{1}{r^2} u_{app} \right\|_{(L^1L^2)(\{|x|>R+|t|\})}\lesssim \frac 1R\|u_0\|_{L^2_R}$ and thus
\begin{equation}
 \label{T40}
\|\vec{h}(t)\|_{\Hc_{R+|t|}}\lesssim \frac{1}{R}\|u_0\|_{L^2_R}.
 \end{equation} 
Since for $r\in [R,R']$, $u_0(r)=\int_{r}^{R'}\varphi$, we obtain by Cauchy-Schwarz 
$$|u_0(r)|\lesssim (R'-R)^{1/2} \|\varphi\|_{L^2([R,R'])}$$
and thus 
$$\|u_0\|_{L^2_R}\lesssim (R'-R)R^{\frac{N-1}{2}}\|\varphi\|_{L^2([R,R'])}.$$
Combining with \eqref{T40}, we deduce
\begin{equation}
 \label{T41}
\|\vec{h}(t)\|_{\Hc_{R+|t|}}\lesssim \frac{R'-R}{R}{R'}^{\frac{N-1}{2}}\|\varphi\|_{L^2([R,R'])}\lesssim \frac{R'-R}{R}R^{\frac{N-1}{2}}\|\varphi\|_{L^2([R,R'])}.
 \end{equation} 
Since
$$\lim_{t\to+\infty} \int_{R+|t|}^{\infty} \left(r^{\frac{N-1}{2}}\partial_ru_{app}(t,r)-\frac{1}{2}(r-t)^{\frac{N-1}{2}}\varphi(r-|t|)\right)^2dr=0,$$
we deduce from \eqref{T41}
$$\left\|\Psi(\varphi)(\rho)-\frac{\rho^{\frac{N-1}{2}}}{2}\varphi(\rho)\right\|_{L^2_{\rho}([R,R'])}\lesssim \frac{R'-R}{R}R^{\frac{N-1}{2}}\|\varphi\|_{L^2([R,R'])},$$
concluding the proof of \eqref{Tbound}. The proof of \eqref{Tbound'} is similar, approaching the solution with initial data $(0,u_1)$ by  
$$ \tilde{u}_{app}(t,r)=\frac{1}{{2r^{\frac{N-1}{2}}}}(u_1(r+t)-u_1(r-t)).$$
\end{proof}

\subsection{The inhomogenous wave equation}

We now turn to solutions to the inhomogeneous wave equation:
\begin{equation} \label{eq:inhomwave}
\left\{ 
\begin{array}{l l} \pa_t^2 u-\Delta u=f,\\
\vec u(0)=(u_0,u_1)\in \mathcal H.
\end{array}
\right.
\end{equation}

We start by proving, as a corollary to Propositions \ref{construction:pr:radiation} and \ref{pr:channels}, that solutions to the inhomogeneous wave equation also have radiation profiles and enjoy channel of energy estimates:

\begin{corollary} \label{cor:inhomogeneousradiation}

Let $u$ solve \eqref{eq:inhomwave} with $f\in L^1L^2$. There exists $G_\pm$ with $\| G_\pm \|_{L^2(\mathbb R)}\lesssim \| \vec u(0)\|_{\mathcal H}+\| f\|_{L^1L^2}$ such that the solution $u$ to \eqref{eq:inhomwave} has radiation profiles $G_\pm$ as $t\to \pm \infty $ in the sense that \eqref{construction:id:radiation} holds true. Moreover:
\begin{equation}\label{bd:channels2}
\left| \begin{array}{l l l} \| \Pi^\perp_{\mathcal H,R} \vec u(0)\|_{\mathcal H_R} & \mbox{if }N\mbox{ is odd},\\
\| \Pi^\perp_{\dot H^1,R} u_0 \|_{\dot H^1_R} & \mbox{if }N\equiv 4\mod 4,\\
\| \Pi^\perp_{L^2,R} u_1\|_{L^2_R}  &  \mbox{if }N\equiv 6\mod 4,
 \end{array} \right. \ \ \lesssim \| \indic(r>R+|t|)f\|_{L^1L^2}+ \sum_{\pm} \| G_\pm \|_{L^2(\rho\geq R)}
\end{equation}

\end{corollary}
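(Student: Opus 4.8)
The strategy is to reduce everything to the free case (Propositions \ref{construction:pr:radiation} and \ref{pr:channels}) by means of the Duhamel representation. Write $u = u_L + v$, where $u_L$ is the free solution with data $\vec u(0) \in \Hc$ and $v$ is the Duhamel term, $v(t) = \int_0^t \frac{\sin((t-s)\sqrt{-\Delta})}{\sqrt{-\Delta}} f(s)\,ds$. Since $f \in L^1L^2$, the standard energy estimate gives $\sup_t \|\vec v(t)\|_{\Hc} \lesssim \|f\|_{L^1L^2}$, and moreover $v$ scatters in both time directions to free solutions $v_\pm^L$ with $\|\vec v_\pm^L(0)\|_{\Hc} \lesssim \|f\|_{L^1L^2}$. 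The existence of radiation profiles $G_\pm$ for $u$ then follows: apply Proposition \ref{construction:pr:radiation} to $u_L$ and to $v_\pm^L$ (the latter captures the asymptotic behaviour of $v$ as $t \to \pm\infty$), set $G_\pm$ to be the sum of the corresponding profiles, and use the isometry statement in Proposition \ref{construction:pr:radiation} to get $\|G_\pm\|_{L^2(\Rb)} \lesssim \|\vec u(0)\|_{\Hc} + \|f\|_{L^1L^2}$. One must check that the scattering of $v$ to $v_\pm^L$ is compatible with the $r^{(N-1)/2}$-weighted $L^2_r$ convergence in \eqref{construction:id:radiation}; this is routine since convergence in $\Hc$ implies convergence of the associated radiation profiles in $L^2(\Rb)$, again by the isometry.

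For the channel-of-energy estimate \eqref{bd:channels2}, the point is to localize to $|x| > R+|t|$ and exploit finite speed of propagation. Fix $R>0$. Decompose $f = f_{\mathrm{in}} + f_{\mathrm{out}}$ where $f_{\mathrm{out}} = \indic(r > R+|t|)f$ and $f_{\mathrm{in}}$ is supported in $\{r \le R+|t|\}$; correspondingly write the Duhamel term $v = v_{\mathrm{in}} + v_{\mathrm{out}}$. By finite speed of propagation, $v_{\mathrm{in}}(t,x) = 0$ for $|x| > R+|t|$, so it does not contribute to any quantity restricted to the exterior cone, neither to the energy in $\{|x|>R+|t|\}$ nor to the radiation profiles on $\{\rho \ge R\}$. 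It remains to handle $u_L + v_{\mathrm{out}}$. Apply the channel-of-energy estimate (Proposition \ref{pr:channels}, in odd dimensions directly, in even dimensions to the appropriate even/odd-in-time component) to the free evolution $w$ of the data $\Pi_{\mathcal H, R}^\perp \vec u(0)$ — here one uses that $\Pi^\perp_{\mathcal H,R}$ data is the orthogonal complement of the free non-radiative directions, so the hypothesis of Proposition \ref{pr:channels} applies; actually more cleanly, one applies the estimate to $u_L$ itself and notes the left side of \eqref{bd:channels} only sees $\Pi_{\mathcal H,R}\vec u(0)$. The contribution of $v_{\mathrm{out}}$ is bounded directly: its energy in $\{|x|>R+|t|\}$ at $t=\pm\infty$ is controlled by $\|f_{\mathrm{out}}\|_{L^1L^2}^2$, and its radiation profiles on $\{\rho \ge R\}$ likewise. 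Assembling, and using the reverse-triangle inequality in $\Hc_R$ together with the isometry relating the full energy flux through the cone to $\sum_\pm\|G_\pm\|_{L^2(\rho\ge R)}^2$ modulo the $f_{\mathrm{out}}$ error, gives \eqref{bd:channels2}.

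The main obstacle is bookkeeping the even/odd-in-time restriction in even dimensions: Proposition \ref{pr:channels} only applies to the even-in-time part when $N\equiv 4 \bmod 4$ and the odd-in-time part when $N \equiv 6 \bmod 4$, which is precisely why the left-hand side of \eqref{bd:channels2} is only $\|\Pi^\perp_{\dot H^1,R} u_0\|_{\dot H^1_R}$ (resp. $\|\Pi^\perp_{L^2,R} u_1\|_{L^2_R}$) rather than the full $\Hc_R$ norm. Concretely, one should split $u$ into $u_+ + u_-$ (even and odd in time parts, noting this also splits $f$), observe that $u_+$ has data $(u_0,0)$ and $u_-$ has data $(0,u_1)$, apply Proposition \ref{pr:channels} to whichever of $u_\pm$ the parity hypothesis allows, and note that its radiation profile controls the relevant half of $\sum_\pm\|G_\pm\|_{L^2(\rho\ge R)}$ via \eqref{construction:id:radiationrelation}. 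The other technical point to be careful about is that the exterior energy flux $\sum_\pm \lim_{t\to\pm\infty}\int_{|x|>R+|t|}|\nabla_{t,x}u|^2$ equals $2|\mathbb S^{N-1}|\sum_\pm\|G_\pm\|_{L^2(\rho\ge R)}^2$ only for the free evolution, so for the inhomogeneous $u$ one picks up the $\|f_{\mathrm{out}}\|_{L^1L^2}$ correction from the triangle inequality applied to $u_L + v_{\mathrm{out}}$; this is harmless since that term already appears on the right of \eqref{bd:channels2}.
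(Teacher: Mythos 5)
Your proof is correct and takes the same route as the paper's terse proof: Duhamel decomposition $u=u_L+v$ plus finite speed of propagation reduces the channel estimate to Proposition \ref{pr:channels} for the free part and an $L^1L^2\to\Hc$ energy bound for $v_{\mathrm{out}}$, with the even/odd-in-time split handling the parity hypothesis in even dimensions exactly as the paper indicates. One minor inaccuracy: the localized exterior-flux identity $\lim_{t\to\pm\infty}\int_{|x|>R+|t|}|\nabla_{t,x}u|^2\,dx\approx\|G_\pm\|_{L^2(\rho\geq R)}^2$ holds for any solution satisfying \eqref{construction:id:radiation}, not only for free waves, so once you have established the first half of the corollary you can use it directly for the inhomogeneous $u$ as well; this only makes your triangle-inequality step on $u_L=u-v_{\mathrm{out}}$ in $\{|x|>R+|t|\}$ cleaner, not different.
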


\begin{proof}

The existence of radiation profiles follows from Proposition \ref{construction:pr:radiation} and standard energy estimates, see \cite{DuKeMaMe22}. The estimate \eqref{bd:channels2} follows from Proposition \ref{pr:channels} and standard energy estimates, applied to $u$ for $N$ odd, $u_+$ for $N\equiv 4\mod 4$, and $u_-$ for $N\equiv 6\mod 4$.

\end{proof}

The main result of this section is that an initial data can always be prescribed to ensure that the solution to \eqref{eq:inhomwave} is non-radiative for $|x|>|t|$, with a suitable estimate. In odd dimensions, or in even dimensions with certain symmetry conditions, this follows as direct consequence of \cite{DuKeMe19,LiShenWei21P}:

\begin{proposition} \label{pr:nonradiativeforcing}

There exists $C>0$ such that the following holds true. Assume $f\in L^1L^2$ and in addition for $N$ even that $f$ is even in time if $N\equiv 4 \mod 4$ (resp. odd in time if $N\equiv 6 \mod 4$). Then there exists a unique initial data $(u_0,u_1)\in \mathcal H$ with
\begin{equation}\label{id:nonradiativeforcingenergy}
\| (u_0,u_1)\|_{\mathcal H}\leq C \| \indic (|x|>|t|)f\|_{L^1L^2}
\end{equation}
such that the solution to \eqref{eq:inhomwave} is non-radiative for $|x|> |t|$. For even dimensions, $u$ is even (resp. odd) in time if $N\equiv 4 \mod 4$ (resp. $N\equiv 6 \mod 4$).

\end{proposition}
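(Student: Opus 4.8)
The plan is to set up a fixed-point / linear-algebra argument on the initial data, exploiting the decomposition of $\Hc$ into the finite-dimensional space of non-radiative data and its orthogonal complement. Recall from Classification \ref{pr:nonradiativefree} (and the discussion following \eqref{id:defPimathcalH}) that a solution of the free equation is non-radiative for $|x|>|t|$ iff its initial data lies in the range of $\Pi_{\mathcal H,0}$. For the inhomogeneous problem, write $u = u^{lin} + u^{f}$ where $u^{lin}$ solves \eqref{eq:freewave} with data $(u_0,u_1)$ and $u^f$ solves \eqref{eq:inhomwave} with zero data; by Corollary \ref{cor:inhomogeneousradiation} the forced part $u^f$ has radiation profiles $G_\pm^f$ controlled by $\|\indic(|x|>|t|)f\|_{L^1L^2}$, and $u$ is non-radiative for $|x|>|t|$ precisely when the radiation profiles of $u^{lin}$ cancel those of $u^f$ on $[0,\infty)$ in both time directions. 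Using the isometry $\vec v(0)\mapsto G_\pm$ of Proposition \ref{construction:pr:radiation} together with the relation \eqref{construction:id:radiationrelation} between $G_+$ and $G_-$, the cancellation condition on $(0,\infty)$ determines $(u_0,u_1)$ up to the addition of a free non-radiative solution, i.e. up to $\mbox{Ran}(\Pi_{\mathcal H,0})$; and one must then check that there is a unique choice in that finite-dimensional ambiguity making the whole thing consistent.

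Concretely, I would argue as follows. First, reduce to the two symmetry classes: for $N$ odd one works with $u$ itself; for $N\equiv 4\bmod 4$ with the even-in-time part; for $N\equiv 6\bmod4$ with the odd-in-time part — this is exactly the setting in which the channels-of-energy estimate of Proposition \ref{pr:channels} (and hence the bound \eqref{bd:channels2}) applies without restriction. Second, define a candidate initial data by prescribing the radiation field: on $(0,\infty)$ we need the profile $G_+$ of $u^{lin}$ to equal $-G_+^f$ (and then $G_-$ is forced on $(-\infty,0)$ by the relation \eqref{construction:id:radiationrelation}, which after the symmetry reduction is consistent on $(0,\infty)$ as well). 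Via the inverse of the isometry $\vec v(0)\mapsto\sqrt{2|\mathbb S^{N-1}|}\,G_+$ this produces a unique $(u_0,u_1)$ in the appropriate symmetric subspace; the norm bound \eqref{id:nonradiativeforcingenergy} then follows immediately from $\|G_+^f\|_{L^2(\mathbb R)}\lesssim\|\indic(|x|>|t|)f\|_{L^1L^2}$ together with the isometry and energy estimates. Third, verify that this $(u_0,u_1)$ indeed makes $u=u^{lin}+u^f$ non-radiative for $|x|>|t|$: by construction the radiation profiles vanish on $(0,\infty)$ in both time directions, and, since by finite speed of propagation the exterior energy $\int_{|x|>|t|}|\nabla_{t,x}u|^2$ is, as $t\to\pm\infty$, captured exactly by $\|G_\pm\|_{L^2(\rho\ge 0)}^2$ plus the contribution of data truncated to $|x|<|t|$ which is radiative in the standard sense — more carefully, one uses that the energy in $|x|>R+|t|$ is asymptotically $\|G_\pm\|_{L^2(\rho>R)}^2$ — letting $R\downarrow 0$ gives \eqref{eq:nonradiativeintro} with $R=0$. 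Finally, uniqueness: if $(u_0,u_1)$ and $(\tilde u_0,\tilde u_1)$ both work, their difference solves \eqref{eq:freewave}, is non-radiative for $|x|>|t|$ and lies in the symmetric subspace, hence by Classification \ref{pr:nonradiativefree} belongs to $\mbox{Ran}(\Pi_{\mathcal H,0})$; but a nonzero element of this space has nonvanishing radiation profile on $(0,\infty)$ (this is where one needs that the free non-radiative solutions $\phi_m$ are \emph{not} in the kernel of the radiation map, equivalently that $\Pi_{\mathcal H,0}$ and the truncated radiation map are transverse), forcing the difference to be $0$.

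The main obstacle, and the step requiring care, is the compatibility between the two time directions: prescribing the profile on $(0,\infty)$ fixes $(u_0,u_1)$, but one must then check that the resulting solution is \emph{also} non-radiative as $t\to-\infty$, i.e. that its profile $G_-$ vanishes on $(0,\infty)$. This is where the symmetry reduction is essential: after restricting to even-in-time (resp. odd-in-time) solutions in the even-dimensional cases, and using that the forcing $f$ has the matching parity so that $u^f$ does too, the relation \eqref{construction:id:radiationrelation} — $G_+(\rho)=\pm G_-(-\rho)$ in the odd case, $G_+=\pm\mathfrak H G_-(-\cdot)$ in the even case, combined with $G_+=\mp G_-$ on $[0,\infty)$ coming from \eqref{T13}-type identities for symmetric solutions — collapses the two conditions to one. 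In odd dimensions this is essentially the argument already in \cite{DuKeMe19,LiShenWei21P} and I would simply cite it; the content here is checking that the even-dimensional symmetric cases reduce to the same mechanism, which is why the hypothesis on the parity of $f$ is imposed. A secondary technical point is justifying that "radiation profile vanishes on $(0,\infty)$ for all $R>0$" is genuinely equivalent to \eqref{eq:nonradiativeintro}; this follows from Proposition \ref{construction:pr:radiation} applied on exterior cones via finite speed of propagation, exactly as in \eqref{T11}.
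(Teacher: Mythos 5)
Your overall strategy — split $u=u^{lin}+u^{f}$, cancel the radiation profiles of $u^{f}$ on $(0,\infty)$ in both time directions, and use the parity of $f$ to make the two cancellation conditions compatible — is exactly the paper's, and your argument is essentially complete for $N$ odd, where the relation \eqref{construction:id:radiationrelation} is local and the two conditions simply prescribe $G_+$ separately on $(0,\infty)$ and on $(-\infty,0)$. The gap is in the even-dimensional case. There, because \eqref{construction:id:radiationrelation} involves the Hilbert transform, the two requirements ``$G_+=H_+$ and $G_-=H_-$ on $(0,\infty)$'' collapse (after the parity reduction) not into a pointwise prescription of the profile on a half-line, but into the nonlocal half-line equation $G(\rho)-(\mathfrak H G)(-\rho)=2H_+(\rho)$ for $\rho>0$, for a single unknown $G\in L^2(\mathbb R)$. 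Prescribing $G$ on $(0,\infty)$ determines neither $G$ on $(-\infty,0)$ nor $\mathfrak H G$ anywhere, so your step ``via the inverse of the isometry this produces a unique $(u_0,u_1)$'' does not go through: what is needed is the solvability of this Wiener--Hopf-type equation with the bound $\|G\|_{L^2(\mathbb R)}\lesssim\|H_+\|_{L^2(\mathbb R^+)}$, which is Lemma 5.2 of \cite{LiShenWei21P} and is the actual content of the even-dimensional case (and the reason the statement is expected to fail for the opposite parity of $f$). Your assertion that the even case ``reduces to the same mechanism'' as the odd one is therefore where the proof breaks.

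A secondary issue is the uniqueness argument, which is internally inconsistent: you say the difference of two admissible data is a non-radiative free wave and then that ``a nonzero element of this space has nonvanishing radiation profile on $(0,\infty)$'' — but non-radiative free waves have, by definition, vanishing asymptotic exterior energy, hence vanishing radiation profiles on $(0,\infty)$. The correct mechanism is that, by Classification \ref{pr:nonradiativefree} applied for every $R>0$, the difference must coincide with a fixed $a_F[\cbf]$ on $\{|x|>R+|t|\}$ for all $R>0$, and $a_F[\cbf]$ has infinite energy near $x=0$ unless $\cbf=0$; finiteness of the $\mathcal H$-norm of the data on all of $\mathbb R^N$ then forces the difference to vanish.
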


\begin{remark}

We believe the result to be false if $N$ is even and $f$ is odd in time if $N\equiv 4 \mod 4$, or even in time if $N\equiv 6 \mod 4$.

\end{remark}

\begin{proof}

Let $v$ be the solution to
$$
\left\{ \begin{array}{l l} \pa_t^2 v-\Delta v=\indic(|x|>|t|)f,\\
\vec v(0)=\vec 0.
\end{array}
\right.
$$
Then, by Corollary \ref{cor:inhomogeneousradiation}, $v$ admits radiation profiles $H_\pm$ as $t\to \pm \infty$ in the sense that \eqref{construction:id:radiation} holds true replacing $(u,G_\pm)$ by $(v,H_\pm)$ in the formula, and $\| H_\pm \|_{L^2(\mathbb R)}\lesssim \| \indic(|x|>|t|)f\|_{L^1L^2}$.

In the case of $N$ odd, we define $G_+(\rho)=\indic(\rho >0) H_+(\rho)+\indic(\rho <0)(-1)^{\frac{N-1}{2}}H_-(-\rho)$ and using Proposition \ref{construction:pr:radiation} we let $w$ be the solution to \eqref{eq:freewave} whose radiation profile as $t\to \infty$ is $G_+$. It thus satisfies $\| \vec w(0)\|_{\mathcal H}\lesssim \| \indic (|x|>|t|)f\|_{L^1L^2}$. By \eqref{construction:id:radiationrelation} the radiation profile of $w$ as $t\to -\infty$ is $G_-(\rho)=(-1)^{\frac{N-1}{2}}G_+(-\rho)$, so that $G_-(\rho)=H_-(\rho)$ for all $\rho>0$. Let then $u=v-w$. It indeed solves \eqref{eq:inhomwave} and satisfies \eqref{id:nonradiativeforcingenergy}. Moreover, its radiation profile are $H_\pm-G_\pm$ as $t\to\pm \infty$. As $H_+(\rho)-G_+(\rho)=0$ and $H_-(\rho)-G_-(\rho)=0$ for all $\rho>0 $, we get by \eqref{construction:id:radiation} that $u$ is non-radiative for $|x|>|t|$.

In the case of $N$ even, then $v$ is even in time as $f$ is if $N\equiv 4\mod 4$ or $v$ is odd in time as $f$ is if $N\equiv 6\mod 4$. By \eqref{construction:id:radiation} this implies that for all $\rho \in \mathbb R$:
\begin{equation}\label{construction:id:symetryHeven}
H_+(\rho)=(-1)^{\frac N2 +1}H_-(\rho).
\end{equation}
By Lemma 5.2. in \cite{LiShenWei21P} for any $H_+\in L^2(\mathbb R^+)$ there exists $G$ such that
\begin{equation}\label{construction:id:defGnonradiative}
G(\rho)-(\mathfrak H G)(-\rho)=2H_+(\rho), \quad \forall \rho>0, \quad \mbox{with} \quad \| G\|_{L^2(\mathbb R)}\leq 2 \| H_+\|_{L^2(\mathbb R^+)}.
\end{equation}
Let then $G_+(\rho)= \frac 12(G(\rho)-(\mathfrak H G)(-\rho))$ and $w$ be the solution to \eqref{eq:freewave} whose radiation profile as $t\to \infty$ is $G_+$. Then by \eqref{construction:id:radiationrelation}, the radiation profile $G_-$ of $w$ as $t\to -\infty$ is
$$
G_-(\rho)= \frac{(-1)^{\frac N2}}{2}(\mathfrak H(G(\cdot)-(\mathfrak H G)(-\cdot)))(-\rho)=(-1)^{\frac N2 +1}G_+(\rho)
$$
so that $G_-(\rho)=H_-(\rho)$ for all $\rho>0$ by \eqref{construction:id:symetryHeven} and \eqref{construction:id:defGnonradiative}. Next, proceeding as for $N$ odd above, one finds that the function $u=v-w$ satisfies the desired conclusions of the Lemma.

\end{proof}

As a direct consequence of Proposition \ref{pr:nonradiativeforcing} and of Classification \ref{pr:nonradiativefree} we have:

\begin{corollary}\label{co:nonradiativeforcing}

There exists $C>0$, such that under the hypotheses of Proposition \ref{pr:nonradiativeforcing}, for any $R>0$, there exists a solution $u$ to \eqref{eq:inhomwave} such that \eqref{id:nonradiativeforcingenergy} holds and
\begin{equation} 
\label{id:nonradiativeforcingortho} (u_0,u_1)=\Pi^\perp_{\mathcal H,R}((u_0,u_1)),
\end{equation}
that is is non-radiative for $|x|> |t|+R$. For any other solution $u'$ to \eqref{eq:inhomwave} that is non-radiative for $|x|>R+|t|$ and which satisfies \eqref{id:nonradiativeforcingortho} there holds $u(t,x)=u'(t,x)$ for $|x|>R+|t|$.

\end{corollary}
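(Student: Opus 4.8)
The plan is to deduce Corollary~\ref{co:nonradiativeforcing} from Proposition~\ref{pr:nonradiativeforcing} by a translation-in-time/space argument together with Classification~\ref{pr:nonradiativefree}, since the latter tells us precisely how to replace a solution that is non-radiative for $|x|>|t|$ by one that is non-radiative for $|x|>R+|t|$ without changing its behaviour far out. First I would apply Proposition~\ref{pr:nonradiativeforcing} to the forcing term $\indic(|x|>R+|t|)f$ (this new forcing still satisfies the relevant time-parity hypothesis, since $\indic(|x|>R+|t|)$ is even in $t$, and it is still in $L^1L^2$ with norm controlled by $\|\indic(|x|>|t|)f\|_{L^1L^2}$ once we observe $\{|x|>R+|t|\}\subset\{|x|>|t|\}$). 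This produces a solution $\tilde u$ of $\pa_t^2\tilde u-\Delta\tilde u=\indic(|x|>R+|t|)f$ with $\vec{\tilde u}(0)\in\Hc$, $\|\vec{\tilde u}(0)\|_{\Hc}\lesssim\|\indic(|x|>R+|t|)f\|_{L^1L^2}\lesssim\|\indic(|x|>|t|)f\|_{L^1L^2}$, that is non-radiative for $|x|>|t|$; but since the forcing vanishes on $\{|x|<R+|t|\}$ and $\vec{\tilde u}(0)$ is non-radiative, finite speed of propagation plus Classification~\ref{pr:nonradiativefree} should in fact give that $\tilde u$ is non-radiative for $|x|>R+|t|$, and indeed equal to $a_F[\cbf]$ in that region for some $\cbf$.

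Second, I would correct the initial data so as to satisfy the orthogonality condition \eqref{id:nonradiativeforcingortho}. By Classification~\ref{pr:nonradiativefree}, the difference between $\tilde u$ and a free non-radiative wave $a_F[\cbf]$ does not affect non-radiativity for $|x|>R+|t|$. So I would set $u=\tilde u-a_F[\cbf]$ where $\cbf$ is chosen so that $\Pi_{\Hc,R}\vec u(0)=0$, i.e. $a_F[\cbf]$ matches the projection $\Pi_{\Hc,R}\vec{\tilde u}(0)$ — this is possible because, as noted right before the statement of Theorem~\ref{th:main}, the spaces in \eqref{id:defPidotH1}--\eqref{id:defPimathcalH} are exactly the spaces spanned by the initial data $\vec a_F[\cbf](0)$ as given in \eqref{id:aF0}, so the map $\cbf\mapsto\Pi_{\Hc,R}\vec a_F[\cbf](0)=\vec a_F[\cbf](0)$ is an isomorphism onto the range of $\Pi_{\Hc,R}$. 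The resulting $u$ solves \eqref{eq:inhomwave} with the original $f$ on the region $|x|>R+|t|$ (where $\indic(|x|>R+|t|)f=f$), is non-radiative there, and satisfies \eqref{id:nonradiativeforcingortho}; the energy bound \eqref{id:nonradiativeforcingenergy} follows since $\|\vec u(0)\|_{\Hc_R}\le\|\Pi^\perp_{\Hc,R}\vec{\tilde u}(0)\|_{\Hc_R}+0\le\|\vec{\tilde u}(0)\|_{\Hc}$, the $a_F[\cbf]$ correction being precisely the projection onto the complementary finite-dimensional subspace.

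Third, for uniqueness: if $u'$ is another solution of \eqref{eq:inhomwave} that is non-radiative for $|x|>R+|t|$ and satisfies \eqref{id:nonradiativeforcingortho}, then $w=u-u'$ solves $\pa_t^2 w-\Delta w=0$ on $\{|x|>R+|t|\}$ (the nonlinear/forcing terms cancel on this region), is non-radiative there, and has $\Pi_{\Hc,R}\vec w(0)=0$. By Classification~\ref{pr:nonradiativefree}, $w(t,x)=a_F[\cbf'](t,x)$ for $|x|>R+|t|$ for some $\cbf'$; but $\vec a_F[\cbf'](0)=\Pi_{\Hc,R}\vec w(0)=0$ forces $\cbf'=0$ by the isomorphism just mentioned, hence $w\equiv 0$ on $\{|x|>R+|t|\}$, i.e. $u(t,x)=u'(t,x)$ there.

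The step I expect to be the main obstacle — really the only non-bookkeeping point — is justifying that the solution $\tilde u$ produced by Proposition~\ref{pr:nonradiativeforcing}, which a priori is only non-radiative for $|x|>|t|$, is in fact non-radiative for the smaller region $|x|>R+|t|$. The clean way is: because the forcing $\indic(|x|>R+|t|)f$ vanishes on $\{|x|<R+|t|\}$, the restriction of $\tilde u$ to $\{|x|>R+|t|\}$ solves the \emph{free} wave equation there; combined with non-radiativity for $|x|>|t|$ (which implies non-radiativity for the smaller cone automatically since the exterior energy integrals are taken over $|x|>R+|t|\subset|x|>|t|$... one must be slightly careful here: non-radiativity for $|x|>|t|$ already gives vanishing of $\lim_{t\to\pm\infty}\int_{|x|>|t|}|\nabla_{t,x}\tilde u|^2$, and monotonicity of the integrand's domain then gives the same for $\int_{|x|>R+|t|}$), so $\tilde u$ is a free wave, non-radiative for $|x|>R+|t|$, and Classification~\ref{pr:nonradiativefree} applies — but actually we do not even need $\tilde u$ to be free globally, only on the cone, which is exactly the hypothesis form of Classification~\ref{pr:nonradiativefree}. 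Once this is in hand the rest is the finite-dimensional linear algebra with the $\phi_m$'s described above.
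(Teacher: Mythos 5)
Your overall strategy is the correct one and is what the paper intends by calling the corollary a ``direct consequence'': apply Proposition~\ref{pr:nonradiativeforcing} to get a solution non-radiative for $|x|>|t|$, observe that this trivially implies non-radiativity for $|x|>R+|t|$ (the integrals in \eqref{eq:nonradiativeintro} over $\{|x|>R+|t|\}$ are dominated by those over $\{|x|>|t|\}$), then subtract the free non-radiative wave $a_F[\cbf]$ realizing $\Pi_{\Hc,R}\vec{\tilde u}(0)$ via Classification~\ref{pr:nonradiativefree} to achieve \eqref{id:nonradiativeforcingortho}; uniqueness is the finite-dimensional linear algebra you describe. That argument is right.

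There is, however, one genuinely incorrect assertion that you repeat twice and that you should delete, since it is false as stated even though you do not actually use it: you claim that because the truncated forcing vanishes in $\{|x|<R+|t|\}$, the restriction of $\tilde u$ to $\{|x|>R+|t|\}$ solves the \emph{free} wave equation and so Classification~\ref{pr:nonradiativefree} applies to $\tilde u$, giving $\tilde u=a_F[\cbf]$ on the cone. This is backwards: on $\{|x|>R+|t|\}$ the cutoff equals $1$, so there $\Box\tilde u=f\ne 0$, and $\tilde u$ is \emph{not} a free wave on that region (it is free only on the complementary region $\{|x|<R+|t|\}$, which is irrelevant). Classification~\ref{pr:nonradiativefree} therefore does not apply to $\tilde u$, and $\tilde u$ is not $a_F[\cbf]$ on the cone. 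Fortunately the only thing you need at this point is non-radiativity of $\tilde u$ for $|x|>R+|t|$, and the correct reason — which you do also give, parenthetically — is simply the monotonicity $\{|x|>R+|t|\}\subset\{|x|>|t|\}$ of the exterior regions; Classification is needed only later, when you project the data and when you treat the difference $w=u-u'$ (which, unlike $\tilde u$, \emph{is} a free wave on the cone).

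Two smaller points. First, truncating $f$ to $\indic(|x|>R+|t|)f$ before invoking Proposition~\ref{pr:nonradiativeforcing} is unnecessary and creates the complication that your $u$ solves $\Box u=\indic(|x|>R+|t|)f$ rather than $\Box u=f$; applying the proposition directly to $f$ avoids this and delivers \eqref{id:nonradiativeforcingenergy} unchanged. Second, after setting $\vec u(0)=\Pi^\perp_{\Hc,R}\vec{\tilde u}(0)$ on $\{|x|>R\}$ you should say explicitly that you extend these data to $\{|x|<R\}$ in a bounded way in $\Hc$ and then solve \eqref{eq:inhomwave} globally; by finite speed of propagation the result agrees on $\{|x|>R+|t|\}$ with $\tilde u-a_F[\cbf]$ and inherits the non-radiativity, orthogonality and energy bound. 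With those adjustments the proof is complete.
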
 

Restricting outside wave cones, and considering a forcing term $f$ that has a small compact support at fixed $t$, we can take off the symmetry assumption on $f$ that appears in Proposition \ref{pr:nonradiativeforcing} when $N$ is even. We recall from \ref{T12} the definition of $\Hc_{R,R'}$
\begin{proposition}
 \label{pr:nonradiative_compactforcing}
 Let $0<R<R'$ with $\frac{R'-R}{R}\leq \delta_0$, $\delta_0$ given by Proposition \ref{pr:T10}. Let  $f \in L^1L^2(\{|x|>R+|t|\})$, radial. 
 Assume that $f(t,x)=0$ for a.a. $|x|>R'+|t|$. Then there exists a unique $(u_0,u_1)\in \Hc_{R,R'}$ such that the solution $u$ of \eqref{eq:inhomwave} on $\{|x|>R+|t|\}$ is nonradiative for $|x|>R+|t|$. Furthermore, 
 $$ \|(u_0,u_1)\|_{\Hc_R}\lesssim \|f\indic(|x|>R+|t|)\|_{L^1L^2}.$$
\end{proposition}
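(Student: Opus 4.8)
The idea is to reduce the inhomogeneous problem on the exterior cone $\{|x|>R+|t|\}$ to a fixed-point problem between initial data and radiation profiles, exactly as in the linear homogeneous analysis of Proposition~\ref{pr:T10}. First I would observe that, given $f\in L^1L^2(\{|x|>R+|t|\})$ vanishing for $|x|>R'+|t|$, one can decompose the sought solution as $u=v+w$, where $v$ solves $\pa_t^2 v-\Delta v=\indic(|x|>R+|t|)f$ with $\vec v(0)=0$, and $w$ solves the free wave equation \eqref{eq:freewave} with data in $\Hc_{R,R'}$ to be determined. By Corollary~\ref{cor:inhomogeneousradiation}, $v$ has radiation profiles $H_\pm\in L^2([R,\infty))$ with $\|H_\pm\|_{L^2}\lesssim \|f\indic(|x|>R+|t|)\|_{L^1L^2}$, and by finite speed of propagation $H_\pm(\rho)=0$ for a.a. $\rho>R'$, so $H_\pm\in L^2([R,R'])$. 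The solution $u$ is non-radiative for $|x|>R+|t|$ precisely when the radiation profiles $G_\pm$ of $w$ satisfy $G_\pm=H_\pm$ on $[R,\infty)$ (equivalently on $[R,R']$), by \eqref{T11}.

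Next I would use Proposition~\ref{pr:T10}: since $\frac{R'-R}{R}\leq\delta_0$, the map $\Gc_{R,R'}\colon \Hc_{R,R'}\to (L^2([R,R']))^2$ sending $(u_0,u_1)$ to its pair of radiation profiles $(G_+,G_-)$ is an isomorphism, with $\|(u_0,u_1)\|_{\Hc_R}\lesssim \sum_\pm \|G_\pm\|_{L^2([R,R'])}$. Hence there is a unique $(u_0,u_1)=\Gc_{R,R'}^{-1}(H_+,H_-)\in\Hc_{R,R'}$ such that the corresponding free solution $w$ has radiation profiles $(H_+,H_-)$ on $[R,R']$, and therefore on $[R,\infty)$ (since both sides vanish past $R'$). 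Setting $u=v+w$ gives a solution of \eqref{eq:inhomwave} on $\{|x|>R+|t|\}$ that is non-radiative there, with
$$\|(u_0,u_1)\|_{\Hc_R}\lesssim \sum_\pm \|H_\pm\|_{L^2([R,R'])}\lesssim \|f\indic(|x|>R+|t|)\|_{L^1L^2},$$
which is the claimed estimate.

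For uniqueness, suppose $(u_0',u_1')\in\Hc_{R,R'}$ gives another non-radiative solution $u'$ of \eqref{eq:inhomwave}. Then $w'=u'-v$ solves the free equation \eqref{eq:freewave} with data in $\Hc_{R,R'}$, and the non-radiativity of $u'$ forces its radiation profiles to agree with $H_\pm$ on $[R,\infty)$; since those profiles are supported in $[R,R']$, injectivity of $\Gc_{R,R'}$ yields $(u_0',u_1')=(u_0,u_1)$, hence $u'=u$ on $\{|x|>R+|t|\}$.

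\textbf{Main obstacle.} The only subtle point is the bookkeeping of supports: one must be sure that the radiation profiles of $v$ and of the free correction $w$ are genuinely supported in $[R,R']$ so that Proposition~\ref{pr:T10} applies, and that ``non-radiative for $|x|>R+|t|$'' is equivalent to matching of profiles on the full half-line $[R,\infty)$ rather than just on a sub-interval. Both follow from finite speed of propagation together with \eqref{T11}, so no new estimate is needed; everything else is a direct concatenation of Corollary~\ref{cor:inhomogeneousradiation} and Proposition~\ref{pr:T10}. I do not expect genuine difficulty here, which is consistent with the claim being stated as a short consequence of the preceding linear theory.
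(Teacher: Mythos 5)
Your argument is essentially identical to the paper's proof: define $v$ with zero data solving the forced equation, extract its radiation profiles $H_\pm$, note by finite speed of propagation that they vanish for $\rho>R'$, invert $\Gc_{R,R'}$ from Proposition~\ref{pr:T10} to produce the free-wave correction $w$, and subtract. The only slip is a sign: if $w$ is chosen so that its radiation profiles equal $H_\pm$, then non-radiativity requires $u=v-w$ (as in the paper), not $u=v+w$; equivalently, with $u=v+w$ one needs $G_\pm(w)=-H_\pm$, since radiation profiles are linear and non-radiativity means $H_\pm+G_\pm(w)=0$ on $[R,\infty)$. With that correction the proof is complete and coincides with the paper's.
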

\begin{proof}
 We let $v$ be the solution of
\begin{equation*} 
\left\{ 
\begin{array}{l l} \pa_t^2 v-\Delta v=f,\quad |x|>R+|t|\\
\vec v(0)=(0,0).
\end{array}
\right.
\end{equation*}
By Corollary \ref{cor:inhomogeneousradiation} and finite speed of propagation, there exist radiation profiles $G_{\pm}\in L^2([R,+\infty))$ such that \eqref{T11} holds with $u=v$. By finite speed of propagation, $v(t,x)=0$ if $|x|>R'+|t|$, and thus $G_{\pm}(\rho)=0$ if $\rho>R'$. By Proposition \ref{pr:T10}, there exists a solution $w$ of the free wave equation \eqref{eq:freewave} such that \eqref{T11} holds with $u=w$, $\vec{w}(0)\in \Hc_R$, $\vec{w}(0,r)=0$ for $r>R'$ and 
$$\|\vec{w}(0)\|_{\Hc_R}\lesssim \sum_{\pm}\|G_{\pm}\|_{L^2_R}\lesssim \|f\indic(|x|>R+|t|)\|_{L^1L^2}.$$
The function $u=v-w$ satisfies the desired conclusion.
\end{proof} 

The main result of this Section is a twofold extension of Corollary \ref{co:nonradiativeforcing}. First, we will remove the symmetry assumptions on $f$ for $N$ even, thanks to an extra regularity for $f$ that will hold true for our applications to the nonlinear problems \eqref{eq:nonlinearwaveintro}. Second, we will obtain additional decay for $u$ if $f$ has additional decay, which will be used to show the extra decay \eqref{bound_a_aF} of non-radiative nonlinear waves, and also to show an approximation result for their classification in Section \ref{sec:uniquenesseven}.

Recall that $W_R^\kappa$ and $W_{R}^{'\kappa}$ are defined by \eqref{id:defWkappa} and \eqref{id:defW'kappa}. Observe that, by the radial Sobolev embedding if $u\in W_1^\kappa$:
\begin{equation}\label{bd:SobolevW}
\forall |x|>R+|t|, \qquad |u(t,x)|\lesssim \frac{1}{|x|^{N/2-1+\kappa }} \| u\|_{W^\kappa_R}.
\end{equation}

\begin{proposition} \label{pr:nonradiativeforcingmainodd}

For any $N\geq 3$ odd and $\kappa>0$ with $\kappa\neq \kappa_N=1/2$, there exists $C>0$ such that the following holds true for any $R>0$. Assume $f\in L^1L^2\cap W^{'\kappa}_R$. Then:

\begin{itemize}
\item \emph{Existence of a non-radiative solution}. There exists a solution $u$ to \eqref{eq:inhomwave} with
\begin{align} \label{bd:nonradiativeforcingmain1odd}
& R^{\kappa}\| \vec u(0) \|_{\mathcal H} +R^{\kappa-\min(\kappa,\kappa_N)} \| u \|_{W_R^{\min(\kappa,\kappa_N)}}\leq C \| f \|_{W_R^{'\kappa}},\\
& \label{id:nonradiativeforcingortho2odd} (u_0,u_1)=\Pi^\perp_{\mathcal H,R}((u_0,u_1)),
\end{align}
that is non-radiative for $|x|>R+|t|$.
\item \emph{Uniqueness}. For any other solution $u'$ to \eqref{eq:inhomwave} that is non-radiative for $|x|>R+|t|$ and which satisfies \eqref{id:nonradiativeforcingortho2odd} there holds $u(t,x)=u'(t,x)$ for $|x|>R+|t|$.
\end{itemize}

\end{proposition}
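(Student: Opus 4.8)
The strategy is to reduce to the already-established Corollary \ref{co:nonradiativeforcing} and Proposition \ref{pr:nonradiative_compactforcing} by a dyadic decomposition in $|x|$, and then to sum the contributions using the decay of the $W^{'\kappa}_R$ norm. Concretely, split the forcing term as $f=\sum_{j\geq 0} f_j$ where $f_j=\indic(2^jR<|x|-|t|\leq 2^{j+1}R)f$, so that each $f_j$ is supported in a dyadic annulus-cone $\{2^jR+|t|<|x|\leq 2^{j+1}R+|t|\}$. For each fixed $j$, the region of support has the ``small relative width'' property needed to invoke Proposition \ref{pr:nonradiative_compactforcing} (possibly after a further subdivision of the dyadic shell into $O(1/\delta_0)$ pieces of relative width $\leq\delta_0$), producing a non-radiative solution $u_j$ on $\{|x|>2^jR+|t|\}$ with $\vec u_j(0)$ supported in the corresponding shell and
$$
\|\vec u_j(0)\|_{\mathcal H_{2^jR}}\lesssim \|f_j\indic(|x|>2^jR+|t|)\|_{L^1L^2}\lesssim (2^jR)^{-\kappa}\|f\|_{W^{'\kappa}_R}.
$$
The definition \eqref{def:defW'kappa} of the $W^{'\kappa}_R$ norm is precisely tailored so that the right-hand side decays geometrically in $j$.

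Next I would set $u=\sum_{j\geq 0}u_j$ and check convergence and the stated bounds. Since $\vec u_j(0)$ is supported in $\{|x|>2^jR\}$, at each scale $\tilde R>R+|t|$ only the indices with $2^jR\lesssim \tilde R$ contribute to $\|\vec u(0)\|_{\mathcal H_{\tilde R}}$ nontrivially from "below", and Proposition \ref{construction:pr:radiation} together with the finite speed of propagation controls the "tail" at larger scales; the geometric decay $\sum_j (2^jR)^{-\kappa}\lesssim R^{-\kappa}$ (using $\kappa>0$) then yields $R^\kappa\|\vec u(0)\|_{\mathcal H}\lesssim\|f\|_{W^{'\kappa}_R}$. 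For the $W^{\min(\kappa,\kappa_N)}_R$ bound one argues scale by scale: at a given $\tilde R>R+|t|$, the contribution of $u_j$ with $2^jR\lesssim\tilde R$ is estimated by its energy $\lesssim(2^jR)^{-\kappa}\|f\|_{W^{'\kappa}_R}$, while the contribution of $u_j$ with $2^jR\gtrsim\tilde R$ uses the extra interior decay of free-wave evolutions, which decay like $(\text{small scale}/\tilde R)^{2\kappa_N}$ as in \eqref{add_decay_aF}, combined with the energy estimate and finite speed of propagation to propagate the localization. Summing the two geometric series (one in $\kappa$, one in $\kappa_N$, whence the $\min$) and separating the borderline case $\kappa=\kappa_N$ which is excluded by hypothesis gives \eqref{bd:nonradiativeforcingmain1odd}. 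The orthogonality \eqref{id:nonradiativeforcingortho2odd} is inherited from each $u_j$ (each satisfies $\vec u_j(0)=\Pi^\perp_{\mathcal H,2^jR}\vec u_j(0)$, hence a fortiori $\Pi^\perp_{\mathcal H,R}$ on the smaller set) and passes to the limit; alternatively one replaces $u$ by $u-w$ where $w$ is the free non-radiative wave matching $\Pi_{\mathcal H,R}\vec u(0)$ as in Corollary \ref{co:nonradiativeforcing}, which does not affect the behaviour for $|x|>R+|t|$.

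For uniqueness, suppose $u'$ is another non-radiative solution of \eqref{eq:inhomwave} for $|x|>R+|t|$ satisfying \eqref{id:nonradiativeforcingortho2odd}. Then $v=u-u'$ solves the free wave equation \eqref{eq:freewave} for $|x|>R+|t|$, is non-radiative there, and satisfies $\vec v(0)=\Pi^\perp_{\mathcal H,R}\vec v(0)$. By Classification \ref{pr:nonradiativefree} applied to $v$, its initial data agrees with that of a free non-radiative wave $a_F[\cbf]$ on $|x|>R$, i.e. $\vec v(0)=\Pi_{\mathcal H,R}\vec v(0)$; combined with $\vec v(0)=\Pi^\perp_{\mathcal H,R}\vec v(0)$ this forces $\vec v(0)=0$ on $|x|>R$, hence $u=u'$ on $\{|x|>R+|t|\}$ by finite speed of propagation.

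\textbf{Main obstacle.} The delicate point is the $W^{\min(\kappa,\kappa_N)}_R$ estimate rather than the energy estimate: one must control $u$ (and $\partial_t u$) pointwise at \emph{every} outer scale $\tilde R>R+|t|$ and every time $t$, so the dyadic pieces $u_j$ have to be summed with weights reflecting \emph{both} the geometric decay coming from the source (exponent $\kappa$) and the interior decay of each free-wave remainder at scales far exceeding its initial support (exponent $\kappa_N=1/2$ in odd dimensions). Keeping track of finite speed of propagation simultaneously in $t$ and in the radial localization of each $u_j$, and verifying that the two competing geometric series combine to exactly the claimed exponent $\min(\kappa,\kappa_N)$ with the correct power of $R$ in front — together with isolating the logarithmically divergent resonant case $\kappa=\kappa_N$ that the hypothesis explicitly excludes — is where the real work lies. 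The use of the decay \eqref{add_decay_aF} for free non-radiative waves, and of the regularity-gain Lemma \ref{lem:gainregradiation2} to upgrade radiation-profile control into pointwise/energy control at far scales, will be the technical engine here.
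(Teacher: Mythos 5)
Your dyadic decomposition $f=\sum_j f_j$ with $f_j$ supported in the cone-shell $\{2^jR+|t|<|x|\le 2^{j+1}R+|t|\}$, followed by an application of Proposition \ref{pr:nonradiative_compactforcing} to each piece, is a genuinely different route from the paper's. The paper instead takes $u$ from Corollary \ref{co:nonradiativeforcing} in one shot (built by matching radiation fields on the cone $\{|x|>|t|\}$), establishes decay of the \emph{initial data} through a separate dyadic argument (Lemma \ref{lem:nonradiativeforcinggain}, which matches coefficients of $\vec\phi_m(0)$ across successive dyadic $R_k=2^kR$), and then in Step 2 of the proof transports that decay forward to the full exterior cone by a time-translation-plus-Classification argument: at each $(\tilde t,\tilde R)$ with $\tilde R\approx R+|\tilde t|$ a fresh nonradiative solution $v$ is built at scale $\tilde R$ with the time-shifted forcing, the free wave $u(\tilde t+\cdot)-v$ is identified via Classification \ref{pr:nonradiativefree}, and its coefficients are bounded. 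Your plan compresses all of this into one summation, but the sketch leaves essential pieces unproven.

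I see three genuine gaps. First, and most seriously, the $W_R^{\min(\kappa,\kappa_N)}$ bound does not follow from ``energy estimate at each dyadic scale plus finite speed of propagation.'' Fix a time $t$ and take $\tilde R$ large with $\tilde R-|t|\approx R$ (the edge of the cone). For the lowest dyadic piece $u_0$, the energy flux estimate
$\|\vec u_0(t)\|_{\Hc_{\tilde R}}\lesssim\|\vec u_0(0)\|_{\Hc_{\tilde R-|t|}}+\|\indic(|x|>\tilde R-|t|+|s|)f_0\|_{L^1L^2}$
gives only $\lesssim R^{-\kappa}\|f\|_{W_R^{'\kappa}}$, independently of how large $\tilde R$ is; the required bound is $\tilde R^{-\kappa_N}R^{\kappa_N-\kappa}\|f\|_{W_R^{'\kappa}}$, which is arbitrarily smaller for $\tilde R\gg R$. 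The gain in $\tilde R$ is genuinely a consequence of the nonradiative structure, not of finite speed of propagation, and obtaining it is precisely the content of Step 2 of the paper's proof; your appeal to \eqref{add_decay_aF} does not help because that inequality pertains to the free nonradiative profiles $a_F$, not to arbitrary inhomogeneous solutions on a moving shell. Second, each $u_j$ delivered by Proposition \ref{pr:nonradiative_compactforcing} is nonradiative for $\{|x|>2^jR+|t|\}$, which is a \emph{weaker} statement than nonradiativity for the larger cone $\{|x|>R+|t|\}$, and you need the latter for the sum. The extension of $u_j$ inward need not vanish in the annular region $\{R+|t|<|x|<2^jR+|t|\}$ for large $|t|$; in odd dimensions one can hope to recover this from strong Huygens (the radiation profile of data and forcing supported in the shell $[2^jR,2^{j+1}R]$ lives, on $\rho>0$, in that interval), but this is an argument that needs to be made and the paper's proof avoids it altogether. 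Third, the parenthetical claim ``each $u_j$ satisfies $\vec u_j(0)=\Pi^\perp_{\mathcal H,2^jR}\vec u_j(0)$'' is false: Proposition \ref{pr:nonradiative_compactforcing} puts $\vec u_j(0)$ into $\Hc_{2^jR,2^{j+1}R}$ (compact support in a shell) but imposes no orthogonality to the span of the $r^{-N+2+2l}$, so the orthogonality of the summed data has to be arranged by hand at the end, as your ``alternatively'' remark correctly proposes — but the attendant modification of the data then feeds back into the $W_R^{\min(\kappa,\kappa_N)}$ estimate and must be controlled, which returns you to gap one.
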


\begin{proposition} \label{pr:nonradiativeforcingmaineven}

For any $N\geq 4$ even and $\kappa>0$ with $\kappa\neq \kappa_N=1$, there exists $C>0$ such that the following holds true for any $R>0$. Assume $f\in L^1L^2$ with $\sup_{\tilde R>R+|t|}\tilde R^{1+\kappa}\| f(t)\|_{L^2_{\tilde R}}<\infty$ and $\pa_t f\in L^1L^2\cap W^{'\kappa+1}_R$. Then:
\begin{itemize}
\item \emph{Existence of a non-radiative solution}. There exists a solution $u$ to \eqref{eq:inhomwave} with
\begin{align} \label{bd:nonradiativeforcingmain1even}
& R^{\kappa}\| \vec u(0) \|_{\mathcal H} +R^{\kappa-\min(\kappa_N,\kappa)} \| u \|_{W_R^{\min(\kappa_N,\kappa)}}+\left| \begin{array}{l l} R^{1+\kappa}\| u_1\|_{\dot H^1} \ \mbox{ if }N\equiv 4\mod 4,\\ R^{1+\kappa} \| u_0\|_{\dot H^2} \ \mbox{ if }N\equiv 6\mod 4, \end{array}\right. \\
\nonumber &\qquad \qquad  \qquad \qquad \qquad \qquad \qquad \qquad \leq C(\sup_{\tilde R>R+|t|}\tilde R^{1+\kappa}\| f(t)\|_{L^2_{\tilde R}}+\|\pa_t f \|_{W_R^{'1+\kappa}}),\\
&\label{id:nonradiativeforcingortho2even}
(u_0,u_1)=\Pi^\perp_{\mathcal H,R}((u_0,u_1)),
\end{align}
such that $u$ is non-radiative for $|x|>R+|t|$.
\item \emph{Uniqueness}. For any other solution $u'$ to \eqref{eq:inhomwave} that is non-radiative for $|x|>R+|t|$ and which satisfies \eqref{id:nonradiativeforcingortho2even} there holds $u(t,x)=u'(t,x)$ for $|x|>R+|t|$.
\end{itemize}
\end{proposition}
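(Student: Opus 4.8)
\emph{Reduction.} The plan is to adapt the proof of Proposition~\ref{pr:nonradiativeforcingmainodd}, the essential new point being to remove the parity restriction on $f$ — unavoidable for even $N$ in Proposition~\ref{pr:nonradiativeforcing} — by using the hypothesis on $\partial_t f$. As in the odd case, it suffices to construct \emph{some} solution $\tilde u$ of \eqref{eq:inhomwave} that is non-radiative for $|x|>R+|t|$ and satisfies the bound \eqref{bd:nonradiativeforcingmain1even}: subtracting the free non-radiative wave $a_F[\cbf_0]$ whose Cauchy data on $\{|x|>R\}$ equals $\Pi_{\mathcal H,R}\vec{\tilde u}(0)$ (which exists by Classification~\ref{pr:nonradiativefree}, with $|\cbf_0|_R\approx\|\Pi_{\mathcal H,R}\vec{\tilde u}(0)\|_{\mathcal H_R}$ and decay \eqref{add_decay_aF}) and re-gluing inside the cone by finite speed of propagation produces $u$ obeying also \eqref{id:nonradiativeforcingortho2even}. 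Uniqueness is immediate: the difference of two such solutions solves the free equation and is non-radiative for $|x|>R+|t|$, hence equals some $a_F[\cbf]$ there by Classification~\ref{pr:nonradiativefree}; then \eqref{id:nonradiativeforcingortho2even} and \eqref{id:aF0} force $\vec a_F[\cbf](0)=0$ on $\{|x|>R\}$, so $|\cbf|_R=0$ by \eqref{bd:energyaFiscbfR2}.

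\emph{Splitting by parity.} Write $f=f_++f_-$ for the even and odd in time parts, as in \eqref{notation:id:fpm}. The summand whose parity is the one covered by Proposition~\ref{pr:channels} — namely $f_+$ if $N\equiv 4\mod 4$, $f_-$ if $N\equiv 6\mod 4$ — is handled self-containedly, exactly as in (the omitted proof of) Proposition~\ref{pr:nonradiativeforcingmainodd}: solve with zero data, read off the radiation profiles via Corollary~\ref{cor:inhomogeneousradiation}, annihilate them on $\{\rho>R\}$ using a free wave furnished by Lemma~5.2 of \cite{LiShenWei21P}, localise outside the cone by finite speed of propagation, and propagate the weights $\tilde R^{1+\kappa}\|f(t)\|_{L^2_{\tilde R}}$ and $\|\partial_t f\|_{W^{'1+\kappa}_R}$ through Proposition~\ref{pr:channels} and exterior energy estimates. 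Note that this good-parity piece has the Cauchy component relevant to the derivative gain equal to zero by time parity ($u_1=0$ if $N\equiv4\mod4$, $u_0=0$ if $N\equiv6\mod4$).

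\emph{The bad-parity part via $\partial_t$.} Here $\partial_t f_{\mathrm{bad}}$ has the good parity and, by hypothesis, lies in $L^1L^2\cap W^{'1+\kappa}_R$, so the previous step yields a good-parity, non-radiative (for $|x|>R+|t|$) solution $w$ of $\partial_t^2w-\Delta w=\partial_tf_{\mathrm{bad}}$ with the corresponding weighted bounds. If $N\equiv4\mod4$, then $f_{\mathrm{bad}}=f_-$ is odd and $w$ even in time, so $f_{\mathrm{bad}}(0)=0=\partial_tw(0)$ and a direct computation shows that $u^{\mathrm{bad}}(t):=\int_0^tw(s)\,ds$ is odd in time and solves $\partial_t^2u^{\mathrm{bad}}-\Delta u^{\mathrm{bad}}=f_{\mathrm{bad}}$ with $\partial_tu^{\mathrm{bad}}=w$. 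Since by Proposition~\ref{construction:pr:radiation} the radiation profiles of $w=\partial_tu^{\mathrm{bad}}$ are, up to sign, the $\rho$-derivatives of those of $u^{\mathrm{bad}}$, and they vanish on $\{\rho>R\}$, the latter are constant hence zero there, so $u^{\mathrm{bad}}$ is non-radiative for $|x|>R+|t|$; moreover $u^{\mathrm{bad}}(0)=0$ and $\partial_tu^{\mathrm{bad}}(0)=w(0)$, so $u_1=w(0)$ inherits the extra regularity and the weight $R^{1+\kappa}$ from the bounds on $w$ — this is the gain of a derivative on $u_1$. If $N\equiv6\mod4$, then $f_{\mathrm{bad}}=f_+$ is even and $w$ odd in time ($w(0)=0$), and $\int_0^tw$ leaves a residue; one cancels it by taking $u^{\mathrm{bad}}(t):=c_0+\int_0^tw(s)\,ds$ with $c_0$ solving $\Delta c_0=\partial_tw(0)-f_{\mathrm{bad}}(0)$, using the fact that $w$ is determined only up to free non-radiative waves to place this right-hand side in $L^2$ with enough decay that $c_0\in\dot H^2$ with the weight $R^{1+\kappa}$ — the gain of a derivative on $u_0$. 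In both cases one checks, with Lemma~\ref{lem:gainregradiation2} and Proposition~\ref{construction:pr:radiation}, that $u^{\mathrm{bad}}$ carries no excitation of the resonance \eqref{id:defphiN/2-1}.

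\emph{Main obstacle.} Adding the two pieces and applying the reduction of the first paragraph yields the Proposition. I expect the delicate part to be the bad-parity step: one must show that the auxiliary solution $w$ has the \emph{sharp} decay — going beyond the naive rate $\kappa_N$ is exactly where the extra decay of $\partial_t f$ and the gain-of-regularity of Proposition~\ref{construction:pr:radiation}/Lemma~\ref{lem:gainregradiation2} become essential — that the reconstructed $u^{\mathrm{bad}}$ (in particular, for $N\equiv6\mod4$, the elliptic inversion for $c_0$) stays in the energy space with the weight $R^{1+\kappa}$, and that the derivative-gained Cauchy component genuinely lands in $\dot H^1$ (resp. $\dot H^2$), i.e. that the resonance $\phi_{N/2-1}$ is not excited. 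This, together with the convergence of the dyadic-in-$(|x|-|t|)$ summation used to build $w$ with weights, is precisely why the borderline rate $\kappa=\kappa_N=1$ must be excluded, a logarithmic divergence occurring there.
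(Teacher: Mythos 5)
Your approach matches the paper's: split $f$ by time parity, handle the good-parity piece by the direct projection argument (Corollary~\ref{co:nonradiativeforcing}), and recover the bad-parity piece by differentiating in time so that $\partial_t f_{\mathrm{bad}}$ has the good parity, solving for a non-radiative $w$, and reconstructing $u^{\mathrm{bad}}$ by anti-differentiation — trivially when $N\equiv 4\bmod 4$, and via the elliptic inversion $\Delta c_0=\partial_tw(0)-f_{\mathrm{bad}}(0)$ when $N\equiv 6\bmod 4$. This is exactly what the paper's Lemma~\ref{lem:nonradforcingeven} does. One correction, though, to the diagnostic in your last paragraph: the tool that makes the derivative-gained component land in $\dot H^1$ (resp.\ $\dot H^2$) with the weight $R^{1+\kappa}$ is not Lemma~\ref{lem:gainregradiation2} (which is a regularity gain for radiation profiles, used in the uniqueness sections), nor does the argument hinge on "not exciting the resonance $\phi_{N/2-1}$" — the resonance never enters the construction. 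What is needed is the gain-of-decay Lemma~\ref{lem:nonradiativeforcinggain}\,(ii), whose dyadic iteration singles out and removes the slowest non-radiative direction $\phi_{m_0}$ from the data $\partial_t w(0)$ of the auxiliary solution; only the remainder $\tilde v_1$ decays at rate $\tilde R^{-1-\kappa}$, which is what you feed into the elliptic inversion (and, when $\kappa>\kappa_N$, what forces the non-radiative solution you build to pick up the residual $\phi_{m_0}$ tail at the rate $\kappa_N$ in \eqref{bd:nonradiativeforcingmain1even}). Your "determined only up to free non-radiative waves" gestures at this but does not identify the mechanism, whereas it is precisely where the hypothesis $\kappa\neq\kappa_N$ and the geometric summability of \eqref{bd:lemnonradiativeforcinggain3} are used.
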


The proof of Propositions \ref{pr:nonradiativeforcingmainodd} and \ref{pr:nonradiativeforcingmaineven} will be given at the end of this section, with the help of several technical Lemmas.

First, we prove that if $f$ in Corollary \ref{co:nonradiativeforcing} enjoys additional decay, then so does $u$. The decay gain is limited by the non-radiative direction that has the slowest decay at infinity, that we might need to single out. We notice that $\vec \phi_{m_0}(0)$ is the initial datum of a non-radiative free wave with the slowest decay as $|x|\to \infty$:
\begin{equation}\label{construction:id:phim0}
\vec \phi_{m_0}(0,x)=\left\{ \begin{array}{l l}
(|x|^{-\frac N2+\frac 12},0) \qquad \mbox{if }N\equiv 3\mod 4,\\ 
(|x|^{-\frac N2},0) \qquad \mbox{if }N\equiv 4\mod 4,\\ 
(0,|x|^{-\frac N2-\frac 12}) \qquad \mbox{if }N\equiv 5\mod 4,\\ 
(0,|x|^{-\frac N2-1}) \qquad \mbox{if }N\equiv 6\mod 4.
\end{array}
\right.
\end{equation}

\begin{lemma} \label{lem:nonradiativeforcinggain}

Let $\kappa>0$ satisfy $\kappa\neq \kappa_N$ and $\kappa<\kappa_N+1$ for $N$ odd or $\kappa<\kappa_N+2$ for $N$ even, $R>0$, $f\in L^1L^2 \cap W^{'\kappa}_R$ and $u$ be given by Corollary \ref{co:nonradiativeforcing}. Then
\begin{itemize}
\item[(i)] If $\kappa<\kappa_N$ there holds
\begin{equation}\label{pr:nonradiativeforcinggain100}
\forall \tilde R>R, \quad \| (u_0,u_1)\|_{\mathcal H_{\tilde R}}\lesssim \tilde R^{-\kappa} \| f\|_{W^{'\kappa}_{R}}.
\end{equation}
\item[(ii)] If $\kappa>\kappa_N$, there exists $c=c(R,f)\in \mathbb R$ with $|c|\lesssim R^{\kappa_N-\kappa}\| f\|_{W^{'\kappa}_R}$ such that $ \vec u(0)=c\vec \phi_{m_0}(0) +(\tilde u_0,\tilde u_1)$ with
$$
\forall \tilde R>R, \quad \| (\tilde u_0,\tilde u_1)\|_{\mathcal H_{\tilde R}}\lesssim \tilde R^{-\kappa} \| f\|_{W^{'\kappa}_{R}}.
$$
\end{itemize}
\end{lemma}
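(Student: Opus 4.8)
\textbf{Proof plan for Lemma \ref{lem:nonradiativeforcinggain}.}

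The plan is to exploit the scaling structure of the problem together with the decay control already available on $f$ through the seminorm $W_R^{'\kappa}$, and to bootstrap the basic energy estimate of Corollary \ref{co:nonradiativeforcing} (which gives decay of order $0$) up to decay of order $\kappa$. First I would fix $\tilde R>R$ and split the forcing term as $f=f\indic(|x|\leq\tilde R+|t|)+f\indic(|x|>\tilde R+|t|)$. For the ``inner'' piece one already has, by finite speed of propagation, that the corresponding non-radiative solution vanishes for $|x|>\tilde R+|t|$, so it contributes nothing to $\|(u_0,u_1)\|_{\Hc_{\tilde R}}$; the only subtlety is that the non-radiative solution associated to a truncated forcing need not equal the truncation of $u$, so I would instead argue directly at the level of radiation profiles: by Corollary \ref{cor:inhomogeneousradiation} and finite speed of propagation, the radiation profiles $G_\pm$ of $u$ satisfy $\|G_\pm\|_{L^2(\rho\geq\tilde R)}\lesssim \|f\indic(|x|>\tilde R+|t|)\|_{L^1L^2}\lesssim \tilde R^{-\kappa}\|f\|_{W_R^{'\kappa}}$, using the very definition \eqref{id:defW'kappa} of the $W_R^{'\kappa}$-norm with $\tilde t=0$. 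Then I would combine this with the channels-of-energy estimate \eqref{bd:channels2} of Corollary \ref{cor:inhomogeneousradiation}, applied on the exterior cone $\{|x|>\tilde R+|t|\}$ rather than $\{|x|>R+|t|\}$, to control $\|\Pi^\perp_{\mathcal H,\tilde R}\vec u(0)\|_{\Hc_{\tilde R}}$ (or the appropriate half of it in even dimensions) by $\tilde R^{-\kappa}\|f\|_{W_R^{'\kappa}}$.

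The remaining point is to control the finite-dimensional ``non-radiative part'' $\Pi_{\mathcal H,\tilde R}\vec u(0)$, i.e. the components of $\vec u(0)$ along the functions $|x|^{-(N-2-2l)}$ on $\{|x|>\tilde R\}$. Here I would use the constraint \eqref{id:nonradiativeforcingortho}, namely $\Pi^\perp_{\mathcal H,R}\vec u(0)=\vec u(0)$, which says that $\vec u(0)$ is orthogonal to all these profiles measured on the larger set $\{|x|>R\}$. Writing out the Gram matrix of the functions $|x|^{-(N-2-2l)}\indic(|x|>\tilde R)$ against $\vec u(0)$ and comparing with the orthogonality on $\{|x|>R\}$, the difference is an integral over the annulus $\{R<|x|<\tilde R\}$, which I can estimate using the energy bound $\|\vec u(0)\|_{\Hc}\lesssim\|\indic(|x|>|t|)f\|_{L^1L^2}\lesssim\|f\|_{W_R^{'0}}\lesssim R^{-\kappa}\|f\|_{W_R^{'\kappa}}$ together with the explicit decay rates of the profiles. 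Carrying out this linear-algebra computation, and keeping track of which profile $|x|^{-(N-2-2l)}$ is or is not in the relevant space $\Hc_{\tilde R}$: when $\kappa<\kappa_N$ all surviving profiles decay fast enough that their coefficients are $O(\tilde R^{-\kappa})$, giving (i); when $\kappa_N<\kappa<\kappa_N+1$ (odd) or $\kappa_N<\kappa<\kappa_N+2$ (even), exactly one profile, namely $\vec\phi_{m_0}(0)$ as recorded in \eqref{construction:id:phim0}, decays too slowly, its coefficient $c$ is only $O(R^{\kappa_N-\kappa})$, and after subtracting $c\vec\phi_{m_0}(0)$ the remainder again has the $\tilde R^{-\kappa}$ decay, giving (ii). The restriction $\kappa\neq\kappa_N$ is what makes this last profile either fully absorbable or cleanly isolable, and the upper restriction on $\kappa$ is what guarantees there is at most one such slowly-decaying profile.

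The main obstacle I anticipate is the bookkeeping in even dimensions: the channels-of-energy estimate \eqref{bd:channels2} only controls one component of $\vec u(0)$ (the $\dot H^1$ part if $N\equiv 4\bmod 4$, the $L^2$ part if $N\equiv 6\bmod 4$), so the decay of the other component must be extracted from the non-radiative structure and the orthogonality relation alone, not from energy in exterior cones. I would handle this by noting that non-radiativity forces $\vec u(0)$ on $\{|x|>\tilde R+|t|\}$ to agree with a free non-radiative wave $a_F[\cbf(\tilde R)]$ (by Classification \ref{pr:nonradiativefree} applied on each cone), so that both components of $\vec u(0)$ are pinned down by the single scalar family $\cbf(\tilde R)$; estimating how $\cbf(\tilde R)$ varies with $\tilde R$ via the annulus integral argument above then yields the decay for both components simultaneously. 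A secondary technical nuisance is checking that the implicit constants are uniform in $R$, which follows because every estimate used (Strichartz, channels of energy, the isomorphism of Proposition \ref{pr:T10}) is scale-invariant, so one may rescale to $R=1$ at the outset.
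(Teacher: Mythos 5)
The first half of your plan is fine, and in fact simpler than you say: since $u$ is non-radiative on $\{|x|>R+|t|\}$, its radiation profiles already vanish on $\{\rho\geq R\}$, so \eqref{bd:channels2} applied on $\{|x|>\tilde{R}+|t|\}$ directly yields
\[
\bigl\| \Pi^\perp_{\mathcal H,\tilde R}\vec u(0)\bigr\|_{\Hc_{\tilde R}}
\;\lesssim\;\bigl\|\indic(|x|>\tilde R+|t|)f\bigr\|_{L^1L^2}
\;\lesssim\;\tilde R^{-\kappa}\|f\|_{W_R^{'\kappa}}
\]
(with the usual half-component caveat in even dimensions). The genuine gap is in the second half, the control of $\Pi_{\mathcal H,\tilde R}\vec u(0)$. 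Your annulus argument compares $\langle u_0,|x|^{-(N-2-2l)}\rangle_{\dot H^1_{\tilde R}}$ with the vanishing inner product at scale $R$, and estimates the difference over $\{R<|x|<\tilde R\}$ using only the global energy bound $\|\vec u(0)\|_{\Hc}\lesssim R^{-\kappa}\|f\|_{W_R^{'\kappa}}$. That bound does not improve as $\tilde R$ grows, and once you normalise by the Gram matrix on $\{|x|>\tilde R\}$ the resulting coefficient bounds pick up factors of order $(\tilde R/R)^{N/2-1-m}$, which \emph{grow} in $\tilde R$ for all $m\leq m_0$. So as written the argument gives no decay at all, let alone the required $\tilde R^{-\kappa}$.

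Your backup plan in the last paragraph does not repair this. You invoke Classification~\ref{pr:nonradiativefree} to say that $\vec u(0)$ ``agrees with a free non-radiative wave $a_F[\cbf(\tilde R)]$'' on $\{|x|>\tilde R+|t|\}$; but $u$ is a solution of the \emph{forced} equation, not of \eqref{eq:freewave}, so this is a misapplication of the Classification, and even if one reinterprets $\cbf(\tilde R)$ as the projection coefficients, you are back to the annulus estimate that fails. What is actually needed, and what the paper's proof does, is to introduce for each dyadic scale $R_k=2^kR$ an \emph{auxiliary} non-radiative solution $\bar u[R_k]$ of the same inhomogeneous equation on $\{|x|>R_k+|t|\}$, given by Corollary~\ref{co:nonradiativeforcing} with $R$ replaced by $R_k$. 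This $\bar u[R_k]$ is the object that carries the improved bound $\|(\bar u_0[R_k],\bar u_1[R_k])\|_{\Hc_{R_k}}\lesssim R_k^{-\kappa}\|f\|_{W_R^{'\kappa}}$, and the difference $u-\bar u[R_k]$ \emph{is} a free non-radiative wave on $\{|x|>R_k+|t|\}$, to which the Classification legitimately applies. Matching decompositions at consecutive scales yields a linear recursion for the projection coefficients with error terms of size $R_k^{-\kappa}$; iterating forward from the zero initial condition (which is exactly the orthogonality \eqref{id:nonradiativeforcingortho}) and isolating the exponent $\frac N2-1-m_0-\kappa=\kappa_N-\kappa$ produces the dichotomy between (i) and (ii). Without the auxiliary solutions there is no source of $\tilde R^{-\kappa}$ decay in your scheme; the sign of $\kappa_N-\kappa$ is what separates the cases, but you never see it because you never have the per-scale improved energy.
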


\begin{remark}

\begin{itemize}

\item As $\| \vec{\phi}_{m_0}\|_{\mathcal H_R}\lesssim R^{-\kappa_N}$ by \eqref{construction:id:phim0}, a direct consequence for $u$ given by Lemma \ref{lem:nonradiativeforcinggain} is that:
\begin{equation}\label{pr:nonradiativeforcinggain1}
\forall \tilde R>R, \qquad \|\vec u(0)\|_{\mathcal H_{\tilde R}}\lesssim \tilde R^{-\min(\kappa,\kappa_N)} R^{\min(\kappa,\kappa_N)-\kappa} \| f\|_{W^{'\kappa}_{R}}.
\end{equation}

\item As is clear from the proof, for larger values of $\kappa$ an analogous statement would hold true, up to singling out additional directions among $(\vec \phi_m(0))_{0\leq m \leq m_0}$. We however do not need such result for the purposes of the present article.
\end{itemize}

\end{remark}

\begin{proof}

\noindent \textbf{Step 1}. \emph{Preliminary decompositions.} Let $E=E[N]$ be the set defined by $E=\{0,1,...,m_0\}$ if $N$ is odd, and $E=\{m\in \{0,...,m_0\}, \ m_0-m \mbox{ even }\}$ if $N$ is even. Observe $\vec \phi_m(0,x)=(|x|^{-N+2+m},0)$ for $m$ even, and $\vec \phi_m(0,x)=(0,|x|^{-N+1+m})$ for $m$ odd. We claim that for all $\tilde R>R$ there exists $\cbf[\tilde R]\in \mathbb R^{m_0}$ such that
\begin{equation}\label{id:lemnonradiativeforcinggain1}
\vec u(0)=\sum_{m\in E} c_{m}[\tilde R]\tilde R^{\frac{N}{2}-1-m}\vec \phi_m(0) + \left(\bar u_0, \bar u_1 \right)
\end{equation}
where $\bar u_0=\bar u_0[\tilde R]$ and $\bar u_1=\bar u_1[\tilde R]$ satisfy:
\begin{equation}\label{id:lemnonradiativeforcinggain2}
\| (\bar u_0,\bar u_1)\|_{\mathcal H_{\tilde R}}\lesssim \tilde R^{-\kappa} \| f\|_{W_R^{'\kappa}}.
\end{equation}
We now prove this claim. Let $(\bar u_0,\bar u_1)\in \mathcal H$ be the initial data of a solution $\bar u $ to \eqref{eq:inhomwave}, provided by Corollary \ref{co:nonradiativeforcing} with $\tilde R$. Then $\bar u$ is non-radiative for $|x|>\tilde R+|t|$, and 
$$
\| (\bar u_0,\bar u_1)\|_{\mathcal H_{\tilde R}}\lesssim \| \indic (|x|>\tilde R+|t|)f \|_{L^1L^2}\leq \tilde R^{-\kappa}\| f\|_{W_R^{'\kappa}}.
$$
The function $v=u-\bar u$ thus solves $\pa_t^2 v-\Delta v=0$, and is non-radiative for $|x|>\tilde R+|t|$, so that by Classification \ref{pr:nonradiativefree} there holds $\vec v(0)=\Pi_{\mathcal H,\tilde R}\vec v(0)$. Therefore, there holds $\vec v(0,x)=\sum_{0\leq m\leq m_0}\tilde c_m \vec \phi_m(x)$ for some constants $(\tilde c_m)_{0\leq m\leq m_0}$. In even dimensions, by the symmetry in time properties of of Proposition \ref{pr:nonradiativeforcing} (which are that of Corollary \ref{co:nonradiativeforcing}), if $N\equiv 4 \mod 4$  (resp. if $N\equiv 6 \mod 4$) then both $u$ and $\bar u$ are even in time (resp. odd in time). Hence $\tilde c_m=0$ for all $m\in \{0,...,m_0\}\backslash E$. This implies the result claimed in this first step.\\

\noindent \textbf{Step 2}. \emph{Estimates for dyadic projections on non-radiative directions}. We now prove the result of the Lemma. Let $R_k=2^k R$ and $c_{m,k}=c_k[R_k]$. We match the two decompositions provided by \eqref{id:lemnonradiativeforcinggain1}, giving:
$$
\sum_{m\in E} \left(c_{m,k+1}R^{\frac{N}{2}-1-m}_{k+1}-c_{m,k}R^{\frac{N}{2}-1-m}_k\right) \vec \phi_m(0)=(\bar u_0[R_k]-\bar u_0[R_{k+1}],\bar u_1[R_k]-\bar u_1[R_{k+1}]).
$$
Renormalising, this gives:
$$
\sum_{m\in E} \left(c_{m,k+1}-2^{-\frac{N}{2}+1+m} c_{m,k}\right) \vec \phi_m(0)=\left( (\bar u_0[R_k]-\bar u_0[R_{k+1}])_{(\frac{1}{R_{k+1}})},(\bar u_1[R_k]-\bar u_1[R_{k+1}])_{[\frac{1}{R_{k+1}}]}\right).
$$
Since 
$$
\| \left( (\bar u_0[R_k]-\bar u_0[R_{k+1}])_{(\frac{1}{R_{k+1}})},(\bar u_1[R_k]-\bar u_1[R_{k+1}])_{[\frac{1}{R_{k+1}}]}\right)\|_{\mathcal H_1}\lesssim R_{k}^{-\kappa}\| f\|_{W_R^{'\kappa}}
$$
by \eqref{id:lemnonradiativeforcinggain2}, and since $(\vec \phi_m)_{m\in E}$ are linearly independent in $\mathcal H_1$, we infer that:
$$
\forall m\in E, \quad \forall k\geq 0, \qquad c_{m,k+1}=2^{-\frac{N}{2}+1+m} c_{m,k}+O(R_k^{-\kappa}\| f\|_{W_R^{'\kappa}}).
$$
Let $d_{m,k}=2^{(\frac{N}{2}-1-m)k}c_{m,k}$ then for all $m\in E$:
\begin{equation}\label{bd:lemnonradiativeforcinggain3}
\forall k\geq 0, \qquad d_{m,k+1}=d_{m,k}+O(R^{-\kappa}2^{(\frac N2-1-m-\kappa)k}\| f\|_{W_R^{'\kappa}}).
\end{equation}
By \eqref{id:nonradiativeforcingortho} we have $c_{m,0}=0=d_{m,0}$ for all $m\in E$. We also remark that, by the assumptions on $\kappa$:
\begin{equation}\label{bd:lemnonradiativeforcinggain333}
\frac N2-1-m_0-\kappa=\kappa_N-\kappa \quad \mbox{and} \quad  \forall m\in E\backslash \{m_0\}, \quad \frac N2-1-m-\kappa >0.
\end{equation}
If $\kappa>\kappa_N$, we deduce from \eqref{bd:lemnonradiativeforcinggain3} and \eqref{bd:lemnonradiativeforcinggain333} after summation that there exists $d_{m_0,\infty}\in \mathbb R$ with $|d_{m_0,\infty}|\lesssim R^{-\kappa} \| f\|_{W_R^{'\kappa}}$ such that $|d_{m_0,k}-d_{m_0,\infty}|\lesssim R^{-\kappa} 2^{(\frac N2-1-m-\kappa)k} \| f\|_{W_R^{'\kappa}}$ for all $k\geq 0$. If $\kappa<\kappa_N$ we deduce from \eqref{bd:lemnonradiativeforcinggain3} and \eqref{bd:lemnonradiativeforcinggain333} that $|d_{m_0,k}|\lesssim R^{-\kappa} 2^{(\frac N2-1-m-\kappa)k} \| f\|_{W_R^{'\kappa}}$. Letting $d_{m_0,\infty}=0$ by convention if $\kappa<\kappa_N$ we have in both cases that for $k\geq 0$:
$$
|d_{m_0,k}-d_{m_0,\infty}|\lesssim R^{-\kappa} 2^{(\frac N2-1-m-\kappa)k} \| f\|_{W_R^{'\kappa}}.
$$
Similarly, by \eqref{bd:lemnonradiativeforcinggain3} and \eqref{bd:lemnonradiativeforcinggain333}, we deduce after summation that for $k\geq 0$:
\begin{align*}
& \forall m\in E, \quad  |d_{m,k}|\lesssim R^{-\kappa}2^{(\frac N2-1-m-\kappa)k} \| f\|_{W_R^{'\kappa}} .
\end{align*}
This implies that for all $k\geq 0$:
\begin{align} \label{bd:lemnonradiativeforcinggain1}
& |c_{m_0,k}-2^{-\kappa_N k}d_{m_0,\infty}|\lesssim R^{-\kappa} 2^{-\kappa k} \| f\|_{W_R^{'\kappa}},\\
 \label{bd:lemnonradiativeforcinggain2}&  \forall m\in E, \qquad  |c_{m,k}|\lesssim R^{-\kappa}2^{ -\kappa k} \| f\|_{W_R^{'\kappa}}.
 \end{align}

\noindent \textbf{Step 3}. \emph{End of the proof}. We first rewrite \eqref{bd:lemnonradiativeforcinggain1} as:
\begin{equation} \label{bd:lemnonradiativeforcinggain33}
\forall k\geq 0, \qquad c_{m_0,k} R_k^{\frac N2-1-m_0}=c+O(R_k^{\kappa_N-\kappa}\| f\|_{W_R^{'\kappa}}), \qquad c=d_{m_0,\infty}R^{\kappa_N}.
\end{equation}
Then using \eqref{id:lemnonradiativeforcinggain1}, \eqref{bd:lemnonradiativeforcinggain33} and \eqref{bd:lemnonradiativeforcinggain2}, for any $k\geq 0$:
\begin{align*}
(u(0,x),\pa_t u(0,x)) & =c\vec \phi_{m_0}(0)+O(R_k^{\kappa_N-\kappa}\| f\|_{W_R^{'\kappa}})\vec \phi_{m_0}(0)\\
&+\sum_{m\in E\backslash\{m_0\}} O(R_k^{\frac{N}{2}-1-m-\kappa}\| f\|_{W_R^{'\kappa}})\vec \phi_m(0)+ (\bar u_0[R_k],\bar u_1[R_k])
\end{align*}
Using $\| \vec \phi_m(0)\|_{\mathcal H_{R_k}}\lesssim R_k^{-(\frac{N}{2}-1-m)}$ and \eqref{id:lemnonradiativeforcinggain2} we infer:
$$
\forall k\geq 0, \qquad \| (u(0,x),\pa_t u(0,x))-c\vec \phi_{m_0}\|_{\mathcal H_{R_k}}\lesssim R_k^{-\kappa}\| f\|_{W_R^{'\kappa}}.
$$
By dyadic partitioning, this implies that $\| (u(0,x),\pa_t u(0,x))-c\vec \phi_{m_0}\|_{\mathcal H_{\tilde R}}\lesssim \tilde R^{-\kappa}\| f\|_{W_R^{'\kappa}}$ for all $\tilde R>R$. This shows (ii) if $\kappa>\kappa_N$. If $\kappa<\kappa_N$, this shows (i) using that $\| \vec \phi_m(0)\|_{\mathcal H_{R_k}}\lesssim R_k^{-\kappa_N}$.

\end{proof}

The decay gain of Lemma \ref{lem:nonradiativeforcinggain} allows us to treat, in even dimensions, forcings $f$ that are not even in time if $N\equiv 4 \mod 4$, or odd in time if $N\equiv 6\mod 4$, provided they enjoy a regularity gain.

\begin{lemma} \label{lem:nonradforcingeven}

Assume $N\geq 4$ is even. Let $0<\kappa<2 $, $R>0$ and assume $f\in L^1L^2$ with $\pa_t f\in L^1L^2 \cap W_R^{'1+\kappa}$.

\begin{itemize}
\item If $N\equiv 4 \mod 4$ and $f$ is odd in time then there exists $u_1 \in L^2\cap \dot H^1$ with
\begin{align} \label{nonradforcingeven:energy} 
& R^{\kappa} \| u_1\|_{L^2}+R^{1+\kappa}\| u_1\|_{\dot H^1}+\sup_{ \tilde R> R}( \tilde R^{\kappa}\| u_1\|_{L^2_{\tilde R}}+\tilde R^{1+\kappa}\| u_1\|_{\dot H^1_{\tilde R}})\lesssim  \| \pa_t f\|_{W^{'1+\kappa}_R},\\
& \label{nonradforcingeven:orthogonalite} 
u_1=\Pi^\perp_{L^2,R}(u_1),
\end{align}
such that the solution to \eqref{eq:inhomwave} is non-radiative for $|x|> |t|+R$.
\item If $N\equiv 6 \mod 4$ and $f$ is even in time then there exists $u_0 \in \dot H^1\cap \dot H^2$ with
\begin{align}
\nonumber & R^\kappa \| u_0\|_{\dot H^1}+R^{1+\kappa}\| u_0\|_{\dot H^2} +\sup_{ \tilde R> R}( \tilde R^{\kappa}\| u_0\|_{\dot H^1_{\tilde R}}+\tilde R^{1+\kappa} \| \Delta u_0\|_{L^2_{\tilde R}})\\
 \label{nonradforcingeven:energy2}  &\qquad \qquad \qquad \qquad \qquad \qquad \qquad  \lesssim  \| \pa_t f\|_{W_R^{'1+\kappa}}+\sup_{\tilde R>R}\tilde R^{1+\kappa} \| f(0)\|_{L^2_{\tilde R}},\\
 \label{nonradforcingeven:orthogonalite2} &u_0=\Pi^\perp_{\dot H^1,R}(u_0),
\end{align}
such that the solution to \eqref{eq:inhomwave} is non-radiative for $|x|> |t|+R$.
\end{itemize}

\end{lemma}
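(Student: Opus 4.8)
The plan is to remove the ``wrong'' time-parity of the forcing by differentiating the equation once in time, which flips the parity and thus brings $\partial_tf$ into the range of applicability of Corollary~\ref{co:nonradiativeforcing} and Lemma~\ref{lem:nonradiativeforcinggain}, and then to integrate back while absorbing a single resonant direction. Consider first $N\equiv4\mod4$ with $f$ odd in time, so that $\partial_tf$ is \emph{even} in time and lies in $L^1L^2\cap W^{'1+\kappa}_R$. Since $0<1+\kappa$, $1+\kappa\neq\kappa_N=1$ and $1+\kappa<\kappa_N+2=3$, Lemma~\ref{lem:nonradiativeforcinggain}(ii) applied to the forcing $\partial_tf$ with exponent $1+\kappa$ produces a solution $v^0$ of $\partial_t^2v^0-\Delta v^0=\partial_tf$, non-radiative for $|x|>R+|t|$, with $\vec v^0(0)=\Pi^\perp_{\mathcal H,R}\vec v^0(0)$, even in time so that $\partial_tv^0(0)=0$, and admitting on $\{|x|>R\}$ the decomposition $\vec v^0(0)=c\,\vec\phi_{m_0}(0)+(\tilde v_0,0)$ with $|c|\lesssim R^{-\kappa}\|\partial_tf\|_{W^{'1+\kappa}_R}$ and $\|(\tilde v_0,0)\|_{\mathcal H_{\tilde R}}\lesssim\tilde R^{-1-\kappa}\|\partial_tf\|_{W^{'1+\kappa}_R}$ for all $\tilde R>R$; here $\vec\phi_{m_0}(0)=(|x|^{-N/2},0)$. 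For $N\equiv6\mod4$ with $f$ even in time one argues symmetrically, $\partial_tf$ being then odd in time; here $\vec\phi_{m_0}(0)=(0,|x|^{-N/2-1})$ and $\vec v^0(0)=c\,\vec\phi_{m_0}(0)+(0,\tilde v_1)$ on $\{|x|>R\}$.

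The crucial point is that the possibly nonzero resonant coefficient $c$ can simply be subtracted off: $\phi_{m_0}$ is itself a non-radiative free wave of the same time-parity as $v^0$, so $v:=v^0-c\,\phi_{m_0}$ still solves $\partial_t^2v-\Delta v=\partial_tf$ on $\{|x|>R+|t|\}$, is still non-radiative there, and now has data $(\tilde v_0,0)$, resp.\ $(0,\tilde v_1)$, whose nontrivial component belongs to $L^2_R\cap\dot H^1_R$, resp.\ $L^2_R$, with decay $\tilde R^{-1-\kappa}$ (and $\|\tilde v_0\|_{L^2_{\tilde R}}\lesssim\tilde R^{-\kappa}\|\partial_tf\|_{W^{'1+\kappa}_R}$ by the radial Sobolev embedding); this is exactly the place where retaining the resonance would force $u_1\notin L^2$, resp.\ $u_0\notin\dot H^1$. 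Next I integrate in time. For $N\equiv4\mod4$, let $\bar u$ solve \eqref{eq:inhomwave} with $\vec{\bar u}(0)=(0,\tilde v_0)$; as $f(0)=0$ one gets $\partial_t^2\bar u(0)=\Delta\bar u(0)+f(0)=0$, so $\partial_t\bar u$ solves $\partial_t^2w-\Delta w=\partial_tf$ with data $(\tilde v_0,0)$, whence $\partial_t\bar u=v$ on $\{|x|>R+|t|\}$ by the uniqueness part of Proposition~\ref{pr:well-posedness}. For $N\equiv6\mod4$, let $\bar u$ solve \eqref{eq:inhomwave} with $\vec{\bar u}(0)=(\hat u_0,0)$, where $\hat u_0:=\Delta^{-1}(\tilde v_1-f(0))$ is the radial solution on $\{|x|>R\}$ of $\Delta\hat u_0=\tilde v_1-f(0)$ that vanishes at infinity; then $\partial_t\bar u$ has data $(0,\Delta\hat u_0+f(0))=(0,\tilde v_1)$ and again $\partial_t\bar u=v$. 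In either case $\vec{\bar u}(0)$ has one more derivative than $\mathcal H_R$ and $\partial_tf\in L^1L^2$, so a localized persistence of regularity gives $\vec{\bar u}\in C^0(\Rb,(\dot H^2\times\dot H^1)_{R+|t|})$, and the gain of regularity for radiation profiles (Proposition~\ref{construction:pr:radiation} in its localized form, Lemma~\ref{lem:gainregradiation2}, together with Corollary~\ref{cor:inhomogeneousradiation}) shows that the profiles $G_\pm$ of $\bar u$ on $(R,\infty)$ are in $\dot H^1$ and those of $v=\partial_t\bar u$ are $\mp\partial_\rho G_\pm$; since $v$ is non-radiative for $|x|>R+|t|$ these vanish on $(R,\infty)$, so $G_\pm$ is constant there, hence $\equiv0$ as $G_\pm\in L^2$, i.e.\ $\bar u$ is non-radiative for $|x|>R+|t|$.

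It remains to meet the orthogonality conditions. Set $u_1:=\Pi^\perp_{L^2,R}\tilde v_0$, resp.\ $u_0:=\Pi^\perp_{\dot H^1,R}\hat u_0$, which satisfies \eqref{nonradforcingeven:orthogonalite}, resp.\ \eqref{nonradforcingeven:orthogonalite2}, and belongs to $L^2\cap\dot H^1$, resp.\ $\dot H^1\cap\dot H^2$, after a bounded extension across $\{|x|<R\}$. The solution $u$ of \eqref{eq:inhomwave} with data $(0,u_1)$, resp.\ $(u_0,0)$, and forcing $f$ differs from $\bar u$ by the free wave with data $(0,\Pi_{L^2,R}\tilde v_0)$, resp.\ $(\Pi_{\dot H^1,R}\hat u_0,0)$, which on $\{|x|>R\}$ is a linear combination of the $\vec\phi_m(0)$ and hence, by Classification~\ref{pr:nonradiativefree} and finite speed of propagation, is non-radiative for $|x|>R+|t|$; therefore so is $u$. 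Finally, \eqref{nonradforcingeven:energy}--\eqref{nonradforcingeven:energy2} follow from the bounds on $(\tilde v_0,\tilde v_1)$ and $c$ above, the elliptic estimate $\|\hat u_0\|_{\dot H^1_{\tilde R}}+\tilde R\|\Delta\hat u_0\|_{L^2_{\tilde R}}\lesssim\tilde R^{-\kappa}\big(\|\partial_tf\|_{W^{'1+\kappa}_R}+\sup_{\rho>R}\rho^{1+\kappa}\|f(0)\|_{L^2_\rho}\big)$ obtained by integrating $\Delta\hat u_0=\tilde v_1-f(0)$, and the fact that $\Pi_{L^2,R}$ and $\Pi_{\dot H^1,R}$ project onto fixed finite-dimensional spaces whose slowest-decaying element is $O(|x|^{-N/2-2})$, resp.\ $O(|x|^{-N/2-1})$, so that, using $\kappa<2$, the correction terms are controlled by the right-hand sides at the claimed rates. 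The main obstacle, and the only genuinely delicate step, is making rigorous --- outside the wave cone and for the inhomogeneous equation --- the identity relating the radiation profiles of $\bar u$ and of $\partial_t\bar u$, i.e.\ combining the localized gain of regularity of Lemma~\ref{lem:gainregradiation2} and Corollary~\ref{cor:inhomogeneousradiation} with persistence of regularity; the remaining verifications are routine bookkeeping of decay rates.
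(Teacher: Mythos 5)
Your proposal is correct and follows essentially the same route as the paper's own proof: differentiate the forcing once in time to flip its parity, apply Corollary~\ref{co:nonradiativeforcing} and Lemma~\ref{lem:nonradiativeforcinggain} to $\partial_t f$, subtract off the resonant direction $c\,\phi_{m_0}$, integrate back in time with the correctly chosen antiderivative data (via the elliptic problem $\Delta\hat u_0=\tilde v_1-f(0)$ when $N\equiv 6\bmod 4$), and transfer non-radiativity using the fact that the radiation profiles of $\partial_t u$ are $\mp\partial_\rho G_\pm$, concluding via $G_\pm\in L^2$ that $G_\pm\equiv 0$ on $(R,\infty)$. The one technical point worth flagging is that the relation between the radiation profiles of $\bar u$ and $\partial_t\bar u$ for the \emph{inhomogeneous} equation is not actually what Proposition~\ref{construction:pr:radiation} (a free-wave statement) asserts: the paper cites \cite{DuKeMe20} for precisely this transfer, and you should do so as well rather than invoking ``Proposition~\ref{construction:pr:radiation} in its localized form,'' which does not literally apply.
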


\begin{proof}

\noindent \textbf{Step 1}. \emph{The case $N\equiv 6\mod 4$}. We let $q= \| \pa_t f\|_{W_R^{'1+\kappa}}+\sup_{\tilde R>R}\tilde R^{1+\kappa} \| f(0)\|_{L^2_{\tilde R}}$ to ease notations. As $f$ is even in time, $\pa_t f$ is odd and we let $v$ be the solution to
\begin{equation}\label{id:nonradforcingeven1000}
\left\{ 
\begin{array}{l l} \pa_t^2 v-\Delta v=\pa_t f,\\
\vec v(0)=(0,v_1)\in \mathcal H,
\end{array}
\right.
\end{equation}
that is non-radiative for $|x|>R+|t|$ given by Corollary \ref{co:nonradiativeforcing}. By Lemma \ref{lem:nonradiativeforcinggain} (ii) there exists $c\in \mathbb R$ such that
\begin{equation}\label{id:nonradforcingeven2}
v_1(x)=\frac{c}{|x|^{N/2+1}}+\tilde v_1(x) \qquad \mbox{with} \qquad \forall \tilde R>R, \quad \| \tilde v_1\|_{L^2_{\tilde R}}\lesssim \tilde R^{-1-\kappa}q.
\end{equation}
Let then $\tilde v$ solve
\begin{equation}\label{id:nonradforcingeven1}
\left\{ 
\begin{array}{l l} \pa_t^2 \tilde v-\Delta \tilde v=\pa_t f,\\
\tilde v(0)=0, \quad \pa_t \tilde v(0,x)=\indic(|x|>R)\tilde v_1(x).
\end{array}
\right.
\end{equation}
By finite speed of propagation and \eqref{construction:id:phim0}, $\tilde v(t,x)=v(t,x)-c\phi_{m_0}(t,x)$ for all $|x|>R+|t|$. Hence $\tilde v $ is non-radiative for $|x|>R+|t|$ since both $v$ and $\phi_{m_0}$ are. 

We now define $w_0$ as follows. We define it first for $r>R$, as the unique solution to
\begin{equation}\label{id:nonradforcingeven8} 
\left\{ \begin{array}{l l}
& -\Delta w_0(r)=f(0,r)-\tilde v_1(r), \quad \forall r>R,\\
& w_0(R)=0
\end{array}
\right.
\end{equation}
that decays as $|x|\to \infty$ (such solution exists, as it is given by the formula \eqref{bd:hardyoutsidetech4} up to rescaling). Using the definition of $q$, \eqref{id:nonradforcingeven2}, one finds that:
\begin{equation}\label{id:nonradforcingeven100} 
\sup_{ \tilde R> R} \tilde R^{1+\kappa} \| \Delta w_0\|_{L^2_{\tilde R}} \lesssim q.
\end{equation}
By \eqref{id:nonradforcingeven8}, Lemma \ref{lem:Hardy} with $\kappa+1$ and \eqref{id:nonradforcingeven100}, we infer $\| \pa_{r}u\|_{L^2_{\tilde R\leq r \leq 2\tilde R}}\lesssim \tilde R^{-\kappa}$ for all $\tilde R\geq R$. After dyadic summation, we deduce:
\begin{equation}\label{id:nonradforcingeven7} 
\sup_{ \tilde R> R} \tilde R^{\kappa}\| w_0\|_{\dot H^1_{\tilde R}}\lesssim q.
\end{equation}
We now denote by $w_0$ an extension to the whole space of $w_0$ in a way such that
\begin{equation}\label{id:nonradforcingeven4} 
R^\kappa \| w_0\|_{\dot H^1}+R^{1+\kappa}\| w_0\|_{\dot H^2} \lesssim q.
\end{equation}
Note that by \eqref{id:nonradforcingeven7} and \eqref{id:nonradforcingeven100} such an extension always exists. We now let $w$ be the solution to
\begin{equation}\label{id:nonradforcingeven45}
\left\{ 
\begin{array}{l l} \pa_t^2 w-\Delta w= f,\\
\vec w(0)=(w_0,0)\in \mathcal H,
\end{array}
\right.
\end{equation}
By \eqref{id:nonradforcingeven4} we have $\vec w(0)\in \mathcal H$, and $f \in L^1L^2$ by assumption, so that by Corollary \ref{cor:inhomogeneousradiation}, $w$ has a radiation profile $G_+\in L^2$ as $t\to \infty$. As $(\pa_t w(0),\pa_{tt}w(0))=(0,\Delta w_0+f(0))\in \mathcal H$ by \eqref{id:nonradforcingeven4} (using $f(0)\in L^2$ as $f,\pa_t f \in L^1L^2$) and $\pa_t f\in L^1L^2$, we have that $\pa_t w$ is the solution to 
$$
\left\{ 
\begin{array}{l l} \pa_t^2 \pa_t w-\Delta \pa_t w= \pa_t f,\\
(\pa_t w(0),\pa_{t}^2 w(0))=(0,\Delta w(0)+f(0))\in \mathcal H.
\end{array}
\right.
$$
By \cite{DuKeMe20}, $\pa_t w$ admits as a radiation profile $-\pa_\rho G_+$ as $t\to \infty$. Due to \eqref{id:nonradforcingeven8} $\pa_{tt}w(0,x)=\tilde v_1(x)$ for $|x|>R$ so that $\pa_t w(t,x)=\tilde v(t,x)$ for all $|x|>R+|t|$ by finite speed of propagation. Since $\tilde v$ is non-radiative for $|x|>R+|t|$, so is $\pa_t w$, and hence $-\pa_\rho G_+(\rho)=0$ for all $\rho \geq R$. As $G_+\in L^2(\mathbb R)$ we deduce that $G_+(\rho)=0$ for all $\rho\geq R$, so that $\lim_{t\to \infty} \int_{r>R+t}|\nabla_{t,x}w(t)|^2dx=0$. Since moreover $w$ is even in time by \eqref{id:nonradforcingeven45} and as $f$ is even in time, then $w$ is non-radiative for $|x|>R+|t|$.

Let us finally define $u_0$. We first define it for $r>R$ as
\begin{equation} \label{id:nonradforcingeven22} 
u_0(r)= \Pi^\perp_{\dot H^1,R}(w_0)(r)  \quad \mbox{if }r> R.
\end{equation}
There exists $(c_{0,0},...,c_{0,l_0})_{0\leq l\leq l_0}\in \mathbb R^{l_0+1}$ such that
\begin{equation} \label{id:nonradforcingeven28} 
\forall |x|>R, \qquad u_0(x)-\sum_{0\leq l \leq l_0} \frac{c_{0,l}}{|x|^{N-2l-2}}=w_0(x)
\end{equation}
The two terms in the left-hand side of \eqref{id:nonradforcingeven28} being orthogonal in $\dot H^1_R$, we deduce by \eqref{id:nonradforcingeven4} and \eqref{bd:energyaFiscbfR2} that
\begin{equation} \label{id:nonradforcingeven25} 
\forall 0\leq l \leq l_0,\qquad |c_{0,l}|\lesssim R^{N/2-2l-1-\kappa}q.
\end{equation}
Injecting \eqref{id:nonradforcingeven25}, \eqref{id:nonradforcingeven7} and \eqref{id:nonradforcingeven100} into \eqref{id:nonradforcingeven28} one finds by a direct estimate, using that for all $0\leq l\leq l_0$ there holds $N/2-2l-1-\kappa\geq N/2-2l_0-1-\kappa=2-\kappa>0$ as $N\equiv 6\mod 4$:
\begin{equation}\label{id:nonradforcingeven26} 
\sup_{ \tilde R> R}( \tilde R^{\kappa}\| u_0\|_{\dot H^1_{\tilde R}}+\tilde R^{1+\kappa} \| \Delta u_0\|_{L^2_{\tilde R}})\lesssim q.
\end{equation}
We now define $u_0$ on the whole space $\mathbb R^N$ as any extension such that 
\begin{equation}\label{id:nonradforcingeven30} 
R^\kappa \| u_0\|_{\dot H^1}+R^{1+\kappa}\| u_0\|_{\dot H^2} \lesssim q,
\end{equation}
which exists by \eqref{id:nonradforcingeven26}. We eventually define $u$ as the solution to \eqref{eq:inhomwave} with data $(u_0,0)$. By \eqref{id:nonradforcingeven28}, \eqref{id:nonradforcingeven45} and finite speed of propagation we have $u=w+\sum_{0}^{l_0} c_{0,l} \phi_{2l}$ for all $|x|>R+|t|$ and hence $u$ is non-radiative for $|x|>R+|t|$ as both $w$ and $\phi_{2l}$ for $0\leq l \leq l_0$ are. The desired estimate and orthogonality \eqref{nonradforcingeven:energy2} and \eqref{nonradforcingeven:orthogonalite2} are consequences of \eqref{id:nonradforcingeven22}, \eqref{id:nonradforcingeven26} and \eqref{id:nonradforcingeven30}.\\

\noindent \textbf{Step 2}. \emph{The case $N\equiv 4\mod 4$.} The proof is very similar. It consists in defining $v$ as the solution to \eqref{id:nonradforcingeven1000} with data $(v_0,0)$ given by Corollary \ref{co:nonradiativeforcing}, then $\tilde v$ as the solution to \eqref{id:nonradforcingeven1} with data $(\tilde v_0,\tilde v_1)=(\indic (r\leq R)v_0(R)+\indic(r>R)v_0(r),0)$, then to directly set $w$ as the solution to \eqref{id:nonradforcingeven45} with data $(w_0,w_1)=(0,\tilde v_1)$ and finally to define $u$ as the solution to \eqref{eq:inhomwave} with data $u_0=0$ and $u_1=\indic (r\leq R)\Pi_{L^2,R}^\perp w_1(R)+\indic (r>R)\Pi^\perp_{L^2,R} w_1(r)$.

All estimates follow exactly as in Step 1, but are slightly easier since the definition of $w_1$ requires no elliptic equation like \eqref{id:nonradforcingeven8}, and since the extension procedures to $r<R$ are simpler. We therefore omit the details.

\end{proof}

We are now ready to give a single proof for Propositions \ref{pr:nonradiativeforcingmainodd} and \ref{pr:nonradiativeforcingmaineven}.

\begin{proof}[Proof of Propositions \ref{pr:nonradiativeforcingmainodd} and \ref{pr:nonradiativeforcingmaineven}]

For $R_0>0$ and a function $g$ we introduce $q(g,R_0)=\| g \|_{W_{R_0}^{'\kappa}}$ for $N$ odd, and $q(g,R_0)=\sup_{\tilde R>R_0+|t|}\tilde R^{1+\kappa} \| g(t)\|_{L^2_{\tilde R}}+\|\pa_t g \|_{W_{R_0}^{'1+\kappa}}$ for $N$ even. Since $\kappa>0$, we have $\| g\|_{W^{'\kappa}_{R_0}}\lesssim \sup_{\tilde R>R_0+|t|}\tilde R^{1+\kappa} \| g(t)\|_{L^2_{\tilde R}}$ by a direct estimate, and hence:
\begin{equation}\label{bd:nonradiativeforcingmainprooftech1}
\| g\|_{W^{'\kappa}_{R_0}}\lesssim q(g,R_0).
\end{equation}

\noindent \textbf{Step 1}. \emph{Existence and decay at initial time}. We claim that there exists $C>0$ such that the following holds true for any $R_0>0$. Assume $g\in L^1L^2$ with $\pa_t g \in L^1L^2$ and $q(g,R_0)<\infty$. Then there exists a solution $w$ to \eqref{eq:inhomwave}
\begin{equation}\label{bd:nonradiativeforcingmainprooftech3}
\left\{ 
\begin{array}{l l} \pa_t^2 w-\Delta w=g,\\
\vec w(0)=(w_0,w_1)\in \mathcal H.
\end{array}
\right.
\end{equation}
with
\begin{align} \label{bd:nonradiativeforcingmainproof1}
& \forall \tilde R\geq R_0, \qquad \| \vec w(0) \|_{\mathcal H_{\tilde R}}\leq C\tilde R^{-\min(\kappa,\kappa_N)} R^{\min(\kappa_N,\kappa)-\kappa} q(g,R_0),\\
\label{id:nonradiativeforcingmainproof2}
& (w_0,w_1)=\Pi^\perp_{\mathcal H,R_0}((w_0,w_1)),\\
\label{id:nonradiativeforcingmainproof200} & \| w_0,w_1\|_{\mathcal H}\lesssim R^{-\kappa}q(g,R_0),
\end{align}
such that $w$ is non-radiative for $|x|>R_0+|t|$. We now prove this claim. If the dimension $N$ is odd, we take $w$ given by Corollary \ref{co:nonradiativeforcing}, and \eqref{bd:nonradiativeforcingmainproof1}, \eqref{id:nonradiativeforcingmainproof2} and \eqref{id:nonradiativeforcingmainproof200} are consequences of \eqref{bd:nonradiativeforcingmainprooftech1}, \eqref{pr:nonradiativeforcinggain1}, \eqref{id:nonradiativeforcingenergy} and \eqref{id:nonradiativeforcingortho}. This proves the claim of Step 1 for odd dimensions $N$.

If the dimension $N$ is even we decompose $w=w_++w_-$, recalling \eqref{notation:id:fpm}, and look for $w_\pm$ solving
\begin{equation}\label{bd:nonradiativeforcingmainprooftech4}
\left\{ 
\begin{array}{l l} \pa_t^2 w_\pm-\Delta w_\pm=g_\pm,\\
\vec w_+(0)=(w_{0},0)\in \mathcal H \mbox{ and }\vec w_-(0)=(0,w_1)\in \mathcal H .
\end{array}
\right. 
\end{equation}
Observe that $g_+ $ is even with time, while $g_-$ is odd in time, and by \eqref{bd:nonradiativeforcingmainprooftech1} that:
\begin{equation}\label{bd:nonradiativeforcingmainprooftech2}
\sum_\pm (\| g_\pm\|_{W_{R_0}^{'\kappa}}+ \| \pa_t g_\pm\|_{W_{R_0}^{'\kappa}}+\sup_{\tilde R>R_0+|t|}\| g_\pm(t)\|_{L^2_{\tilde R}} ) \lesssim q(g,R_0)
\end{equation}
If $N\equiv 4\mod 4$, we take $w_+$ given by Corollary \ref{co:nonradiativeforcing}, and $w_-$ given by Lemma \ref{lem:nonradforcingeven}. Then the desired estimate and orthogonality \eqref{bd:nonradiativeforcingmainproof1}, \eqref{id:nonradiativeforcingmainproof2} and \eqref{id:nonradiativeforcingmainproof200} are consequences of \eqref{pr:nonradiativeforcinggain1}, \eqref{id:nonradiativeforcingenergy}, \eqref{id:nonradiativeforcingortho} and \eqref{bd:nonradiativeforcingmainprooftech2} for $w_0$, and of \eqref{nonradforcingeven:energy}, \eqref{nonradforcingeven:orthogonalite} and $\kappa>\kappa_N$ for $w_1$.

If $N\equiv 6\mod 4$, we take $w_-$ given by Corollary \ref{co:nonradiativeforcing}, and $w_+$ given by Lemma \ref{lem:nonradforcingeven}. Then the desired estimate and orthogonality \eqref{bd:nonradiativeforcingmainproof1} and \eqref{id:nonradiativeforcingmainproof2} are consequences of \eqref{pr:nonradiativeforcinggain1}, \eqref{id:nonradiativeforcingenergy}, \eqref{id:nonradiativeforcingortho} and \eqref{bd:nonradiativeforcingmainprooftech2} for $w_1$, and of \eqref{nonradforcingeven:energy2}, \eqref{nonradforcingeven:orthogonalite2} and $\kappa>\kappa_N$ for $w_0$.

In both cases $N\equiv 4\mod4$ and $N\equiv 6\mod 4$ we have that $w=w_++w_-$ solves \eqref{bd:nonradiativeforcingmainprooftech3} because of \eqref{bd:nonradiativeforcingmainprooftech4} and of $g=g_++g_-$, and that $w$ is indeed non-radiative for $|x|>R_0+|t|$ as both $w_+$ and $w_-$ are. This proves the claim of Step 1 for even dimensions.\\

\noindent \textbf{Step 2}. \emph{Decay in the exterior cone and end of the proof}. We define $u$ as the solution $u=w$ to \eqref{eq:inhomwave} provided by Step 1 with $R_0=R$ and forcing $g=f$. The desired orthogonalities \eqref{id:nonradiativeforcingortho2odd} and \eqref{id:nonradiativeforcingortho2even}, as well as the desired estimate for $\| \vec u(0)\|_{\mathcal H}$ in \eqref{bd:nonradiativeforcingmain1odd} and \eqref{bd:nonradiativeforcingmain1even}, follow as immediate consequences of \eqref{id:nonradiativeforcingmainproof2} and \eqref{id:nonradiativeforcingmainproof200}. The desired estimates for $\| u_1\|_{\dot H^1}$ if $N\equiv 4 \mod 4$ or $\| u_0\|_{\dot H^2}$ if $N\equiv 6\mod 4$ in \eqref{bd:nonradiativeforcingmain1even} follow from \eqref{nonradforcingeven:energy} and \eqref{nonradforcingeven:energy2}.

There only remains to prove the estimate for $\| u\|_{W^{\kappa_N}_R}$ in \eqref{bd:nonradiativeforcingmain1odd} and \eqref{bd:nonradiativeforcingmain1even}. To that aim, we fix $\tilde t \in \mathbb R$ and $\tilde R>0$ such that $\tilde R>R+|\tilde t|$.

If $\tilde R\geq R+2|\tilde t|$, applying energy estimates and finite speed of propagation to \eqref{eq:inhomwave}, using \eqref{bd:nonradiativeforcingmainproof1}, we obtain:
\begin{align}
\nonumber \| \vec u(\tilde t)\|_{\mathcal H_{\tilde R}} & \lesssim \| \vec u(0)\|_{\mathcal H_{\tilde R-|\tilde t|}}+\| \indic (|x|\geq \tilde R-|\tilde t|+|t|) f\|_{L^1L^2} \\
\nonumber & \lesssim R^{\min(\kappa,\kappa_N)-\kappa} (\tilde R-|\tilde t|)^{-\min(\kappa,\kappa_N)} q(f,R)+\tilde R^{-\kappa} \| f\|_{W^{'\kappa}_R}\\
\label{bd:nonradiativeforcingmainproof3} &\lesssim  \tilde R^{-\min(\kappa,\kappa_N)}R^{\min(\kappa,\kappa_N)-\kappa} q(f,R)
\end{align}
where we used that in this case $ \tilde R \leq 2(\tilde R-|\tilde t|)$ and $\tilde R\geq R$.

If now $R+|\tilde t|\leq \tilde R\leq R+2|\tilde t|$, we let $v$ be the solution to
$$
\left\{ 
\begin{array}{l l} \pa_t^2 v-\Delta v= f(\tilde t+t),\\
\vec v(0)=(v_0,v_1)\in \mathcal H.
\end{array}
\right.
$$
given by Step 1 with $g(\cdot)=f(\tilde t+\cdot)$ and $R_0=\tilde R$. Then $v$ is non-radiative for $|x|>\tilde R+|t|$ and satisfies
\begin{equation}\label{bd:nonradiativeforcingmainproof2}
\| \vec v(0) \|_{\mathcal H_{\tilde R}}\leq C \tilde R^{-\kappa}  q(f(\tilde t+\cdot),\tilde R)\leq C \tilde R^{-\min(\kappa,\kappa_N)} R^{\min(\kappa,\kappa_N)-\kappa} q(f,R).
\end{equation}
Consider $\bar u(t)=u(\tilde t+t)-v(t)$. Then $\bar u$ solves $\pa_t^2\bar u-\Delta \bar u=0$, and is non-radiative for $|x|>\tilde R+|t|$. By Classification \ref{pr:nonradiativefree}, there exists $\cbf [\tilde t,\tilde R]\in \mathbb R^{m_0+1}$ such that
\begin{equation}\label{bd:nonradiativeforcingmainproof4}
\vec a_F[\cbf [\tilde t,\tilde R]](0)=\vec u(\tilde t)-\vec v(0).
\end{equation}
Injecting \eqref{bd:energyaFiscbfR2}, \eqref{bd:nonradiativeforcingmainproof3} and \eqref{bd:nonradiativeforcingmainproof2} in \eqref{bd:nonradiativeforcingmainproof4} we get
\begin{align}
\nonumber  | \cbf [\tilde t,\tilde R]|_{R+2|\tilde t|}&\approx \| \vec a_F[\cbf [\tilde t,\tilde R]](0)\|_{\mathcal H_{R+2|\tilde t|}} \\
\nonumber &\lesssim  (R+2|\tilde t|)^{-\min(\kappa,\kappa_N)}R^{\min(\kappa,\kappa_N)-\kappa} q(f,R)+ \tilde R^{-\min(\kappa,\kappa_N)} R^{\min(\kappa,\kappa_N)-\kappa} q(f,R)\\
 \label{bd:nonradiativeforcingmainproof100}&\lesssim \tilde R^{-\min(\kappa,\kappa_N)}R^{\min(\kappa,\kappa_N)-\kappa} q(f,R)
\end{align}
where we used $\tilde R\approx (R+2|\tilde t|)$. Using \eqref{bd:energyaFiscbfR2} again, with \eqref{bd:nonradiativeforcingmainproof100} and $\tilde R\approx (R+2|\tilde t|)$ so that $| \cbf [\tilde t,\tilde R]|_{\tilde R}\approx | \cbf [\tilde t,\tilde R]|_{R+2|\tilde t|}$ we get:
\begin{equation}\label{bd:nonradiativeforcingmainproof101}
\| \vec a_F[\cbf [\tilde t,\tilde R]](0)\|_{\mathcal H_{\tilde R}}\lesssim \tilde R^{-\min(\kappa,\kappa_N)}R^{\min(\kappa,\kappa_N)-\kappa} q(f,R)
\end{equation}
Reinjecting \eqref{bd:nonradiativeforcingmainproof2} and \eqref{bd:nonradiativeforcingmainproof101} in \eqref{bd:nonradiativeforcingmainproof4} shows:
\begin{equation} \label{bd:nonradiativeforcingmainproof300}
 \| \vec u(\tilde t)\|_{\mathcal H_{\tilde R}}\lesssim  \tilde R^{-\min(\kappa,\kappa_N)} R^{\min(\kappa,\kappa_N)-\kappa} q(f,R).
\end{equation}
The inequalities \eqref{bd:nonradiativeforcingmainproof3} and \eqref{bd:nonradiativeforcingmainproof300} imply $\| u\|_{W^{\min(\kappa,\kappa_n)}_R}\lesssim R^{\min(\kappa,\kappa_N)-\kappa}q(f,R)$ ending the proof of \eqref{bd:nonradiativeforcingmain1odd} and \eqref{bd:nonradiativeforcingmain1even}.

\end{proof}

As a by-product of the strategy of the proof of Propositions \ref{pr:nonradiativeforcingmainodd} and \ref{pr:nonradiativeforcingmaineven}, we can refine the channels of energy estimate of Proposition \ref{pr:channels}:

\begin{lemma}[Weighted channels of energy]

Assume $N\geq 4$ is even. For any $0\leq \kappa<1$, there exists $C>0$ such that the following holds true for all $R>0$. Assume $u$ is a solution to \eqref{eq:inhomwave} with $f\in L^1L^2 \cap W^{'\kappa}_R$ that is non-radiative for $|x|>R+|t|$. Assume moreover that $u$ and $f$ are even in time if $N\equiv 4\mod 4$ and odd in time if $N\equiv 6\mod 4$. Then:
\begin{equation} \label{bd:weightedchannels}
\| u\|_{\tilde W^\kappa_R}\leq \left\{ \begin{array}{l l} C \| f\|_{W^{'\kappa}_R}+R^\kappa \| \Pi_{\dot H^1,R}u_0\|_{\dot H^1_R} \qquad \mbox{if }N\equiv 4 \mod 4,\\
C \| f\|_{W^{'\kappa}_R}+R^\kappa \| \Pi_{L^2,R} u_1\|_{L^2_R} \qquad \mbox{if }N\equiv 6 \mod 4.
\end{array} \right.
\end{equation}

\end{lemma}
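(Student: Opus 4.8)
For $N\equiv 4\mod 4$ the goal is to bound $\tilde R^\kappa\|u(t)\|_{\dot H^1_{\tilde R}}$ uniformly over $t\in\Rb$ and $\tilde R>R+|t|$; the case $N\equiv 6\mod 4$ is identical after replacing $(\dot H^1_{\tilde R},u(t),\text{even in time},\Pi_{\dot H^1,R})$ by $(L^2_{\tilde R},\pa_t u(t),\text{odd in time},\Pi_{L^2,R})$. The time symmetry enters twice. First, $u$ being even in time forces $u_1=0$ on $\{|x|>R\}$, so $\|\vec u(0)\|_{\Hc_{\tilde R}}=\|u_0\|_{\dot H^1_{\tilde R}}$ for $\tilde R\geq R$. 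Second, and crucially, rather than the time-translate $u(t+\cdot)$ — whose forcing $f(t+\cdot)$ is no longer even in time, so Corollary \ref{co:nonradiativeforcing} does not apply in even dimension — I work with its even-in-$s$ part $U(s):=\tfrac12(u(t+s)+u(t-s))$. Then $\vec U(0)=(u(t),0)$ on $\{|x|>\tilde R\}$, so $\|u(t)\|_{\dot H^1_{\tilde R}}=\|\vec U(0)\|_{\Hc_{\tilde R}}$; $U$ solves $\pa_s^2U-\Delta U=F$ with $F(s):=\tfrac12(f(t+s)+f(t-s))$ even in $s$; $U$ is non-radiative for $|x|>\tilde R+|s|$ (since $u$ is non-radiative for $|x|>R+|\tau|$ and $\{|x|>\tilde R+|s|\}\subset\{|x|>R+|t\pm s|\}$ because $\tilde R\geq R+|t|$); and a short computation from \eqref{id:defW'kappa} gives $\|F\|_{W^{'\kappa}_{\tilde R}}\lesssim\|f\|_{W^{'\kappa}_R}$. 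I treat $t=0$, then $t\neq0$ with $\tilde R\geq R+2|t|$, then $t\neq0$ with $R+|t|\leq\tilde R\leq R+2|t|$.

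\emph{Case $t=0$.} Corollary \ref{co:nonradiativeforcing} applied at scale $R$ to $f$ (legitimate since $f$ is even in time) yields a non-radiative $\hat v$ with $\pa_t^2\hat v-\Delta\hat v=f$, $\vec{\hat v}(0)=\Pi^\perp_{\Hc,R}\vec{\hat v}(0)$ and $\|\vec{\hat v}(0)\|_{\Hc_R}\lesssim\|\indic(|x|>R+|t|)f\|_{L^1L^2}\lesssim R^{-\kappa}\|f\|_{W^{'\kappa}_R}$; moreover $\|\vec{\hat v}(0)\|_{\Hc_{\tilde R}}\lesssim\tilde R^{-\kappa}\|f\|_{W^{'\kappa}_R}$ for all $\tilde R>R$, by Lemma \ref{lem:nonradiativeforcinggain}(i) when $0<\kappa<\kappa_N$ and by monotonicity when $\kappa=0$. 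Now $u-\hat v$ solves the free wave equation and is non-radiative for $|x|>R+|t|$, so by Classification \ref{pr:nonradiativefree}, $u-\hat v=a_F[\cbf']$ on $\{|x|>R+|t|\}$, with $\cbf'$ of vanishing odd-index components because $u$ and $\hat v$ are even in time. Applying $\Pi_{\Hc,R}$ to $\vec u(0)=\vec a_F[\cbf'](0)+\vec{\hat v}(0)$ and using $\Pi_{\Hc,R}\vec a_F[\cbf'](0)=\vec a_F[\cbf'](0)$, $\Pi_{\Hc,R}\vec{\hat v}(0)=0$ and $u_1=0$ on $\{|x|>R\}$ gives $\vec a_F[\cbf'](0)=(\Pi_{\dot H^1,R}u_0,0)$, hence $|\cbf'|_R\approx\|\Pi_{\dot H^1,R}u_0\|_{\dot H^1_R}$ by \eqref{bd:energyaFiscbfR2}. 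Finally, by \eqref{add_decay_aF} (with $\kappa_N=1>\kappa$) and the bound on $\vec{\hat v}(0)$, for $\tilde R>R$ one has $\tilde R^\kappa\|u_0\|_{\dot H^1_{\tilde R}}\le\tilde R^\kappa(\|\nabla_{t,x}a_F[\cbf'](0)\|_{L^2_{\tilde R}}+\|\vec{\hat v}(0)\|_{\Hc_{\tilde R}})\lesssim\tilde R^{\kappa-\kappa_N}R^{\kappa_N}|\cbf'|_R+\|f\|_{W^{'\kappa}_R}\lesssim R^\kappa\|\Pi_{\dot H^1,R}u_0\|_{\dot H^1_R}+\|f\|_{W^{'\kappa}_R}$, using $\tilde R^{\kappa-\kappa_N}\le R^{\kappa-\kappa_N}$.

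\emph{Case $t\neq0$.} If $\tilde R\geq R+2|t|$: by finite speed of propagation and the energy estimate, $\|\vec u(t)\|_{\Hc_{\tilde R}}\lesssim\|\vec u(0)\|_{\Hc_{\tilde R-|t|}}+\|\indic(|x|>(\tilde R-|t|)+|s|)f\|_{L^1L^2}$, and since $\tilde R-|t|\geq R$ the first term is $\|u_0\|_{\dot H^1_{\tilde R-|t|}}\lesssim(\tilde R-|t|)^{-\kappa}(\|f\|_{W^{'\kappa}_R}+R^\kappa\|\Pi_{\dot H^1,R}u_0\|_{\dot H^1_R})$ by the case $t=0$, the second is $\lesssim(\tilde R-|t|)^{-\kappa}\|f\|_{W^{'\kappa}_R}$, and $\tilde R-|t|\approx\tilde R$, so $\tilde R^\kappa\|u(t)\|_{\dot H^1_{\tilde R}}\lesssim\|f\|_{W^{'\kappa}_R}+R^\kappa\|\Pi_{\dot H^1,R}u_0\|_{\dot H^1_R}$. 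If $R+|t|\leq\tilde R\leq R+2|t|$: set $R'=R+2|t|$ so $\tilde R\leq R'\leq2\tilde R$ and the previous sub-case at radius $R'$ gives $\|\vec u(t)\|_{\Hc_{R'}}\lesssim(R')^{-\kappa}(\|f\|_{W^{'\kappa}_R}+R^\kappa\|\Pi_{\dot H^1,R}u_0\|_{\dot H^1_R})$. Apply Corollary \ref{co:nonradiativeforcing} at scale $\tilde R$ to the even-in-$s$ forcing $F$: one gets an even-in-$s$ non-radiative $\hat v$ of $\pa_s^2\hat v-\Delta\hat v=F$ with $\vec{\hat v}(0)=\Pi^\perp_{\Hc,\tilde R}\vec{\hat v}(0)$ and $\|\vec{\hat v}(0)\|_{\Hc_\rho}\lesssim\rho^{-\kappa}\|f\|_{W^{'\kappa}_R}$ for $\rho\geq\tilde R$ (Lemma \ref{lem:nonradiativeforcinggain}(i)). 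Then $U-\hat v$ is free and non-radiative for $|x|>\tilde R+|s|$, so $U-\hat v=a_F[\cbf']$ with $\cbf'$ even; evaluating $\vec a_F[\cbf'](0)=\vec U(0)-\vec{\hat v}(0)$ at scale $R'$ and using $\|\vec U(0)\|_{\Hc_{R'}}=\|u(t)\|_{\dot H^1_{R'}}\le\|\vec u(t)\|_{\Hc_{R'}}$ gives $|\cbf'|_{R'}\lesssim(R')^{-\kappa}(\|f\|_{W^{'\kappa}_R}+R^\kappa\|\Pi_{\dot H^1,R}u_0\|_{\dot H^1_R})$; since $|\cbf'|_{\tilde R}\approx|\cbf'|_{R'}$ (comparable scales), $\|u(t)\|_{\dot H^1_{\tilde R}}=\|\vec U(0)\|_{\Hc_{\tilde R}}\le\|\vec a_F[\cbf'](0)\|_{\Hc_{\tilde R}}+\|\vec{\hat v}(0)\|_{\Hc_{\tilde R}}\lesssim|\cbf'|_{\tilde R}+\tilde R^{-\kappa}\|f\|_{W^{'\kappa}_R}\lesssim\tilde R^{-\kappa}(\|f\|_{W^{'\kappa}_R}+R^\kappa\|\Pi_{\dot H^1,R}u_0\|_{\dot H^1_R})$. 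Taking suprema over $t$ and $\tilde R$ across the three cases yields \eqref{bd:weightedchannels}.

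The main obstacle is the last sub-case, $R+|t|\leq\tilde R\leq R+2|t|$: a plain backward energy estimate there loses an unbounded factor $(\tilde R/(\tilde R-|t|))^\kappa$, so the decay at scale $\tilde R$ must instead be re-derived from the non-radiative structure via Classification \ref{pr:nonradiativefree} and Corollary \ref{co:nonradiativeforcing}; and the latter requires a parity-symmetric forcing, which a time translation destroys in even dimension. Replacing $u(t+\cdot)$ by its even (resp. odd) part $U$ repairs the parity while still carrying exactly the component of $\vec u(t)$ measured by the norm $\tilde W^\kappa_R$ — this being the only new ingredient, the remaining estimates are those already developed for Propositions \ref{pr:nonradiativeforcingmainodd}--\ref{pr:nonradiativeforcingmaineven}.
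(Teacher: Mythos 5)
Your proof is correct and follows essentially the same route as the paper's (which simply remarks that one proceeds as in Propositions~\ref{pr:nonradiativeforcingmainodd}--\ref{pr:nonradiativeforcingmaineven}, but using only Corollary~\ref{co:nonradiativeforcing} and the analogue of Lemma~\ref{lem:nonradiativeforcinggain}(i)): decay at $t=0$ via Corollary~\ref{co:nonradiativeforcing}, Lemma~\ref{lem:nonradiativeforcinggain}(i) and Classification~\ref{pr:nonradiativefree}, then energy estimate for $\tilde R\geq R+2|t|$, then Corollary~\ref{co:nonradiativeforcing} at scale $\tilde R$ plus Classification~\ref{pr:nonradiativefree} for $R+|t|\leq\tilde R\leq R+2|t|$. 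Your introduction of the even-in-$s$ part $U(s)=\tfrac12(u(t+s)+u(t-s))$ is a clean way of making explicit the parity bookkeeping that the paper handles implicitly through its decomposition $g=g_++g_-$ in Step~1 of Proposition~\ref{pr:nonradiativeforcingmaineven}, and of explaining why only the ``good'' half (the one Corollary~\ref{co:nonradiativeforcing} controls) is needed for $\tilde W^\kappa_R$.
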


\begin{proof}

The proof of the Lemma follows the same strategy as that above of Propositions \ref{pr:nonradiativeforcingmainodd} and \ref{pr:nonradiativeforcingmaineven}. It is actually much simpler since by definition the norm $\tilde W^\kappa_R$ only concerns the half part of $\vec u$ for which Corollary \ref{co:nonradiativeforcing} can be applied.

The first step is to show that if $f\in W^{'\kappa}_R$, then $u$ be given by Corollary \ref{co:nonradiativeforcing} enjoys:
\begin{equation}\label{construction:bd:weightedchannelstech}
\forall \tilde R>R, \quad \| (\tilde u_0,\tilde u_1)\|_{\mathcal H_R}\lesssim R^{-\kappa} \| f\|_{W^{'\kappa}_R}.
\end{equation}
The proof of \eqref{construction:bd:weightedchannelstech} is exactly as that of Lemma \ref{lem:nonradiativeforcinggain}, but simpler since $0<\kappa<\kappa_N=1$. Then, one proceeds exactly as in the proof of Propositions \ref{pr:nonradiativeforcingmainodd} and \ref{pr:nonradiativeforcingmaineven}, but using only Corollary \ref{co:nonradiativeforcing} and \eqref{construction:bd:weightedchannelstech}. We omit the details.

\end{proof}

\subsection{Construction of small nonlinear non-radiative solutions}

In this section we construct non-radiative solutions to
\begin{equation} \label{eq:nonlinearwave2}
\left\{ 
\begin{array}{l l} \pa_t^2 u-\Delta u=\varphi(u),\\
\vec u(0)=(u_0,u_1)\in \mathcal H
\end{array}
\right.
\end{equation}
with small energy in the exterior cone $|x|>1+|t|$ as perturbations of the non-radiative free waves $a_F$ given by \eqref{id:aF}. In \eqref{eq:nonlinearwave2} we write $\varphi(u)=\varphi(x,u)$ to ease notations, where $\varphi$ is given by \eqref{eq:nonlinearityanalytic} or \eqref{eq:nonlinearitypower}. We recall $\delta=1$ (resp. $\delta=\frac{N-2}{4}$) if $\varphi $ is of the form \eqref{eq:nonlinearityanalytic} (resp. of the form \eqref{eq:nonlinearitypower}).

\begin{proposition} \label{pr:constructionnonradia}

Let $\varphi$ be given by either \eqref{eq:nonlinearityanalytic}, or \eqref{eq:nonlinearitypower} if $N$ is odd or $N=4,6$. There exists $\epsilon>0$ such that, for any $\cbf \in \mathbb R^{m_0+1}$ with $|\cbf|\leq \epsilon$, there exists $a[\cbf]=a_F[\cbf]+\tilde a[\cbf]$ a solution to \eqref{eq:nonlinearwave2} that is non-radiative for $|x|>1+|t|$ and such that
\begin{align} 
\label{bd:tildeaWkappaN}
& \| ( a (0),\pa_t a(0))\|_{\mathcal H}\lesssim |\cbf|,\quad \| \tilde a \|_{W^{\kappa_N}_1}\lesssim |\cbf|^{1+\delta}.\\
\label{id:Pia0} & \Pi_{\mathcal H,1} \vec a (0)=\vec a_F[\cbf](0),
\end{align}
Moreover, the map $\cbf \mapsto (\tilde a(0),\pa_t \tilde a(0),\pa_r \pa_t \tilde a(0))$ if $N\equiv 4\mod 4$ (resp. the map $\cbf \mapsto (\tilde a(0),\pa_t \tilde a(0),\pa_{tt} \tilde a(0))$ if $N\equiv 6\mod 4$) is differentiable from a neighbourhood of the origin in $\mathbb R^{m_0+1}$ to $\dot H^1_1\times L^2_1\times L^2_1$ and the norm of its differential is $\lesssim |\cbf|^\delta$.

\end{proposition}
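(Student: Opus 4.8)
The plan is to set up a fixed-point argument for the remainder $\tilde a$ on the region $\{|x|>1+|t|\}$, using the linear non-radiative solving operators of Propositions \ref{pr:nonradiativeforcingmainodd} and \ref{pr:nonradiativeforcingmaineven} as the linear part of the scheme. Write $a=a_F[\cbf]+\tilde a$; then $\tilde a$ must solve
\begin{equation}\label{eq:fixedpointplan}
\pa_t^2\tilde a-\Delta\tilde a=\varphi(a_F[\cbf]+\tilde a),
\end{equation}
be non-radiative for $|x|>1+|t|$, and satisfy $\Pi_{\mathcal H,1}\vec{\tilde a}(0)=0$. Define a map $\Phi:\tilde a\mapsto$ the unique non-radiative solution (orthogonal to the free non-radiative data, i.e. in the range of $\Pi^\perp_{\mathcal H,1}$) of $\pa_t^2 w-\Delta w=\varphi(a_F[\cbf]+\tilde a)$ provided by Proposition \ref{pr:nonradiativeforcingmainodd} (odd $N$) or \ref{pr:nonradiativeforcingmaineven} (even $N$) with $R=1$ and an appropriate choice of $\kappa$. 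The ball in which one runs the contraction is $\{\|\tilde a\|_{W^{\kappa_N}_1}\lesssim|\cbf|^{1+\delta}\}$; by \eqref{bd:SobolevW} this controls the pointwise size $|x|^{N/2-1}|\tilde a(t,x)|\lesssim|\cbf|^{1+\delta}$, while $a_F[\cbf]$ itself satisfies $|x|^{N/2-1}|a_F[\cbf]|\lesssim|\cbf|$ for $|x|>1+|t|$ from \eqref{id:aF}, so the argument of $\varphi$ stays small.

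First I would verify that the forcing term $\varphi(a_F[\cbf]+\tilde a)$ has exactly the decay/regularity required to feed into Proposition \ref{pr:nonradiativeforcingmainodd}/\ref{pr:nonradiativeforcingmaineven}. The key point is a scaling count: with $\varphi$ of the form \eqref{eq:nonlinearityanalytic} the leading term is $\sim |x|^{-2}\,(a_F)^2$, which from $|a_F|\lesssim |\cbf|\,|x|^{-N/2+1}$ decays like $|x|^{-N}|\cbf|^2$, i.e.\ with a gain of one power of $|x|$ beyond the free-wave decay $|x|^{-N/2-1}$ carried by $\vec\phi_{m_0}$; this is precisely the source of $\delta=1$, and of $\kappa=\kappa_N+1-0$ being admissible in Proposition \ref{pr:nonradiativeforcingmainodd}, and of the $\pa_t$-regularity gain needed in Proposition \ref{pr:nonradiativeforcingmaineven}. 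For $\varphi(u)=|u|^{\frac4{N-2}}u$ one gets instead a gain governed by the exponent $\frac4{N-2}$, giving $\delta=\frac4{N-2}$ (as stated — I note the excerpt writes $\frac{N-2}{4}$ in one place, which I would treat as a typo and use $\frac4{N-2}$ consistently). For the even-dimensional case one must also check that $\pa_t\varphi(a_F+\tilde a)=\varphi'(a_F+\tilde a)\pa_t(a_F+\tilde a)$ lies in $W^{'1+\kappa}_1\cap L^1L^2$ with the required bound, and that $\sup_{\tilde R}\tilde R^{1+\kappa}\|\varphi(a_F+\tilde a)(t)\|_{L^2_{\tilde R}}<\infty$; both follow from the pointwise decay bounds on $a_F$, $\tilde a$, $\pa_t a_F$, $\pa_t\tilde a$ combined with the weighted Strichartz control of Lemma \ref{lem:weighted_Strichartz} (analytic case) or Lemma \ref{lem:lipschitz}. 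Then the output bound \eqref{bd:nonradiativeforcingmain1odd}/\eqref{bd:nonradiativeforcingmain1even} gives $\|\Phi(\tilde a)\|_{W^{\kappa_N}_1}\lesssim q(\varphi(a_F+\tilde a),1)\lesssim |\cbf|^{1+\delta}$ when $\|\tilde a\|_{W^{\kappa_N}_1}\lesssim|\cbf|^{1+\delta}$, so $\Phi$ maps the ball to itself; the Lipschitz/contraction estimate on $\Phi(\tilde a_1)-\Phi(\tilde a_2)$ comes from the same linear bound applied to $\varphi(a_F+\tilde a_1)-\varphi(a_F+\tilde a_2)$, estimated via Lemma \ref{lem:lipschitz} (which contributes a small factor $\lesssim|\cbf|^\delta\ll1$). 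This produces $\tilde a=a[\cbf]-a_F[\cbf]$ with \eqref{bd:tildeaWkappaN} and, since $\Phi(\tilde a)$ is in the range of $\Pi^\perp_{\mathcal H,1}$, also \eqref{id:Pia0}.

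For differentiability of $\cbf\mapsto(\tilde a(0),\pa_t\tilde a(0),\pa_r\pa_t\tilde a(0))$ (or with $\pa_{tt}$ in place of $\pa_r\pa_t$ when $N\equiv6\bmod4$), I would apply the implicit function theorem / differentiate the fixed-point relation $\tilde a=\Phi_{\cbf}(\tilde a)$ in the Banach space whose norm is $\|\cdot\|_{W^{\kappa_N}_1}$ together with the extra $\dot H^1$ (resp.\ $\dot H^2$) control on $\pa_t\tilde a(0)$ (resp.\ $\tilde a(0)$) supplied by the last terms in \eqref{bd:nonradiativeforcingmain1even}; the map $(\cbf,\tilde a)\mapsto\tilde a-\Phi_\cbf(\tilde a)$ is $C^1$ because $\cbf\mapsto a_F[\cbf]$ is linear and $\varphi$ is analytic (or $C^1$ as a Nemytskii-type map in the relevant Strichartz/energy spaces via Lemma \ref{lem:lipschitz}), and the partial differential in $\tilde a$ is $\mathrm{Id}-D_{\tilde a}\Phi_\cbf$ with $\|D_{\tilde a}\Phi_\cbf\|\lesssim|\cbf|^\delta<1$, hence invertible. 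The bound $\lesssim|\cbf|^\delta$ on the differential then follows by differentiating \eqref{eq:fixedpointplan} in $\cbf$: $D_\cbf\tilde a$ solves a linear non-radiative problem with forcing $\varphi'(a_F+\tilde a)\,D_\cbf(a_F+\tilde a)$, whose $W^{'\kappa}$-norm carries a factor $\|\varphi'(a_F+\tilde a)\|\lesssim|\cbf|^\delta$ times the $\mathcal O(1)$ contribution of $D_\cbf a_F$, and the linear solving operator is bounded.

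\textbf{Main obstacle.} The delicate part is not the contraction itself but checking, in the even-dimensional cases, that the nonlinear forcing meets the precise hypotheses of Proposition \ref{pr:nonradiativeforcingmaineven} — in particular that $\pa_t\varphi(a_F+\tilde a)\in W^{'1+\kappa}_1$ with $\kappa$ strictly below $\kappa_N=1$, which forces a careful scaling bookkeeping of the worst term $\sim|x|^{-2}a_F\,\pa_t a_F$ (and of the $\tilde a$-quadratic and higher terms) and relies on the resonant direction $\phi_{m_0}$ not being excited by the quadratic interaction — exactly the phenomenon flagged in the paper's introduction. Keeping the weights consistent between the $W^{\kappa_N}$-ball, the radial Sobolev embedding \eqref{bd:SobolevW}, and the $q(\cdot,1)$ functional across odd/even dimensions and across the two nonlinearity types is where the bulk of the (routine but lengthy) work lies.
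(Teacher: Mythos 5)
Your overall architecture matches the paper's: both set up a Banach fixed point for the remainder $\tilde a$ in which the linear step is exactly the non-radiative solving operator of Propositions \ref{pr:nonradiativeforcingmainodd}/\ref{pr:nonradiativeforcingmaineven}, run the contraction in a ball whose radius is $O(|\cbf|^{1+\delta})$ in a $W^{\kappa_N}_1$-type norm, use the $\Pi^\perp_{\mathcal H,1}$ output to obtain \eqref{id:Pia0}, and derive differentiability by differentiating the fixed-point relation. You also correctly flag the inconsistency $\delta=\frac{N-2}{4}$ versus $\frac{4}{N-2}$ in the text (the latter is correct).

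Two places where your sketch glosses over points the paper handles with specific constructions, and which you would discover when carrying this out. First, the paper does not feed $a_F[\cbf]$ directly into the nonlinearity but rather a globally defined free wave $a^0$ with initial data $\chi(r)\vec a_F[\cbf](0)$; this coincides with $a_F[\cbf]$ for $|x|>1+|t|$ by finite speed of propagation, but unlike $a_F$ it has finite energy and is well-defined for all $|x|$, which is required to make the nonlinear forcing on the full exterior region an honest $L^1L^2$ function. Second --- and this is the sharper point in $N$ even --- the paper introduces an auxiliary iterate $\bar b$ solving $\Box\bar b=\chi(|x|-|t|)\varphi(a^0+\tilde b)$ (smooth cutoff), in addition to $b$ solving $\Box b=\indic(|x|>1+|t|)\varphi(a^0+\tilde b)$: the smooth cutoff is needed so that $\pa_t$ of the forcing exists in $L^1L^2\cap W^{'1+\kappa}_1$ as demanded by Proposition \ref{pr:nonradiativeforcingmaineven}, since the sharp indicator $\indic(|x|>1+|t|)$ is not time-differentiable along the cone. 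Your scheme with $\varphi(a_F+\tilde a)\indic(|x|>1+|t|)$ as forcing would fail precisely on this hypothesis. You do flag this class of issue as ``where the bulk of the work lies,'' so it is not a conceptual gap, but in the paper it is resolved by a concrete two-layer construction ($\bar b$ for differentiability, then $b$ truncated to the exterior cone to get the actual iterate) rather than by bookkeeping alone. Everything else --- the scaling count giving $\delta$, the contraction via Lemma \ref{lem:lipschitz}, and the IFT argument for differentiability with $D\Phi$ of size $O(|\cbf|^\delta)$ --- lines up with the paper.
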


\begin{remark}

In view of \eqref{add_decay_aF} and \eqref{bd:tildeaWkappaN}, we have $\| a[\cbf]\|_{W^{\kappa_N}_1}\lesssim |\cbf|$, implying the gain of decay $|a |\lesssim |\cbf| |x|^{1-N/2-\kappa_N}$ for $|x|>1+|t|$ by \eqref{bd:SobolevW}, in comparison with the scaling invariant decay $|x|^{1-N/2}$.

\end{remark}

\begin{definition}
\label{def:constructionnonradia}
Proposition \ref{pr:constructionnonradia} allows us to construct small non-radiative solutions for $|x|>R+|t|$ for any $R>0$ by a scaling argument. Namely, for all $R>0$ and $\cbf \in \mathbb R^{m_0+1}$ such that $|\cbf|_{R}\leq \epsilon$, we define for $|x|>R+|t|$:
\begin{equation}\label{id:defageneral}
a[\cbf,R](t,r)=(a[\cbf_R])_{(R)}(t,r), \quad \tilde{a}[\cbf,R](t,r)=(\tilde{a}[\cbf_R])_{(R)}(t,r).
\end{equation}
where $\cbf_R=(R^{-\frac N2+1+m}c_m)_{0\leq m <\frac{N}{2}-1}$ and  $a[\cbf_R]$, $\tilde{a}[\cbf_R]$ are given by Proposition \ref{pr:constructionnonradia}. Note that $|\cbf_R|=|\cbf|_R$ and that $a_F[\cbf]=(a_F[\cbf_R])_{(R)}$.

\end{definition}

\begin{proof}

For $\eta>0$ to be fixed later, we define the quantities for $f\in L^1L^2$ with $\pa_t f\in L^1L^2$ and $b \in C(\mathbb R,\mathcal H)$:
\begin{align*}
& \| f\|_{Y_1}= \| f\|_{L^1L^2}+\| \pa_t f\|_{L^1L^2}, \qquad \| f\|_{Y_2}=\|\pa_t f\|_{W^{'1+\kappa_N+\eta}_1}+\sup_{t\in \mathbb R}\sup_{R>1+|t|} R^{1+\kappa_N+\eta}\| f(t)\|_{L^2_R},\\
& \| b \|_{X_1}= \| b\|_{L^\infty \mathcal H}, \qquad \| b \|_{X_2}=\| b \|_{W^{\kappa_N}_1}+\left| \begin{array}{l l} \| \pa_t b(0)\|_{\dot H^1} \ \mbox{ if }N\equiv 4\mod 4,\\ \| b(0)\|_{\dot H^2} \ \mbox{ if }N\equiv 6\mod 4, \end{array}\right.
\end{align*}
and let $X$ and $Y$ be the Banach spaces associated to the norms $\| b\|_X=\| b\|_{X_1}+\| b\|_{X_2}$ and $\| f\|_Y=\| f\|_{Y_1}+\| f\|_{Y_2}$ respectively, requiring in addition that $\vec b \in C(\mathbb R,\mathcal H)$ for $b\in X$. The proof relies on the Banach fixed point argument in $X$.\\

\noindent \textbf{Step 1}. \emph{Formulation of the fixed point problem}. Let $\chi$ be a smooth one-dimensional cut-off with $\chi(\rho)=1$ for $\rho>1$ and $\chi(\rho)=0$ for $\rho<\frac 12$. We define $a^0$ as the solution to $\pa_t^2a^0-\Delta a^0=0$ with initial data
\begin{align}
\label{construction:id:defa00} & (a^0_0,a^0_1)(r)=\chi (r)\vec a_F[\cbf](0,r)
\end{align}
Notice that by finite speed of propagation
\begin{equation}\label{construction:id:a0isaF}
\forall |x|>1+|t|, \qquad a^0(t,x)=a_F[\cbf](t,x).
\end{equation}
Using this and \eqref{add_decay_aF}, and energy conservation, we get
\begin{equation}\label{bd:constructionnonradia1}
\| a^0 \|_{X}\lesssim |\cbf| .
\end{equation}
We consider the mapping $\Phi$ defined as follows. For $\tilde b\in X$, we let $(b_0,b_1)$ be the initial data of the solution $\bar b$ to
\begin{equation}\label{id:constructionbarb}
\left\{
\begin{array}{l l}
\pa_t^2 \bar b-\Delta \bar b= f, \\
(\bar b (0),\pa_t \bar b (0))=(b_0,b_1)\in \mathcal H,
\end{array} \right. \qquad f=f[\cbf,\tilde b]= \chi (|x|-|t|) \varphi (a^0+\tilde b)
\end{equation}
that is non-radiative for $|x|>1+|t|$ given by Proposition \ref{pr:nonradiativeforcingmainodd} if $N$ is odd or Proposition \ref{pr:nonradiativeforcingmaineven} if $N$ is even, with $R=1$. We relegate to Step 2 the proof of all necessary estimates to ensure the applicability of Propositions \ref{pr:nonradiativeforcingmainodd} and \ref{pr:nonradiativeforcingmaineven} to show that $\Phi$ is well-defined in Step 3. We then let $b$ be the solution to
\begin{equation} \label{id:constructionb}
\left\{
\begin{array}{l l}
\pa_t^2 b-\Delta  b= \indic(|x|>1+|t|) f(\tilde b), \\
\vec b(0)=(b_0,b_1).
\end{array} \right.
\end{equation}
As $\bar b=b$ for $|x|>1+|t|$ by finite speed of propagation, $b$ is non-radiative for $|x|>1+|t|$ since $\bar b$ is.

Let us comment that the auxiliary function $\bar b$ is solely used to ensure the differentiability of the forcing term in its equation in order to be able to apply Proposition \ref{pr:nonradiativeforcingmaineven}. We then let $\Phi(\tilde b)=b$.\\

\noindent \textbf{Step 2}. \emph{Bounds on $f$}. Let $B(K|\cbf|^{1+\delta})$ denote the ball of $X$ of radius $K|\cbf|^{1+\delta}$ centered at the origin. We claim that for any $K>1$, if $|\cbf|$ is small enough, then for all $\tilde b,\tilde b' \in B(K|\cbf|^{1+\delta})$ the following holds true. First, if $\varphi$ is of the form \eqref{eq:nonlinearityanalytic} or \eqref{eq:nonlinearitypower} then:
\begin{align}
\label{bd:constructionf1} & \sup_{R>1+|t|} R^{1+\kappa_N+\eta}\| f[\tilde b](t)\|_{L^2_R}\leq C |\cbf|^{1+\delta},\\
\label{bd:constructionf2} & \sup_{R>1+|t|} R^{1+\kappa_N+\eta}\| (f[\tilde b]-f[\tilde b'])(t)\|_{L^2_R}\leq C |\cbf|^{\delta}\| \tilde b-\tilde b'\|_{X},\\
\label{bd:constructionf3} & \| \pa_t f[\tilde b]\|_{W^{'1+\kappa_N+\eta}_1}\leq C |\cbf|^{1+\delta},
\end{align}
where $\eta=\min (\kappa_N,\frac{4}{N-2}\kappa_N )$. Second, if $\varphi$ is of the form \eqref{eq:nonlinearityanalytic} or of the form \eqref{eq:nonlinearitypower} and $3\leq N\leq 6$, then
\begin{equation}\label{bd:constructionf4} 
\| \pa_t f[\tilde b]-\pa_t f[\tilde b']\|_{W^{'1+\kappa_N+\eta}_1}\leq C |\cbf| \| \tilde b-\tilde b'\|_{X}.
\end{equation}
Third, if $\varphi$ is of the form \eqref{eq:nonlinearityanalytic} or \eqref{eq:nonlinearitypower}:
\begin{equation}\label{bd:constructionf5} 
\| f[\tilde b]\|_{L^1L^2}+\| \pa_t f[\tilde b]\|_{L^1L^2}<\infty.
\end{equation}

We now prove all the above estimates. First, by \eqref{bd:constructionnonradia1} and \eqref{bd:SobolevW}:
\begin{equation}\label{bd:existencetechnical1}
\forall |x|\geq 1+|t|,\qquad |a^0+\tilde b| \lesssim \frac{|\cbf|+K|\cbf|^{1+\delta}}{|x|^{\frac{N}{2}-1+\kappa_N}}\leq \frac{C |\cbf |}{|x|^{\frac{N}{2}-1+\kappa_N}}
\end{equation}
for some universal $C>0$, for any $K>0$, if $|\cbf|$ has been chosen small enough.

We first assume that $\varphi $ is of power-type form \eqref{eq:nonlinearitypower}. Then by \eqref{bd:existencetechnical1}, for $|x|>1+|t|$:
$$
 | f[\tilde b]|\leq \left(\frac{C |\cbf |}{|x|^{\frac{N}{2}-1+\kappa_N}} \right)^{\frac{N+2}{N-2}}\leq \frac{C |\cbf|^{1+\delta}}{|x|^{\frac N2 +1 +\kappa_N+\eta}}.
$$
Next, using $|\varphi (u)-\varphi (v)|\lesssim (|u|+|v|)^{\frac{4}{N-2}}|u-v|$ and \eqref{bd:existencetechnical1} we get that for $|x|>1+|t|$:
$$
| f[\tilde b]-f[\tilde b']|\leq \left(\frac{C |\cbf |}{|x|^{\frac{N}{2}-1+\kappa_N}} \right)^{\frac{4}{N-2}}|\tilde b'-\tilde b|\leq \frac{C |\cbf|^{\delta}}{|x|^{2+\eta}}|\tilde b-\tilde b'|.
$$
The two inequalities above imply \eqref{bd:constructionf1} and \eqref{bd:constructionf2} after a direct computation. For $|x|>1+|t|$, using $\pa_t \varphi(u)=\frac{N+2}{N-2} |u|^{\frac{4}{N-2}}u_t$ and that $\chi(|x|-|t|)=1$ we get:
$$
| \pa_t f[\tilde b]|\leq  \left(\frac{C |\cbf |}{|x|^{\frac{N}{2}-1+\kappa_N}} \right)^{\frac{4}{N-2}} (|\pa_t a^0|+|\pa_t \tilde b|)\leq \frac{C |\cbf|^{\delta}}{|x|^{2+\eta}}(| \pa_t a^0|+|\pa_t \tilde b|).
$$
We deduce by an explicit computation that for $R\geq 1$:
\begin{align}
\nonumber \| \pa_t f[\tilde b] \|_{L^1L^2(|x|\geq R+|t|)}& \lesssim |\cbf|^\delta \int_0^\infty \frac{dt}{(R+t)^{2+\eta}}(\| \pa_t a^0(t)\|_{L^2_{R+t}}+\| \pa_t \tilde b(t)\|_{L^2_{R+t}}) \\
\nonumber & \lesssim |\cbf|^\delta \int_0^\infty \frac{dt}{(R+t)^{2+\eta+\kappa_N}}(\| a^0\|_{X}+\| \tilde b\|_{X}) \\
 \label{bd:existencetechnical2}& \lesssim |\cbf|^{1+\delta} \frac{1}{R^{1+\kappa_N+\eta}}
\end{align}
which proves \eqref{bd:constructionf3}.

Second, assume $\varphi$ is of the form \eqref{eq:nonlinearityanalytic}. Then, using \eqref{bd:existencetechnical1}, for all $|x|>1+|t|$:
$$
| f[\tilde b]|\leq \sum_{k\geq 2} \frac{\tau^{-k}C^k|\cbf|^k}{|x|^{\frac{N}{2}+1+k\kappa_N}}\leq  \frac{C |\cbf|^{1+\delta}}{|x|^{\frac N2 +1 +2\kappa_N}}
$$
(the series converging if $\epsilon$ is small enough), and similarly
$$
| f[\tilde b]-f[\tilde b']|\leq \sum_{k\geq 2} \frac{k\tau^{-k}C^{k-1}|\cbf|^{k-1}}{|x|^{2+(k-1)\kappa_N}}|\tilde b-\tilde b'|\leq  C |\cbf |\frac{|\tilde b-\tilde b'|}{|x|^{2+\kappa_N}}.
$$
These two inequalities and a direct computation imply \eqref{bd:constructionf1} and \eqref{bd:constructionf2}. Using the identity $\pa_t \varphi[a^0+\tilde b]-\pa_t \varphi[a^0+\tilde b']=F_1+F_2$ where
\begin{align*}
F_1&=\sum_{k\geq 2} k\varphi_k |x|^{(k-1)(\frac N2-1)}(a^0+\tilde b)^{k-1}(\pa_t \tilde b-\pa_t \tilde b'), \\
F_2&=\sum_{k\geq 2} k\varphi_k |x|^{(k-1)(\frac N2-1)}((a^0+\tilde b)^{k-1}-(a^0+\tilde b')^{k-1})(\pa_t a^0+\pa_t \tilde b),
\end{align*}
and performing a computation very similar to \eqref{bd:existencetechnical2}, one obtains the bound \eqref{bd:constructionf3}.

We have thus proved \eqref{bd:constructionf1}, \eqref{bd:constructionf2} and \eqref{bd:constructionf3} in both cases $\varphi$ of the form \eqref{eq:nonlinearityanalytic} and $\varphi$ of the form \eqref{eq:nonlinearitypower}. The proof of \eqref{bd:constructionf4} is so similar to that of \eqref{bd:constructionf2} and \eqref{bd:constructionf3} that we safely omit it.

Third, let us show \eqref{bd:constructionf5}. By Sobolev and \eqref{bd:constructionnonradia1}, we have for $\frac 12+|t|\leq |x|\leq 1+|t|$ that
\begin{equation}\label{bd:existencetechnical3}
|a^0|+|\tilde b|\lesssim \frac{|\cbf|}{(1+|t|)^{\frac N2-1}}.
\end{equation}
Hence, if $\varphi$ is either of the form \eqref{eq:nonlinearityanalytic} or \eqref{eq:nonlinearitypower} then for $\frac 12+|t|\leq |x|\leq 1+|t|$ one has $|f[\tilde b]|\lesssim (1+|t|)^{-\frac N2-1}$ and thus by a direct estimate:
$$
\| \indic (\frac 12+|t|<|x|<1+|t|)f[\tilde b]\|_{L^1L^2} \lesssim \int_0^\infty \frac{dt}{(1+|t|)^2}<\infty.
$$
combining the above inequality with \eqref{bd:constructionf1} shows the first inequality in \eqref{bd:constructionf5}. The second inequality in \eqref{bd:constructionf5} can be proved using \eqref{bd:existencetechnical3} and performing a computation similar to \eqref{bd:existencetechnical2}, we omit the details.\\

\noindent \textbf{Step 3}. \emph{Well definition and contraction property of $\Phi$}. We claim that there exists $K>0$ such that for $\epsilon$ small enough, $\Phi$ is well-defined and is a contraction on the set $B(K|\cbf|^{1+\delta})=\{\tilde b\in X, \ \| \tilde b\|_{X}\leq K|\cbf|^{1+\delta}\}$. 

Indeed, by \eqref{bd:constructionf1}, \eqref{bd:constructionf3} and \eqref{bd:constructionf5} we deduce $f\in Y$ with $\| f\|_{Y_2}\lesssim |\cbf|^{1+\delta}$. We can apply Proposition \ref{pr:nonradiativeforcingmainodd} if $N$ is odd (using $\| g\|_{W_1^{'\kappa_N+\eta}}\lesssim \sup_{R>1+|t|}R^{1+\kappa_N+\eta} \| g(t)\|_{L^2_R}\leq \| g\|_{Y_2}$ which follows from a direct computation) and Proposition \ref{pr:nonradiativeforcingmaineven} if $N$ is even to the equation \eqref{id:constructionbarb} and obtain that $\bar b$, $b_0$ and $b_1$ are well-defined with:
\begin{equation}\label{bd:constructionfixed1}
\| (b_0,b_1) \|_{\mathcal H} +\| \bar b \|_{W_1^{\kappa_N}}\leq C|\cbf|^{1+\delta}
\end{equation}
Then, we can solve for $b$ in Equation \eqref{id:constructionb} using \eqref{bd:constructionfixed1}, \eqref{bd:constructionf1} and standard energy estimates and obtain:
\begin{equation}\label{bd:constructionfixed2}
\| \vec b\|_{L^\infty \mathcal H}\leq C|\cbf|^{1+\delta}.
\end{equation}
Furthermore, since $\bar b=b$ for $|x|>1+|t|$ we have from \eqref{bd:constructionfixed1} that
\begin{equation}\label{bd:constructionfixed3}
\| b \|_{W_1^{\kappa_N}}\leq C|\cbf|^{1+\delta}.
\end{equation}
Combining \eqref{bd:constructionfixed2} and \eqref{bd:constructionfixed3} shows that $b\in X$ with $\| b\|_{X}\leq C|\cbf|^{1+\delta}$. Thus $b\in B(K|\cbf|^{1+\delta})$ if $K$ has been chosen large enough. This shows that $\Phi$ maps $B(K|\cbf|^{1+\delta})$ onto itself. The proof that $\Phi$ is a contraction is similar, using \eqref{bd:constructionf2} and \eqref{bd:constructionf4}.\\

\noindent \textbf{Step 4}. \emph{End of the proof}. By Step 3, $\Phi$ admits a fixed point $b\in B(K|\cbf|^{1+\delta})$. That is, by \eqref{id:constructionb} $b$ solves
\begin{equation}\label{id:constructionb2}
\left\{
\begin{array}{l l}
\pa_t^2 b-\Delta b= \indic (|x|>1+|t|)\varphi (a^0+b), \\
\vec b (0)=(b_0,b_1)\in \mathcal H,
\end{array} \right.
\end{equation}
is non-radiative for $|x|>1+|t|$, and satisfies by definition of $X$:
\begin{equation}\label{bd:constructionfixed4}
\| b\|_{L^\infty \mathcal H}+\| b\|_{W^{\kappa_N}_1}\lesssim |\cbf|^{1+\delta}.
\end{equation}
In addition, by \eqref{id:nonradiativeforcingortho2odd} and \eqref{id:nonradiativeforcingortho2even}:
\begin{equation}\label{bd:constructionfixed5}
\Pi_{\mathcal H_1} \vec b(0)= (0,0).
\end{equation}
Consider now $\bar a=a^0+b$. Then it solves using \eqref{id:constructionb2}:
$$
\left\{
\begin{array}{l l}
\pa_t^2 \bar a-\Delta \bar a= \indic (|x|>1+|t|) \varphi (\bar a), \\
(\bar a(0),\pa_t \bar a(0))=(a^0_0+b_0,a^0_1+b_1),
\end{array} \right.
$$
We eventually let $a$ be the solution to \eqref{eq:nonlinearwave2} with initial datum $(\bar a(0),\pa_t \bar a (0))$. Then by finite speed of propagation, we have $a(t,x)=\tilde a (t,x)=a^0+b$ for all $|x|>1+|t|$. The desired estimates and orthogonality \eqref{id:Pia0} and \eqref{bd:tildeaWkappaN} for $a$ are then direct consequences of \eqref{construction:id:a0isaF}, \eqref{construction:id:defa00}, \eqref{construction:id:defa00}, \eqref{bd:constructionfixed5} and \eqref{bd:constructionfixed4}.

Finally, let us show the differentiability property at initial time in even dimensions. We have the identity $\Phi=\Phi_3\circ (\Phi_2\circ \Phi_1,\Phi_1)$ where :
$$
\begin{array}{l l l l}
\Phi_1 :& \mathbb R^{m_0+1} \times X & \rightarrow & Y  \\
& (\cbf,\tilde b) & \mapsto & f 
\end{array}
\qquad \begin{array}{l l l l l l l l}
\Phi_2 :& Y &  \rightarrow & X  \\
& f &\mapsto & \bar b
\end{array} 
\qquad 
\begin{array}{l l l l l l l l}
\Phi_3 :& X \times Y & \rightarrow & X \\
 &(\bar b,f)&\mapsto &b
\end{array}
$$
where $f$, $\bar b$ and $b$ are defined in Step 1. Then, for $N$ even, by computations that are very similar to the ones establishing \eqref{bd:constructionf2}, \eqref{bd:constructionf4} and \eqref{bd:constructionf5}, the map $\Phi_1$ is differentiable from $\mathbb R^{m_0+1}\times X$ into $Y$, and at a point $(\cbf,\tilde b)$ with $\| \tilde b\|_{X}\lesssim |\cbf|\ll 1$ its differential $J\Phi_1[\cbf,\tilde b]$ enjoys the estimate $\| J\Phi_1[\cbf,\tilde b](\Delta \cbf,\Delta \tilde b)\|_{Y_2}\lesssim |\cbf|^\delta \| (\Delta \cbf,\Delta \tilde b) \|_{\mathbb R^{m_0+1}\times X}$. The linear mappings $\Phi_2$ and $\Phi_3$ are continuous by Proposition \ref{pr:nonradiativeforcingmaineven} and standard energy estimates, with in particular the estimates $\| \Phi_2 (f)\|_{X_2}\lesssim \| f\|_{Y_2}$ and $\| \Phi_3(\bar b,f)\|_{X_1}\lesssim \| \bar b\|_{X_2}+\| f\|_{Y_2}$; hence both are differentiable.

Combining, and using that $\bar b=b$ for $|x|>1+|t|$, we deduce that $\Phi$ is differentiable from $\mathbb R^{m_0+1}\times X$ towards $X$ with $J\Phi=\Phi_3\circ (\Phi_2 \circ J\Phi_1,J\Phi_1)$ and that for $\| \tilde b\|_X\lesssim |\cbf|\ll 1$ there holds the estimate 
\begin{align*}
\| J\Phi[\cbf,\tilde b](\Delta \cbf,\Delta \tilde b)\|_{X} & \lesssim \| \Phi_2 \circ J \Phi_1 [\cbf,\tilde b](\Delta \cbf,\Delta \tilde b) \|_{X_2}+ \| J \Phi_1 [\cbf,\tilde b](\Delta \cbf,\Delta \tilde b) \|_{Y_2}\\
&\lesssim  |\cbf|^{1+\delta} \| (\Delta \cbf,\Delta \tilde b) \|_{\mathbb R^{m_0+1}\times X}.
\end{align*}
By a standard argument, this implies that the fixed point map $\cbf\mapsto b$ is differentiable from a neighbourghood of the origin into $X$, and that its differential has norm $\lesssim |\cbf|^\delta$. Hence the desired differentiability properties of the map $\cbf \mapsto \tilde a[\cbf]$.

\end{proof}

\section{Construction of an approximate nonlinear resonance in even dimensions} \label{sec:resonance}

We assume throughout this section that $N$ is even. In this section we first define properly the resonance \eqref{id:defphiN/2-1}, and then show that quadratic nonlinearities are non-resonant, a cancellation that will be crucial in Section \ref{sec:uniquenesseven}, for the proof of uniqueness.

We recall from \eqref{id:freenonrad1} and \eqref{id:freenonrad2} the identity:
\begin{equation}\label{id:BoxNresonance}
(\pa_t^2-\Delta) \left(\frac{1}{r^{N/2-1}}f(\frac{t}{r})\right)=-\frac{1}{r^{N/2+1}}\left(\mathcal L_{N} f(\frac{t}{r}\right)
\end{equation}
where, writing $\mathcal L_N=\mathcal L_{N,N/2-1}$ to ease notations,
$$
\mathcal L_{N}=-(1-\sigma^2)\pa_\sigma^2+\sigma\pa_\sigma -(\frac N2-1)^2.
$$

\begin{lemma}[Linear resonance] \label{lem:resonancelinear}

Let $N\geq 4 $ be even. Then if $N\equiv 4 \mod 4$ there exists a unique odd polynomial $p_{2l_1+3}$ of degree $2l_1+3=\frac N2-1$ with $\pa_\sigma p_{2l_1+3}(0)=1$, and if $N\equiv 6 \mod 4$ there exists a unique even polynomial  $p_{2l_0+2}$ of degree $2l_0+2=\frac N2-1$ with $p_{2l_0+2}(0)=1$, such that the following holds true. There hold $\mathcal L_N p_{2l_1+3}=0$ and $\mathcal L_N p_{2l_0+2}=0$. The function
\begin{equation}\label{id:defphim0+1}
\phi_{m_0+1} (t,x)=\left\{ \begin{array}{l l} \frac{1}{r^{N/2-1}} p_{2l_1+3}\left(\frac{t}{r} \right) \qquad \mbox{if }N\equiv 4 \mod 4, \\  \frac{1}{r^{N/2-1}} p_{2l_0+2}\left(\frac{t}{r} \right) \qquad \mbox{if }N\equiv 6 \mod 4, \end{array} \right.
\end{equation}
satisfies $\pa_t^2 \phi_{m_0+1} -\Delta \phi_{m_0+1}=0$ for all $t\in \mathbb R$ and $x\neq 0$, and
$$
\vec{\phi}_{m_0+1}(0,x)= \left\{ \begin{array}{l l} \left( 0,\frac{1}{r^{N/2 }}\right)  \qquad \mbox{if }N\equiv 4 \mod 4, \\ \left( \frac{1}{r^{N/2-1}},0\right)  \qquad \mbox{if }N\equiv 6 \mod 4. \end{array} \right.
$$

\end{lemma}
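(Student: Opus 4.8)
\smallskip
\noindent\emph{Proof plan.} The plan is to repeat, almost verbatim, the polynomial construction of Lemma~\ref{lem:freenonrad}, the only new feature being that the homogeneity exponent $m=\frac N2-1=m_0+1$ sits exactly at the resonant value of $\mathcal L_{N,m}$. First I would record what the formulas of Lemma~\ref{lem:freenonrad} become at $m=\frac N2-1$: since then $N-1-2m=1$ and $(N-2-m)m=(\frac N2-1)^2$, the eigenvalue $\mu_{N,m,\alpha}=\alpha^2+(N-2-2m)\alpha-(N-2-m)m$ reduces to $\mu_{N,\frac N2-1,\alpha}=\alpha^2-(\frac N2-1)^2$, so that $\mathcal L_N(\sigma^\alpha)=\big(\alpha^2-(\tfrac N2-1)^2\big)\sigma^\alpha-\alpha(\alpha-1)\sigma^{\alpha-2}$. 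The decisive point is the resonant cancellation $\mu_{N,\frac N2-1,\frac N2-1}=0$.

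\noindent\textbf{Existence.} I would set $\delta=1$ when $N\equiv4\mod 4$ (then $\frac N2-1$ is odd) and $\delta=0$ when $N\equiv6\mod 4$ (then $\frac N2-1$ is even), let $L$ be the nonnegative integer with $\delta+2L=\frac N2-1$, and look for $p=\sum_{j=0}^{L}c_j\sigma^{\delta+2j}$. Expanding $\mathcal L_N p$ exactly as in Lemma~\ref{lem:freenonrad}: the lowest monomial produced by the $\sigma^{\alpha-2}$ part has coefficient $c_0\,\delta(\delta-1)=0$, and a shift of indices gives $\mathcal L_N p=\sum_{j=0}^{L-1}\big(c_j(\,(\delta+2j)^2-(\tfrac N2-1)^2\,)-c_{j+1}(\delta+2j+2)(\delta+2j+1)\big)\sigma^{\delta+2j}+c_L\big((\delta+2L)^2-(\tfrac N2-1)^2\big)\sigma^{\delta+2L}$, whose last term vanishes by the resonant cancellation since $\delta+2L=\frac N2-1$. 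Hence $c_L$ is free: I would take $c_0=1$ and set $c_{j+1}=c_j\dfrac{(\delta+2j)^2-(\frac N2-1)^2}{(\delta+2j+2)(\delta+2j+1)}$ for $0\le j\le L-1$. The denominators are positive, and for $j\le L-1$ we have $0\le\delta+2j\le\frac N2-3<\frac N2-1$, so the numerators are non-zero; therefore every $c_j\neq0$, $p$ has exact degree $\frac N2-1$ and parity $\delta$, it satisfies $\mathcal L_N p=0$, and $c_0=1$ gives $p(0)=1$ if $\delta=0$ and $\pa_\sigma p(0)=1$ if $\delta=1$. This produces $p_{2l_0+2}$ (resp. $p_{2l_1+3}$).

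\noindent\textbf{Wave equation and initial data.} With $\phi_{m_0+1}$ defined by \eqref{id:defphim0+1}, identity \eqref{id:BoxNresonance} together with $\mathcal L_N p=0$ immediately gives $\pa_t^2\phi_{m_0+1}-\Delta\phi_{m_0+1}=0$ for all $t\in\mathbb R$, $x\neq0$. Since $\phi_{m_0+1}(t,x)=r^{-(\frac N2-1)}p(t/r)$, one computes $\phi_{m_0+1}(0,x)=r^{-(\frac N2-1)}p(0)$ and $\pa_t\phi_{m_0+1}(0,x)=r^{-\frac N2}\pa_\sigma p(0)$; plugging in $(p(0),\pa_\sigma p(0))=(1,0)$ when $N\equiv6\mod 4$ and $(0,1)$ when $N\equiv4\mod 4$ yields the two stated expressions for $\vec\phi_{m_0+1}(0,x)$.

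\noindent\textbf{Uniqueness and main obstacle.} Given a second polynomial $q$ of degree $\frac N2-1$, of the prescribed parity and normalisation, with $\mathcal L_N q=0$, I would set $r=q-p$: then $\mathcal L_N r=0$, $r$ has that same parity, $\deg r\le\frac N2-1$, and the lowest coefficient of $r$ (its value at $0$, resp. its $\pa_\sigma$ at $0$) vanishes. Looking at the top-degree coefficient of $\mathcal L_N r$, a non-zero $r$ of degree $d$ would have to satisfy $d^2=(\frac N2-1)^2$, i.e. $d=\frac N2-1$; expanding $r$ in the monomials of its parity and running the recursion above, the vanishing of its lowest coefficient propagates and forces all the coefficients to be $0$, so $r=0$ and $q=p$. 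The only point that requires any care is the resonant identity $\mu_{N,\frac N2-1,\frac N2-1}=0$, which is exactly what frees the leading coefficient so that the recursion is not over-determined; everything else is the routine algebra already used in Lemma~\ref{lem:freenonrad}, and the substantive work of this section lies not in this lemma but in the non-resonance of the quadratic nonlinearity established afterwards.
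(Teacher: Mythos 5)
Your proof is correct, but it takes a genuinely different route from the paper's. The paper simply observes that $\mathcal L_N p=0$ is Chebyshev's differential equation of order $\tfrac N2-1$, whose unique polynomial solution (up to scaling) is the Chebyshev polynomial $T_{\frac N2-1}$, and then normalises; uniqueness is cited as standard. You instead run an explicit coefficient recursion, exactly as in Lemma~\ref{lem:freenonrad}, and exploit the resonant vanishing $\mu_{N,\frac N2-1,\frac N2-1}=0$ to free the leading coefficient. Both are valid: your version is more self-contained and requires no appeal to special function theory, and it makes the resonance mechanism transparent. The paper's identification with Chebyshev polynomials is quicker but also genuinely load-bearing for what follows: the orthogonality $\int_{-1}^1(1-\sigma^2)^{-1/2}T_\alpha^3\,d\sigma=0$ in Proposition~\ref{pr:resonancequadra} and the $\arccos$ change of variables in Lemma~\ref{rigidity|u|u:lem:mathcalL} both rely on recognising $p_{\frac N2-1}$ as $T_{\frac N2-1}$ up to the stated normalisation. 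If you keep your recursion proof, you would still want to record that identification somewhere (e.g.\ by noting $p(\cos s)=\cos((\tfrac N2-1)s)$ up to scale) so the later arguments go through unchanged. One very minor stylistic remark: in the uniqueness step, the observation that a nonzero solution must have degree exactly $\tfrac N2-1$ is redundant, since the forward recursion $c_{j+1}=c_j\,\mu_j/[(\delta+2j+2)(\delta+2j+1)]$ with $c_0=0$ already kills every coefficient.
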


\begin{proof}

The equation $\mathcal L_{N}p=0$ is $-(1-\sigma^2)\pa_\sigma^2p+\sigma\pa_\sigma p -(\frac N2-1)^2p=0$ which is Chebyshev's equation. It admits as a particular solution the Chebyshev polynomials of the first kind $T_{\frac{N}{2}-1}$ which we rename in this article as $p_{N/2 -1}$ (see e.g. \cite{olver2010nist}), and renormalise so that $p_{N/2 -1}(0)=1$ if $\frac N2-1 $ is even, and $\pa_\sigma p_{N/2-1}(0)=1$ if $\frac N2-1$ is odd. Its uniqueness follows from standard arguments.

\end{proof}

The linear resonance $\phi_{m_0+1}$ critically fails to belong to the energy space. In our proof of the classification of non-radiative solutions in Section \ref{sec:uniquenesseven}, we will use localised projections on this resonance, in order to obtain dispersive estimates. The argument will require to add corrections to $\phi_{m_0+1}$ to make it a better approximate solution of the nonlinear problem \eqref{eq:nonlinearwaveintro}. Of the most importance are the quadratic terms. For an analytic nonlinearity, it is $r^{\frac{N-6}{2}}\phi_{m_0+1}^2$. The function $\phi_{m_0+1}^2$ is even in time, while $\phi_{m_0+1}$ is odd in time if $N\equiv 4 \mod 4$ and even in time if $N\equiv 6 \mod 4$. Thus, if $N\equiv 4 \mod 4$ quadratic effects are non-resonant due to this difference in time symmetry. Somewhat surprisingly, they are also non-resonant if $N\equiv 6 \mod 4$ due to an orthogonality property of Chebyshev polynomials:

\begin{proposition}[Quadratic correction for analytic nonlinearities] \label{pr:resonancequadra}

Assume $N\geq 4$ is even and let $\varphi_2\in \mathbb R$. There exists for $N\equiv 4 \mod 4$ a unique even polynomial $\tilde p=\tilde p[N,\varphi_2]$ of degree $N-2$, with $\mathcal L_N\tilde p=-\varphi_2 p_{2l_1+3}^2$, and for $N\equiv 6 \mod 4$, a unique polynomial $\tilde{p}=\tilde{p}[N,\varphi_2]$ with $\mathcal L_N\tilde p=-\varphi_2 p_{2l_0+2}^2$ and $\tilde p(0)=0$ . The function
\begin{equation}\label{rigidity|u|u:id:defphi2}
\psi (t,x)=\frac{1}{r^{N/2-1}} \tilde p\left(\frac{t}{r} \right)
\end{equation}
satisfies for all $t\in \mathbb R$ and $x\neq 0$ :
\begin{equation}\label{rigidity|u|u:id:eqA2}
\pa_t^2 \psi -\Delta \psi= r^{\frac{N-6}{2}}\varphi_2 \phi_{m_0+1}^2.
\end{equation}

\end{proposition}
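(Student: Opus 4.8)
The strategy is to reduce the PDE \eqref{rigidity|u|u:id:eqA2} to a linear ODE for the polynomial $\tilde p$, and to solve that ODE by diagonalizing $\mathcal{L}_N$ in the basis of Chebyshev polynomials. First recall from Lemma \ref{lem:resonancelinear} that in both parity cases the relevant profile, namely $p_{2l_1+3}$ if $N\equiv 4\bmod 4$ and $p_{2l_0+2}$ if $N\equiv 6\bmod 4$, is a fixed nonzero multiple $p_{N/2-1}=c_N T_{N/2-1}$ of the Chebyshev polynomial of the first kind of degree $\frac N2-1$. Since $\phi_{m_0+1}=r^{-\frac N2+1}p_{N/2-1}(\sigma)$ with $\sigma=t/r$, an explicit exponent count gives $r^{\frac{N-6}{2}}\varphi_2\phi_{m_0+1}^2=\varphi_2 r^{-\frac N2-1}p_{N/2-1}(\sigma)^2$, so that, applying the identity \eqref{id:BoxNresonance} with $f=\tilde p$, the ansatz $\psi=r^{-\frac N2+1}\tilde p(t/r)$ from \eqref{rigidity|u|u:id:defphi2} satisfies \eqref{rigidity|u|u:id:eqA2} for all $t\in\mathbb{R}$ and $x\neq 0$ if and only if $\tilde p$ satisfies the polynomial identity $\mathcal{L}_N\tilde p=-\varphi_2 p_{N/2-1}^2$.

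It then remains to solve this ODE among polynomials. The key algebraic fact is that Chebyshev's equation reads $\mathcal{L}_N T_n=\big(n^2-(\tfrac N2-1)^2\big)T_n$, so $\mathcal{L}_N$ is diagonal in the Chebyshev basis, and among polynomials its kernel is exactly $\mathbb{R}\,T_{N/2-1}$. Expanding the right-hand side with the product formula $T_{N/2-1}^2=\tfrac12\big(T_{N-2}+T_0\big)$ we get
\[
-\varphi_2 p_{N/2-1}^2=-\frac{\varphi_2 c_N^2}{2}\big(T_{N-2}+T_0\big),
\]
and the crucial observation is that this expansion carries \emph{no} $T_{N/2-1}$-component: when $N\equiv 4\bmod 4$ this is the parity cancellation ($p_{N/2-1}$ is odd, hence $p_{N/2-1}^2$ is even, while $T_{N/2-1}$ is odd), and when $N\equiv 6\bmod 4$ it is exactly the orthogonality property of Chebyshev polynomials encoded in the product formula. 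Thus the compatibility condition for solving $\mathcal{L}_N\tilde p = g$ in polynomials, i.e. that the $T_{N/2-1}$-coefficient of $g$ vanish, is met, and matching coefficients yields the particular solution
\[
\tilde p_{\mathrm{part}}=-\frac{\varphi_2 c_N^2}{2\,(N-2-(\tfrac N2-1))(N-2+\tfrac N2-1)}\,T_{N-2}+\frac{\varphi_2 c_N^2}{2(\tfrac N2-1)^2}\,T_0,
\]
whose denominators are nonzero since $N\geq 4$; this is a polynomial of degree $N-2$ (when $\varphi_2\neq 0$) and is even since $T_{N-2}$ and $T_0$ are even.

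For uniqueness, note that any polynomial solution differs from $\tilde p_{\mathrm{part}}$ by a multiple of $T_{N/2-1}$. If $N\equiv 4\bmod 4$ then $\tfrac N2-1$ is odd, so $T_{N/2-1}$ is odd, and requiring $\tilde p$ to be even forces this multiple to vanish, giving the unique even polynomial $\tilde p$ of degree $N-2$. If $N\equiv 6\bmod 4$ then $\tfrac N2-1$ is even, so $T_{N/2-1}(0)=\pm 1\neq 0$, and the normalization $\tilde p(0)=0$ pins down the multiple uniquely, again of degree $N-2$. In both cases the resulting $\psi$ in \eqref{rigidity|u|u:id:defphi2} solves \eqref{rigidity|u|u:id:eqA2}, which is the claim. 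The only genuinely delicate point is establishing that $p_{N/2-1}^2$ has no $T_{N/2-1}$-component, i.e. the solvability of the resonant ODE; once that is in hand everything reduces to a routine diagonal computation in the Chebyshev basis.
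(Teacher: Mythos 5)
Your proof is correct, and it follows a genuinely different route than the paper. The paper treats the two parity cases asymmetrically: for $N\equiv 4\bmod 4$ it shows that $\mathcal{L}_N$ restricted to even polynomials is a linear isomorphism because $\mu_{N,\alpha}=4\alpha^2-(2n+1)^2$ is odd and hence nonzero for all $\alpha\in\mathbb{N}$; for $N\equiv 6\bmod 4$ it treats $\mathcal{L}_N$ as a self-adjoint operator with respect to $\langle f,g\rangle=\int_{-1}^1(1-\sigma^2)^{-1/2}fg\,d\sigma$ and verifies the Fredholm-type solvability condition by the integral computation $\int_0^\pi\cos^3(\alpha s)\,ds=0$. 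You instead work in the Chebyshev eigenbasis from the outset, use the product identity $T_m^2=\tfrac12(T_0+T_{2m})$ with $m=\tfrac N2-1$ to diagonalize the right-hand side, and read off the absence of a resonant $T_{N/2-1}$ component directly from the fact that neither $0$ nor $N-2$ equals $\tfrac N2-1$ when $N\geq 4$. This unifies the two parity cases, replaces the paper's integral cancellation with a coefficient-matching argument, and gives an explicit closed form for $\tilde p_{\mathrm{part}}$ (whose coefficients $(N-2)^2-(\tfrac N2-1)^2=3(\tfrac N2-1)^2$ and $(\tfrac N2-1)^2$ are visibly nonzero). The uniqueness discussion via parity (for $N\equiv 4$) and the $T_{N/2-1}(0)\neq 0$ normalization (for $N\equiv 6$) matches the paper's conclusions. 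The only mild caveat is that your uniqueness argument implicitly restricts to polynomials of degree at most $N-2$; that this is not a loss follows from the fact that $\mathcal{L}_N(\sigma^\alpha)$ has leading term $(\alpha^2-(\tfrac N2-1)^2)\sigma^\alpha$, so a polynomial solution of degree $d>N-2$ would produce a degree-$d$ output, contradicting $\deg(p_{N/2-1}^2)=N-2$. Worth a sentence, but not a gap.
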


The proof is done below. A power-type nonlinearity \eqref{eq:nonlinearitypower} is quadratic in the $N=6$ dimensional case. Then $|\phi_{m_0+1}|\phi_{m_0+1}$ is no longer orthogonal to $\phi_{m_0+1}$. However, its projection enjoys a crucial sign property, and we are still able to construct the quadratic correction.

\begin{proposition}[Quadratic correction for power nonlinearity]  \label{rigidity|u|u:pr:A}

Assume $N=6$ (so $m_0+1=2$). There exists a function $\tilde p\in C^2([-1,1])$ (which is not a polynomial)\footnote{We keep the same notation of $\tilde p$ and $\psi$ as in Proposition \ref{pr:resonancequadra} to be able to treat simultaneously power and analytic nonlinearities later on.} which is even in $\sigma$ and satisfies $\tilde p(0)=0$, and $\beta^*>0$, so that for any\footnote{Notice that in comparison with Proposition \ref{pr:resonancequadra}, $\psi$ is no longer invariant under the scaling transformation $f\mapsto f_{(\lambda)}$ hence the introduction of a scale parameter $R$.} $R>0$, the function $\psi=\psi[R]$ given by
\begin{equation}\label{rigidity|u|u:id:defphi}
\psi(t,x)=\beta^*  \log (\frac rR) \phi_{2}(t,x)+\frac{1}{r^{2}}\tilde p(\frac{t}{r})
\end{equation}
satisfies for all $t\in \mathbb R$ and $x\neq 0$ :
\begin{equation}\label{rigidity|u|u:id:eqA}
\pa_t^2 \psi -\Delta \psi= | \phi_{2}|\phi_{2} .
\end{equation}

\end{proposition}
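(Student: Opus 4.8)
The plan is to substitute the ansatz \eqref{rigidity|u|u:id:defphi} into \eqref{rigidity|u|u:id:eqA}, reduce the PDE to a linear second–order ODE for $\tilde p$ on $[-1,1]$, and read off both $\beta^*$ and its sign from the Fredholm solvability condition of that ODE. Throughout write $\sigma=t/r$ and recall from Lemma \ref{lem:resonancelinear} that $\phi_2=r^{-(N/2-1)}p_2(\sigma)=r^{-2}p_2(\sigma)$, where $p_2=p_{N/2-1}$ is the degree-$2$ normalised Chebyshev polynomial of the first kind, so that $\mathcal L_6 p_2=0$ and $p_2(\sigma)=1-2\sigma^2=-(2\sigma^2-1)$.

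\emph{Step 1 (reduction to an ODE).} By \eqref{id:BoxNresonance} we have $(\pa_t^2-\Delta)\big(r^{-2}\tilde p(\sigma)\big)=-r^{-4}(\mathcal L_6\tilde p)(\sigma)$, while $|\phi_2|\phi_2=r^{-4}|p_2(\sigma)|p_2(\sigma)$. For the logarithmic term I would differentiate the scaling family $u_a:=r^{-a}p_2(\sigma)$ in the exponent $a$ at $a=N/2-1$: a direct computation shows $(\pa_t^2-\Delta)u_a=r^{-a-2}(\mathcal M_a p_2)(\sigma)$ for an operator $\mathcal M_a$ that is a first– and zeroth–order deformation of $-\mathcal L_6$ with $\mathcal M_{N/2-1}=-\mathcal L_6$; since $\log(r/R)\,\phi_2=-\pa_a\big|_{a=N/2-1}u_a$ and $\mathcal M_{N/2-1}p_2=-\mathcal L_6 p_2=0$, the potentially singular $\log$-contribution cancels upon differentiating, leaving
\begin{equation*}
(\pa_t^2-\Delta)\big(\log(r/R)\,\phi_2\big)=\frac{2\sigma\,p_2'(\sigma)}{r^{4}}.
\end{equation*}
Hence \eqref{rigidity|u|u:id:eqA} is equivalent to $(\mathcal L_6\tilde p)(\sigma)=2\beta^*\sigma p_2'(\sigma)-|p_2(\sigma)|p_2(\sigma)=:g_{\beta^*}(\sigma)$ on $(-1,1)$, and this equation does not involve $R$, so the same $\beta^*$ and $\tilde p$ work for every $R>0$; changing $R$ shifts $\psi$ only by a multiple of $\phi_2$, compatibly with $\phi_2$ being scaling invariant.

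\emph{Step 2 (solvability and the sign of $\beta^*$).} The substitution $\sigma=\cos\theta$ conjugates $\mathcal L_6$ to $-\pa_\theta^2-4$, and a function on $[-1,1]$ is regular up to the endpoints precisely when its $\theta$-pullback is even, $2\pi$-periodic and regular; the $C^2$ kernel of $\mathcal L_6$ is then exactly $\mathbb R p_2$ (the other homogeneous solution behaves like $\sqrt{1-\sigma^2}$ near $\sigma=\pm1$ and is not $C^1$ there). Therefore $\mathcal L_6\tilde p=g_{\beta^*}$ has a $C^2$ solution, unique modulo $\mathbb R p_2$, iff the single resonance condition $\int_0^\pi g_{\beta^*}(\cos\theta)\cos 2\theta\,d\theta=0$ holds. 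Using $p_2(\cos\theta)=-\cos 2\theta$ and $\cos\theta\,p_2'(\cos\theta)=-4\cos^2\theta=-2-2\cos 2\theta$, this collapses to $-2\pi\beta^*+\int_0^\pi|\cos 2\theta|^3\,d\theta=0$, i.e.
\begin{equation*}
\beta^*=\frac{1}{2\pi}\int_0^\pi|\cos 2\theta|^3\,d\theta>0,
\end{equation*}
which is the asserted positivity. This is exactly the point where the argument departs from the analytic case of Proposition \ref{pr:resonancequadra}: there the quadratic source $p_2^2$ is automatically orthogonal to the resonant mode $\cos 2\theta$, whereas $|\cos 2\theta|\cos 2\theta$ is not, and the logarithmic correction is introduced precisely to absorb that nonzero resonant pairing (whose coefficient is nonzero since $\int_0^\pi\cos^2 2\theta\,d\theta\neq0$).

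\emph{Step 3 (regularity, uniqueness, and the main obstacle).} With this $\beta^*$, take the even $C^2$ solution of $\mathcal L_6\tilde p=g_{\beta^*}$ — available by symmetrising, since $g_{\beta^*}$ is even and $\mathcal L_6$ parity preserving — and normalise it by $\tilde p(0)=0$; this pins $\tilde p$ down uniquely because $p_2(0)=1\neq0$ and $\mathbb R p_2$ is the $C^2$ kernel, and it uniquely forces the value of $\beta^*$ found above. The resulting $\tilde p$ is not a polynomial because $|p_2|p_2$ is only $C^1$ (second derivative jumping at the zeros $\sigma=\pm1/\sqrt2$ of $p_2$). One still gets $\tilde p\in C^2([-1,1])$ by a splitting argument: in the interior the ODE has smooth coefficients and $C^1$ right-hand side, hence $\tilde p\in C^3$ locally; near $\sigma=\pm1$ one has $|p_2|p_2=p_2^2$ since $p_2$ does not vanish there, so $g_{\beta^*}$ is smooth and the $\theta$-pullback of $\tilde p$ solves $-Y''-4Y=\text{(smooth)}$ near $\theta=0,\pi$, giving smoothness of $\tilde p$ up to the endpoints. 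The main difficulty is Step 2: realising that the logarithmic ansatz is the correct device (its d'Alembertian being finite exactly because $\mathcal L_6 p_2=0$), identifying the single Chebyshev-weighted resonance condition, and checking that the two relevant integrals conspire so that $\beta^*>0$; the endpoint regularity of $\tilde p$ is the secondary technical point.
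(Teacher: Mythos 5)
Your proposal is correct, and Steps~1 and~2 follow exactly the paper's line: the ansatz reduces \eqref{rigidity|u|u:id:eqA} to the ODE $\mathcal L_6\tilde p=2\beta^*\sigma\pa_\sigma p_2-|p_2|p_2$, and $\beta^*$ is fixed by orthogonality to the resonant mode, whose positivity is read off from the two definite integrals (your $\theta$-form of the condition is the paper's $\int_0^1 f\,p_2\,(1-\sigma^2)^{-1/2}d\sigma=0$ after the substitution $\sigma=\cos\theta$, and both yield $\beta^*=\frac1{2\pi}\int_0^\pi|\cos 2\theta|^3\,d\theta$). The genuinely different piece is Step~3 (endpoint regularity). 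The paper introduces an explicit second homogeneous solution $\tilde p_2$ of $\mathcal L_6$ (Lemma~\ref{rigidity|u|u:lem:mathcalL}), writes the variation-of-constants formula for $\bar p$, and verifies $\tilde p\in C^2$ at $\sigma=\pm1$ by computing $\pa_\sigma^2\bar p$ against the expansions \eqref{rigidity|u|u:id:proptildep1}--\eqref{rigidity|u|u:id:proptildep3}; you instead conjugate by $\sigma=\cos\theta$ to the constant-coefficient Neumann problem $-Y''-4Y=G$ on $(0,\pi)$ and deduce regularity at the endpoints from a parity/Liouville argument. Your route is cleaner and avoids the $\tilde p_2$ machinery, but the crux there deserves an explicit sentence: a smooth $Y$ near $\theta=0$ does \emph{not} pull back to a $C^2$ function of $\sigma$ at $\sigma=1$ unless $Y$ is locally even (contrast $Y=\theta$, which gives $\arccos\sigma\notin C^1$). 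The reason it works is that the Neumann boundary value $Y'(0)=0$, combined with evenness of $G$ near $\theta=0$ (guaranteed because $p_2$ does not vanish there) and uniqueness for the ODE initial value problem, forces $Y$ to coincide with its even reflection, hence $Y(\theta)=F(\theta^2)$ with $F$ smooth; composing with the smooth map $\sigma\mapsto(\arccos\sigma)^2$ then yields $\tilde p$ smooth up to $\sigma=\pm1$. Spelling out this parity step would close the small gap in your Step~3. Minor slips: near $\sigma=\pm1$ you have $p_2<0$, so $|p_2|p_2=-p_2^2$, not $p_2^2$ (this is harmless, since only smoothness is used); and your paper references to ``Lemma \ref{lem:resonancelinear}'' etc.\ are consistent with the paper's conventions $p_2(\sigma)=1-2\sigma^2=-T_2(\sigma)$.
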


The proof is done below. To treat both nonlinearities simultaneously, we let:
\begin{equation}\label{def:beta}
\beta=\left\{\begin{array}{l l l} \beta^* & \mbox{ for }\varphi \mbox{ given by }\eqref{eq:nonlinearitypower}\mbox{ and }N=6,\\
0 & \mbox{ for all other cases}.
\end{array} \right.
\end{equation}
Let $\tilde c\in \mathbb R$ and $R>0$. We define\footnote{We slightly abuse notations, as the superscript $2$ in $\hat c^2$ does not denote a square.} $\hat c^2=\hat c^2[\tilde c]$ by
\begin{equation}\label{def:hatc}
\hat c^2=\left\{\begin{array}{l l l} |\tilde c|\tilde c & \mbox{ for }\eqref{eq:nonlinearitypower}\mbox{ and }N=6,\\
\tilde c^2 & \mbox{ for all other cases}.
\end{array} \right.
\end{equation}
We define the nonlinearly corrected resonance $\tilde \phi=\tilde \phi[\tilde c,R]$ by:
\begin{equation}\label{def:tildephi}
\tilde \phi=    \tilde c\phi_{m_0+1}+\hat{c}^2 \psi
\end{equation}
where $\psi$ is given by Proposition \ref{pr:resonancequadra} for $\varphi$ given by \eqref{eq:nonlinearityanalytic}, or is given by Proposition \ref{rigidity|u|u:pr:A} for $\varphi$ given by \eqref{eq:nonlinearitypower} and $N=6$, or $\psi=0$ by convention for \eqref{eq:nonlinearitypower} and $N=4$, respectively. To sum up, we have the condensed notation:
\begin{align} \label{even:id:tidephigeneralised} 
\tilde \phi (t,x) & =\frac{1}{r^{N/2-1}} \left( \left(\tilde c+\beta \hat c^2 \ln \frac rR \right)p_{\frac N2-1}(\frac{t}{r})+\hat c^2 \tilde p (\frac{t}{r})\right) ,\\
\label{even:id:tidephigeneralised2}  \psi (t,x)&=\frac{1}{r^{N/2-1}}\left( \beta \ln \frac rR \ p_{\frac N2-1} (\frac{t}{r})+ \tilde p (\frac{t}{r})\right),
\end{align}
so that 
$\tilde \phi(t,x)=\frac{\tilde c}{r^{\frac N2 -1}} p_{\frac{N}{2}-1}\left( \frac{t}{r} \right)+\hat{c}^2 \psi(t,x)$.
We have that $\tilde \phi$ satisfies for $t\in \mathbb R$ and $x\neq 0$:
\begin{equation}\label{even:id:boxtildephi}
\pa_t^2\tilde \phi-\Delta \tilde \phi-\varphi(  \tilde \phi)=
 \left\{ \begin{array}{l l l} \displaystyle \frac{\varphi_2 }{r^{N/2+1}}\left(-2\tilde c^3 p_{N/2-1}\tilde p-\tilde c^4 \tilde p^2\right)(\frac{t}{r})-\varphi_c(\tilde \phi) & \mbox{for } \eqref{eq:nonlinearityanalytic},\\
\displaystyle |\tilde c \phi_{2}|\tilde c\phi_{2}-|\tilde \phi|\tilde \phi  & \mbox{for }\eqref{eq:nonlinearitypower} \mbox{ and }N=6,\\
\displaystyle \frac{1}{r^{3}} \tilde c^3 |p_{1}|^2p_{1}(\frac{t}{r}) & \mbox{for }\eqref{eq:nonlinearitypower} \mbox{ and }N=4.
\end{array} \right.
\end{equation}
where $\varphi_c(u)=\sum_{k\geq 3}^\infty \varphi_k |x|^{(k-1)(\frac N2-1)-2}u^k$ for \eqref{eq:nonlinearityanalytic}. Note that all right-hand sides in \eqref{even:id:boxtildephi} are cubic in $\tilde c$. In addition, there holds:
\begin{equation}\label{eq:vectildephi0}
\left(\tilde \phi(0,x),\partial_t \tilde \phi(0,x)\right)= \left\{ \begin{array}{l l l} \displaystyle \left( \frac{ \tilde c^2 \tilde p(0)}{r^{N/2-1 }} \ , \ \frac{\tilde c}{r^{N/2}}\right) & \mbox{for }N\equiv 4\mod 4, \\
\displaystyle \left( \frac{ \tilde c}{r^{N/2 -1}} \ , \ 0\right)  &\mbox{for }N\equiv 6\mod 4  \mbox{ and } \eqref{eq:nonlinearityanalytic}\\
 \displaystyle  \left( \frac{ \tilde c+\beta| \tilde c|\tilde c \ln \frac rR }{r^{2}} \ , \ 0\right)  &\mbox{for }N= 6  \mbox{ and } \eqref{eq:nonlinearitypower}
 \end{array} \right.
\end{equation}
where by convention $\tilde p(0)=0$ for $\varphi$ given by \eqref{eq:nonlinearitypower} and $N=4$.

\begin{proof}[Proof of Proposition \ref{pr:resonancequadra}]

For any even $N$, by Lemma \ref{lem:resonancelinear}, $p_{N/2-1}^2$ ($=p_{2l_1+3}^2$ or $=p_{2l_0+2}^2$ if $N\equiv 4\mod 4$ or $N\equiv 6 \mod 4$ respectively) is an even polynomial of degree $N-2$. We compute for any $\alpha\in \mathbb N$ that $\mathcal L_{N}(\sigma^{2\alpha})=\mu_{N,\alpha}\sigma^{2\alpha}-2\alpha (2\alpha-1)\sigma^{2\alpha-2}$ with $\mu_{N,\alpha}=4\alpha^2-(\frac N2-1)^2$.

First, assume that $N\equiv 4+4n$ for some integer $n\geq 0$. Then $\mu_{N,\alpha}=4\alpha^2-(2n+1)^2\neq 0$ for all $\alpha\in \mathbb N$. The operator $\mathcal L_N$ then defines a linear isomorphism on the set of even polynomials of degree $N-2=2+4n$. There then exists a unique even polynomial $\tilde p$ of degree $N-2$ such that $\mathcal L_N\tilde p=-p_{2l_1+3}^2$ and the result of the Lemma follows.

Second, assume that $N=6+4n$ for some integer $n\geq 0$ in which case
$$
\mathcal L_N=(1-\sigma^2)^{\frac 12}\pa_\sigma ((1-\sigma^2)^{\frac 12}\pa_\sigma )-(2+4n)^2.
$$
As standard properties of Chebyshev polynomials (see \cite{olver2010nist}), we have that the operator $\mathcal L_N$ is self-adjoint for the scalar product $\langle f,g\rangle=\int_0^1 (1-\sigma^2)^{-\frac 12}f(\sigma)g(\sigma)d\sigma$ with compact resolvent, that its spectrum is $\{\alpha^2-(2+4n)^2, \ \alpha \in \mathbb N\}$, and that the associated eigenfunctions $T_\alpha$ are the Chebyshev polynomials. Therefore, there exists at least one polynomial $\bar p$ of degree $N-2$ that solves $\mathcal L_N \bar p=-p_{2l_0+2}^2$ if and only if
\begin{equation}\label{id:resonancequadra1}
\int_{-1}^1 (1-\sigma^2)^{-\frac 12} T_{\alpha}^3(\sigma)d\sigma =0
\end{equation}
for $\alpha=2+4n$, and then all other solutions are of the form $\tilde p=\bar p+c p_{2l_0+2}$ for $c\in \mathbb R$. We will show that \eqref{id:resonancequadra1} holds true for all $\alpha\geq 1$. Indeed, changing variables, setting $s=\arccos(\sigma)$ there holds $\frac{ds}{d\sigma}=-\frac{1}{(1+\sigma^2)^{\frac 12}}$ and $T_{\alpha}(\sigma)=T_{\alpha}(\cos s)=\cos (\alpha s)$  and hence
$$
\int_{-1}^1 (1-\sigma^2)^{-\frac 12} T_{\alpha}^3(\sigma)d\sigma =\int_{0}^\pi \cos^3(\alpha s)ds=0.
$$
This shows the existence of $\bar p$, and choosing $c=-\bar p(0)$ ensures by Lemma \ref{lem:resonancelinear} that this is the only solution such that $\tilde p(0)=0$.

\end{proof}

The proof of Proposition \ref{rigidity|u|u:pr:A}, relies on an explicit formula for solving $\mathcal L_Nu=f$ (only stated our case of concern $N=6$) given by the following Lemma. Notice that for $N=6$ the polynomial involved in the definition of the resonance of Lemma \ref{lem:resonancelinear} is $p_2=1-2\sigma^2$.

\begin{lemma}[Inverting $\mathcal L_N$ for $N=6$] \label{rigidity|u|u:lem:mathcalL}
Assume $N=6$. There exists a solution $\tilde p_2$ (which is not a polynomial) to $\mathcal L_6 \tilde p_2=0$ with $\tilde p_2(0)=0$ and $\pa_\sigma \tilde p_2(0)=1$. This function is smooth on $(-1,1)$, is odd in $\sigma$, satisfies the Wronskian relation:
\begin{equation}\label{rigidity|u|u:id:wronskian}
\pa_\sigma \tilde p_2p_2-\pa_\sigma p_2 \tilde p_2=-\frac 12 (1-\sigma^2)^{-\frac 12},
\end{equation}
and admits the asymptotic behaviour close to $1$ for some $c_1,c_2\in \mathbb R$ as $\sigma \uparrow 1$:
\begin{align} 
& \label{rigidity|u|u:id:proptildep1}\tilde p_2(\sigma)=c_1-\sqrt 2(1-\sigma)^{\frac 12}+O((1-\sigma)),\\
& \label{rigidity|u|u:id:proptildep2} \pa_\sigma \tilde p_2(\sigma)= \frac{1}{ (2-2\sigma)^{\frac 12}}+c_2 +O((1-\sigma)^{\frac12}),\\
& \label{rigidity|u|u:id:proptildep3} \pa_\sigma^2 \tilde p_2(\sigma)= \frac{1}{(2-2\sigma)^{\frac 32}}+ \frac{4\sqrt 2}{(1-\sigma)^{\frac12}}+O(1).
\end{align}
Given any continuous and even function $f$ on $(-1,1)$, there is a unique $C^2(-1,1)$ solution $u$ to $\mathcal L_6 u=f$ on $(-1,1)$ that is even in $\sigma$ and such that $u(0)=0$. It is given by the formula:
\begin{equation}\label{rigidity|u|u:id:inversionL}
u(\sigma)=p_2(\sigma)\int_0^\sigma \tilde p_2(\tilde \sigma)(1-\tilde \sigma^2)^{-\frac 12}f(\tilde \sigma)d\tilde \sigma-\tilde p_2(\sigma)\int_0^\sigma p_2(\tilde \sigma)(1-\tilde \sigma^2)^{-\frac 12}f(\tilde \sigma)d\tilde \sigma.
\end{equation}

\end{lemma}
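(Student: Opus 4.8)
The plan is to exhibit $\tilde p_2$ in closed form, read off the stated properties from that formula, and then recognise \eqref{rigidity|u|u:id:inversionL} as the variation-of-parameters solution relative to the fundamental system $(p_2,\tilde p_2)$. Recall from Lemma~\ref{lem:resonancelinear} that $p_2=1-2\sigma^2$ solves $\mathcal L_6p_2=0$. A direct reduction of order, $\tilde p_2\propto p_2\int p_2^{-2}(1-\sigma^2)^{-1/2}$, is unavailable because $p_2$ vanishes at $\sigma=\pm 1/\sqrt2\in(-1,1)$; instead I would use the substitution $\sigma=\cos\theta$, $\theta\in(0,\pi)$. Since $(1-\sigma^2)^{1/2}\partial_\sigma=-\partial_\theta$, one has $\mathcal L_6=-(1-\sigma^2)^{1/2}\partial_\sigma\big((1-\sigma^2)^{1/2}\partial_\sigma\,\cdot\,\big)-4=-\partial_\theta^2-4$, so $\mathcal L_6u=0$ becomes $u_{\theta\theta}+4u=0$, with fundamental system $\cos2\theta=2\sigma^2-1=-p_2$ and $\sin2\theta=2\sigma(1-\sigma^2)^{1/2}$. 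Hence I would set $\tilde p_2(\sigma)=\sigma(1-\sigma^2)^{1/2}$: this is the unique solution of the Cauchy problem $\mathcal L_6\tilde p_2=0$, $\tilde p_2(0)=0$, $\partial_\sigma\tilde p_2(0)=1$, it is odd in $\sigma$, it is not a polynomial, and it is $C^\infty$ on $(-1,1)$ since $(1-\sigma^2)^{1/2}$ is; all of this is checked by direct differentiation of the closed form.

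Next, by Abel's identity applied to $\mathcal L_6u=0$ (whose $\partial_\sigma^2$-coefficient is $-(1-\sigma^2)$ and whose $\partial_\sigma$-coefficient is $\sigma$), every Wronskian of two solutions is a constant multiple of $(1-\sigma^2)^{-1/2}$; evaluating $\partial_\sigma\tilde p_2\,p_2-\partial_\sigma p_2\,\tilde p_2$ at $\sigma=0$ determines the constant and yields \eqref{rigidity|u|u:id:wronskian} (equivalently, one simplifies the explicit expression, the $\sigma^4$-terms cancelling). For the boundary asymptotics \eqref{rigidity|u|u:id:proptildep1}--\eqref{rigidity|u|u:id:proptildep3} I would plug $(1-\sigma^2)^{1/2}=\sqrt2\,(1-\sigma)^{1/2}\big(1-\tfrac{1-\sigma}{2}\big)^{1/2}$ into the closed forms $\tilde p_2=\sigma(1-\sigma^2)^{1/2}$, $\partial_\sigma\tilde p_2=(1-2\sigma^2)(1-\sigma^2)^{-1/2}$, $\partial_\sigma^2\tilde p_2=(2\sigma^3-3\sigma)(1-\sigma^2)^{-3/2}$, and expand in powers of $(1-\sigma)^{1/2}$; the behaviour at $\sigma\to-1$ then follows from oddness.

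For the inversion statement, given $f$ continuous and even on $(-1,1)$, the function $u$ in \eqref{rigidity|u|u:id:inversionL} is the standard variation-of-parameters particular solution of $\mathcal L_6u=f$ built from $(p_2,\tilde p_2)$ with Wronskian \eqref{rigidity|u|u:id:wronskian}; I would verify $\mathcal L_6u=f$ by differentiating \eqref{rigidity|u|u:id:inversionL} twice, the cross terms recombining through the Wronskian identity. Since the two integrands are continuous on $(-1,1)$ (as $1-\sigma^2>0$ there), $u\in C^1(-1,1)$, and then $u\in C^2(-1,1)$ because it satisfies a second-order ODE with continuous right-hand side and non-vanishing leading coefficient. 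Parity and the value at $0$: $p_2$, $(1-\sigma^2)^{-1/2}$ and $f$ are even and $\tilde p_2$ is odd, so the first integral in \eqref{rigidity|u|u:id:inversionL} is an even function of $\sigma$ and the second an odd one, whence $u$ is even, while $u(0)=0$ is immediate from the lower limit. For uniqueness, if $u_1,u_2$ are two such solutions then $w=u_1-u_2$ solves $\mathcal L_6w=0$, so $w=\alpha p_2+\beta\tilde p_2$; evenness of $w$ forces $\beta=0$ and $w(0)=0$ forces $\alpha=0$.

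The one genuine obstruction is obtaining a workable second solution: the textbook reduction-of-order integral is singular at the interior zeros $\sigma=\pm1/\sqrt2$ of $p_2$, so one must uncover the hidden constant-coefficient form via $\sigma=\cos\theta$ to land on $\tilde p_2=\sigma(1-\sigma^2)^{1/2}$. Once that closed form is in hand, everything else—the Wronskian, the asymptotics, and the verification that \eqref{rigidity|u|u:id:inversionL} inverts $\mathcal L_6$—is elementary bookkeeping, the only care being with the $(1-\sigma)^{1/2}$ branch points at $\sigma=\pm1$.
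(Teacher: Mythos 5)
Your route is correct and genuinely different from the paper's. The paper never writes $\tilde p_2$ in closed form: it defines $\tilde p_2$ abstractly by the Cauchy data at $\sigma=0$, computes the Wronskian by Abel's identity ($W(0)=\partial_\sigma\tilde p_2(0)\,p_2(0)=1$, $\partial_\sigma W=\frac{\sigma}{1-\sigma^2}W$, hence $W=(1-\sigma^2)^{-1/2}$), and then for $\sigma$ near $1$ uses the reduction-of-order representation $\tilde p_2=cp_2+p_2\int_{\sigma_0}^{\sigma}(1-\tilde\sigma^2)^{-1/2}p_2^{-2}$ (with $\sigma_0>2^{-1/2}$ chosen precisely to dodge the interior zero of $p_2$ that you also flag) to read off the asymptotics. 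Your $\sigma=\cos\theta$ substitution short-circuits all of this: it makes $\mathcal{L}_6$ constant-coefficient so that $\tilde p_2=\sigma(1-\sigma^2)^{1/2}$ drops out immediately, and every assertion — parity, smoothness, Wronskian, asymptotics, verification of the inversion formula — becomes direct bookkeeping from the closed form. Your approach is cleaner and pins the asymptotic constants down (you get $c_1=c_2=0$), which the paper's abstract representation does not.

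There is one thing you should actually carry through rather than assert. You write that evaluating the Wronskian at $\sigma=0$ ``yields \eqref{rigidity|u|u:id:wronskian}.'' It does not: with $\tilde p_2(0)=0$, $\partial_\sigma\tilde p_2(0)=1$, $p_2(0)=1$, $\partial_\sigma p_2(0)=0$ you get $W(0)=1$, hence $W=(1-\sigma^2)^{-1/2}$, whereas \eqref{rigidity|u|u:id:wronskian} claims $-\tfrac12(1-\sigma^2)^{-1/2}$. That factor $-\tfrac12$ in the statement is an error in the paper: the paper's own proof text derives $W(0)=1$ and $W=(1-\sigma^2)^{-1/2}$, and the variation-of-parameters computation behind \eqref{rigidity|u|u:id:inversionL} produces $\mathcal{L}_6 u=(1-\sigma^2)W\,(1-\sigma^2)^{-1/2}f$, so the claimed identity $\mathcal{L}_6 u=f$ forces $W=(1-\sigma^2)^{-1/2}$, exactly as your explicit formula gives. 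Similarly, once you expand $\tilde p_2=\sigma(1-\sigma^2)^{1/2}$, $\partial_\sigma\tilde p_2=(1-2\sigma^2)(1-\sigma^2)^{-1/2}$ and $\partial_\sigma^2\tilde p_2=\sigma(2\sigma^2-3)(1-\sigma^2)^{-3/2}$ near $\sigma=1$, you will find $\tilde p_2=\sqrt2(1-\sigma)^{1/2}+O((1-\sigma)^{3/2})$, $\partial_\sigma\tilde p_2=-(2-2\sigma)^{-1/2}+O((1-\sigma)^{1/2})$, $\partial_\sigma^2\tilde p_2=-(2-2\sigma)^{-3/2}-\tfrac{15\sqrt2}{16}(1-\sigma)^{-1/2}+O((1-\sigma)^{1/2})$ — the signs and the $(1-\sigma)^{-1/2}$ coefficient disagree with \eqref{rigidity|u|u:id:proptildep1}--\eqref{rigidity|u|u:id:proptildep3}. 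Your values are the correct ones. So do not try to ``match'' the lemma's constants; state what your closed form gives and note that the lemma's Wronskian constant and asymptotic coefficients contain typographical errors, the correct normalization being the one required by \eqref{rigidity|u|u:id:inversionL} (and used later in the proof of Proposition \ref{rigidity|u|u:pr:A}, where the cross term in $\partial_\sigma^2\bar p$ equals $-W\cdot(1-\sigma^2)^{-1/2}f=-(1-\sigma^2)^{-1}f$, which only holds if $W=(1-\sigma^2)^{-1/2}$).
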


\begin{proof}[Proof of Lemma \ref{rigidity|u|u:lem:mathcalL}]
We have $\mathcal L_{6}=-(1-\sigma^2)\pa_\sigma^2+\sigma\pa_\sigma -4$. Let $\tilde p_2$ be a solution to $\mathcal L_6\tilde p_2=0$ with $\tilde p_2(0)=0$ and $\pa_\sigma \tilde p_2(0)=1$, and define the Wronskian $W=\pa_\sigma \tilde p_2 p_2-\pa_\sigma p_2\tilde p_2$. As $p_2(0)=1$ by Lemma \ref{lem:resonancelinear}, we have $W(0)=\pa_\sigma \tilde p_2(0) p_2(0)=1$. Moreover, it satisfies the ODE:
$$
\pa_\sigma W=\frac{\sigma}{1-\sigma^2}W
$$
so that $W= (1-\sigma^2)^{-\frac 12}$. For $\sigma_0>2^{-\frac 12}$, reintegrating the Wronskian relation we have that there exists $c\in \mathbb R$ such that:
\begin{equation}\label{rigidity|u|u:id:astildep1}
\tilde p_2(\sigma)=c p_2(\sigma)+ p_2(\sigma)\int_{\sigma_0}^{\sigma} (1-\tilde \sigma^2)^{-\frac 12}p_2^{-2}(\tilde \sigma)d\tilde \sigma
\end{equation}
which, using $p_2=1-2\sigma^2$, gives the desired asymptotic behaviour \eqref{rigidity|u|u:id:proptildep1}, \eqref{rigidity|u|u:id:proptildep2} and \eqref{rigidity|u|u:id:proptildep3}. The second part of the lemma then follows as a standard consequence from linear second order ODEs theory.

\end{proof}

\begin{proof}[Proof of Proposition \ref{rigidity|u|u:pr:A}]

Recall $m_0+1=2$ for $N=6$. Using the identity \eqref{id:BoxNresonance} and the fact that $\Box \phi_{2}=0$ we obtain:
$$
\Box\left( \beta \log r \phi_{2}\right)(t,r)=\frac{2\beta}{r^4}(\sigma \pa_\sigma p_2)(\frac tr), \qquad \Box\left(\frac{1}{r^2}\tilde p(\frac tr)\right)=\frac{1}{r^4}(-\mathcal L_6 \tilde p)(\frac tr),
$$
so that \eqref{rigidity|u|u:id:eqA} is satisfied if and only if:
\begin{equation}\label{rigidity|u|u:id:intertildeA}
\mathcal L_6 \tilde p=2\beta \sigma \pa_\sigma p_2-|p_2|p_2=:f.
\end{equation}
We fix $\beta$ by requiring the orthogonality condition $\int_{0}^{1} f(\sigma)p_2(\sigma)(1-\sigma^2)^{-1/2}d\sigma=0$. Since $p_2(\sigma)=1-2\sigma^2$, we have $\sigma\pa_\sigma p_2=-4\sigma^2=2p_2-2$. Using $\int_{0}^1 p_2(\sigma) (1-\sigma^2)^{-1/2}d\sigma=0$, we infer:
$$
\int_{0}^1  \sigma \pa_\sigma p_2(\sigma) p_2(\sigma)(1-\sigma^2)^{-1/2}d\sigma= 2 \int_{0}^1  (p_2(\sigma))^2 (1-\sigma^2)^{-1/2}d\sigma>0.
$$
We thus choose:
$$
\beta=\beta^*=\frac{\int_{0}^1 |p_2(\sigma)|^3(1-\sigma^2)^{-1/2}d\sigma}{2 \int_{0}^1  \sigma \pa_\sigma p_2(\sigma)p_2(\sigma)(1-\sigma^2)^{-1/2}d\sigma}>0.
$$
Solving Equation \eqref{rigidity|u|u:id:intertildeA} using Lemma \ref{rigidity|u|u:lem:mathcalL} and $\int_{0}^1 f(\sigma)p_2(\sigma)(1-\sigma^2)^{-1/2}=0$, we obtain $\tilde p=\bar p+cp_2$ for
$$
\bar p(\sigma)=p_2(\sigma)\int_0^\sigma \tilde p_2(\tilde \sigma)(1-\tilde \sigma^2)^{-\frac 12}f(\tilde \sigma)d\tilde \sigma+\tilde p_2(\sigma)\int_\sigma^1 p_2(\tilde \sigma)(1-\tilde \sigma^2)^{-\frac 12}f(\tilde \sigma)d\tilde \sigma,
$$
and $c\in \mathbb R$. The above identity defines a $C^2$ function on $(-1,1)$. It remains to check the differentiability at $\sigma=1$ of $\bar p$ (implying that at $\sigma=-1$ by even symmetry). Differentiating twice, using \eqref{rigidity|u|u:id:wronskian}, \eqref{rigidity|u|u:id:proptildep1}, \eqref{rigidity|u|u:id:proptildep2} and \eqref{rigidity|u|u:id:proptildep3}:
\begin{align*}
\pa_{\sigma}^2 \bar p(\sigma) & =\pa_\sigma^2 p_2(\sigma)\int_0^\sigma \tilde p_2(1-\tilde \sigma^2)^{-\frac 12}fd\tilde \sigma+\pa_\sigma^2 \tilde p_2(\sigma)\int_\sigma^1 p_2(1-\tilde \sigma^2)^{-\frac 12}f(\tilde \sigma)d\tilde \sigma -(1-\sigma^2)^{-1}f(\sigma), \\
&=C+O((1-\sigma)^{1/2})\\
& +\left(\frac{4\sqrt 2}{(1-\sigma)^{\frac 12}}+\frac{1}{(2-2\sigma)^{\frac 32}}+O(1)\right)\left(  (1-\sigma)^{\frac 12}f(1)+C'(1-\sigma)^{\frac 32}+O(1-\sigma)^{\frac 52}\right)\\
&-\frac{f(1)}{2(1-\sigma)}+C''+O((1-\sigma))\\
&=C'''+O(1-\sigma)^{1/2}
\end{align*}
where $C,C',C'',C'''$ denote constants depending on $f(1)$ and $f'(1)$. Hence $\pa_{\sigma}^2\bar p$ is continuous up to $\sigma=1$, so that $\bar p$ is $C^2$ up to $\sigma=1$. We finally choose $c\in \mathbb R$ so that $\tilde p(0)=0$, which ends the proof of the Proposition.
\end{proof}

\section{Uniqueness of non-radiative solutions in odd space-dimension}
In this section we prove the uniqueness part of Theorem \ref{th:main} when $N\geq 3$ is odd. We divide the proof into two cases. In the first case, when $N\in\{3,5\}$ with nonlineariry $|u|^{\frac{4}{N-2}}u$ or when $N\geq 5$ with analytic nonlinearity, the straightforward proof relies on the linear channels of energy estimates, Corollary \ref{co:nonradiativeforcing} and Strichartz inequalities (see Subsection \ref{sub:uniqueness1}). In this case we prove indeed a stronger uniqueness result, which is not restricted to nonradiative solutions. In the second case, treated in Subsection \ref{sub:uniqueness2}, when $N\geq 7$ and the nonlinearity is $|u|^{\frac{4}{N-2}}u$, we need an extra ingredient, the a priori decay of nonradiative solutions in odd dimensions, proved in \cite{DuKeMe21a} and recalled in the preliminary Subsection \ref{sub:Strichartz}.
\subsection{Uniqueness for general solutions}
\label{sub:uniqueness1}
In this Subsection we assume:
\begin{equation}
 \label{H1}
\text{$N\geq 5$ is odd and $\varphi$ of the form \eqref{eq:nonlinearityanalytic}, or $N\in \{3,5\}$ and $\varphi$ is of the form \eqref{eq:nonlinearitypower}. }
\end{equation}

\begin{proposition}
\label{pr:uniqueness1}
Assume \eqref{H1}. Then  
there exists $\ep>0$ such that, for any $R\geq 0$ and any radial solutions $u$ and $v$ of \eqref{eq:nonlinearwave2} for $|x|> R+|t|$ such that 
\begin{equation}
\label{smalluv}
\left\|\vec{u}(0)\right\|_{\Hc_R}+\left\|\vec{v}(0)\right\|_{\Hc_R}\leq \ep,\quad \Pi_{\Hc_R}\vec{u}(0)=\Pi_{\Hc_R}\vec{v}(0) 
\end{equation} 
and 
\begin{equation}
\label{u-v}
\sum_{\pm}\lim_{t\to\pm\infty}\int_{|x|>R+|t|} |\nabla_{t,x}(u(t,x)-v(t,x))|^2dx=0,
\end{equation} 
one has 
$$ u(t,x)=v(t,x),\quad |x|>R+|t|.$$
\end{proposition}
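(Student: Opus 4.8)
The plan is to treat $w=u-v$ as a solution, on the exterior region $\{|x|>R+|t|\}$, of a linear inhomogeneous wave equation whose source term $\varphi(u)-\varphi(v)$ is controlled --- thanks to the smallness of $u$ and $v$ --- by a small multiple of a Strichartz norm of $w$, and then to close a perturbative estimate. Throughout, one passes between $\{|x|>R+|t|\}$ and the whole space by extending data and forcings arbitrarily inside $\{|x|<R\}$ and using finite speed of propagation.

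First I would record a priori bounds on $u$ and $v$. By the small-data part of the well-posedness theory outside wave cones (Proposition \ref{pr:well-posedness}) and the smallness \eqref{smalluv}, for $\epsilon$ small both $u$ and $v$ are the unique solutions of \eqref{eq:nonlinearwave2} on all of $\{|x|>R+|t|\}$, and they lie, with norm $\lesssim\epsilon$, in the Strichartz space $\mathcal N$ entering Lemma \ref{lem:lipschitz} ($\mathcal N=L^{\frac{N+2}{N-2}}L^{\frac{2(N+2)}{N-2}}(\{|x|>R+|t|\})$ in the power case, $\mathcal N=\{u:\ r^{\frac{N-6}{4}}u\in L^2L^4(\{|x|>R+|t|\})\}$ in the analytic case); by the radial Sobolev embedding one also gets $\sup_{t\in\Rb}\sup_{r>R+|t|}r^{\frac N2-1}(|u|+|v|)\lesssim\epsilon$. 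Lemma \ref{lem:lipschitz} (power nonlinearity when $N\in\{3,5\}$, analytic nonlinearity when $N\geq 5$) then shows $\varphi(u)-\varphi(v)\in L^1L^2(\{|x|>R+|t|\})$ and
\[
\|\varphi(u)-\varphi(v)\|_{L^1L^2(\{|x|>R+|t|\})}\lesssim \epsilon^{\nu}\,\|w\|_{\mathcal N},\qquad \nu=\tfrac{4}{N-2}\ \text{(power)},\quad \nu=1\ \text{(analytic)}.
\]

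Next I would control the initial data of $w$ by this source term. Since $\Pi_{\mathcal H,R}\vec u(0)=\Pi_{\mathcal H,R}\vec v(0)$ by \eqref{smalluv}, $\vec w(0)=\Pi^{\perp}_{\mathcal H,R}\vec w(0)$ on $\{|x|>R\}$; moreover $w$ solves $\pa_t^2w-\Delta w=\varphi(u)-\varphi(v)$ there and is non-radiative for $|x|>R+|t|$ by \eqref{u-v}. Hence the radiation profiles $G_{\pm}$ of $w$ (which exist by Corollary \ref{cor:inhomogeneousradiation}, since $\varphi(u)-\varphi(v)\in L^1L^2$ outside the cone) vanish on $[R,\infty)$, because \eqref{T11} together with \eqref{u-v} forces $\int_R^{\infty}|G_{\pm}(\rho)|^2\,d\rho=0$. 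The inhomogeneous channels-of-energy estimate \eqref{bd:channels2} of Corollary \ref{cor:inhomogeneousradiation} (no symmetry needed since $N$ is odd), or equivalently the uniqueness statement of Corollary \ref{co:nonradiativeforcing}, then gives $\|\vec w(0)\|_{\mathcal H_R}=\|\Pi^{\perp}_{\mathcal H,R}\vec w(0)\|_{\mathcal H_R}\lesssim \|\indic(r>R+|t|)(\varphi(u)-\varphi(v))\|_{L^1L^2}$. Combining with the Strichartz estimate $\|w\|_{\mathcal N}\lesssim \|\vec w(0)\|_{\mathcal H_R}+\|\indic(r>R+|t|)(\varphi(u)-\varphi(v))\|_{L^1L^2}$ (standard for the power nonlinearity, Lemma \ref{lem:weighted_Strichartz} for the analytic one) and with the Lipschitz bound above, I get $\|w\|_{\mathcal N}\lesssim\epsilon^{\nu}\|w\|_{\mathcal N}$; since $\|w\|_{\mathcal N}<\infty$, choosing $\epsilon$ small forces $\|w\|_{\mathcal N}=0$, hence $\varphi(u)=\varphi(v)$ a.e.\ on $\{|x|>R+|t|\}$, hence $\vec w(0)=0$ on $\{|x|>R\}$, and finally $w\equiv0$ on $\{|x|>R+|t|\}$ by uniqueness in Proposition \ref{pr:well-posedness} --- i.e.\ $u=v$ there.

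I do not expect a genuine obstacle: this is the ``straightforward'' case flagged in the text, and the argument is soft and perturbative. The one real mechanism is that the smallness of $\|\vec u(0)\|_{\mathcal H_R}$ and $\|\vec v(0)\|_{\mathcal H_R}$ turns the nonlinearity into a contraction with constant $\epsilon^{\nu}\ll1$, which beats the scale-invariant constant of the non-radiative-forcing / channels-of-energy estimate; everything else is bookkeeping. The only point needing some care is the repeated use of finite speed of propagation to apply the whole-space statements --- Corollary \ref{cor:inhomogeneousradiation} (and, if one prefers it, the uniqueness of Corollary \ref{co:nonradiativeforcing}), the weighted Strichartz estimate of Lemma \ref{lem:weighted_Strichartz}, and Proposition \ref{pr:well-posedness} --- to objects that are only defined on the exterior region $\{|x|>R+|t|\}$.
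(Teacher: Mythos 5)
Your proof is correct and follows essentially the same route as the paper's: set $w=u-v$, use Lemma \ref{lem:lipschitz} with the a priori smallness of $u,v$ to bound $\varphi(u)-\varphi(v)$ in $L^1L^2$ by $\epsilon^\nu\|w\|_{\mathcal N}$, use the vanishing of the radiation profiles (via Corollary \ref{co:nonradiativeforcing} / \eqref{bd:channels2}) together with $\Pi_{\mathcal H,R}\vec w(0)=0$ to bound $\|\vec w(0)\|_{\Hc_R}$ by the same $L^1L^2$ norm, and close via Strichartz. The only difference is cosmetic bookkeeping (you combine the two inequalities in the opposite order to force $\|w\|_{\mathcal N}=0$ rather than $\|\vec w(0)\|_{\Hc_R}=0$, and the detour through $\varphi(u)=\varphi(v)$ at the end is superfluous since $\|w\|_{\mathcal N}=0$ already gives $w=0$ a.e.\ in the exterior region).
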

We first check that Proposition \ref{pr:uniqueness1} implies the uniqueness part of Theorem \ref{th:main} when \eqref{H1} holds. We thus assume \eqref{H1} and let $u$ be a nonradiative solution such that $\int |\nabla_{t,x}u(0)|^2dx\leq {\ep'}^2$, where $\ep'$ such that $0<\ep'\leq \ep$ is to be specified. We choose $\cbf \in \Rb^{\lfloor \frac{N-1}{2} \rfloor}$ such that 
$$ \Pi_{\Hc_R}\vec{a}[\cbf](0)=\Pi_{\Hc_R}\vec{u}(0),$$
and we denote to lighten notation $a=a[\cbf]$. Then by the ``existence'' part of Theorem \ref{th:main}, taking $\ep'$ small enough, we have 
$$ \int |\nabla_{t,x}a(0)|^2dx\leq {\ep}^2.$$
By Proposition \ref{pr:uniqueness1}, we obtain immediately $u=a$, which concludes the proof of the uniqueness part of Theorem \ref{th:main} when (H1) holds.
\begin{proof}[Proof of Proposition \ref{pr:uniqueness1}]
We first consider the case where $N\in \{3,5\}$, and $\varphi(u)=|u|^{\frac{4}{N-2}}u$. Let $w=u-v$. Then
\begin{equation}
 \label{eqw}
\partial_t^2w-\Delta w=|u|^{\frac{4}{N-2}}u-|v|^{\frac{4}{N-2}}v.
 \end{equation} 
 By Strichartz estimate, finite speed of propagation, the smallness condition in \eqref{smalluv} and standard small data theory,
 \begin{equation} 
  \label{StrichartzBound}
  \left\|\indic(|x|>R+|t|) u\right\|_{L^{\frac{N+2}{N-2}}L^{\frac{2(N+2)}{N-2}}}+\left\|\indic(|x|>R+|t|) v\right\|_{L^{\frac{N+2}{N-2}}L^{\frac{2(N+2)}{N-2}}}\lesssim \ep.
 \end{equation} 
 Noting that
 \begin{equation}
  \label{bound_NL}
  \left||u|^{\frac{4}{N-2}}u-|v|^{\frac{4}{N-2}}v\right|\leq C|u-v|\left( |u|^{\frac{4}{N-2}}+|v|^{\frac{4}{N-2}} \right),
 \end{equation} 
we obtain, by equation \eqref{eqw}, finite speed of propagation, Strichartz and H\"older estimates.
\begin{multline*}
 \big\|\indic(|x|>R+|t|)w\big\|_{L^{\frac{N+2}{N-2}}L^{\frac{2(N+2)}{N-2}}}\lesssim \left\|\indic(|x|>R+|t|) w \left( |u|^{\frac{4}{N-2}}+|v|^{\frac{4}{N-2}}\right)\right\|_{L^1L^2}+\|\vec{w}(0)\|_{\Hc_R}\\
 \lesssim \ep^{\frac{4}{N-2}}\left\|\indic(|x|>R+|t|) w\right\|_{L^{\frac{N+2}{N-2}}L^{\frac{2(N+2)}{N-2}}}+\|\vec{w}(0)\|_{\Hc_R},
\end{multline*}
and hence, taking $\ep$ small enough,
\begin{equation}
\label{Strichartz_w}
\big\|\indic(|x|>R+|t|)w\big\|_{L^{\frac{N+2}{N-2}}L^{\frac{2(N+2)}{N-2}}}\lesssim  \|\vec{w}(0)\|_{\Hc_R}.
\end{equation} 
Since $w$ satisfies equation \eqref{eqw},  we obtain by Proposition \ref{pr:nonradiativeforcing} and the second assumption in \eqref{smalluv} and assumption \eqref{u-v}
\begin{multline*}
 \|\vec{w}(0)\|_{\Hc_R}\lesssim\left\|\indic(|x|>R+|t|) w \left( |u|^{\frac{4}{N-2}}+|v|^{\frac{4}{N-2}}\right)\right\|_{L^1L^2}\\
 \lesssim \ep^{\frac{4}{N-2}}\left\|\indic(|x|>R+|t|) w\right\|_{L^{\frac{N+2}{N-2}}L^{\frac{2(N+2)}{N-2}}}\lesssim \ep^{\frac{4}{N-2}}\|\vec{w}(0)\|_{\Hc_R}.
\end{multline*}
Taking $\ep$ small enough, we obtain $\|\vec{w}(0)\|_{\Hc_R}=0$, concluding the proof in the case where $N\in \{3,5\}$ and the nonlinearity is power-like.

The proof is almost the same in the case where $N\geq 5$ and $\varphi$ is an analytic nonlinearity of the form \eqref{eq:nonlinearityanalytic}. Indeed, by standard small data theory and the smallness assumption in \eqref{smalluv}, we obtain 
$$ \forall t\in \Rb,\quad \|u\|_{\Hc_{R+|t|}}+\|v\|_{\Hc_{R+|t|}}\lesssim \ep.$$
Using \eqref{NLLipsch2} and the weighted Strichartz estimate of Lemma \ref{lem:weighted_Strichartz}, the end of the proof goes among the same lines as the proof in the case where $\varphi$ is power-like. We omit the details.
\end{proof}

\subsection{Uniqueness of non-radiative solutions for power nonlinearity in high dimensions}
\label{sub:uniqueness2}
In this subsection we prove the uniqueness part of Theorem \ref{th:main}, in the case where $N\geq 7$ and $\varphi(u)=|u|^{\frac{4}{N-2}}u$. Unlike the proofs of the two other subsections, we need an additional decay estimate, proved in \cite{DuKeMe19Pb}.

Let $u$ be a non-radiative solution, and $C_1$, $R_1$ be given by Proposition \ref{pr:aprioribound}, so that $u(t,r)$ is well defined for $r\geq t+R_1$ and 
\begin{equation}
\label{decayu}
\forall R\geq R_1+|t|,\quad \|\vec{u}(t)\|_{\Hc_R}\leq \frac{C_1}{R^{1/2}},\quad |u(t,R)|\leq \frac{C_1}{R^{\frac{N-1}{2}}}. 
\end{equation} 
Taking a larger $R_1$ if necessary, we can assume 
\begin{equation}
 \label{largeR1}
 \frac{C_1}{R_1^{1/2}}\leq \ep,
\end{equation}
Since $\|\vec{u}(0)\|_{\Hc_R}\leq \ep$, taking $\ep$ small enough, we can find $\cbf \in \Rb^{\lfloor \frac{N-1}{2}\rfloor}$ such that 
$$ \Pi_{\Hc_{R_1}}\vec{a}[\cbf](0)=\Pi_{\Hc_{R_1}}\vec{u}(0).$$
We will denote $a=a[\cbf]$ to ligthen notation. Since by \eqref{decayu}, \eqref{largeR1}, $|\cbf|_{R_1}\approx \left\|\pi_{\Hc_{R_1}}\vec{a}(0)\right\|_{\Hc_{R_1}}\leq \ep$, we have $\|\vec{a}(0)\|_{\Hc_{R_1}}\lesssim \ep$. Also, by the inequality \eqref{bound_a_aF},
\begin{equation}
 \label{decaya1}
 \forall t,\; \forall R\geq R_1+|t|,\quad \|\vec{a}(t)-\vec{a}_F(t)\|_{\Hc_R}\lesssim \ep^{1+\delta}\frac{R_1^{1/2}}{R^{1/2}},
\end{equation}
where $a_F$ is the solution of the free wave equation with initial data $\vec{a}(0)$. By the decay estimate \eqref{add_decay_aF}, we see that for $R\geq R_1+|t|$, $\left\|a_F(t,R)\right\|_{\Hc_R}\lesssim \ep\left(\frac{R_1}{R}\right)^{1/2}$. Combining with \eqref{decaya1}, radial Sobolev, \eqref{decayu} and \eqref{largeR1} we obtain te following uniform decay bound for $a$ and $u$
\begin{equation}
 \label{unif_bound_ua}
 \forall r\geq R_1+|t|,\quad |a(t,r)|+|u(t,r)|\lesssim \ep \frac{R_1^{1/2}}{R^{\frac{N-1}{2}}}. 
\end{equation} 
We next consider $w=u-a$, which satisfies 
\begin{equation}
 \label{eqwbis}
 \partial_t^2w-\Delta w=|u|^{\frac{4}{N-2}}u-|a|^{\frac{4}{N-2}}a.
\end{equation} 
By the endpoint Strichartz estimate and finite speed of propagation,
\begin{equation}
 \label{StrichartzwNlarge}
\left\|\indic(|x|>R_1+|t|) w\right\|_{L^2L^{\frac{2N}{N-3}}}\lesssim \|\vec{w}(0)\|_{\Hc_{R_1}}+\left\|\indic(|x|>R_1+|t|)\left( |u|^{\frac{4}{N-2}} u-|a|^{\frac{4}{N-2}}a\right)\right\|_{L^1L^2}.
 \end{equation} 
By H\"older's inequality and the uniform bound \eqref{unif_bound_ua}
\begin{multline*}
 \left\|\indic(|x|>R_1+|t|)\left( |u|^{\frac{4}{N-2}} u-|a|^{\frac{4}{N-2}}a\right)\right\|_{L^1L^2}\\
 \lesssim \left(\ep R_1^{1/2}\right)^{\frac{4}{N-2}} \|\indic(|x|>R_1+|t|) w\|_{L^2L^{\frac{2N}{N-3}}}\left\| \indic(|x|>R_1+|t|)r^{-2(N-1)/(N-2)}\right\|_{L^2L^{\frac {2N}3}}.
\end{multline*} 
By direct computation, $\left\| \indic(|x|>R_1+|t|)r^{-2(N-1)/(N-2)}\right\|_{L^2L^{\frac{2N}3}}\approx R_1^{-2/(N-2)}$, and thus 
\begin{equation}
\label{boundNLw} 
\left\|\indic(|x|>R_1+|t|)\left( |u|^{\frac{4}{N-2}} u-|a|^{\frac{4}{N-2}}a\right)\right\|_{L^1L^2}\lesssim \ep^{\frac{4}{N-2}} 
\left\|\indic(|x|>R_1+|t|) w\right\|_{L^2L^{\frac{2N}{N-3}}}.
\end{equation} 
Going back to \eqref{StrichartzwNlarge}, we deduce, taking $\ep$ small enough
\begin{equation}
\label{boundStrichartzwNlarge}
\left\|\indic(|x|>R_1+|t|) w\right\|_{L^2L^{\frac{2N}{N-3}}}\lesssim \|\vec{w}(0)\|_{\Hc_{R_1}}. 
\end{equation} 
The end of the proof is the same as in the other cases, using Proposition \ref{pr:nonradiativeforcing} together with \eqref{boundNLw} and \eqref{boundStrichartzwNlarge} to prove that $w=0$ for $|x|>R_1+|t|$. By Proposition 3.8 of \cite{DuKeMe19Pb}, we obtain that $u=a$ for $|x|>R+|t|$, for any $R$ so that $u$ and $a$ are defined on this set.

\section{Uniqueness of non-radiative solutions in even dimensions} \label{sec:uniquenesseven}

In this section we prove:

\begin{proposition}[Uniqueness of small non-radiative nonlinear waves in even dimensions] \label{pr:uniquenesseven}

Assume $N\geq 4$ is even. Then there exists $\epsilon'>0$ such that if $u$ is any solution to \eqref{eq:nonlinearwaveintro} that is non-radiative for $|x|>R_0+|t|$ with $\int_{|x|> R_0} |\nabla_{t,x}u(0)|^2dx\leq \epsilon^{'2}$, then there exists $\cbf \in \mathbb R^{m_0+1}$ with $|\cbf|_{R_0}\lesssim \epsilon'$ such that
$$
\forall r>R_0+|t|, \qquad u(t,r)=a[\cbf,R_0](t,r)
$$
where $a[\cbf,R_0]$ is given by \eqref{id:defageneral}.

\end{proposition}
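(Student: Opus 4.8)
The strategy is to reduce to the case $R_0=1$ by scaling (using that the equation and the non-radiative property are scale invariant, and that $a[\cbf,R_0]=(a[\cbf_{R_0}])_{(R_0)}$ by Definition \ref{def:constructionnonradia}), so assume $R_0=1$. Given a non-radiative solution $u$ with $\|\vec u(0)\|_{\Hc_1}\leq \epsilon'$, the free solution $a_F$ with the same data as $u$ restricted to $\{|x|>1+|t|\}$ is non-radiative for $|x|>1+|t|$ only up to the obstruction measured by the channels of energy estimate; but since $u$ itself is non-radiative and solves the nonlinear equation with forcing $\varphi(u)\in W^{'\kappa}_1\cap L^1L^2$ (small, of size $\epsilon'^{1+\delta}$ by the weighted Strichartz/Hölder estimates of Lemmas \ref{lem:weighted_Strichartz} and \ref{lem:lipschitz} and the radial Sobolev embedding), Corollary \ref{cor:inhomogeneousradiation} and Classification \ref{pr:nonradiativefree} give that $\Pi_{\Hc,1}\vec u(0)=\vec a_F[\cbf](0)$ for some $\cbf$ with $|\cbf|_1\lesssim \epsilon'$, while $\Pi^\perp_{\Hc,1}\vec u(0)$ is controlled by $\|\varphi(u)\indic(|x|>1+|t|)\|$-type norms. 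The goal is then to show $u$ coincides on the cone with the solution $a=a[\cbf]$ constructed in Proposition \ref{pr:constructionnonradia}, i.e. that $w:=u-a$ vanishes for $|x|>1+|t|$.

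The first main step is a bootstrap/contraction argument at the level of the radiation-field-to-data correspondence. Both $u$ and $a$ are non-radiative solutions of the nonlinear equation with the same $\Pi_{\Hc,1}$-projection of the data; subtracting, $w$ solves an inhomogeneous wave equation with forcing $\varphi(u)-\varphi(a)$, is non-radiative for $|x|>1+|t|$, and satisfies $\Pi_{\Hc,1}\vec w(0)=0$. In odd dimensions one would finish as in Section \ref{sub:uniqueness1} using the channels of energy estimate of Proposition \ref{pr:nonradiativeforcing} (or \ref{pr:nonradiativeforcingmainodd}) together with Strichartz and the Lipschitz bound $|\varphi(u)-\varphi(a)|\lesssim |u-a|(|u|+|a|)^{\cdots}$, closing a smallness loop to force $\|\vec w(0)\|_{\Hc_1}=0$. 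In even dimensions, however, the channels of energy estimate \eqref{bd:channels} only controls \emph{half} of $\vec w(0)$ — the $\dot H^1$ part if $N\equiv 4\bmod 4$, the $L^2$ part if $N\equiv 6\bmod 4$ — so one can only conclude that $w$ agrees on the cone with a solution whose data lies in the ``bad'' one-dimensional direction spanned by $\vec\phi_{m_0}$ together with the resonant direction; equivalently, after the half-channel argument, $\vec w(0)$ restricted to the cone is a multiple of a free non-radiative profile plus an error, and the surviving obstruction is an excitation of the resonance $\phi_{m_0+1}$.

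The second, and main, step is to rule out this resonant excitation, which is exactly what Section \ref{sec:resonance} was built for. The plan is to write the restriction of $w$ (or rather of the surviving ``bad'' component of $u-a$) near infinity on the cone as $\tilde c\,\phi_{m_0+1}+\hat c^2\psi+(\text{lower order})$ using the nonlinearly corrected resonance $\tilde\phi[\tilde c,R]$ from \eqref{def:tildephi}, compare the resulting elliptic equation $\mathcal L_N(\cdots)=(\text{cubic in }\tilde c)$ via \eqref{even:id:boxtildephi} and the inversion formula of Lemma \ref{rigidity|u|u:lem:mathcalL} (for the power case) or the orthogonality of Chebyshev polynomials (Proposition \ref{pr:resonancequadra}), and derive a dyadic recursion for the coefficient $\tilde c=\tilde c[R]$ across scales $R_k=2^k$. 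Because the quadratic term is non-resonant — either by time parity ($N\equiv 4\bmod4$) or by the vanishing of $\int_{-1}^1(1-\sigma^2)^{-1/2}T_\alpha^3$ ($N\equiv 6\bmod4$) — the recursion shows that a nonzero $\tilde c$ forces either a logarithmic growth of the energy between dyadic annuli or a fixed-sign energy increment, both of which are incompatible with $\vec u(0)\in\Hc$ (finite energy). Hence $\tilde c\equiv 0$, the resonant obstruction vanishes, and combining with the half-channel control one gets $\|\vec w(0)\|_{\Hc_1}\lesssim \epsilon'^{\delta}\|\vec w(0)\|_{\Hc_1}$, forcing $\vec w(0)=0$ on the cone and therefore $u=a[\cbf]$ for $|x|>1+|t|$ by finite speed of propagation. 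The hard part is the bookkeeping in this last step: tracking the precise decay rates $\kappa_N$, controlling the lower-order (cubic and higher, and non-local elliptic) corrections uniformly across dyadic scales, and checking that the approximation of $w$ by $\tilde\phi[\tilde c,R]$ is good enough — in $W^\kappa_R$-type norms with $\kappa$ just below $\kappa_N$ — that the error does not swamp the resonant coefficient; this is where the weighted estimates of Propositions \ref{pr:nonradiativeforcingmaineven} and the gain-of-derivative Lemmas \ref{lem:nonradiativeforcinggain}, \ref{lem:nonradforcingeven} all get used.
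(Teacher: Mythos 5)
Your outline correctly identifies the key ingredients of the paper's proof: reduction to $R_0=1$ by scaling, the failure of the channels estimate \eqref{bd:channels} on half of the data in even dimensions, the role of the resonance $\phi_{m_0+1}$ as the surviving obstruction, the nonlinearly corrected resonance $\tilde\phi[\tilde c,R]$ and the non-resonance of quadratic terms (by time parity if $N\equiv 4\bmod 4$, by the Chebyshev orthogonality $\int(1-\sigma^2)^{-1/2}T_\alpha^3=0$ if $N\equiv 6\bmod 4$), the dyadic recursion for $\tilde c[R_k]$, and the finite-energy contradiction that forces $\tilde c=0$. This is essentially the same route the paper takes, and your intuition that the half-channel argument must be supplemented by applying the channels estimate to time derivatives of $u$ — i.e.\ the gain-of-regularity step — is also correct.

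There is, however, one genuine gap in your plan: the conclusion that $\vec w(0)=0$ ``on the cone $\{|x|>1+|t|\}$'' and ``therefore $u=a[\cbf]$ for $|x|>1+|t|$ by finite speed of propagation''. The gain-of-regularity Lemmas \ref{even:lem:gainreg}--\ref{even:lem:gainreg2} (for $N\equiv 6\bmod 4$) and \ref{evenN=4:lem:gainreg}--\ref{evenN=4:lem:gainreg4} (for $N\equiv 4\bmod 4$) — which are needed to make the half-channel argument close, since they convert the even-in-time part into an odd-in-time time derivative with an extra factor of decay — intrinsically shrink the exterior cone: they provide control only for $r>R_1+|t|$ with $R_1=16R_0$, respectively $r>R_2+|t|$ with $R_2=256R_0$. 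Consequently the resonance/dyadic argument produces Lemmas \ref{lem:criticalnondispersivestabilitycontrad} and \ref{lemN=4:criticalnondispersivestabilitycontrad}, which give $u=a[\bar\cbf,\bar R]$ only on a \emph{strictly smaller} cone $\{r\geq\bar R+|t|\}$ with $\bar R\in[R_0,256R_0]$. To recover the full statement on $\{r>R_0+|t|\}$, the paper invokes the maximal extension result, Theorem \ref{th:maximal} (proved independently in Section \ref{sec:maximal}), to uniquely extend the two non-radiative solutions $u$ and $a[\cbf,R_0]$ from $\{r\geq\bar R+|t|\}$ back down to $\{r>R_0+|t|\}$. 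Finite speed of propagation alone cannot do this: agreement of data on $\{|x|>\bar R\}$ propagates \emph{into} the cone $\{|x|>\bar R+|t|\}$, not outward to the larger region $\{|x|>R_0+|t|\}$. This backwards extension step is a nontrivial missing component of your argument.
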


The proof will use several intermediate results contained in the following subsections.

\begin{proof}

Applying Lemma \ref{lem:criticalnondispersivestabilitycontrad} if $N\equiv 6\mod 4$, or Lemma \ref{lemN=4:criticalnondispersivestabilitycontrad} if $N\equiv 4 \mod 4$, there exists $ \bar R \in [R_0,256R_0]$ and $\bar{\cbf}\in \mathbb R^{m_0+1}$ with $|\bar{\cbf}|_{\bar R}\lesssim \epsilon$ such that $u=a[\bar{\cbf},\bar R]$ on $\{r\geq \bar R+|t|\}$. Using Proposition \ref{pr:constructionnonradia} and the local inverse Theorem, there exists $\cbf \in \mathbb R^{m}$ with $|\cbf|_{R_0}\lesssim \epsilon$ such that $\Pi_{\mathcal H_{\bar R}}\vec a[\cbf,R_0]=\Pi_{\mathcal H_{\bar R}}\vec a[\bar{\cbf},\bar{R}]$, so that $a[\cbf,R_0]=a[\bar{\cbf},\bar{R}]$ on $\{r>\bar R+|t|\}$ by Lemmas \ref{lem:criticalnondispersivestabilitycontrad} and \ref{lemN=4:criticalnondispersivestabilitycontrad}. Hence $u=a[\cbf,R_0]$ on $\{r\geq \bar R+|t|\}$. Applying Theorem \ref{th:maximal}, whose proof is independent of that of Proposition \ref{pr:uniquenesseven}, we obtain $u=a[\cbf,R_0]$ on $\{r\geq R_0+|t|\}$ as desired.

\end{proof}

\subsection{Proof of Proposition \ref{pr:uniquenesseven} in the case $N\equiv 6\mod 4$} \label{subsec:N=6}

In this subsection we fix $N\equiv 6\mod 4$ and prove Proposition \ref{pr:uniquenesseven} in that case, which is the most difficult, see the comments in Subsection \ref{subsec:N=4}. In what follows, all statements are valid simultaneously for $\varphi$ given by \eqref{eq:nonlinearityanalytic}, or \eqref{eq:nonlinearitypower} and $N=6$, unless explicitly mentioned.

We now fix once for all for this section $u$ a solution to \eqref{eq:nonlinearwaveintro} that is non-radiative for $|x|>R_0+|t|$ with $\int_{|x|> R_0} |\nabla_{t,x}u(0)|^2dx\leq \epsilon^{'2}$. We have the a priori estimates on $u$
\begin{equation}\label{even:bd:aprioriu}
\| \vec u \|_{L^\infty \mathcal H_{R_0+|t|}}+\sup_{r>R_0+|t|}r^{\frac{N-2}{2}}|u(t,r)|+\| r^{\frac{N-6}{4}}u\|_{L^2L^4_{R_0+|t|}}\lesssim \epsilon'
\end{equation}

First, we establish a gain of regularity for $u$. We let $R_1=16 R_0$ and $\varphi'=\frac{\pa}{\pa u} \varphi $.

\begin{lemma}[Gain of regularity] \label{even:lem:gainreg}

The functions $\indic (r> R_1+|t|)\pa_r\pa_t u $, $\indic (r>R_1+|t|)\pa_{tt} u$ and $\indic (r>R_1+|t|)\Delta u $ all belong to $ \mathcal CL^2 \cap L^\infty L^2$. In addition, $\pa_t u$ is a non-radiative solution for $r>R_1+|t|$ to
\begin{equation}\label{even:id:Boxpatu}
\left\{ \begin{array}{l l} \Box \pa_t u=\varphi'(u)\pa_t u,\\ (\pa_t u(0),\pa_{tt}u(0))=(u_1,\Delta u_0+\varphi(u_0),\end{array} \right. 
\end{equation}
and $\varphi'(u)\pa_t u\in L^1L^2_{R_1+|t|}$. It satisfies:
\begin{equation}\label{even:aprioripattuL21}
\forall R >R_1, \qquad \| \pa_{t} u(0)\|_{\dot H^1_{R}}+\| \pa_{tt} u(0)\|_{L^2_{R}}\lesssim \frac{\epsilon' }{ R}.
\end{equation}

\end{lemma}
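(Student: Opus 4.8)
The statement to prove is Lemma \ref{even:lem:gainreg}: a gain of regularity for a small non-radiative solution $u$ in dimension $N\equiv 6\bmod 4$. I would derive this as a consequence of the localized gain-of-regularity result for the linear inhomogeneous problem, namely Lemma \ref{lem:gainregradiation2}, combined with the channel-of-energy / forcing machinery already set up (Proposition \ref{pr:nonradiativeforcingmaineven} and Corollary \ref{cor:inhomogeneousradiation}). The first step is to record that $\pa_t u$ solves \eqref{even:id:Boxpatu} with the indicated data: differentiating the equation $\Box u=\varphi(u)$ in $t$ gives $\Box\pa_tu=\varphi'(u)\pa_tu$, and reading $\pa_{tt}u(0)=\Delta u_0+\varphi(u_0)$ off the equation at $t=0$. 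The a priori bounds \eqref{even:bd:aprioriu} together with the pointwise decay $r^{\frac{N-2}{2}}|u|\lesssim\ep'$ give, by Hölder (exactly as in Lemma \ref{lem:lipschitz}), that $\indic(r>R_1+|t|)\varphi'(u)\pa_tu\in L^1L^2$ with norm $\lesssim \ep'^{\,\delta}\|\pa_tu\|_{\dots}$, and in particular $\varphi'(u)\pa_tu\in W^{'\kappa}_{R_1}$-type bounds with the relevant extra decay — this is the input needed to feed into the even-dimensional forcing result.

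The second step is the decay/regularity bootstrap. Since $u$ is non-radiative for $r>R_0+|t|$, its radiation fields $G_\pm$ vanish for $\rho>R_0$; the inhomogeneous term $F=\varphi'(u)\pa_tu$ has enough spatial decay (coming from \eqref{even:bd:aprioriu} and the explicit weights in $\varphi$) that one can apply Lemma \ref{lem:nonradiativeforcinggain} / Proposition \ref{pr:nonradiativeforcingmaineven} with a weight $\kappa$ slightly above $\kappa_N=1$. This yields that $\pa_tu$ restricted to $\{r>R_1+|t|\}$ is, up to the free non-radiative waves $\phi_m$ and the resonance direction, in the better space: concretely $\pa_tu(0)\in\dot H^1_{R}$ and $\pa_{tt}u(0)\in L^2_R$ with the quantitative $\ep'/R$ bound \eqref{even:aprioripattuL21}, after matching the non-radiative directions dyadically as in the proof of Lemma \ref{lem:nonradiativeforcinggain}. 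From $\pa_tu(0)\in\dot H^1$ and $\pa_{tt}u(0)\in L^2$ on $\{r>R_1\}$ one gets $\indic(r>R_1+|t|)\pa_r\pa_tu$ and $\indic(r>R_1+|t|)\pa_{tt}u$ in $\mathcal CL^2\cap L^\infty L^2$ by energy estimates and finite speed of propagation applied to \eqref{even:id:Boxpatu}; and then $\Delta u=\pa_{tt}u-\varphi(u)$ gives the last membership since $\varphi(u)$ already has the required integrability/decay from \eqref{even:bd:aprioriu}.

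The main obstacle is the handling of the resonance and the slowest-decaying non-radiative direction in even dimensions: a priori the decomposition from Proposition \ref{pr:nonradiativeforcingmaineven} allows excitation of $\phi_{m_0}$ and potentially the resonant profile $\phi_{m_0+1}$ of Section \ref{sec:resonance}, and one must argue these do not appear for $\pa_tu$ — equivalently, that $\pa_tu(0)$ genuinely lies in the orthogonal complement $\Pi^\perp_{\mathcal H,R_1}$ with the full gain. Here the point is that $\pa_tu$ is itself a \emph{solution} (not merely a forced wave with arbitrary data), so its data is determined by $u$, and the non-radiative orthogonality for $u$ plus the relation $(\pa_tu(0),\pa_{tt}u(0))=(u_1,\Delta u_0+\varphi(u_0))$ forces the projection onto the bad directions to be controlled by lower-order (quadratic in $\ep'$) terms; closing this requires the smallness $\ep'\ll1$ and the weighted estimates of Proposition \ref{pr:nonradiativeforcingmaineven} with $\kappa$ chosen in $(\kappa_N,\kappa_N+1)$. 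The rest — verifying $F\in L^1L^2$, the continuity in time, and the explicit constant $\ep'/R$ — is routine given \eqref{even:bd:aprioriu} and Lemma \ref{lem:lipschitz}.
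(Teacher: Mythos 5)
Your argument is circular, and the circularity is precisely what the paper's proof is structured to avoid.

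The core issue: you propose to treat $\partial_t u$ as a non-radiative solution of the inhomogeneous wave equation $\Box\,\partial_t u=\varphi'(u)\partial_t u$ and then apply Lemma \ref{lem:nonradiativeforcinggain} / Proposition \ref{pr:nonradiativeforcingmaineven} to get the regularity gain. But those results concern \emph{constructed} solutions with prescribed data in $\mathcal H$; to reinterpret $\partial_t u$ as one of them (say via the uniqueness clause), you would first need to know that $(\partial_t u(0),\partial_{tt}u(0))=(u_1,\Delta u_0+\varphi(u_0))\in\mathcal H_{R_1}$, i.e.\ $u_1\in\dot H^1$ and $\Delta u_0\in L^2$ outside a ball. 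This is exactly the conclusion of Lemma \ref{even:lem:gainreg}; a priori you only have $(u_0,u_1)\in\dot H^1\times L^2$. Worse, the hypotheses of Proposition \ref{pr:nonradiativeforcingmaineven} in the even case require $\partial_t f\in L^1L^2\cap W^{'1+\kappa}_R$ for $f=\varphi'(u)\partial_t u$, and $\partial_t f$ contains the term $\varphi'(u)\partial_{tt}u$ — so even verifying the hypotheses requires the $L^2$ control of $\partial_{tt}u$ you are trying to produce. Your remark that ``$\partial_t u$ is itself a solution, its data is determined by $u$'' does not break this loop: being ``determined'' is not the same as being in $\mathcal H$.

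The paper's argument never differentiates the nonlinear solution in the region of bad regularity. For each $R\geq R_0$ it writes $u=v+w$ on $\{r>2R+|t|\}$ with $v$ the forced wave with forcing $f=\chi(r-2R-|t|)\varphi(u)$ and zero data, and $w$ the free wave with truncated data $(u_0,u_1)\indic_{r\geq R}$. Both $v$ and $w$ are genuine finite-energy waves from the start. One checks $\partial_t f\in L^1L^2$ with norm $\lesssim\epsilon'^2/R$ using only the \emph{known} a priori bound \eqref{even:bd:aprioriu} (note $\partial_t f$ contains $\varphi'(u)\partial_t u$, controlled by $\|\partial_t u\|_{L^\infty L^2}$, not $\partial_{tt}u$). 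This gives $G_\pm[v]\in H^1(\mathbb R)$ via the differentiability clause of Proposition \ref{construction:pr:radiation}. Non-radiativity of $u$ transfers this regularity to $G_\pm[w]$ for $\rho\geq 2R$, and Lemma \ref{lem:gainregradiation2} applied to the \emph{free} wave $w$ then yields $(u_1,\Delta u_0)\in\mathcal H_{16R}$. Only after that does one conclude that $\partial_t u$ is a finite-energy non-radiative solution of \eqref{even:id:Boxpatu}, by propagation of regularity and energy estimates. If you want to keep the spirit of your plan, you must first establish the $H^1$ regularity of the radiation profiles through the $v/w$ decomposition before touching $\partial_t u$.
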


\begin{proof}

Let $R\geq R_0$, and $v$ and $w$ be the solutions to
$$
\left\{\begin{array}{l l} \Box v=\chi(r-2R-|t|)\varphi(u)=:f,\\ \vec v(0)=(0,0), \end{array}  \right. 
$$
where $\chi$ is a smooth one-dimensional cut-off function with $\chi(s)=0$ for $s\leq -R$ and $\chi(s)=1$ for $s\geq 0$, and
$$
 \left\{\begin{array}{l l} \Box w= 0 ,\\ \vec w(0,r)=\left(u_0(r)\indic(r\geq R)+u_0(R)\indic(r<R),u_1(r)\indic(r\geq R)\right), \end{array}  \right.
$$
Note that by \eqref{even:bd:aprioriu}, since $\vec u(0)\in \mathcal H_{R_0}$ and $R\geq R_0$, then $\vec w(0)\in \mathcal H$ with $\| \vec w(0) \|_{\mathcal H}\lesssim \| \vec u(0)\|_{\mathcal H_{R_0}}\lesssim \epsilon'$. Applying Proposition \ref{construction:pr:radiation}, $w$ has radiation profiles $G_\pm[w]$ as $t\to \pm \infty$, that satisfy
\begin{equation}\label{even:gainreg:bd:G+w1}
\| G_{\pm}[w]\|_{L^2(\mathbb R)}\lesssim \epsilon'.
\end{equation}
Note that $f\in L^1L^2$ by \eqref{even:bd:aprioriu} and H\"older (see Lemma \ref{lem:lipschitz}), hence $v$ admits radiation profiles $G_\pm[v]\in L^2(\mathbb R)$ as $t\to \pm \infty$ by Corollary \ref{cor:inhomogeneousradiation}. Since $u=v+w$ for $r>2R+|t|$ by finite speed of propagation, and $u$ is non-radiative for $r>R+|t|$, there holds
\begin{equation}\label{even:gainreg:id:G+w=G+v}
G_+ [w](\rho)=-G_+ [v](\rho) \quad \mbox{and}\quad G_- [w](\rho)=-G_- [v](\rho)
\end{equation}
for all $\rho \geq 2 R$. In addition, $f$ is differentiable for $t\neq 0$ with $\pa_t f=f_1+f_2$ where
$$
 f_1=-\textup{sgn}(t)\chi'(r-|t|-2R)\varphi(u), \quad f_2=\chi(r-2R-|t|)\varphi'(u)\pa_t u.
$$
We have that $\chi'(r-|t|-2R)$ is zero for $r$ outside $[R+|t|,2R+|t|]$, and by \eqref{even:bd:aprioriu} that $|\varphi(u)|\lesssim  r^{-N/2-1}\epsilon^{'2}$ (see \eqref{NLLipsch1}), so that $|f_1|\lesssim \epsilon^{'2} (R+|t|)^{-N/2-1}\indic(R+|t|\leq r\leq 2R+|t|)$. Thus $\| f_1\|_{L^1L^2}\lesssim \epsilon^{'2}R^{-1}$ by a direct computation using \eqref{even:controle:bd:universal5}. By \eqref{even:bd:aprioriu} again we have $|\varphi'(u)|\lesssim  r^{-2}\epsilon^{'}$ (see \eqref{NLLipsch1}). Hence by H\"older and \eqref{even:controle:bd:universal5}:
\begin{equation}\label{even:gainreg:bd:upatuL1L2}
\| \varphi'(u)\pa_t u\|_{L^1L^2_{R+|t|}}\lesssim  \| \pa_t u \|_{L^\infty L^2_{R+|t|}} \| \varphi'(u)\|_{L^1L^\infty_{R+|t|}}\lesssim \epsilon' \| \frac{\epsilon'}{(R+|t|)^2}\|_{L^1}\lesssim \frac{\epsilon^{'2}}{R}.
\end{equation}
Hence $\|f_2\|_{L^1L^2}\lesssim \epsilon^{'2}R^{-1}$. Combining, we get $ \pa_t f\in L^1L^2$ with $\| \pa_t f\|_{L^1L^2}\lesssim \epsilon^{'2}R^{-1}$. From \cite{DuKeMaMe22} this implies that $G_+ [v],G_- [v] \in H^1(\mathbb R)$ with $\| \pa_\rho G_+[v]\|_{L^2(\mathbb R)}+\| \pa_\rho G_-[v]\|_{L^2(\mathbb R)}\lesssim \epsilon^{'2}R^{-1}$. Hence by \eqref{even:gainreg:id:G+w=G+v} $G_+ [w],G_- [w]\in H^1(\rho\geq 2R)$ with
\begin{equation}\label{even:gainreg:bd:G+w2}
\| \pa_\rho G_+[w]\|_{L^2(\rho\geq 2R)}+\| \pa_\rho G_-[w]\|_{L^2(\rho\geq 2R)}\lesssim \epsilon^{'2}R^{-1}.
\end{equation}
Since $\vec w(0)=\vec u(0)$ for $r\geq R$, applying Lemma \ref{lem:gainregradiation2} to $w$ using \eqref{even:gainreg:bd:G+w1} and \eqref{even:gainreg:bd:G+w2}, we obtain the gain of regularity for the initial data $(u_1 ,\Delta u_0)\in \mathcal H_{16 R}$ with
\begin{align}
\nonumber \| (u_1 ,\Delta u_0)\|_{\mathcal H_{16 R}} &=\| (w_1 ,\Delta w_0)\|_{\mathcal H_{16 R}}\\
\label{even:gainreg:bd:G+w3}& \lesssim \| \pa_\rho G_+[w]\|_{L^2(\rho\geq 2R)}+\| \pa_\rho G_-[w]\|_{L^2(\rho\geq 2R)}+\frac{1}{R}\| G_+[w]\|_{L^2(\mathbb R)} \ \lesssim \frac{\epsilon'}{R}.
\end{align}
We have $(\pa_t u(0),\pa_{tt}u(0))=(u_1 ,\Delta u_0+\varphi(u_0))$. We have by \eqref{even:bd:aprioriu} that $|\varphi(u_0)|\lesssim \epsilon^{'2} r^{-N/2-1}$ (see \eqref{NLLipsch1}) hence $\|\varphi(u_0)\|_{L^2_{R}}\lesssim \epsilon^{'2}R^{-1}$ by \eqref{even:controle:bd:universal1}. Combining this and \eqref{even:gainreg:bd:G+w3} shows \eqref{even:aprioripattuL21}. Taking $R=R_0$ we get $(\pa_t u(0),\pa_{tt}u(0))\in \mathcal H_{R_1}$ and the rest of the Lemma follows combining a classical propagation of regularity argument, energy estimates with \eqref{even:gainreg:bd:upatuL1L2}, and finite speed of propagation.

\end{proof}

Second, we show that the derivative gained by the previous lemma enjoys a gain of decay.

\begin{lemma} \label{even:lem:gainreg2}

There holds $\pa_t u \in \tilde W^{1}_{R_1}$ with
\begin{equation}\label{even:aprioripattuL2}
\forall R >R_1+|t|, \qquad \| \pa_{tt} u(t)\|_{L^2_{R}}\lesssim \frac{\epsilon' }{ R}.
\end{equation}

\end{lemma}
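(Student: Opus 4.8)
The plan is to work throughout with $v:=\partial_t u$. By Lemma \ref{even:lem:gainreg}, $v$ is a non-radiative solution for $|x|>R_1+|t|$ of $\Box v=f$ with $f:=\varphi'(u)v\in L^1L^2_{R_1+|t|}$, and, by the a priori bounds \eqref{even:bd:aprioriu} (which give $|u(t,r)|\lesssim\epsilon' r^{-(N-2)/2}$, hence $|\varphi'(u)(t,r)|\lesssim\epsilon' r^{-2}$, and $\|v\|_{L^\infty L^2_{R_0+|t|}}\lesssim\epsilon'$), the forcing decays at the critical rate $\kappa_N=1$: a direct computation gives $\|\indic(|x|>\tilde R)f\|_{L^1L^2}\lesssim\epsilon'^2\,\tilde R^{-1}$ for $\tilde R>R_1+|t|$, so $f\in W'^{\kappa_N}_{R_1}$ with norm $\lesssim\epsilon'^2$. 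Together with the data bound \eqref{even:aprioripattuL21}, which reads $\|\vec v(0)\|_{\mathcal H_R}\lesssim\epsilon'/R$ for $R>R_1$, the goal is to propagate this to all times; recall that for $N\equiv 6\bmod 4$ the assertion $\partial_t u\in\tilde W^1_{R_1}$ is, by \eqref{id:deftildeWkappa}, exactly the claimed bound $\sup_{t}\sup_{\tilde R>R_1+|t|}\tilde R\,\|\partial_t v(t)\|_{L^2_{\tilde R}}\lesssim\epsilon'$.

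I would split according to the position of $\tilde R$ relative to the tip of the light cone. In the régime $R\ge R_1+2|t|$ one has $R-|t|\approx R>R_1$, and a plain energy estimate with finite speed of propagation applied to $\Box v=f$, fed by \eqref{even:aprioripattuL21} at radius $R-|t|$ and by the forcing decay recorded above, yields
\[
\|\vec v(t)\|_{\mathcal H_R}\lesssim\|\vec v(0)\|_{\mathcal H_{R-|t|}}+\int_0^{|t|}\|\indic(|x|>R-|t|+s)f(\cdot)\|_{L^2}\,ds\lesssim\frac{\epsilon'}{R},
\]
so that $\|\partial_t v(t)\|_{L^2_R}\le\|\vec v(t)\|_{\mathcal H_R}\lesssim\epsilon'/R$ here. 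In the complementary régime $R_1+|t|<R<R_1+2|t|$ (so $R\approx|t|$), propagation from $t=0$ is useless and one must use non-radiativity, following Step 2 of the proof of Propositions \ref{pr:nonradiativeforcingmainodd}--\ref{pr:nonradiativeforcingmaineven}. Fix such $\tilde t,\tilde R$; then $v(\tilde t+\cdot)$ is non-radiative for $|x|>\tilde R+|t|$ with forcing $g:=f(\tilde t+\cdot)$, which inherits $\|\indic(|x|>\tilde R')g\|_{L^1L^2}\lesssim\epsilon'^2\,\tilde R'^{-1}$. Using the results of Section~3 on non-radiative forcing (Corollary \ref{co:nonradiativeforcing} and Lemma \ref{lem:nonradiativeforcinggain}, and for the ``wrong parity'' contribution the regularity–gain mechanism of Lemma \ref{lem:nonradforcingeven}, legitimate here thanks to the gain of regularity of Lemma \ref{even:lem:gainreg}), one produces a non-radiative solution $\hat v$ of $\Box\hat v=g$ at scale $\tilde R$ with $\|\vec{\hat v}(0)\|_{\mathcal H_\rho}\lesssim\epsilon'^2\,\rho^{-1}$ for all $\rho\ge\tilde R$. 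Then $v(\tilde t+\cdot)-\hat v$ solves the free wave equation and is non-radiative for $|x|>\tilde R+|t|$, so by Classification \ref{pr:nonradiativefree} there is $\cbf\in\mathbb R^{m_0+1}$ with $\vec v(\tilde t)-\vec{\hat v}(0)=\vec a_F[\cbf](0)$ on $\{|x|>\tilde R\}$. Evaluating this identity at a scale $\rho\approx R_1+2|\tilde t|\ge\tilde R$, where the first régime already gives $\|\vec v(\tilde t)\|_{\mathcal H_\rho}\lesssim\epsilon'/\rho$, and using \eqref{bd:energyaFiscbfR2} together with $\tilde R\approx\rho$, one gets $|\cbf|_{\tilde R}\approx|\cbf|_\rho\lesssim\epsilon'/\rho\lesssim\epsilon'/\tilde R$; hence $\|\partial_t v(\tilde t)\|_{L^2_{\tilde R}}\le\|\vec a_F[\cbf](0)\|_{\mathcal H_{\tilde R}}+\|\vec{\hat v}(0)\|_{\mathcal H_{\tilde R}}\lesssim\epsilon'/\tilde R$. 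Combining the two régimes yields $\partial_t u\in\tilde W^1_{R_1}$.

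The main obstacle, and the only delicate point, is the second régime, where everything takes place at the \emph{critical} exponent $\kappa=\kappa_N=1$: both the data \eqref{even:aprioripattuL21} of $v$ and the forcing $f=\varphi'(u)\partial_t u$ decay exactly like the slowest non-radiative profile $\phi_{m_0}$, and only borderline faster than the resonance $\phi_{m_0+1}$. Thus a component of $v$ along $\phi_0,\dots,\phi_{m_0},\phi_{m_0+1}$ is not a priori excluded, and Proposition \ref{pr:nonradiativeforcingmaineven} as stated cannot be invoked at $\kappa=\kappa_N$. The resolution (to be carried out carefully) rests on two facts: first, the $L^\infty\mathcal H_{R_0+|t|}$ bound in \eqref{even:bd:aprioriu} already forbids a $\phi_{m_0+1}$–component in $v$, since $\phi_{m_0+1}(t,\cdot)\notin L^2(\{|x|>R_0+|t|\})$; second, for each $0\le m\le m_0$ one has $\partial_t\phi_m\in\tilde W^1$ (indeed $\|\partial_t\phi_m(t)\|_{L^2_R}\lesssim R^{-(m_0+1-m)}$, so $R\,\|\partial_t\phi_m(t)\|_{L^2_R}\lesssim R^{m-m_0}\lesssim 1$). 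Together these make the construction of the auxiliary solution $\hat v$ at the critical rate harmless, and close the argument.
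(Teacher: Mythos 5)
Your first r\'egime (large radius, $R\ge R_1+2|t|$) matches the paper: finite speed of propagation plus energy estimates on $\Box(\partial_t u)=\varphi'(u)\partial_t u$, fed by \eqref{even:aprioripattuL21} and \eqref{even:gainreg:bd:upatuL1L2}, gives $\|(\partial_t u(t),\partial_{tt}u(t))\|_{\mathcal{H}_R}\lesssim\epsilon'/R$. But in the delicate near-tip r\'egime $R_1+|t|\le R\le R_1+2|t|$ you diverge from the paper, and your route has a genuine gap you have not closed.

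In that second r\'egime you propose to construct an auxiliary non-radiative solution $\hat v$ of $\Box\hat v=\varphi'(u)\partial_tu(\tilde t+\cdot)$ at scale $\tilde R$ satisfying $\|\vec{\hat v}(0)\|_{\mathcal{H}_\rho}\lesssim\epsilon'^2\rho^{-1}$ for all $\rho\ge\tilde R$, and then compare $v(\tilde t+\cdot)-\hat v$ to a free non-radiative wave via Classification \ref{pr:nonradiativefree}. This mimics Step~2 of the proof of Proposition \ref{pr:nonradiativeforcingmaineven}. The problem is precisely the one you flag yourself and then do not actually resolve: all the decay-gain lemmas are unavailable at the critical exponent $\kappa=\kappa_N=1$. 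Lemma \ref{lem:nonradiativeforcinggain} is stated and proved only for $\kappa\neq\kappa_N$; its dyadic recursion for the coefficient $d_{m_0,k}$ accumulates increments of size $2^{(\kappa_N-\kappa)k}=1$ at $\kappa=\kappa_N$, so the series does not converge and no bound on the $\phi_{m_0}$-component is obtained. Your first ``fact'' (that \eqref{even:bd:aprioriu} forbids a $\phi_{m_0+1}$-component of $v$) addresses the wrong direction: the obstruction at $\kappa=\kappa_N$ lives along $\phi_{m_0}$, not the resonance $\phi_{m_0+1}$, and moreover the relevant object for $\hat v$ is the forcing-generated component, not a component of $u$. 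Your second ``fact'' ($\partial_t\phi_m\in\tilde W^1$) is a statement about the target space, not a tool for bounding $\hat v$. Separately, to handle the ``wrong parity'' piece of the forcing via Lemma \ref{lem:nonradforcingeven} you would need $\partial_t\bigl(\varphi'(u)\partial_t u\bigr)=\varphi''(u)(\partial_t u)^2+\varphi'(u)\partial_{tt}u\in W^{'2}_R$; the term $\varphi'(u)\partial_{tt}u$ requires precisely the decay $\|\partial_{tt}u(t)\|_{L^2_{\bar R}}\lesssim\epsilon'/\bar R$ that Lemma \ref{even:lem:gainreg2} is asserting. At this stage you only have $\partial_{tt}u\in L^\infty L^2$ from Lemma \ref{even:lem:gainreg}, which yields only the $W^{'1}_R$ decay. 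The argument is circular.

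The paper avoids all of this by never constructing $\hat v$. Shifting time and applying the channels-of-energy estimate \eqref{bd:channels2} directly to $v(\bar t)=\partial_t u(t+\bar t)$ (non-radiative by Lemma \ref{even:lem:gainreg}, with vanishing radiation profiles on $\rho\ge R$) yields $\|\Pi^\perp_{L^2,R}\partial_{tt}u(t)\|_{L^2_R}\lesssim\epsilon'/R$ with only the $L^1L^2$ bound \eqref{even:gainreg:bd:upatuL1L2} on the forcing — no weighted-decay lemma, no derivative in time. The remaining finite-dimensional component $\Pi_{L^2,R}\partial_{tt}u(t)$ lives in $\mathrm{Span}\bigl(r^{-N+2l+2}\bigr)_{0\le l\le l_1}$, whose $L^2_R$ and $L^2_{R_1+2|t|}$ norms are comparable when $R\approx R_1+|t|$; the bound at $R_1+2|t|$ then follows from the large-radius r\'egime already established. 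This splitting into a channels-controlled part and a finite-dimensional part is the key structural move your proposal is missing.
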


\begin{proof}

First, consider the case $R>R_1+2| t|$. Applying standard energy estimates to \eqref{even:id:Boxpatu}, using \eqref{even:aprioripattuL21}, \eqref{even:gainreg:bd:upatuL1L2} and finite speed of propagation:
$$
\| (\pa_{t} u(t),\pa_{tt}u(t))\|_{\mathcal H_R}\lesssim \| (\pa_{t} u(0),\pa_{tt}u(0))\|_{\mathcal H_{R-|t|}}+\| \indic(r>R-|t|+|t'|)\varphi'(u(t'))\pa_t u(t')\|_{L^1L^2}\lesssim \frac{\epsilon'}{R}
$$ 
where we used that $R\approx R-|t|$. This shows \eqref{even:aprioripattuL2} in that case.

Second, consider the case $R_1+|t|\leq R\leq R_1+2|t|$. Let $v(\bar t)=\pa_t u(t+\bar t)$. We have $\pa_{\bar t}v(0)=\pa_t^2 u(t)$. By Lemma \ref{even:lem:gainreg}, $v$ is a non-radiative solution for $r>R+|\bar t|$ of $(\pa_{\bar t}^2-\Delta) v=\varphi'(u(t+\bar t))\pa_t u(t+\bar t)$. Its radiation profiles then satisfy $G_+[v](\rho)=G_-[v](\rho)=0$ for all $\rho\geq R$. Applying the channels of energy estimate \eqref{bd:channels2} to $v$, using \eqref{even:gainreg:bd:upatuL1L2}, we obtain:
\begin{equation}\label{even:regdecay:bd:tech2}
\| \Pi_{L^2,R}^\perp \pa_{tt} u(t)\|_{L^2_{R}}=\| \Pi_{L^2,R}^\perp \pa_{t} v(0)\|_{L^2_{R}}\lesssim \| \varphi'(u(t+\cdot))\pa_t u(t+\cdot )\|_{L^1L^2_{R+|\bar t-t|}} \lesssim \frac{\epsilon'}{R}.
\end{equation}
Then using the identity
$$
\Pi_{L^2,R} \pa_{tt} u(t)= \pa_{tt} u(t)-\Pi_{L^2,R}^\perp \pa_{tt} u(t),
$$
the bounds \eqref{even:regdecay:bd:tech2} and \eqref{even:aprioripattuL2} for $R=R_1+2|t|$ imply $\| \Pi_{L^2,R} \pa_{tt} u(t)\|_{L^2_{R_1+2|t|}}\lesssim \epsilon' R^{-1}$. Note that $\| \Pi_{L^2,R} \pa_{tt} u(t)\|_{L^2_{R}}\approx \| \Pi_{L^2,R} \pa_{tt} u(t)\|_{L^2_{R_1+2|t|}}$ by finite dimensionality, since $\Pi_{L^2,R} \pa_{tt} u(t)\in \textup{Span}(r^{-N+2l+2})_{0\leq l \leq l_1}$ and $R\approx R_1+|t|$. Therefore, 
\begin{equation}\label{even:regdecay:bd:tech4}
\| \Pi_{L^2,R} \pa_{tt} u(t)\|_{L^2_{R}}\lesssim \frac{\epsilon'}{R}.
\end{equation}
Combining \eqref{even:regdecay:bd:tech2} and \eqref{even:regdecay:bd:tech4} implies the desired result \eqref{even:aprioripattuL2} in this second case as well.

\end{proof}

We recall that our aim is to show $u=a[\cbf]$ for $r>R_0+|t|$ for some $\cbf$. Our strategy will be to introduce an approximate non-radiative solution $u_{ap}[\cbf,\tilde c]$ defined below in \eqref{even:id:defuap}, with an extra parameter $\tilde c$ along the resonant direction $\phi_{m_0+1}$ given by \eqref{id:defphim0+1}, such that $u_{ap}[\cbf,0]=a[\cbf]$. Adding this extra parameter will enable us to control the difference $v=u-u_{ap}[\cbf,\tilde c]$, eventually leading to show $v=0$ and $\tilde c=0$, so that $u=a[\cbf]$ as desired.

For any $\cbf \in \mathbb R^{m_0+1}$, $\tilde c\in \mathbb R$ and $R>R_0$ we define the approximate solution:
\begin{equation}\label{even:id:defuap}
u_{ap}[\cbf,\tilde c,R]=a[\cbf,R]+\tilde \phi[\tilde c,R]\chi_{M R}
\end{equation}
where $a$ is given by Proposition \ref{pr:constructionnonradia}, $\tilde \phi$ is defined by \eqref{def:tildephi} and where for $\chi$ a smooth non-negative cut-off with $\chi(t,r)=1$ for $|r|+|t|\leq 1 $ and $\chi(t,r)=0$ for $|r|+|t|\geq 2 $, that is even in both time and space variables, we write $\chi_{L}(t,r)=\chi(\frac tL,\frac rL)$ for $L\in (0,\infty]$ with the convention that $\chi_\infty =1$. We introduce the error generated by $u_{ap}$:
\begin{equation}\label{even:id:defE}
E=-\Box u_{ap}+\varphi(u_{ap}).
\end{equation}

Recall the notation \eqref{def:beta} and \eqref{def:hatc}. The following lemma gathers estimates on $u_{ap}$, $a$, $\tilde \phi $ and $E$.

\begin{lemma} \label{even:lem:estimatesuap}

For all $R>0$, $\cbf \in \mathbb R^{m_0+1}$, $\tilde c\in \mathbb R$ with $|\cbf|_{R}+|\tilde c|\lesssim \epsilon'$ and $M>1$ with $\ln M\lesssim |\ln |\tilde c||$, $u_{ap}$ is well-defined for $r>R+|t|$ and satisfies the following estimates.
\begin{itemize}
\item \emph{Pointwise estimates}. For all $r>R+|t|$ one has:
\begin{align}
& \label{even:pointwisea}  |a|\lesssim \frac{R |\cbf|_{R}}{r^{N/2}} \\
& \label{even:pointwisetildephi} |\tilde \phi \chi_{RM}|\lesssim \frac{ |\tilde c|}{r^{N/2-1}}\indic(r+|t|\leq 2M R) \\
& \label{even:pointwiseuap} |u_{ap}|\lesssim \frac{\epsilon'}{r^{N/2-1}}
\end{align}
\item \emph{Averaged estimates in space}. For all $\bar R\geq R+|t|$:
\begin{align}
& \label{even:L2patuap} \| \frac{1}{r} \pa_t u_{ap}(t)\|_{L^2_{\bar R}}\lesssim \frac{R|\cbf|_{R}}{\bar R^2}+\frac{|\tilde c|}{\bar R}\\
& \label{even:L2E} \|E(t)\|_{L^2_{\bar R}}\lesssim \left( \frac{R|\cbf|_{R}|\tilde c|}{\bar R^2}+\frac{|\tilde c|^3(1+\beta|\ln |\tilde c||)}{\bar R}+\frac{|\tilde c|}{MR}\right)\indic (\bar R\leq 2MR) 
\end{align}
\item \emph{Averaged estimates in time and space}.
\begin{align}
& \label{even:L1L2E-} \| E_- \|_{L^1L^2_{R+|t|}}\lesssim |\cbf|_{R}|\tilde c|,\\
& \label{even:L2patE} \| \pa_t E(t)\|_{L^1L^2_{\bar R+|t-\bar t|}}\lesssim \frac{|\tilde c|\left(|\cbf|_{R}+|\tilde c|^2(1+\beta|\ln |\tilde c||)+\frac{1}{M^{1/2}}\right)}{\bar R^{1/2} R^{1/2}},\\
& \label{even:L2L4uap-} \| r^{\frac{N-6}{4}} u_{ap,-}\|_{L^2L^4_{R+|t|}}\lesssim \epsilon'.
\end{align}
\end{itemize}

\end{lemma}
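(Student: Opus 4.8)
The plan is to deduce all of \eqref{even:pointwisea}--\eqref{even:L2L4uap-} from three ingredients: the explicit self-similar form of $a_F[\cbf]$ and of the corrected resonance $\tilde\phi[\tilde c,R]$, $\psi$ recorded in \eqref{even:id:tidephigeneralised}--\eqref{even:id:boxtildephi}; the decay gain $\|a[\cbf]\|_{W^{\kappa_N}_1}\lesssim|\cbf|$ from Proposition \ref{pr:constructionnonradia} together with the radial Sobolev bound \eqref{bd:SobolevW} and the fact that $a[\cbf,R]$ is a genuine solution, so $\Box a[\cbf,R]=\varphi(a[\cbf,R])$ on its cone; and the time-parity structure from Section \ref{sec:resonance} (for $N\equiv6\mod4$ the profiles $p_{N/2-1}$ and $\tilde p$ are even, and $\chi$ is even in time). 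With these, the proof reduces to computing $L^2$ and $L^1L^2$ norms of finitely many profiles of known homogeneity over exterior regions, using $\|r^{-\alpha}\|_{L^2(|x|>\rho)}\approx\rho^{N/2-\alpha}$ for $\alpha>N/2$.

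First I would establish the pointwise estimates, since they feed all the others. For \eqref{even:pointwisea} I rescale: $a[\cbf,R]=(a[\cbf_R])_{(R)}$ with $|\cbf_R|=|\cbf|_R$, and $\|a[\cbf_R]\|_{W^{\kappa_N}_1}\lesssim|\cbf|_R$ (combining \eqref{add_decay_aF} and \eqref{bd:tildeaWkappaN}), so \eqref{bd:SobolevW} with $\kappa_N=1$ gives $|a[\cbf_R](s,y)|\lesssim|\cbf|_R\,|y|^{-N/2}$ on $\{|y|>1+|s|\}$; undoing the scaling yields the extra factor $R$. For \eqref{even:pointwisetildephi} I use \eqref{even:id:tidephigeneralised}: $p_{N/2-1}$ and $\tilde p$ are bounded on $[-1,1]$, $|t/r|<1$ in the cone, and on the support of $\chi_{RM}$ one has $r\le 2MR$, so $|\ln(r/R)|\lesssim\ln M\lesssim|\ln|\tilde c||$ and, since $\hat c^2\approx\tilde c^2$, the term $\hat c^2(1+|\ln(r/R)|)\lesssim\tilde c^2(1+|\ln|\tilde c||)\lesssim|\tilde c|$ for $|\tilde c|$ small; \eqref{even:pointwiseuap} then follows by adding \eqref{even:pointwisea}--\eqref{even:pointwisetildephi} and using $r>R$. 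Differentiating each self-similar profile in time costs one power of $r^{-1}$, so $|\pa_t a[\cbf,R]|\lesssim R|\cbf|_R\,r^{-N/2-1}$ and $|\pa_t(\tilde\phi\chi_{RM})|\lesssim|\tilde c|r^{-N/2}+|\tilde c|(MR)^{-N/2}\indic(r\approx MR)$; integrating these gives \eqref{even:L2patuap}.

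For the error $E$ I decompose it. Where $\chi_{RM}\equiv1$, the identities $\Box a[\cbf,R]=\varphi(a[\cbf,R])$ and \eqref{even:id:boxtildephi} give $E=\bigl(\varphi(a+\tilde\phi)-\varphi(a)-\varphi(\tilde\phi)\bigr)-c_3$, where $c_3$ is the cubic-in-$\tilde c$ remainder of \eqref{even:id:boxtildephi}; where $\chi_{RM}$ varies, $E$ also carries the commutator terms $\tilde\phi\,\Box\chi_{RM}$, $\pa_t\tilde\phi\,\pa_t\chi_{RM}$ and $\nabla\tilde\phi\cdot\nabla\chi_{RM}$, all supported in $r\approx MR$ with $|\Box\chi_{RM}|\lesssim(MR)^{-2}$. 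The bracket splits into a weighted quadratic cross term of size $\lesssim R|\cbf|_R|\tilde c|\,r^{-N/2-2}$ (uniformly in both types of nonlinearity) plus higher-order mixed terms of the same or faster decay; $c_3$ has size $\lesssim|\tilde c|^3(1+\beta|\ln|\tilde c||)\,r^{-N/2-1}$ on $r\lesssim MR$, the logarithm entering only in the $N=6$ power case through $\psi$; and the commutator terms are $\lesssim|\tilde c|(MR)^{-N/2-1}\indic(r\approx MR)$. Taking $L^2_x$ over $|x|>\bar R$ yields \eqref{even:L2E}; differentiating once in time and then taking $L^1L^2$ over the shifted cone yields \eqref{even:L2patE} (the $\bar R^{-2}$, $\bar R^{-1}$ and $(MR)^{-1}$ factors produced are all dominated by the claimed $\bar R^{-1/2}R^{-1/2}$-type bound since $\bar R>R$ and $\bar R\lesssim MR$ on the support of $E$). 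For \eqref{even:L1L2E-} and \eqref{even:L2L4uap-} I invoke the parity structure: for $N\equiv6\mod4$ the resonance $\phi_{m_0+1}$, hence $\tilde\phi$, $\psi$ and $c_3$, as well as $\chi_{RM}$, are even in time, so $E_-$ is carried only by the odd part $a_-\tilde\phi$ of the cross term, giving $\|E_-\|_{L^1L^2_{R+|t|}}\lesssim\int_0^\infty R|\cbf|_R|\tilde c|(R+t)^{-2}\,dt\lesssim|\cbf|_R|\tilde c|$; similarly $u_{ap,-}=a_-$, and $\|r^{(N-6)/4}u_{ap,-}\|_{L^2L^4_{R+|t|}}\le\|r^{(N-6)/4}a[\cbf,R]\|_{L^2L^4_{R+|t|}}\lesssim|\cbf|_R\lesssim\epsilon'$ by the weighted Strichartz estimate of Lemma \ref{lem:weighted_Strichartz} (after finite speed of propagation) and the smallness of $a$.

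The genuinely delicate points, as opposed to routine homogeneity book-keeping, are two. First, one must ensure the cubic remainder $c_3$ never carries an unbounded logarithm: this is exactly where the hypothesis $\ln M\lesssim|\ln|\tilde c||$ enters, converting $\tilde c^2\ln M$ into $\lesssim|\tilde c|$ and keeping $c_3$ cubically small. Second, one must verify the time-parity cancellation underlying \eqref{even:L1L2E-} and \eqref{even:L2L4uap-}: since the resonant direction decays only like $r^{1-N/2}$, without the vanishing of the odd-in-time parts of $\tilde\phi$ and $c_3$ (Lemma \ref{lem:resonancelinear}, Propositions \ref{pr:resonancequadra} and \ref{rigidity|u|u:pr:A}) and of the commutator terms, the norm $\|E_-\|_{L^1L^2_{R+|t|}}$ would be too large to close the later fixed-point argument. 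I expect this second point to require the most care.
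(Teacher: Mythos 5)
Your proposal follows essentially the same route as the paper's proof in Appendix~\ref{A:approximate}: pointwise estimates from the self-similar form of $a_F$, $\tilde\phi$, $\psi$ together with rescaling and the Sobolev bound \eqref{bd:SobolevW}; the decomposition of $E$ into a quadratic cross term, the cubic remainder of \eqref{even:id:boxtildephi}, and commutator terms carrying derivatives of $\chi_{MR}$; and the time-parity of $\tilde\phi$, $\psi$, $\chi_{MR}$ for $N\equiv 6\bmod 4$ to control $E_-$ and to reduce $u_{ap,-}$ to $a_-$. The only cosmetic difference is that for \eqref{even:L2L4uap-} you invoke the weighted Strichartz estimate of Lemma~\ref{lem:weighted_Strichartz}, whereas the paper simply integrates the pointwise bound \eqref{even:pointwisea}; both are valid.
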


The proof of Lemma \ref{even:lem:estimatesuap} is by direct computations, done in Appendix \ref{A:approximate}. We choose:
\begin{equation}\label{even:id:defM}
M=\frac{\nu}{|\cbf|_{\tilde R}^2+\tilde c^4(1+\beta |\ln |\tilde c||)^2+\left(\sup_{r>R+|t|}r^{N/2-1}|u(t,r)-u(-t,r)|\right)^4}
\end{equation}
for some $0<\nu\ll 1$ independent of $\tilde c$, $\cbf$ and $R$. This choice of $M$ is technical: the larger $M$ is, the smaller the error terms in $E$ given by \eqref{even:id:defE} due to boundary terms created by the cut-off $\chi_{M R}$ are; but the larger $M$ is, the larger the nonlinear interactions with $\tilde \phi \chi_{M \tilde R}$ are in the proof of the forthcoming Lemma \ref{even:lem:approximation} (since $\tilde \phi$ is not in the energy space). Optimising these two constraints leads to the choice \eqref{even:id:defM}.

We approximate $u$ for $|x|>R+|t|$ by $u_{ap}[\cbf,\tilde c,R]$ whose parameters are chosen as follows:

 \begin{lemma} \label{lem:defctildec}

For all $R>R_1$, there exists $\cbf[R]\in \mathbb R^{m_0+1}$ and $\tilde c[R]\in \mathbb R$ with
\begin{equation}\label{even:bd:cbftildectildeR}
|\cbf[R]|_{R}+|\tilde c[R]|\lesssim \epsilon' \quad \mbox{and}\quad \lim_{R\to \infty} |\cbf[R]|_{R}+|\tilde c[R]|=0
\end{equation}
such that, $u_{ap}=u_{ap}[\cbf[R],\tilde c[R],R]$ satisfies:
\begin{align}
\label{even:id:orthogonalityatR}& u(0,R)-u_{ap}(0,R)=0,\\
\label{even:id:orthogonalitypatv}& \Pi_{L^2,R}\left(\pa_t u(0)-\pa_t u_{ap}(0)\right)=0,\\
\label{even:id:orthogonalityDeltav}& \Pi_{L^2,R}\left(\pa_{tt} u(0)-\pa_{tt} u_{ap}(0)\right)=0.
\end{align}

 \end{lemma}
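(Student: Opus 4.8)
The plan is to recast the three conditions \eqref{even:id:orthogonalityatR}--\eqref{even:id:orthogonalityDeltav} as a single equation between finite-dimensional spaces and solve it by a contraction argument (equivalently, a quantitative inverse function theorem) near the origin. Fix $R>R_1$ and write $m=m_0+1$. Introduce the target space
\[
Z_R=\Rb\times V_R\times V_R,\qquad V_R=\mathrm{Span}\Big(\tfrac{1}{r^{N-2-2l}}\Big)_{0\le l\le l_1},
\]
of dimension $1+2(l_1+1)=m_0+2$, equal to that of the parameter space $\Rb^{m}\times\Rb\ni(\cbf,\tilde c)$. Define
\[
F(\cbf,\tilde c)=\Big(u_{ap}(0,R),\ \Pi_{L^2,R}\partial_t u_{ap}(0),\ \Pi_{L^2,R}\partial_{tt}u_{ap}(0)\Big),\qquad u_{ap}=u_{ap}[\cbf,\tilde c,R],
\]
so that the conclusion amounts to solving $F(\cbf,\tilde c)=b_R$ with $b_R:=\big(u(0,R),\Pi_{L^2,R}u_1,\Pi_{L^2,R}\partial_{tt}u(0)\big)$ (here $\partial_t u(0)=u_1$, and $\partial_{tt}u(0)$ makes sense on $r>R_1$ by Lemma \ref{even:lem:gainreg}). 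Since $u_{ap}[0,0,R]=0$ we have $F(0,0)=0$. The first observation is that near the corner point $(t,r)=(0,R)$ the cut-off $\chi_{MR}$ in \eqref{even:id:defuap} equals $1$ (as $M>1$) and is even in $t$, so $\partial_t\chi_{MR}(0,\cdot)=0$; hence $u_{ap},\partial_tu_{ap},\partial_{tt}u_{ap}$ at $t=0$ coincide with those of $a[\cbf,R]+\tilde\phi[\tilde c,R]$ up to the $M$-dependent tail terms controlled by Lemma \ref{even:lem:estimatesuap}.

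Next I would estimate $b_R$. By the radial Sobolev embedding and \eqref{even:bd:aprioriu}, $|u(0,R)|\lesssim R^{1-N/2}\epsilon'$; trivially $\|\Pi_{L^2,R}u_1\|_{L^2_R}\le\|\vec u(0)\|_{\Hc_R}\lesssim\epsilon'$; and $\|\Pi_{L^2,R}\partial_{tt}u(0)\|_{L^2_R}\lesssim\epsilon'/R$ by Lemma \ref{even:lem:gainreg2}. In the natural $R$-weighted norm on $Z_R$ (chosen so that the inverse of $L_R$ below is a uniformly bounded isomorphism onto $\Rb^m\times\Rb$ endowed with $|\cbf|_R+|\tilde c|$) this gives $\|b_R\|_{Z_R}\lesssim\epsilon'$; moreover, since $\vec u(0)$ has finite energy each of these three quantities tends to $0$ as $R\to\infty$, so $\|b_R\|_{Z_R}\to0$.

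\textbf{The heart of the matter} is to compute the linearisation $L_R=DF(0,0)$ (after rescaling to unit scale) and show it is invertible with $R$-independent constants. Three structural facts combine. First, by \eqref{eq:vectildephi0}, for $N\equiv 6\bmod 4$ one has $\partial_t\tilde\phi[\tilde c,R](0)=0$, so $\Pi_{L^2,R}\partial_t u_{ap}(0)=\Pi_{L^2,R}\partial_t a[\cbf,R](0)$, which by \eqref{id:aF0} and Proposition \ref{pr:constructionnonradia} equals $\sum_{l\le l_1}c_{1,l}\,r^{-(N-2-2l)}$ up to an $O(|\cbf|_R^{1+\delta})$ error — a linear bijection of the block $\cbf_1=(c_{1,l})_{l\le l_1}$ onto $V_R$. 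Second, from \eqref{even:id:tidephigeneralised}, $\partial_{tt}\tilde\phi[\tilde c,R](0)=\tilde c\,p_{\frac N2-1}''(0)\,r^{-(N/2+1)}+O(|\tilde c|^2)$ (with logarithms in the $N=6$ power case), and $p_{\frac N2-1}''(0)\neq0$; since $N/2+1=N-2-2l_1$, this leading term is a nonzero multiple of the slowest-decaying basis vector of $V_R$. Third, $\partial_{tt}a[\cbf,R](0)=\Delta a_0+\varphi(a_0)$ with $\Delta r^{-(N-2-2l)}=-2l(N-2-2l)\,r^{-(N-2l)}$, so $\Pi_{L^2,R}\partial_{tt}a[\cbf,R](0)$ depends, to leading order, linearly on $(c_{0,1},\dots,c_{0,l_0})$ and lands in $\mathrm{Span}(r^{-(N-2-2j)})_{0\le j\le l_1-1}$, complementary to the resonant direction $r^{-(N/2+1)}$. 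Hence $L_R$ is block triangular: the $\partial_{tt}$-component is a diagonal isomorphism $(c_{0,1},\dots,c_{0,l_0},\tilde c)\mapsto V_R$, the $\partial_t$-component an isomorphism $\cbf_1\mapsto V_R$, and the scalar value-at-$R$ component then pins down $c_{0,0}$ through the nonzero coefficient of $R^{-(N-2)}$. Thus $L_R$ is invertible, uniformly in $R$.

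Finally I would close the argument by writing $F=L_R+N_R$, where $N_R$ collects the genuinely quadratic terms ($\varphi(a_0)$, the $\hat c^2\psi$ part of $\tilde\phi$, the logarithmic $N=6$ correction), the $O(|\cbf|_R^{1+\delta})$ errors from Proposition \ref{pr:constructionnonradia}, and the $M$-dependent cut-off tails; by that Proposition's differentiability statement and Lemma \ref{even:lem:estimatesuap} — using \eqref{even:id:defM} with $\nu$ fixed small and then $\epsilon'$ small so that $M^{-1}\ll1$ — the map $N_R$ has Lipschitz constant $\le\tfrac12\|L_R^{-1}\|^{-1}$ on a small ball. Then $(\cbf,\tilde c)\mapsto L_R^{-1}\big(b_R-N_R(\cbf,\tilde c)\big)$ is a contraction on $\{|\cbf|_R+|\tilde c|\le C\|b_R\|_{Z_R}\}$, whose fixed point $(\cbf[R],\tilde c[R])$ solves $F=b_R$ and satisfies $|\cbf[R]|_R+|\tilde c[R]|\lesssim\|b_R\|_{Z_R}\lesssim\epsilon'$; since $\|b_R\|_{Z_R}\to0$, we also get $|\cbf[R]|_R+|\tilde c[R]|\to0$. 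The main obstacle is precisely the uniform invertibility of $L_R$ — i.e.\ verifying that the resonance amplitude $\tilde c$ supplies exactly the one direction $r^{-(N/2+1)}$ of $V_R$ inaccessible to $\Pi_{L^2,R}\Delta$ applied to non-radiative data — together with checking that every remaining contribution, including the one produced by the $M$-dependent cut-off, is a higher-order (or uniformly small) perturbation.
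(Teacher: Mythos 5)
Your proposal is correct and follows essentially the same route as the paper: the paper's Step~1 exhibits the same block-triangular structure of the linearisation $\Psi\circ\Phi$ (with the moments $\int_{r>R}\partial_t^j f\,r^{2l+1}dr$ being exactly the inner products that compute $\Pi_{L^2,R}$), using the same three facts you identify (parity gives $\partial_t\tilde\phi(0)=0$; $\partial_{tt}\tilde\phi(0)$ supplies the resonant direction $r^{-(N/2+1)}$; $\Pi_{L^2,R}\Delta$ on the $r^{-(N-2-2l)}$ shifts indices and misses that direction), and the paper's Step~2 then closes by the same contraction with the nonlinear remainder $\tilde\Phi$ and the $M$-dependent cut-off tails handled via \eqref{even:bd:linearctildectech2}--\eqref{even:bd:linearctildectech4}. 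Your write-up is lighter on the Lipschitz estimates for the $M$-dependent terms (your $M$ from \eqref{even:id:defM} varies with $(\cbf,\tilde c)$ along the iteration, so a $\bigl|\tfrac1M-\tfrac1{M'}\bigr|$ estimate as in \eqref{even:bd:linearctildectech4} is needed), but the structure of the argument matches.
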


\begin{proof}

Fix $R>R_1$ and let $\Psi=(\Psi_0,\Psi_1)$ be the map defined by:
$$
\Psi_0(f)=\left(f(0,R),\Big(\int_{r>R}\pa_{tt}f(0,r)r^{2l+1}dr\Big)_{0\leq l\leq l_1}\right), \quad \Psi_1(f)= \Big(\int_{r>R}\pa_{t}f(0,r)r^{2l+1}dr\Big)_{0\leq l\leq l_1}.
$$
By Cauchy-Schwarz, for $R=1$, for any $f$ with $\vec f(0)\in \mathcal H_1$ and $\pa_{tt} f(0)\in L^2_1$, $\Psi$ is well-defined with
\begin{equation}\label{even:bd:linearctildectech}
|\Psi_0(f)|\lesssim |f(0,1)|+\| \pa_{tt}f(0)\|_{L^2_1} \quad \mbox{and}\quad |\Psi_1(f)|\lesssim \| \pa_{t}f(0)\|_{L^2_1}.
\end{equation}
To prove the Lemma, we will find $(\cbf,\tilde c)$ satisfying \eqref{even:bd:cbftildectildeR} and $\Psi(u_{ap})=\Psi(u)$.\\

\noindent \textbf{Step 1}. \emph{The linear projection}. Let $\Phi(\cbf,\tilde c)=a_{F}[\cbf]+\tilde c \phi_{m_0+1}$. We prove in this first step that $\Psi \circ \Phi$ is a linear invertible map on $\mathbb R^{m_0+2}$, and that for any $f$ with $\vec f(0)\in \mathcal H_R$ and $\pa_{tt} f(0)\in L^2_R$, $(\cbf,\tilde c)=(\Psi \circ \Phi)^{-1}(\Psi (f))$ satisfies
\begin{align}
\label{even:bd:linearctildec}& |\tilde c|\lesssim R^{N/2-1}|f(0,R)|+R\| \pa_{tt}f(0)\|_{L^2_R}, \\
\label{even:bd:linearctildec2}& |\cbf|_R\lesssim R^{N/2-1}|f(0,R)|+\|\pa_t f(0)\|_{L^2_R}+R\| \pa_{tt}f(0)\|_{L^2_R}.
\end{align}

By scaling, it suffices to show this result in the case $R=1$. We decompose $\Phi(\cbf)=\Phi_0(\cbf_0,\tilde c)+\Phi_1(\cbf_1)$ where $\Phi_0(\cbf_0,\tilde c)= \sum_{0\leq l\leq l_0} c_{0,l}\phi_{2l}+\tilde c \phi_{m_0+1}$ and $\Phi_1(\cbf_1)= \sum_{0\leq l\leq l_1} c_{1,l}\phi_{2l+1}$. We have $\Psi\circ \Phi=(\Psi_0\circ \Phi_0,\Psi_1\circ \Phi_1)$. Using \eqref{id:aF} and \eqref{id:defphim0+1} we obtain the identities:
\begin{align*}
& (\Phi(\cbf,\tilde c))(0,1)=\sum_{0\leq l\leq l_0}c_{0,l}+\tilde c,\\
& \left(\pa_{t}\Phi(\cbf,\tilde c) \right)(0,r)=\sum_{0\leq l\leq l_1}c_{1,l}r^{-N+2l},\\
&  \left(\pa_{tt}\Phi(\cbf,\tilde c) \right)(0,r)=\sum_{1\leq l\leq l_0}2l(-N+2l+2)c_{0,l}r^{-N+2l}-(\frac N2-1)^2 r^{-N+2l_0+2},
\end{align*}
and deduce, since $l_0=l_1$ when $N\equiv 6\mod 4$, that both map $ \Psi_0\circ\Phi_0$ and $ \Psi_1\circ\Phi_1$ are invertible maps. Hence $\Psi\circ \Phi$ is invertible, and hence, using \eqref{even:bd:linearctildectech}, the estimates \eqref{even:bd:linearctildec} and \eqref{even:bd:linearctildec2}.\\

\noindent \textbf{Step 2}. \emph{The non-linear projection}. Let $ \tilde \Phi(\cbf,\tilde c)=\tilde a[\cbf,R]+\chi_{MR} \tilde \phi[\tilde c,R] -\tilde c \phi_{m_0+1}$, where $\tilde{a}[\cbf,R]$ is defined in Proposition \ref{pr:constructionnonradia} and Definition \ref{def:constructionnonradia}. By the differentiability property of Proposition \ref{pr:constructionnonradia} and the radial Sobolev inequality, we infer that for any $\cbf,\cbf'\in \mathbb R^{m_0+1}$ with $|\cbf|_R+|\cbf'|_R$ small one has:
\begin{align}
 \label{even:bd:linearctildectech5} & R^{N/2-1}|(\tilde a[\cbf]-\tilde a[\cbf'])(0,R)|+\| \pa_t( \tilde a[\cbf]-\tilde a[\cbf']  )(0) \|_{L^2_R}+R\| \pa_{tt}( \tilde a[\cbf]-\tilde a[\cbf']  )(0)\|_{L^2_R} \\
\nonumber & \qquad \qquad \qquad \qquad \qquad \qquad \qquad \qquad  \qquad \qquad \qquad \qquad \qquad  \lesssim |\cbf-\cbf'|_R(|\cbf|_R+|\cbf '|_R).
\end{align}
We now claim that for any $\tilde c,\tilde c'\in \mathbb R$ and general $M,M'>1$ (i.e. not necessarily given by formula \eqref{even:id:defM}) there holds:
\begin{align}
\label{even:bd:linearctildectech2}& (\chi_{MR} \tilde \phi[\tilde c,R] -\tilde c \phi_{m_0+1})(0,R)=0,\\
\label{even:bd:linearctildectech3} & \pa_t(\chi_{MR} \tilde \phi[\tilde c,R] -\tilde c \phi_{m_0+1})(0,r)=0 \qquad \forall r>R,\\
\label{even:bd:linearctildectech4}& \| \pa_{tt}(\chi_{MR} \tilde \phi[\tilde c,R] -\tilde c \phi_{m_0+1})(0)-\pa_{tt}(\chi_{M'R} \tilde \phi[\tilde c'] -\tilde c' \phi_{m_0+1})(0) \|_{L^2_R}\\
\nonumber &\qquad \qquad \qquad \qquad \lesssim \frac{|\tilde c-\tilde c'|}{R}\left(|\tilde c|+|\tilde c'|+\frac 1M+\frac{1}{M'}\right)+\left|\frac{1}{M}-\frac{1}{M'}\right|\frac{|\tilde c|+|\tilde c'|}{R}.
\end{align}
Then, assuming temporarily this claim, we have that $\cbf $ and $\tilde c$ satisfy the conclusions of the Lemma if and only if they satisfy \eqref{even:bd:cbftildectildeR} and solve $\Psi ((\Phi+\tilde \Phi)(\cbf,\tilde c))=\Psi(u)$. By the result of Step 1, the latter equation is equivalent to the fixed point equation
\begin{equation}\label{even:id:fixedpointctildec}
(\cbf,\tilde c)=(\Psi \circ \Phi)^{-1}(\Psi(u)-\Psi(\tilde \Phi (\cbf,\tilde c))).
\end{equation}
By the estimate \eqref{even:bd:linearctildectech} for $\Psi$, the estimates \eqref{even:bd:linearctildectech5}, \eqref{even:bd:linearctildectech2}, \eqref{even:bd:linearctildectech3} and \eqref{even:bd:linearctildectech4} for $\tilde \Phi$, the choice \eqref{even:id:defM} for $M$ (along with $\sup_{r>R+|t|}r^{N/2-1}|u(t,r)-u(-t,r)|\lesssim \epsilon'$ by \eqref{even:bd:aprioriu}), the right-hand side of \eqref{even:id:fixedpointctildec}, for $\nu$, $\eps'$ small, is a contraction on the set $\{(\cbf,\tilde c)\in \mathbb R^{m_0+2} \mbox{ satisfying }\eqref{even:bd:cbftildectildeR} \}$. Hence, by the Banach fixed point Theorem, there exists a unique $(\cbf ,\tilde c)$ solving \eqref{even:id:fixedpointctildec} and satisfying the desired estimate \eqref{even:bd:cbftildectildeR}.

There remains to prove \eqref{even:bd:linearctildectech2}, \eqref{even:bd:linearctildectech3} and \eqref{even:bd:linearctildectech4}. First, by \eqref{def:tildephi}, \eqref{rigidity|u|u:id:defphi2}, \eqref{rigidity|u|u:id:eqA2} and $\tilde p(0)=0$ we have $\tilde \phi(0,R)=\tilde c\phi_{m_0+1}(0,R)$, and   \eqref{even:bd:linearctildectech2} follows. As $\chi$, $\tilde \phi$ and $\phi_{m_0+1}$ are even with time, we obtain \eqref{even:bd:linearctildectech3}. Next, we decompose using \eqref{def:tildephi}:
\begin{align}
\label{even:bd:linearctildectech10} &(\chi_{MR} \tilde \phi[\tilde c,R] -\tilde c \phi_{m_0+1})-(\chi_{M'R} \tilde \phi[\tilde c',R] -\tilde c' \phi_{m_0+1}) \\
\nonumber &\qquad \qquad = (\chi_{MR}-\chi_{M'R}) \tilde \phi[\tilde c] +\chi_{M'R}(\hat c^2-\hat c^{'2})\psi+(\chi_{M'R}-1 )(\tilde c-\tilde c') \phi_{m_0+1}.
\end{align}
We estimate the first term, assuming $M<M'$ without loss of generality. If $M'\geq 2M$ then $|M^{-1}-M^{'-1}|\approx M^{-1}$ and performing a direct estimate using \eqref{def:tildephi} and $|\ln M|+|\ln M'|\lesssim |\ln |\tilde c||$:
$$
\| \pa_{tt} ((\chi_{MR}-\chi_{M'R}) \tilde \phi[\tilde c])(0) \|_{L^2_R}^2\lesssim \int_{MR}^\infty \frac{\tilde c^2}{(r^{\frac N2+1})^2} r^{N-1}dr\lesssim \frac{\tilde c^2}{R^2M^2}\approx \frac{\tilde c^2}{R^2}\left|\frac{1}{M}-\frac{1}{M'} \right|^2.
$$
If $M<M'< 2M$ then $\textup{supp}(\chi_{MR}-\chi_{M'R})\subset\{M/2\leq |r|+|t|\leq 4M\}.$ By the mean value Theorem: $|\chi_{MR}(r)-\chi_{M'R}(r)|\lesssim \frac{r}{R}(M^{-1}-M^{'-1})\indic (MR/2\leq r \leq  4MR)$ (and similarly for higher order derivatives), so that performing a direct computation using \eqref{def:tildephi}:
$$
\| \pa_{tt} ((\chi_{MR}-\chi_{M'R}) \tilde \phi[\tilde c])(0) \|_{L^2_R}^2\lesssim \left|\frac{1}{M}-\frac{1}{M'} \right|^2\int_{MR/2}^{4MR} \frac{\tilde c^2r^2}{R^2(r^{\frac N2+1})^2} r^{N-1}dr\approx \frac{\tilde c^2}{R^2}\left|\frac{1}{M}-\frac{1}{M'} \right|^2.
$$
This shows the desired bound \eqref{even:bd:linearctildectech4} for the first term in the right-hand side of \eqref{even:bd:linearctildectech10}. The estimates for the remaining terms are easier, relying on direct computations using \eqref{def:tildephi} and $|\ln M|+|\ln M'|\lesssim |\ln |\tilde c||$; we omit them. This ends the proof of \eqref{even:bd:linearctildectech4}.

 \end{proof}

The key result of this Section is the following, estimating the difference $u-u_{ap}[\cbf,\tilde c,R]$ solely in terms of the projection parameters $\cbf$ and $\tilde c$.

\begin{lemma} \label{even:lem:approximation}

For all $R>R_1$, let $\cbf=\cbf[ R]$ and $\tilde c=\tilde c[R]$ be given by Lemma \ref{lem:defctildec}. Then:
\begin{align}
\label{even:bd:u-uap1} &\| \pa_t u(0)-\pa_t u_{ap}[\cbf,\tilde c,R](0)\|_{L^2_{R}} \lesssim |\tilde c|( |\cbf|_{R}+|\tilde c|),\\
\label{even:bd:u-uap2} & \| \pa_{tt} u(0)-\pa_{tt} u_{ap}[\cbf,\tilde c,R](0)\|_{L^2_{R}}\lesssim  \frac{|\tilde c|\left(|\cbf|_{R}+\tilde c^2 (1+\beta |\ln |\tilde c||)\right)}{R},\\
\label{even:bd:u-uap3} & |u(0,r)-u_{ap}[\cbf,\tilde c,R](0,r)| \lesssim  \frac{|\tilde c|\left(|\cbf|_{R}+\tilde c^2(1+\beta |\ln |\tilde c||) \right)}{r^{N/2-3/2}R^{1/2}}, \qquad \forall r\geq R.
\end{align}

\end{lemma}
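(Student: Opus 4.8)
Write $v = u - u_{ap}[\cbf,\tilde c,R]$ with $\cbf = \cbf[R]$, $\tilde c = \tilde c[R]$ given by Lemma~\ref{lem:defctildec}. Since $\Box u = \varphi(u)$ and $\Box u_{ap} = \varphi(u_{ap}) - E$ by \eqref{even:id:defE}, the function $v$ solves $\Box v = \big(\varphi(u)-\varphi(u_{ap})\big) + E$ on $\{r > R+|t|\}$ and is non-radiative there (for $R \geq R_1 \geq R_0$ both $u$ and $u_{ap}$ are). Differentiating in time and using \eqref{even:id:Boxpatu} for $\partial_t u$, together with the regularity gains of Lemmas~\ref{even:lem:gainreg}--\ref{even:lem:gainreg2} ($\partial_t u \in \tilde W^1_{R_1}$ and $\partial_{tt}u(0) \in L^2$ with $\|\partial_{tt}u(t)\|_{L^2_{\bar R}}\lesssim \epsilon'/\bar R$), one finds that $\partial_t v$ solves $\Box(\partial_t v) = \varphi'(u)\partial_t u - \varphi'(u_{ap})\partial_t u_{ap} + \partial_t E$ on $\{r > R_1+|t|\}$, is non-radiative there, and lies in the energy class with forcing in $L^1L^2$. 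The three orthogonality relations of Lemma~\ref{lem:defctildec} read $\Pi_{L^2,R}\partial_t v(0) = 0$, $\Pi_{L^2,R}\partial_{tt}v(0) = 0$ and $v(0,R) = 0$, so that $\partial_t v(0) = \Pi^\perp_{L^2,R}\partial_t v(0)$ and $\partial_{tt}v(0) = \Pi^\perp_{L^2,R}\partial_{tt}v(0)$.

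Since $N \equiv 6 \bmod 4$, the channels-of-energy estimate of Corollary~\ref{cor:inhomogeneousradiation} applies to odd-in-time solutions and bounds the $L^2$-component of the data by the $L^1L^2$-norm of the forcing on the exterior cone (the radiation profiles vanishing there because $v$ is non-radiative). Applied to the odd-in-time part $v_-$ (data $(0,\partial_t v(0))$) it gives $\|\partial_t v(0)\|_{L^2_R} \lesssim \big\|\indic(r>R+|t|)\big((\varphi(u)-\varphi(u_{ap}))_- + E_-\big)\big\|_{L^1L^2}$, and applied to $(\partial_t v)_-$ (data $(0,\partial_{tt}v(0))$) it bounds $\|\partial_{tt}v(0)\|_{L^2_R}$ by the $L^1L^2$-norm of the odd part of the forcing of $\partial_t v$; so \eqref{even:bd:u-uap1}--\eqref{even:bd:u-uap2} reduce to estimating these forcing terms. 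The error contributions are handled by Lemma~\ref{even:lem:estimatesuap}: $\|E_-\|_{L^1L^2} \lesssim |\cbf|_R|\tilde c|$ by \eqref{even:L1L2E-}, $\|\partial_t E(t)\|_{L^1L^2_{R+|t|}} \lesssim |\tilde c|\big(|\cbf|_R + |\tilde c|^2(1+\beta|\ln|\tilde c||) + M^{-1/2}\big)R^{-1}$ by \eqref{even:L2patE}, and with the choice of $M$ in \eqref{even:id:defM} the $M^{-1}$-cut-off contributions are absorbed by the others — which is precisely why that choice is made. The nonlinear differences $\varphi(u)-\varphi(u_{ap})$ and $\varphi'(u)\partial_t u - \varphi'(u_{ap})\partial_t u_{ap}$ are expanded as a $\varphi'$-type factor times $v$ (resp. $\partial_t v$) plus terms carrying an explicit $u_{ap}$-factor; using the a priori bounds \eqref{even:bd:aprioriu}, the decay \eqref{even:aprioripattuL2}, \eqref{even:L2patuap}, \eqref{even:L2L4uap-}, the Lipschitz estimates of Lemma~\ref{lem:lipschitz}, Hardy's inequality, and standard energy estimates with finite speed of propagation, these are bounded by $C\epsilon' R^{-1}$ times the left-hand sides (the $u_{ap}$-factor terms reproducing quantities controlled by $|\tilde c|$ times the projection parameters). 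Taking $\epsilon'$ small absorbs the $C\epsilon'R^{-1}$-terms and closes a bootstrap; the odd-in-time parts being one power smaller — a manifestation of the non-resonance of quadratic terms (Propositions~\ref{pr:resonancequadra} and \ref{rigidity|u|u:pr:A}) together with $\phi_{m_0+1}$ being even in time — produces the sharp right-hand sides of \eqref{even:bd:u-uap1} and \eqref{even:bd:u-uap2}.

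For the pointwise bound \eqref{even:bd:u-uap3}, one observes that the construction of $\cbf[\bar R],\tilde c[\bar R]$ in Lemma~\ref{lem:defctildec} and the estimates just obtained are valid at every scale $\bar R \geq R_1$. Comparing two nearby dyadic scales — $u - u_{ap}[\cbf[\bar R],\tilde c[\bar R],\bar R]$ and $u - u_{ap}[\cbf[2\bar R],\tilde c[2\bar R],2\bar R]$ differ by a non-radiative free wave whose energy is controlled by the bounds above, as in the proof of Lemma~\ref{lem:nonradiativeforcinggain} — shows that $|\cbf[\bar R]|_{\bar R} + |\tilde c[\bar R]|$ is controlled by $|\cbf|_R + |\tilde c|$ with the appropriate decay in $\bar R/R$. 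Combining the resulting control of $\|\vec v(0)\|_{\mathcal H_{\bar R}}$ at dyadic scales $\bar R\in[R,r]$ (from \eqref{even:bd:u-uap2} at scale $\bar R$), with $v(0,R) = 0$, radial Sobolev, and integrating $\partial_r v(0)$ from $R$ to $r$, yields \eqref{even:bd:u-uap3}.

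The main obstacle is the bootstrap step in the presence of the resonance: because $\tilde\phi$ fails to belong to the energy space, its truncation $\tilde\phi\chi_{MR}$ feeds into $\Box v$ both cut-off boundary errors of size $\sim |\tilde c|/M$ near $r\sim MR$ and nonlinear self-interactions that grow with $M$, and the whole scheme only closes because the estimates of Lemma~\ref{even:lem:estimatesuap} and the choice \eqref{even:id:defM} render all these contributions cubic in the projection parameters, while the smallness of $\epsilon'$ absorbs the $v$- and $\partial_t v$-dependent nonlinear feedback. Keeping precise track of the time parity so that the odd parts gain a power — which is what makes \eqref{even:bd:u-uap1} genuinely bilinear in the small parameters — is the delicate bookkeeping.
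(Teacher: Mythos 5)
Your high-level plan coincides with the paper's: set $v=u-u_{ap}$, note $\partial_t v_+=(\partial_t v)_-$ is odd in time, apply channels-of-energy to the odd pieces using the orthogonalities of Lemma~\ref{lem:defctildec}, plug in the error estimates of Lemma~\ref{even:lem:estimatesuap}, and close a smallness bootstrap using the choice of $M$ in \eqref{even:id:defM}. However, there are two gaps that would prevent the argument from closing as written.

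\emph{First gap.} You propose to estimate $\partial_t v(0)$ and $\partial_{tt}v(0)$ via the plain exterior channels estimate \eqref{bd:channels2} applied at $t=0$. The paper instead controls the stronger \emph{weighted} quantity $q_2 = R^{1/2}\|\partial_t v_+\|_{\tilde W^{1/2}_R}$ through the weighted channels-of-energy estimate \eqref{bd:weightedchannels}. This space-time decaying control of $\partial_t v_+$ is not a luxury: it is what yields the uniform-in-$t$ pointwise decay $|v_+(t,r)|\lesssim (q_2+\ldots)/(r^{(N-3)/2}R^{1/2})$ (formula \eqref{even:bdv+} in the paper), and that bound is needed in the nonlinear terms $(\varphi(u_{ap}+v_+)-\varphi(u_{ap}))$ feeding back into the $v_-$ equation (Step~2 of the paper's proof). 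A $t=0$-only channels bound does not propagate into the exterior cone the way the bootstrap requires, so your claim that the nonlinear feedback can be ``bounded by $C\epsilon' R^{-1}$ times the left-hand sides'' has no justified route to the pointwise-in-$t$ bounds actually needed.

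\emph{Second gap.} Your derivation of the pointwise bound \eqref{even:bd:u-uap3} is circular. You argue by comparing the approximations at nearby dyadic scales $\bar R$ and $2\bar R$ and claim their difference is ``controlled by the bounds above.'' But that comparison \emph{is} Lemma~\ref{even:lem:dyadicdifferences}, which the paper proves \emph{after} and \emph{from} Lemma~\ref{even:lem:approximation}; one cannot use it to prove \eqref{even:bd:u-uap3}. The paper instead proves \eqref{even:bd:u-uap3} directly inside the bootstrap (Step~1, leading to \eqref{even:bdv+t=0}): writing the even part as a solution of the elliptic equation $\Delta v_+(0)=\partial_{tt}v_+(0)+(\text{nonlinear})-E_+(0)$, using the orthogonality $v(0,R)=0$ from \eqref{even:id:orthogonalityatR} as a boundary condition, and invoking the weighted Hardy inequality \eqref{bd:hardyoutside} with $\kappa=1/2$ to convert $L^2$-bounds on $\Delta v_+(0)$ into the pointwise decay of $v_+(0,r)$. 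This argument closes in a single scale and feeds back into the estimates on $q_1$, $q_2$.
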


\begin{proof}

To ease notations, we drop the dependence on the parameters $R$, $\cbf[R]$ and $\tilde c[R]$ in the notation, and just write $u_{ap}$ for $u_{ap}[\cbf[R],\tilde c[R],R]$ for example. We fix $R>R_1$ and let
\begin{equation}\label{even:id:defv}
v=u-u_{ap}.
\end{equation}
By \eqref{even:id:defE} it is a non-radiative solution for $r>R+|t|$ to
\begin{equation}\label{even:id:equationu-uap}
\left\{ \begin{array}{l l} \Box v=E+\varphi(u)-\varphi(u_{ap}),\\
(v(0),\pa_t v(0))=(u_0-u_{ap}(0),u_1-\pa_t u_{ap}(0)).
\end{array} \right.
\end{equation}
We decompose $v=v_++v_-$ and aim at estimating the quantities
\begin{align}
& \label{even:controle:id:q1} q_1 =\| v_-\|_{L^\infty \mathcal H_{R+|t|}}+\| r^{\frac{N-6}{4}}v_-\|_{L^2L^4_{R+|t|}},\\
& \label{even:controle:id:q2} q_2=R^{1/2}\| \pa_t v_+\|_{\tilde W_{R}^{1/2}},\\
& \label{even:controle:id:tildeq1} \tilde q_1 = \sup_{t\in \mathbb R}\sup_{r>R+|t|} r^{N/2-1}|v_-(t,r)| .
\end{align}
Note $\tilde q_1\lesssim q_1$ by Sobolev. Because of the estimates \eqref{even:bd:cbftildectildeR}, \eqref{even:L2L4uap-} and \eqref{even:bd:aprioriu}, of Proposition \ref{pr:constructionnonradia} and of Lemma \ref{even:lem:gainreg2} (that we apply to both $u$ and $a$), both $q_1$ and $q_2$ are finite with:
\begin{equation}\label{even:controle:bd:aprioriq1q2}
q_1+q_2\lesssim \epsilon'.
\end{equation}
By \eqref{even:id:defuap} we have $u_{ap,-}=a_-$, hence $u_-=a_-+v_-$. By \eqref{even:pointwisea}, \eqref{even:controle:id:tildeq1} and the radial Sobolev embedding we obtain on the one hand that $\sup_{ r>R+|t|}r^{N/2-1}|u_-(t,r)|\lesssim |\cbf|_R+\tilde q_1$ and on the other that $\tilde q_1\lesssim \sup_{r>R+|t|}r^{N/2-1}|u_-(t,r)|+ |\cbf|_R$. Hence, as $|\cbf|_R\lesssim \epsilon'\ll1$, $M$ defined by \eqref{even:id:defM} satisfies
\begin{equation}\label{even:bd:approxM}
M\approx \frac{\nu}{|\cbf|_{R}^2+\tilde c^4(1+\beta |\ln |\tilde c||)^2+\tilde q_1^4}\gg 1.
\end{equation}

We will use several times the following inequalities for $x,y\in \mathbb R$ with $|x|,|y|\ll r^{-N/2+1}$:
\begin{align}
\label{even:bd:varphi1}  & |\varphi (x+y)-\varphi(x)|\lesssim r^{\frac{N-6}{2}}|y|(|x|+|y|),\\
\label{even:bd:varphi2}  & |\varphi (x+y)-\varphi(x)-\varphi(y)|\lesssim r^{\frac{N-6}{2}}|x||y|,\\
\label{even:bd:varphi3} & |\varphi'(x)|\lesssim r^{\frac{N-6}{2}}|x| ,\\
\label{even:bd:varphi4} & |\varphi'(x+y)-\varphi'(x)|\lesssim r^{\frac{N-6}{2}}|y| .
\end{align}
We omit their proof, which can be done by standard arguments.

\noindent \textbf{Step 1}. \emph{Preliminary estimates}. In this step we control $v_+$ and $v_-$ by the quantities $q_1$, $q_2$, $\tilde q_1$, $M$ and $\epsilon'$. We claim that for all $r>R+|t|$:
\begin{align} 
& \label{even:bdv} |v(t,r)|\lesssim \frac{\epsilon'}{r^{N/2-1}},\\
& \label{even:bdv-} |v_-(t,r)|\lesssim \frac{\tilde q_1}{r^{N/2-1}},\\
& \label{even:bdv+} |v_+(t,r)|\lesssim  \frac{q_2+|\tilde c|\left(|\cbf|_{R}+\tilde c^2(1+\beta |\ln |\tilde c||)+\frac{1}{M^{1/2}}\right)}{r^{\frac{N-3}{2}}R^{1/2}} +\frac{\epsilon' q_1}{r^{N/2-1}},
\end{align}
that for all $r>R$:
 \be
\label{even:bdv+t=0} |v_+(0,r)|\lesssim \frac{q_2+|\tilde c|\left(|\cbf|_{R}+\tilde c^2(1+\beta |\ln |\tilde c||)+\frac{1}{M^{1/2}}\right)}{r^{\frac{N-3}{2}}R^{1/2}},
\end{equation}
and that for all $\bar R>R+|t|$:
\begin{align}
& \label{even:bdv+L2}  \| \frac{v_+(t)}{r^2}\|_{L^2_{\bar R}}\lesssim \frac{q_2+|\tilde c|\left(|\cbf|_{R}+\tilde c^2(1+\beta |\ln |\tilde c||)+\frac{1}{M^{1/2}}\right)}{\bar R^{1/2}R^{1/2}},\\
&\label{even:bdpatv+L2}  \| \frac{\pa_t v_+(t)}{r^2}\|_{L^2_{\bar R}}\lesssim \frac{q_2}{\bar R^{3/2}R^{1/2}} .
\end{align}

\noindent \underline{Proof of \eqref{even:bdv} and \eqref{even:bdv-}:} The bound \eqref{even:bdv} is a direct consequence of \eqref{even:id:defv}, \eqref{even:pointwiseuap} and \eqref{even:bd:aprioriu}. The bound \eqref{even:bdv-} is by definition of $\tilde q_1$.
\smallskip

\noindent \underline{Proof of \eqref{even:bdv+t=0}:} Using \eqref{even:id:equationu-uap}, \eqref{even:id:defv} and $v(\pm t)=v_+(t)\pm v_-(t)$, $v_+$ satisfies for any $t\in \mathbb R$:
\begin{align} \label{even:id:Deltav+}
\Delta v_+(t) &= (\pa_{tt}v+\varphi(u_{ap})-\varphi(u_{ap}+v)-E)_+(t)\\ \notag
&= \pa_{tt}v_+(t)+(\varphi(u_{ap})-\varphi(u_{ap}+v_+))_+(t)\\ \notag
&\qquad \qquad+(\varphi(u_{ap}+v_+)-\varphi(u_{ap}+v_++v_-))_+(t)-E_+(t).
\end{align}
We let $t\in \mathbb R$ and $\bar R\geq R+|t|$, and estimate all terms. By definition of the $\tilde W^{1/2}_{R}$ norm \eqref{id:deftildeWkappa}:
\begin{equation}\label{even:controle:tech1}
\| \pa_{tt}v_+(t)\|_{L^2_{\bar R}}\lesssim \frac{ q_2}{\bar R^{1/2}R^{1/2}}.
\end{equation}
Let $r>\bar R$. Using \eqref{even:bd:varphi1}, \eqref{even:pointwiseuap} and \eqref{even:bdv},
\begin{equation}\label{even:controle:tech2}
|\varphi(u_{ap})-\varphi(u_{ap}+v_+)|\lesssim r^{\frac{N-6}{2}}|v_+|(|v_+|+|u_{ap}|)\lesssim \frac{\epsilon'}{r^2}|v_+|.
\end{equation}
Using \eqref{even:L2E}, $\bar R\geq R$ and $\bar R\leq 2MR$ on the support of the right-hand side of \eqref{even:L2E} we get:
\begin{equation}\label{even:controle:tech5}
\| E(t) \|_{L^2_{\bar R}}\lesssim \frac{|\tilde c|\left(|\cbf|_{R}+\tilde c^2(1+\beta|\ln|\tilde c||)+\frac{1}{M^{1/2}}\right) }{\bar R^{1/2} R^{1/2}}.
\end{equation}
At the initial time $t=0$, injecting \eqref{even:controle:tech1}, \eqref{even:controle:tech2}, $(\varphi(u_{ap}+v_+)-\varphi(u_{ap}+v_++v_-))(0)=0$ since $v_-(0)=0$ and \eqref{even:controle:tech5} in \eqref{even:id:Deltav+} we get:
\begin{equation}\label{even:controle:tech4}
\| \Delta v_+(0)\|_{L^2_{\bar R}}\lesssim \frac{q_2+|\tilde c|\left(|\cbf|_{R}+\tilde c^2(1+\beta |\ln |\tilde c||)+\frac{1}{M^{1/2}}\right)}{\bar R^{1/2}R^{1/2}} +\epsilon' \| \frac{v_+(0)}{r^2}\|_{L^2_{\bar R}}.
\end{equation}
Since $v_+(0,R)=0$ by \eqref{even:id:orthogonalityatR}, injecting \eqref{even:controle:tech4} in the weighted Hardy inequality \eqref{bd:hardyoutside} with $\kappa=1/2$ shows:
\begin{align*}
\sup_{\bar R\geq R} \bar R^{\frac 12} \| \frac{v_+(0)}{r^2}\|_{L^2_{\bar R}}+\sup_{r\geq {R}} r^{\frac{N-3}{2}} |v_+(0,r)| & \lesssim \frac{q_2+|\tilde c|\left(|\cbf|_{R}+\tilde c^2(1+\beta |\ln |\tilde c||)+\frac{1}{M^{1/2}}\right)}{R^{1/2}}\\
&\qquad +\epsilon' \sup_{\bar R\geq R}\bar R^{\frac 12} \| \frac{v_+(0)}{r^2}\|_{L^2_{\bar R}} .
\end{align*}
This implies \eqref{even:bdv+t=0} for $\epsilon'$ small enough.

\smallskip

\noindent \underline{Proof of \eqref{even:bdv+L2} and \eqref{even:bdpatv+L2}:} As $v_+$ is even in time, $\pa_t v_+(t)=\int_0^t \pa_{tt}v_+(t')dt'$. Hence \eqref{even:bdpatv+L2} follows from \eqref{even:controle:tech1}, $|t|\leq \bar R$ and $r>\bar R$. Similarly, $v_+(t)=v_+(0)+\int_0^{t}\pa_{t}v_+(t')dt'$. Hence \eqref{even:bdv+L2} follows from \eqref{even:bdv+t=0} and \eqref{even:controle:bd:universal1}, and \eqref{even:bdpatv+L2} with $|t|\leq \bar R$.

\smallskip

\noindent \underline{Proof of \eqref{even:bdv+}:} Let $t\in \mathbb R$ and $R(t)=R+|t|$. In view of \eqref{even:id:Deltav+}, we decompose
\begin{equation}\label{even:controle:tech6}
v_+(t,r)=\frac{c(t)}{r^{N-2}}+v_+^{(1)}(r)+v_+^{(2)}(r)
\end{equation}
where $c(t)\in \mathbb R$, and $v_+^{(1)}[t]$ and $v_+^{(2)}[t]$ are the unique solutions that decay as $r\to \infty$ of
\begin{align*}
& \left\{ \begin{array}{l l} \Delta v_+^{(1)}(r)=\pa_{tt}v_+(t,r)+(\varphi(u_{ap})-\varphi(u_{ap}+v_+))_+(t,r)-E_+(t,r) \qquad \forall r>R(t),\\ v_+^{(1)}(R(t))=0 ,\end{array} \right. \\
& \left\{ \begin{array}{l l}  \Delta v_+^{(2)}=(\varphi(u_{ap}+v_+)-\varphi(u_{ap}+v_++v_-))_+(t,r)  \qquad \forall r>R(t),\\ v_+^{(2)}(R(t))=0,   \end{array} \right. 
\end{align*}
(which exist, by using a rescaled version of the formula \eqref{bd:hardyoutsidetech4}). For any $\bar R\geq R(t)$, combining \eqref{even:controle:tech1}, \eqref{even:controle:tech2}, \eqref{even:bdv+L2} and \eqref{even:controle:tech5} we have that 
$$
\| \pa_{tt}v_+(t)+(\varphi(u_{ap})-\varphi(u_{ap}+v_+))_+-E_+(t)\|_{L^2_{\bar R}}\lesssim \frac{q_2+|\tilde c|\left(|\cbf|_{R}+\tilde c^2(1+\beta |\ln |\tilde c||)+\frac{1}{M^{1/2}}\right)}{\bar R^{1/2}R^{1/2}}.
$$
Hence by \eqref{bd:hardyoutside} with $\kappa=1/2$, for all $r>R(t)$:
\begin{equation}\label{even:controle:tech7}
|v_+^{(1)}(r)|\lesssim \frac{q_2+|\tilde c|\left(|\cbf|_{R}+\tilde c^2(1+\beta |\ln |\tilde c||)+\frac{1}{M^{1/2}}\right)}{r^{(N-3)/2}R^{1/2}} .
\end{equation}
By \eqref{even:bd:varphi1}, \eqref{even:bdv-} with $\tilde q_1\lesssim q_1$, \eqref{even:pointwiseuap} and \eqref{even:bdv}, for $r>R(t)$:
$$
|\varphi(u_{ap}+v_+)-\varphi(u_{ap}+v_++v_-)|\lesssim r^{\frac{N-6}{2}} |v_-|(|u_{ap}|+|v_+|+|v_-|)\lesssim \frac{\epsilon' q_1}{r^{N/2+1}}.
$$
Hence for any $\bar R\geq R(t)$, $\| \varphi(u_{ap}+v_+)-\varphi(u_{ap}+v_++v_-)\|_{L^2_{\bar R}}\lesssim \epsilon' q_1/ \bar R$ by \eqref{even:controle:bd:universal1}. Using this and \eqref{bd:hardyoutside} with $\kappa=1$ we get for all $r>R(t)$:
\begin{equation}\label{even:controle:tech8}
|v^{(2)}_+(r)|\lesssim \frac{\epsilon' q_1}{r^{N/2-1}}.
\end{equation}
By \eqref{even:controle:tech6} we have $ c(t)r^{-N}=r^{-2}(v_+(t)-v_+^{(1)}+v_+^{(2)})$. Taking the $L^2_{R(t)}$ norm, using \eqref{even:bdv+L2}, \eqref{even:controle:tech7}, \eqref{even:controle:tech8} and \eqref{even:controle:bd:universal1} we obtain:
\begin{align*}
\| \frac{c(t)}{r^N}\|_{L^2_{R(t)}} & \lesssim \| \frac{|v_+|}{r^2}\|_{L^2_{R(t)}}+\| \frac{q_2+|\tilde c|(|\cbf|_{R}+\tilde c^2(1+\beta|\ln |\tilde c||)+\frac{1}{M^{1/2}})}{r^{N/2+1/2}R^{1/2}}\|_{L^2_{R(t)}}+\|\frac{\epsilon' q_1}{r^{N/2+1}}\|_{L^2_{R(t)}} \\
&\lesssim \frac{q_2+|\tilde c|(|\cbf|_{R}+\tilde c^2+\frac{1}{M^{1/2}})}{R(t)^{1/2}R^{1/2}}+\frac{q_1 \epsilon'}{ R(t)}.
\end{align*}
Hence using $\| r^{-N}\|_{L^2_{R(t)}}\approx R(t)^{-N/2}$ we deduce
\begin{equation}\label{even:controle:tech9}
|c(t)|\lesssim \frac{R(t)^{N/2-1/2}\left(q_2+|\tilde c|\left(|\cbf|_{R}+\tilde c^2(1+\beta|\ln |\tilde c||)+\frac{1}{M^{1/2}}\right)\right)}{R^{1/2}}+q_1 \epsilon' R(t)^{N/2-1}
\end{equation}
For $r>R(t)$, reinjecting \eqref{even:controle:tech7}, \eqref{even:controle:tech8} and \eqref{even:controle:tech9} in \eqref{even:controle:tech6} we obtain \eqref{even:bdv+}, ending Step 1.\\

\noindent \textbf{Step 2}. \emph{Control of $v_-$}. In this step we show an improved estimate for $q_1$. We have, using \eqref{even:id:equationu-uap}, $v=v_++v_-$ and $(\varphi (v_+))_-=0$ that $v_-$ solves:
\begin{align*}
 \Box v_-  & = (E+\varphi(u_{ap}+v)-\varphi(u_{ap}))_-\\
& = E_-+(\underbrace{\varphi (u_{ap}+v_++v_-)-\varphi(u_{ap}+v_+)}_{=I})_-+(\underbrace{\varphi(u_{ap}+v_+)-\varphi (u_{ap})-\varphi (v_+)}_{=II})_-.
\end{align*}
For $I$, using \eqref{even:bd:varphi1}, and then $u=u_{ap}+v_++v_-$, for $r>R+|t|$:
$$
|I|\lesssim r^{\frac{N-6}{2}}|v_-|(|u+v_++v_-|+|u+v_+|) \lesssim r^{\frac{N-6}{2}} |v_-|(|u|+|v_-|+|u_{ap}|).
$$
For the first two terms by H\"older, \eqref{even:bd:aprioriu}, \eqref{even:controle:id:q1} and \eqref{even:controle:bd:aprioriq1q2}:
\begin{align*}
\| r^{\frac{N-6}{2}} |v_-|(|u|+|v_-|)\|_{L^1L^2_{R+|t|}} & \lesssim \| r^{\frac{N-6}{4}} v_-\|_{L^2L^4_{R+|t|}} \left(\| r^{\frac{N-6}{4}} (|u|+ | v_-|)\|_{L^2L^4_{R+|t|}}  \right) \\
 &\lesssim q_1(\epsilon' +q_1) \ \lesssim \epsilon'q_1.
\end{align*}
For the second, using \eqref{even:bdv-} and $\tilde q_1\lesssim q_1$, \eqref{even:id:defuap}, \eqref{even:pointwisea} and \eqref{even:pointwisetildephi}, we get for $r>R+|t|$:
$$
|v_-  u_{ap}|\lesssim q_1\left(\frac{R|\cbf|_{R}}{r^{N-1}}+\frac{|\tilde c|}{r^{N-2}}\indic (r+|t|\leq 2MR) \right).
$$
This implies using \eqref{even:controle:bd:universal4}, \eqref{even:bd:cbftildectildeR}, $\ln M \lesssim |\ln |\tilde c||$ (by \eqref{even:bd:approxM}) and \eqref{even:bd:cbftildectildeR} again:
\begin{align*}
\| r^{\frac{N-6}{2}}v_- u_{ap}\|_{L^1L^2_{R+|t|}} & \lesssim q_1\left( \| \frac{|\cbf|_R R}{r^{N/2+2}}\|_{L^1L^2_{R+|t|}}+\| \frac{\tilde c}{r^{N/2+1}}\indic (r+|t|\leq 2MR) \|_{L^1L^2_{R+|t|}}\right) \\
& \lesssim q_1( \frac{|\cbf|_RR}{R}+|\tilde c|\log M) \lesssim q_1( \epsilon' +|\tilde c| |\ln |\tilde c||) \lesssim q_1 \epsilon' |\ln \epsilon '|.
\end{align*}
Combining, we get that for $I$:
\begin{equation}\label{even:controle:tech10} 
\| I\|_{L^1L^2_{R+|t|}}\lesssim q_1 \epsilon' |\ln \epsilon'|.
\end{equation}
To estimate $II$ we first use \eqref{even:bd:varphi2}, then \eqref{even:id:defuap}, \eqref{even:pointwisea} and \eqref{even:pointwisetildephi} and get for $r>R+|t|$:
$$
|II|\lesssim r^{\frac{N-6}{2}}|v_+||u_{ap}|\lesssim \frac{1}{r^{2}}|v_+| \left(\frac{|\cbf|_{R}R}{r}+ |\tilde c| \indic ( r+|t|\leq 2MR) \right)
$$
Then, by \eqref{even:bdv+L2}, for any $t\in \mathbb R$:
\begin{align*}
& \|II(t)\|_{L^2_{R+|t|}} \lesssim \| \frac{1}{r^2} v_+\|_{L^2_{R+|t|}}  \left(\frac{|\cbf|_{R}R}{R+|t|}+ |\tilde c| \indic ( R+|t|\leq 2MR) \right) \\
\lesssim & \frac{q_2+|\tilde c|\left(|\cbf|_{R}+\tilde c^2(1+\beta |\ln |\tilde c||)+\frac{1}{M^{1/2}}\right)}{R^{1/2}}\left(\frac{|\cbf|_{R}R}{(R+|t|)^{3/2}}+\frac{|\tilde c|}{(R+|t|)^{1/2}}\indic ( R+|t|\leq 2MR)\right)
\end{align*}
so that using \eqref{even:controle:bd:universal5} and \eqref{even:controle:bd:universal6}:
\begin{align}
\nonumber \|II\|_{L^1L^2_{R+|t|}} & \lesssim  \frac{q_2+|\tilde c|\left(|\cbf|_{R}+\tilde c^2(1+\beta |\ln |\tilde c||)+\frac{1}{M^{1/2}}\right)}{R^{1/2}}\left(\frac{|\cbf|_{R}R}{R^{1/2}}+|\tilde c|M^{1/2}R^{1/2} \right) \\
\nonumber &=q_2\left( |\cbf|_{R}+|\tilde c|M^{1/2} \right)+|\tilde c||\cbf|_{R}\left(|\cbf|_{R}+\tilde c^2(1+\beta |\ln |\tilde c||)+\frac{1}{M^{1/2}}\right) \\
\nonumber & \qquad +\tilde c^2\left(M^{1/2}(|\cbf|_{R}+\tilde c^2(1+\beta|\ln |\tilde c||))+1\right)
 \\
 \label{even:controle:tech11} & \lesssim  q_2\left( |\cbf|_{R}+|\tilde c|M^{1/2} \right) +|\tilde c||\cbf|_{R}+\tilde c^2
\end{align}
where we used $|\cbf|_{R}+\tilde c^2(1+\beta |\ln |\tilde c||)+M^{-1/2}\ll 1$ from \eqref{even:bd:cbftildectildeR} and \eqref{even:bd:approxM}, and $M^{1/2}(|\cbf|_{R}+\tilde c^2(1+\beta|\ln |\tilde c||))\lesssim \sqrt{\nu}\lesssim 1$ from \eqref{even:bd:approxM}.

Combining \eqref{even:controle:tech10}, \eqref{even:controle:tech11} and \eqref{even:L1L2E-} we infer that:
$$
 \| \Box v_- \|_{L^1L^2_{R+|t|}}\lesssim q_1 \epsilon'|\ln \epsilon'|+ q_2\left( |\cbf|_{R}+|\tilde c|M^{1/2} \right) +|\tilde c||\cbf|_{R}+\tilde c^2
$$
As $v_-$ is non-radiative for $r>R+|t|$, its radiation profiles satisfy $G_+[v_-](\rho)=G_-[v_-](\rho)=0$ for all $\rho \geq R$. Recalling the definition \eqref{even:controle:id:q1} of $q_1$, then applying the channels of energy estimate \eqref{bd:channels2} using \eqref{even:id:orthogonalitypatv}, and the energy and Strichartz estimate \eqref{Weight_StrichartzR} to $v_-$ we infer:
\begin{align*}
q_1 & =\| v_-\|_{L^\infty \mathcal H_{R+|t|}}+\| r^{\frac{N-6}{4}} v_-\|_{L^2L^4_{R+|t|}} \lesssim  \| \Box v_- \|_{L^1L^2_{R+|t|}} \\
&\qquad \lesssim q_1 \epsilon'|\ln \epsilon'|+ q_2\left( |\cbf|_{R}+|\tilde c|M^{1/2} \right) +|\tilde c||\cbf|_{R}+\tilde c^2
\end{align*}
Therefore for $\epsilon'$ small enough:
\begin{equation}\label{even:controle:bdq1}
q_1\lesssim q_2\left( |\cbf|_{R}+|\tilde c|M^{1/2} \right) +|\tilde c||\cbf|_{R}+\tilde c^2.
\end{equation}

\noindent \textbf{Step 3}. \emph{Control of $v_+$}. The equation satisfied by $v_+$ is using \eqref{even:id:equationu-uap} and $u=u_{ap}+v$:
\begin{align*}
& \Box v_+  = (E+\varphi (u_{ap}+v)-\varphi (u_{ap}))_+
\end{align*}
so that the equation satisfied by $\pa_t v_+$ is, using $\pa_t f_+=(\pa_t f)_-$, $u=u_{ap}+v$, $v=v_++v_-$ and $(\varphi'(v_+)\pa_t v_-)_-=0$ (as a product of even in time functions):
\begin{align*}
 \Box \pa_t  v_+& =\pa_t E_++\left(\varphi'(u_{ap}+v)(\pa_t u_{ap}+\pa_t v)-\varphi'(u_{ap})\pa_t u_{ap}\right)_-\\
&= \pa_t E_++ (\underbrace{\varphi'(u)\pa_t v_+}_{=I})_-+ (\underbrace{(\varphi'(u_{ap}+v_++v_-)-\varphi'(v_+))\pa_t v_-}_{=II})_-\\
& \qquad + \Bigl(\underbrace{\left(\varphi'(u_{ap}+v_++v_-)-\varphi'(u_{ap})\right)\pa_t u_{ap}}_{=III} \Bigr)_-.
\end{align*}
We have by \eqref{even:bd:aprioriu}, \eqref{even:bd:varphi3}, $|I|\leq \epsilon' r^{-2}|\partial_t v_+(t,r)|$. Fix $\widetilde{R}$, $\tilde{t}$ with $\widetilde{R}>R+|\tilde{t}|$, so that $r>\widetilde{R}+|t-\tilde{t}|\Longrightarrow r>R+|t|$. Using \eqref{even:bdpatv+L2} with $\bar{R}=\widetilde{R}+|t-\tilde{t}|\geq R+|t|$, and \eqref{even:controle:bd:universal5}, we then obtain that
\begin{equation} \label{even:controle:tech13}
\| I\|_{L^1L^2_{\widetilde R+|t-\tilde t|}} \lesssim \frac{\epsilon' q_2}{R} \int_{\mathbb R}\frac{d t}{(\widetilde R+|t-\tilde t|)^{3/2}} \lesssim \frac{\epsilon' q_2 }{R^{1/2}\widetilde{R}^{1/2}}.
\end{equation}
For $II$, using \eqref{even:bd:varphi4} we get $|II|\lesssim r^{\frac{N-6}{2}}(|u_{ap}|+|v_-|)|\pa_t v_-|$ for $r>R+|t|$. Combining \eqref{even:bdv-}, \eqref{even:pointwisea} and \eqref{even:pointwisetildephi} shows $ r^{\frac{N-6}{2}}(|u_{ap}|+|v_-|)\lesssim r^{-2}(|\cbf|+|\tilde c|+\tilde q_1)$ for $r>R+|t|$. Hence by the definition \eqref{even:controle:id:q1} of $q_1$, for $t\in \mathbb R$ and $\widetilde R\geq R+|\tilde t|$, $\bar{R}=\widetilde{R}+|t-\tilde{t}|$,
$$
\| II(t)\|_{L^2_{\bar R}}  \lesssim  \|r^{\frac{N-6}{2}}(u_{ap,-}+v_-)\|_{L^\infty_{\bar R}}  \| \pa_t v_-\|_{L^2_{\bar R}}  \lesssim \frac{|\cbf|_{R}+|\tilde c|+\tilde q_1}{\bar R^2} q_1.
$$
Thus, using \eqref{even:controle:bd:universal5}:
\begin{equation}\label{even:controle:tech15}
\| II\|_{L^1L^2_{\bar R+|t-\bar t|}} \lesssim \frac{q_1(|\cbf|_{R}+|\tilde c|+\tilde q_1)}{\widetilde R}\leq \frac{1}{R^{1/2}\widetilde{R}^{1/2}}q_1\left( |c|_R+|\tilde{c}|+\tilde{q}_1 \right).
\end{equation}
For $III$, we first use \eqref{even:bd:varphi4} and obtain for $r>R+|t|$:
\begin{equation}\label{even:controle:tech140}
|III|\lesssim r^{\frac{N-6}{2}}|v_+||\pa_t u_{ap}|+ r^{\frac{N-6}{2}}|v_-||\pa_t u_{ap}|.
\end{equation}
For the first term, using H\"older with \eqref{even:L2patuap} and \eqref{even:bdv+} we obtain for any $\bar R\geq R+|t|$:
\begin{align*}
\| r^{\frac{N-6}{2}} v_+(t)\pa_t u_{ap}(t)\|_{L^2_{\tilde R}} & \lesssim \| r^{\frac{N-4}{2}}v_+(t)\|_{L^\infty_{\tilde R}} \| \frac{1}{r}\pa_t u_{ap}(t)\|_{L^2_{\tilde R}}\\
&\lesssim \frac{q_2+|\tilde c|(|\cbf|_{R}+\tilde c^2(1+\beta |\ln |\tilde c||)+\frac{1}{M^{1/2}})+\epsilon'q_1}{\bar R^{1/2}R^{1/2}} \left(\frac{R|\cbf|_{R}}{\bar R^2}+\frac{|\tilde c|}{\bar R}\right)\\
&\quad \lesssim  \frac{\epsilon' q_2+|\tilde c| (|\cbf|_{R}+|\tilde c|^2(1+\beta |\ln |\tilde c||)+\frac{1}{M^{1/2}}) +\epsilon' (|\cbf|_{R}+|\tilde c|) q_1}{\bar R^{3/2}R^{1/2}}
\end{align*}
where we used \eqref{even:bd:cbftildectildeR} for the last inequality. Hence, using \eqref{even:controle:bd:universal5} if $\widetilde{R}\geq R+|\tilde{t}|$, $\bar{R}=\widetilde{R}+|t-\tilde{t}|$:
\begin{equation}\label{even:controle:tech14}
\| r^{\frac{N-6}{2}} v_+\pa_t u_{ap} \|_{L^1L^2_{\bar R+|t-\bar t|}}\lesssim  \frac{\epsilon' q_2+|\tilde c| (|\cbf|_{R}+|\tilde c|^2(1+\beta |\ln |\tilde c||)+\frac{1}{M^{1/2}}) +\epsilon' (|\cbf|_{R}+|\tilde c|) q_1}{\widetilde R^{1/2}R^{1/2}} .
\end{equation}
For the second term, with \eqref{even:bdv-}, \eqref{even:L2patuap} and $\tilde q_1\lesssim q_1$ we obtain for $\tilde R\geq R+|t|$:
$$
\| r^{\frac{N-6}{2}}v_-(t)\pa_t u_{ap}(t)\|_{L^2_{\bar R}} \lesssim \| r^{\frac{N-4}{2}}v_-(t)\|_{L^\infty_{\bar R}}  \|\frac{1}{r}\pa_t u_{ap,-}(t) \|_{L^2_{\bar R}} \lesssim \frac{q_1(|\cbf|_{R}+|\tilde c|)}{\bar R^2}.
$$
Hence using \eqref{even:controle:bd:universal5}:
\begin{equation}\label{even:controle:tech16}
\| r^{\frac{N-6}{2}}v_-\pa_t u_{ap}\|_{L^1L^2_{\bar R+|t-\bar t|}}\lesssim \frac{q_1(|\cbf|_{R}+|\tilde c|)}{\widetilde R}\leq \frac{q_1(|c_R|+|\tilde{c}|}{\widetilde{R}^{1/2}R^{1/2}}.
\end{equation}
Injecting \eqref{even:controle:tech14} and \eqref{even:controle:tech16} in \eqref{even:controle:tech140} we find:
\begin{equation}\label{even:controle:tech160}
\| III \|_{L^1L^2_{\bar R+|t-\bar t|}}\lesssim  \frac{\epsilon' q_2+|\tilde c| (|\cbf|_{R}+|\tilde c|^2(1+\beta |\ln |\tilde c||)+\frac{1}{M^{1/2}}) +(|\cbf|_{R}+|\tilde c|) q_1}{\widetilde R^{1/2}R^{1/2}} .
\end{equation}

Gathering \eqref{even:L2patE} (using $\|f_+\|_{L^1L^2}\lesssim\| f\|_{L^1L^2}$), \eqref{even:controle:tech13}, \eqref{even:controle:tech15} and \eqref{even:controle:tech160} we obtain:
$$
 \| \Box \pa_t v_+ \|_{W^{'1/2}_{R}}\lesssim  \frac{|\tilde c|\left(|\cbf|_{R}+\tilde c^2(1+\beta| \ln |\tilde c||)+ \frac{1}{M^{1/2}}\right)+\epsilon' q_2+q_1\left(|\cbf|_{R}+|\tilde c|+\tilde q_1\right)}{R^{1/2}}
$$
Recalling the definition \eqref{even:controle:id:q2} of $q_2$, and then using the weighted channels of energy estimate \eqref{bd:weightedchannels} with \eqref{even:id:orthogonalityDeltav} we infer
\begin{align*}
q_2& =R^{1/2}\| \pa_t v_+\|_{\tilde W_{R}^{1/2}} \lesssim  \| \Box \pa_t v_+ \|_{W^{'1/2}_{R}} \\
&\lesssim |\tilde c|\left(|\cbf|_{R}+\tilde c^2(1+\beta| \ln |\tilde c||)+ \frac{1}{M^{1/2}}\right)+\epsilon' q_2+q_1\left(|\cbf|_{R}+|\tilde c|+\tilde q_1\right)
\end{align*}
so that
\begin{equation}\label{even:controle:bdq2}
q_2 \lesssim |\tilde c| \left(|\cbf|_{R}+\tilde c^2(1+\beta| \ln |\tilde c||)+ \frac{1}{M^{1/2}}\right)+q_1\left(|\cbf|_{R}+|\tilde c|+\tilde q_1\right).
\end{equation}

\noindent \textbf{Step 4}. \emph{End of the proof}. We inject \eqref{even:controle:bdq2} in \eqref{even:controle:bdq1}, use \eqref{even:controle:bd:aprioriq1q2} and \eqref{even:bd:cbftildectildeR}, and obtain:
\begin{align*}
q_1 & \lesssim |\tilde c|(|\cbf|_R+|\tilde c|M^{1/2})(|\cbf|_R+\tilde c^2(1+\beta |\ln |\tilde c||)+\frac{1}{M^{1/2}})\\
&\qquad +q_1(|\cbf|_R+|\tilde c| M^{1/2})(|\cbf|_R+|\tilde c|+\tilde q_1)+|\tilde c||\cbf|_{R}+\tilde c^2 \\
& \lesssim  \tilde c^2M^{1/2} (|\cbf|_R+\tilde c^2(1+\beta |\ln |\tilde c||))+q_1(\epsilon'+|\tilde c| M^{1/2}( |\cbf|_R+|\tilde c|+\tilde q_1))+|\tilde c||\cbf|_{R}+\tilde c^2 
\end{align*}
Using \eqref{even:bd:approxM} we have $M^{1/2} (|\cbf|_R+\tilde c^2(1+\beta |\ln |\tilde c||))\lesssim \sqrt{\nu}\lesssim 1$. Using $|\tilde c| ( |\cbf|_R+|\tilde c|+\tilde q_1)\lesssim \tilde c^2+|\cbf|_R^2+\tilde q_1^2$ and \eqref{even:bd:approxM} we have $|\tilde c| M^{1/2}( |\cbf|_R+|\tilde c|+\tilde q_1))\lesssim \sqrt{\nu}$.

Therefore, for $\nu$ small enough, but independently of the other parameters as announced, the above inequality implies:
\begin{equation}\label{even:controle:bdq1final}
q_1 \lesssim |\tilde c||\cbf|_{R}+\tilde c^2.
\end{equation}
Injecting \eqref{even:controle:bdq1final} in \eqref{even:bd:approxM}, using $\tilde q_1\lesssim q_1$ by Sobolev, we find
\begin{equation}\label{even:bd:approxM2}
M \approx \frac{\nu}{|\cbf|_R^2+\tilde c^4(1+\beta |\ln |\tilde c||)^2}.
\end{equation}

Injecting $\tilde q_1\lesssim q_1$, \eqref{even:bd:approxM2} and \eqref{even:controle:bdq1final} in \eqref{even:controle:bdq2} we get:
\begin{equation}\label{even:controle:bdq2final}
q_2 \lesssim |\tilde c|\left( |\cbf|_{R}+\tilde c^2(1+\beta |\ln |\tilde c||)\right).
\end{equation}
The two bounds \eqref{even:controle:bdq1final} and \eqref{even:controle:bdq2final}, because of the very definitions \eqref{even:controle:id:q1} and \eqref{even:controle:id:q2} of $q_1$ and $q_2$, imply the result of the Lemma.

\end{proof}

Thanks to Lemma \ref{even:lem:approximation}, one can estimate how the parameters $\cbf$ and $\tilde c$ depend on $R$:

 \begin{lemma} \label{even:lem:dyadicdifferences}

For all $R_1\leq R_2\leq R_3\leq 10 R_2$, there holds:
\begin{align}
\label{even:bd:cbf2-cbf1} & \cbf[R_3]=\cbf[R_2]+O_{|\cdot |_{R_2}}(|\tilde c[R_2]|(|\tilde c[R_2]|+|\cbf [R_2]|)),\\
\label{even:bd:tildec2-tildec1}& \tilde c[R_3]=\tilde c[R_2]+\beta \ln \frac{R_3}{R_2} |\tilde c[R_2]|\tilde c[R_2]+O\left(|\tilde c[R_2]|\left(|\tilde c[R_2]|^2(1+\beta |\ln |\tilde c[R_2]||)+|\cbf [R_2]|\right)\right),
\end{align}
where, for $R,\eta>0$, above $O_{|\cdot |_{R}}(\eta)$ denotes an element of $\mathbb R^{m_0+1}$ whose $|\cdot|_{R}$ norm is $\lesssim \eta$.

\end{lemma}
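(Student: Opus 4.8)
The plan is to compare, on the common exterior cone $\{r>R_3+|t|\}$ (which lies inside the domains of both approximate solutions since $R_0\le R_1\le R_2\le R_3$), the two approximate solutions attached to $u$ by Lemma~\ref{lem:defctildec} at the scales $R_2$ and $R_3$, and to read off the parameter differences from their difference in two independent ways. Write $\cbf_j=\cbf[R_j]$, $\tilde c_j=\tilde c[R_j]$ for $j\in\{2,3\}$, set $v^{(j)}=u-u_{ap}[\cbf_j,\tilde c_j,R_j]$, and
\[
D:=u_{ap}[\cbf_2,\tilde c_2,R_2]-u_{ap}[\cbf_3,\tilde c_3,R_3]=v^{(3)}-v^{(2)}\qquad\text{on }\{r>R_3+|t|\}.
\]
On the one hand, $D$ is small: by Lemma~\ref{even:lem:approximation} applied at both scales, and since $R_2\le R_3\le10R_2$ lets us pass freely (up to absolute constants) between $\|\cdot\|_{L^2_{R_2}}$ and $\|\cdot\|_{L^2_{R_3}}$ and between $|\cdot|_{R_2}$ and $|\cdot|_{R_3}$, one obtains
\[
\|\pa_tD(0)\|_{L^2_{R_3}}\lesssim\sum_{j}|\tilde c_j|\big(|\cbf_j|_{R_j}+|\tilde c_j|\big),\quad
R_3\|\pa_{tt}D(0)\|_{L^2_{R_3}}+R_3^{\frac N2-1}|D(0,R_3)|\lesssim\sum_{j}|\tilde c_j|\big(|\cbf_j|_{R_j}+\tilde c_j^2(1+\beta|\ln|\tilde c_j||)\big).
\]

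On the other hand, $D$ has an explicit form that is, to leading order, a linear combination of the $\phi_m$. Using $u_{ap}=a[\cbf,R]+\chi_{MR}\tilde\phi[\tilde c,R]$, one splits $D$ into three pieces. First, the linear part of $a$ is scale invariant, $a_F[\cbf_2]-a_F[\cbf_3]=a_F[\cbf_2-\cbf_3]$, while by the differentiability statement of Proposition~\ref{pr:constructionnonradia} and $R_2\approx R_3$ the nonlinear corrections satisfy $\tilde a[\cbf_2,R_2]-\tilde a[\cbf_3,R_3]=O\!\big(|\cbf_2-\cbf_3|_{R_2}(|\cbf_2|_{R_2}+|\cbf_3|_{R_3})\big)$ in the relevant weighted norms, plus terms of higher order in $|\cbf|$. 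Second, and crucially, by \eqref{even:id:tidephigeneralised2} the $\ln(r/R)$ dependence of $\tilde\phi$ is carried entirely by the resonance polynomial $p_{\frac N2-1}$, so $\psi[R_2]-\psi[R_3]=\beta\ln(R_3/R_2)\,\phi_{m_0+1}$ and hence
\[
\tilde\phi[\tilde c_2,R_2]-\tilde\phi[\tilde c_3,R_3]=\Big(\tilde c_2-\tilde c_3+\beta\hat c_2^2\ln\tfrac{R_3}{R_2}\Big)\phi_{m_0+1}+(\hat c_2^2-\hat c_3^2)\psi[R_3]+\big(\text{higher order in }\tilde c\big).
\]
Third, the cut-offs $\chi_{M_jR_j}$ equal $1$ on the bulk $R_3\le r\lesssim M_jR_j$, and their boundary layers near $r\sim M_jR_j$ contribute, by the choice \eqref{even:id:defM} of $M$, only terms $\lesssim|\tilde c_j|/(M_jR_j)$ on that shell, again absorbable. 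Altogether
\[
D=\Phi\Big(\cbf_2-\cbf_3,\ \tilde c_2-\tilde c_3+\beta\hat c_2^2\ln\tfrac{R_3}{R_2}\Big)+\mathcal E,
\]
with $\Phi$ the linear map from Step~1 of the proof of Lemma~\ref{lem:defctildec} and $\mathcal E$ an error whose weighted norms are controlled by the two displays above, plus the absorbable contribution $|\cbf_2-\cbf_3|_{R_2}(|\cbf_2|_{R_2}+|\cbf_3|_{R_3})$.

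To conclude, apply the functionals $\Psi=(\Psi_0,\Psi_1)$ at scale $R_3$ from Step~2 of the proof of Lemma~\ref{lem:defctildec} to $D$. Since $\Psi(u-u_{ap}[\cbf_3,\tilde c_3,R_3])=0$ by \eqref{even:id:orthogonalityatR}--\eqref{even:id:orthogonalityDeltav}, we have $\Psi(D)=-\Psi(v^{(2)})$, whose size is controlled by the smallness of $D$ above through the rescaled version of \eqref{even:bd:linearctildectech}. Combining with the invertibility of $\Psi\circ\Phi$ and the quantitative bounds \eqref{even:bd:linearctildec}--\eqref{even:bd:linearctildec2} (applied both to $D$ and to $\mathcal E$) gives
\[
\Big|\tilde c_2-\tilde c_3+\beta\hat c_2^2\ln\tfrac{R_3}{R_2}\Big|+|\cbf_2-\cbf_3|_{R_3}\lesssim\sum_{j}|\tilde c_j|\big(|\cbf_j|_{R_j}+\tilde c_j^2(1+\beta|\ln|\tilde c_j||)\big)+\epsilon'|\cbf_2-\cbf_3|_{R_3}+\epsilon'|\tilde c_2-\tilde c_3|.
\]
A first crude pass (using $\beta|\ln(R_3/R_2)|\le\beta^*\ln10=O(1)$ and $\epsilon'$ small) yields $|\tilde c_3|\approx|\tilde c_2|$, $|\cbf_3|_{R_3}\approx|\cbf_2|_{R_2}$ and $|\ln|\tilde c_3||\lesssim|\ln|\tilde c_2||+O(1)$; re-inserting this to replace all ``$j=3$'' quantities by ``$j=2$'' ones and absorbing the $\epsilon'$-small multiples of $|\cbf_2-\cbf_3|$ and $|\tilde c_2-\tilde c_3|$ to the left gives \eqref{even:bd:cbf2-cbf1} and \eqref{even:bd:tildec2-tildec1}, recalling that $\hat c_2^2=|\tilde c_2|\tilde c_2$ and $\beta=\beta^*$ in the power $N=6$ case and $\beta=0$ (so that the $\ln(R_3/R_2)$ term disappears) otherwise.

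The main obstacle is the careful isolation of the logarithmic shift: one must verify, from \eqref{even:id:tidephigeneralised}--\eqref{even:id:tidephigeneralised2} and Propositions~\ref{pr:resonancequadra}/\ref{rigidity|u|u:pr:A}, that $\tilde\phi[\cdot,R_2]-\tilde\phi[\cdot,R_3]$ is, modulo genuinely higher-order terms, the pure resonance $\beta\hat c^2\ln(R_3/R_2)\,\phi_{m_0+1}$, while simultaneously checking that the $\tilde p$-correction difference, the $\tilde a$-correction difference and the cut-off boundary layers are all errors of the cubic-in-$\tilde c$ (and mixed) order compatible with \eqref{even:bd:cbf2-cbf1}--\eqref{even:bd:tildec2-tildec1}; and then threading the scale-$R$ dependence of the weights and of $\Psi$ so that the rescalings in \eqref{even:bd:linearctildec}--\eqref{even:bd:linearctildec2} line up and the bootstrap carrying the $|\ln|\tilde c||$ factors closes.
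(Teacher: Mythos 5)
Your proposal is correct and follows essentially the same route as the paper: form the difference of the two approximate solutions, isolate the pure resonance shift $\beta\hat c_2^2\ln(R_3/R_2)\,\phi_{m_0+1}$ coming from the $R$-dependence of $\psi$, control the remaining pieces via Lemma \ref{even:lem:approximation}, the differentiability of $\cbf\mapsto\tilde a[\cbf]$ and direct cut-off estimates, then apply the functionals $\Psi$ at scale $R_3$ and invert $\Psi\circ\Phi$ before bootstrapping to replace the scale-$R_3$ parameters by scale-$R_2$ ones. The only cosmetic difference is that you exploit $\Psi(v^{(3)})=0$ to write $\Psi(D)=-\Psi(v^{(2)})$, whereas the paper estimates $\Psi$ of each term of the decomposition directly; the content is identical.
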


\begin{proof}

We write $\cbf_i=\cbf[R_i]$, $\tilde c_i=\tilde c[R_i]$ and $M_i=M[R_i]$ for $i=2,3$. By \eqref{even:id:tidephigeneralised} we have $\tilde \phi [\tilde c_2,R_2]-\tilde \phi[\tilde c_2,R_3]=\beta |\tilde c_2|\tilde c_2 \ln \frac{R_3}{R_2} \phi_{m_0+1}$ (using $\beta \hat c_2^2=\beta |\tilde c_2|\tilde c_2$). We decompose using this, \eqref{even:id:defuap} and \eqref{def:tildephi}:
\begin{align}
\label{even:id:diffcR2-cR1} & a_F[\cbf_3-\cbf_2]+\left(\tilde c_3-\tilde c_2-\beta \tilde c_2^2 \ln \frac{R_3}{R_2}\right)\phi_{m_0+1} \\
\nonumber =& \underbrace{u_{ap}[\cbf_3,\tilde c_3,R_3]-u}_{=f_3}+\underbrace{u-u_{ap}[\cbf_2,\tilde c_2,R_2]}_{=-f_2}+\underbrace{\tilde a[\cbf_2]-\tilde a[\cbf_3]}_{=g}+\underbrace{(\chi_{M_2R_3}-\chi_{M_3R_3}) \tilde \phi[\tilde c_3,R_3]}_{=h_3} \\
\nonumber &+\underbrace{(\chi_{M_2R_2}-\chi_{M_2R_3})\tilde \phi[\tilde c_2,R_3]}_{=-h_2}+\underbrace{\chi_{M_2R_3}\tilde \phi[\tilde c_2,R_3]-\tilde c_2 \phi_{m_0+1}-(\chi_{M_2R_3} \tilde \phi[\tilde c_3,R_3] -\tilde c_3 \phi_{m_0+1})}_{=j}\\
\nonumber &+\underbrace{\beta \tilde c_2^2 \ln \frac{R_3}{R_2} (\chi_{M_2R_2}-1)\phi_{m_0+1}}_{=k} \quad =:F
\end{align}
Recall the definition of the mappings $\Psi $ and $\Phi$ in the proof of Lemma \ref{lem:defctildec}. We have by Step 1 in the proof of Lemma \ref{lem:defctildec} that \eqref{even:id:diffcR2-cR1} implies:
\begin{equation}\label{even:id:diffcR2-cR12}
\left(\cbf_3-\cbf_2-\beta \tilde c_2^2 \ln \frac{R_3}{R_2},\tilde c_3-\tilde c_2\right)=(\Psi\circ \Phi)^{-1}(\Psi (F) ).
\end{equation}

Let $Q=|\cbf_2|_{R_3}+|\cbf_3|_{R_3}+|\tilde c_2|+|\tilde c_3|$, $Q_1=(|\tilde c_2|+|\tilde c_3|)(|\cbf_2|_{R_3}+|\cbf_3|_{R_3}+|\tilde c_2|+|\tilde c_3|)$ and $Q_2=(|\tilde c_2|+|\tilde c_3|)(|\cbf_2|_{R_3}+|\cbf_3|_{R_3}+\tilde c_2^2(1+\beta |\ln |\tilde c_2||)+\tilde c_3^2(1+\beta |\ln |\tilde c_3||))$. We estimate at $r=R_3$ (so that $r>R_2$) the first two terms $f_2$ and $f_3$ via \eqref{even:bd:u-uap3}, the second $g$ by \eqref{even:bd:linearctildectech5}, and the last $j$ by \eqref{even:bd:linearctildectech2}, yielding:
\begin{align}
&\nonumber |(f_3-f_2+g+j)(0,R_3)| \\
\nonumber \lesssim  &\frac{|\tilde c_3|(|\cbf_3|_{R_3}+\tilde c_3^2(1+\beta |\ln |\tilde c_3||))+|\tilde c_2|(|\cbf_2|_{R_2}+\tilde c_2^2(1+\beta |\ln |\tilde c_2||))+|\cbf_3-\cbf_2|_{R_3}(|\cbf_2|_{R_3}+|\cbf_3|_{R_3})}{R^{N/2-1}_2}\\
 \label{even:bd:diffcR2-cR1:tech1} &\lesssim\frac{Q_2+Q|\cbf_3-\cbf_2|_{R_3}}{R^{N/2-1}_2} 
\end{align}
where we used $R_2\approx R_3$ so that $|\cdot|_{R_2}\approx |\cdot|_{R_3}$. Similarly, using \eqref{even:bd:u-uap1}, \eqref{even:bd:u-uap2}, \eqref{even:bd:linearctildectech5}, \eqref{even:bd:linearctildectech3} and \eqref{even:bd:linearctildectech4} (with \eqref{even:bd:approxM2} so that $M_i^{-1}\lesssim |\cbf_i|_{R_3}^2+|\tilde c_i|^4(1+\beta |\ln |\tilde c_i||)^2$ for $i=2,3$)
\begin{align}
\label{even:bd:diffcR2-cR1:tech2} & \| \pa_t(f_3-f_2+g+j)(0)\|_{L^2_{R_3}}\lesssim Q_1+Q|\cbf_3-\cbf_2|_{R_3} ,\\
\label{even:bd:diffcR2-cR1:tech3}& \| \pa_{tt}(f_3-f_2+g+j)(0)\|_{L^2_{R_3}}\lesssim \frac{Q_2+Q(|\cbf_3-\cbf_2|_{R_3}+|\tilde c_3-\tilde c_2|)}{R_3} .
\end{align}
For $h_i$ for $i=2,3$ and $k$, we have that $\textup{supp}(\chi_{M_2R_3}-\chi_{M_iR_3})\subset \{r+|t|\geq R_2 \min(M_2,M_3) \}$ and $\textup{supp}(\chi_{M_2R_2}-1)\subset \{r+|t|\geq M_2R_2 \}$, and that $\chi$, $\phi_{m_0+1}$ and $\tilde \phi$ are even in time. Hence:
\begin{equation}\label{even:bd:diffcR2-cR1:tech4}
(h_3-h_2+k)(0,R_3)=0 \quad \mbox{and}\quad \pa_t (h_3-h_2+k)(0,r)=0 \quad \forall r>0,
\end{equation}
and by a direct computation using \eqref{def:tildephi} and \eqref{id:defphim0+1}:
\begin{align}
\nonumber \| \pa_{tt}(h_3-h_2+k)(0)\|_{L^2_{R_3}}^2 & \lesssim  \int_{\min(M_2,M_3)R_2}^\infty \frac{\tilde c_1^2+\tilde c_2^2}{(r^{\frac N2+1})^2} r^{N-1}dr\\
\label{even:bd:diffcR2-cR1:tech5} & \lesssim \frac{(\tilde c_1^2+\tilde c_2^2)\max (M_2^{-2},M_3^{-2})}{R_2^2}\lesssim \frac{Q_2^2}{R_3^2}.
\end{align}
where we used $M_2^{-1}+M_3^{-1}\lesssim (|\cbf_2|^2+|\cbf_3|^2+|\tilde c_2|^4(1+\beta |\ln |\tilde c_2||)^2+|\tilde c_3|^4(1+\beta |\ln |\tilde c_3||)^2)$ by \eqref{even:bd:approxM2}. Injecting \eqref{even:bd:diffcR2-cR1:tech1}, \eqref{even:bd:diffcR2-cR1:tech2}, \eqref{even:bd:diffcR2-cR1:tech3}, \eqref{even:bd:diffcR2-cR1:tech4} and \eqref{even:bd:diffcR2-cR1:tech5} in \eqref{even:id:diffcR2-cR1} one finds:
\begin{align*}
& |F(0,R_3)|\lesssim \frac{Q_2+Q|\cbf_3-\cbf_2|_{R_3}}{R^{N/2-1}_2} ,\\
&\|\pa_tF(0)\|_{L^2_{R_3}}\lesssim  Q_1+Q|\cbf_3-\cbf_2|_{R_3} \\
& \|\pa_{tt}F(0)\|_{L^2_{R_3}}\lesssim \frac{ Q_2+Q(|\cbf_3-\cbf_2|_{R_3}+|\tilde c_3-\tilde c_2|)}{R_3}.
\end{align*}
Using the identity \eqref{even:id:diffcR2-cR12}, and injecting the three above estimates in the right-hand sides of \eqref{even:bd:linearctildec} and \eqref{even:bd:linearctildec2} with $R=R_3$ yields:
\begin{align*}
& |\cbf_3-\cbf_2|_{R_3}\lesssim Q_1+Q(|\cbf_3-\cbf_2|_{R_3}+|\tilde c_3-\tilde c_2|), \\
& \left|\tilde c_3-\tilde c_2 -\beta \ln \frac{R_3}{R_2}|\tilde c_2|\tilde c_2\right| \lesssim Q_2+Q(|\cbf_3-\cbf_2|_{R_3}+|\tilde c_3-\tilde c_2|).
\end{align*}
and hence since $QQ_1\lesssim Q_2$ and $Q\lesssim \epsilon'$ by \eqref{even:bd:cbftildectildeR}, we get $|\cbf_3-\cbf_2|_{R_3}\lesssim Q_1$ and $|\tilde c_3-\tilde c_2|\lesssim Q_2$. In turn, injecting these two inequalities in the definitions of $Q_1$ and $Q_2$ shows $Q_1\approx |\tilde c_2|(|\cbf_2|_{R_3}+|\tilde c_2|)$ and $Q_2\approx |\tilde c_2|(|\cbf_2|_{R_3}+\tilde c_2^2(1+\beta |\ln |\tilde c_2||))$. This eventually shows \eqref{even:bd:cbf2-cbf1} and \eqref{even:bd:tildec2-tildec1}

\end{proof}

\begin{lemma} \label{lem:criticalnondispersivestabilitycontrad}

There holds $\tilde c[R_1]=0$. In particular, as a consequence of Lemma \ref{even:lem:approximation} and of finite speed of propagation we get that for all $|x|>R_1+|t|$:
$$
u(t,x)=a[\cbf(R_1),R_1](t,x).
$$

\end{lemma}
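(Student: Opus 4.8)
The plan is to derive a contradiction from the assumption $\tilde c[R_1]\neq 0$ by iterating the dyadic recurrence relations of Lemma~\ref{even:lem:dyadicdifferences}. The key observation is the leading-order behaviour of $\tilde c[R]$ along the dyadic sequence $R_k=2^kR_1$. Setting $\tilde c_k=\tilde c[R_1\cdot 2^k]$ and $\cbf_k=\cbf[R_1\cdot 2^k]$, Lemma~\ref{even:lem:dyadicdifferences} (applied with $R_2=R_k$, $R_3=R_{k+1}$) gives
$$
\cbf_{k+1}=\cbf_k+O_{|\cdot|_{R_k}}\big(|\tilde c_k|(|\tilde c_k|+|\cbf_k|_{R_k})\big),\qquad
\tilde c_{k+1}=\tilde c_k+\beta\ln 2\,|\tilde c_k|\tilde c_k+O\big(|\tilde c_k|(\tilde c_k^2(1+\beta|\ln|\tilde c_k||)+|\cbf_k|_{R_k})\big).
$$
I would split into the two cases $\beta=0$ (analytic nonlinearity, or power nonlinearity with $N=4$) and $\beta=\beta^*>0$ (power nonlinearity, $N=6$).

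In the case $\beta=0$, the recursion is $\tilde c_{k+1}=\tilde c_k+O\big(|\tilde c_k|(\tilde c_k^2+|\cbf_k|_{R_k})\big)$ and $|\cbf_{k+1}|_{R_{k+1}}\le |\cbf_k|_{R_k}+O\big(|\tilde c_k|(|\tilde c_k|+|\cbf_k|_{R_k})\big)$, where I use $|\cdot|_{R_k}\approx|\cdot|_{R_{k+1}}$. Set $Q_k=|\tilde c_k|+|\cbf_k|_{R_k}$; then $Q_{k+1}\le Q_k+CQ_k^2$, and since $Q_k\lesssim\epsilon'$ is small, $Q_k$ stays bounded for all $k$ (a telescoping/Grönwall-type argument: $Q_k\le 2Q_0$ for all $k$ if $\epsilon'$ is small enough, using the geometric decay of the increments once one shows they are summable). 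Actually the cleaner route is: since $\lim_{R\to\infty}Q[R]=0$ by \eqref{even:bd:cbftildectildeR}, for any $k$ one has $Q_k\to 0$; combined with the recursion $|\tilde c_{k+1}-\tilde c_k|\le C|\tilde c_k|Q_k$ and $|\cbf_{k+1}-\cbf_k|_{R_{k+1}}\le C|\tilde c_k|Q_k$, telescoping from $k$ to $\infty$ gives $|\tilde c_k|\le C\sum_{j\ge k}|\tilde c_j|Q_j$. Bootstrapping with $Q_j\to 0$ and $|\tilde c_j|\le Q_j$, one finds $Q_k\le C Q_k\cdot\sup_{j\ge k}Q_j$, hence $Q_k=0$ for all large $k$ once $\sup_{j\ge k}Q_j<1/C$, and then propagating back down the recursion (which is reversible to leading order, or simply re-running the estimate on finitely many steps) gives $\tilde c_0=\tilde c[R_1]=0$.

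The genuinely delicate case is $\beta=\beta^*>0$. Here the dominant term in the $\tilde c$-recursion is $\tilde c_{k+1}=\tilde c_k+\beta^*\ln 2\,|\tilde c_k|\tilde c_k+(\text{lower order})$. If $\tilde c_k>0$, this is an increasing sequence, and comparison with the ODE $\dot x=\beta^* x^2$ (whose solutions blow up in finite time) shows $\tilde c_k$ cannot remain small for all $k$ — it would escape to $+\infty$, contradicting $|\tilde c_k|\lesssim\epsilon'$; symmetrically if $\tilde c_k<0$, the sequence decreases and $|\tilde c_k|$ still grows. The point to be careful about: the error term $O\big(|\tilde c_k|(\tilde c_k^2(1+\beta|\ln|\tilde c_k||)+|\cbf_k|_{R_k})\big)$ is comparable in size to the main term $|\tilde c_k|^2$, up to the logarithm and the $|\cbf_k|$ contribution. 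So I must first control $|\cbf_k|_{R_k}$ relative to $|\tilde c_k|$: from \eqref{even:bd:cbf2-cbf1}, $|\cbf_{k+1}-\cbf_k|_{R_{k+1}}\lesssim|\tilde c_k|(|\tilde c_k|+|\cbf_k|_{R_k})$, and telescoping against a summable (geometrically decaying, if $\tilde c_k\to 0$) or bounded sequence shows $|\cbf_k|_{R_k}\lesssim \sup_j|\tilde c_j|^2+|\cbf_\infty|$ where $\cbf_\infty=\lim\cbf_k$; combined with $\lim_k|\cbf_k|_{R_k}=0$ one gets $|\cbf_k|_{R_k}=o(|\tilde c_k|)$ or at least $|\cbf_k|_{R_k}\lesssim|\tilde c_k|$ along the tail. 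Then, for $\epsilon'$ small, the logarithmic factor $1+\beta|\ln|\tilde c_k||$ is dominated by the gain: more precisely $\beta^*\ln 2\,|\tilde c_k|^2$ beats $C|\tilde c_k|^3|\ln|\tilde c_k||$ once $|\tilde c_k|$ is small, so the sign of the increment $\tilde c_{k+1}-\tilde c_k$ is that of $\tilde c_k^2\,\mathrm{sgn}(\tilde c_k)$, i.e. $|\tilde c_k|$ is non-decreasing and in fact $|\tilde c_{k+1}|\ge |\tilde c_k|+\tfrac12\beta^*\ln 2\,|\tilde c_k|^2$. If $\tilde c[R_1]\neq 0$ then $\tilde c_0\neq 0$, and this lower bound forces $|\tilde c_k|\to\infty$ (the comparison sequence $y_{k+1}=y_k+cy_k^2$ with $y_0>0$ diverges in finitely many steps), contradicting $|\tilde c_k|\lesssim\epsilon'\ll1$. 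Hence $\tilde c[R_1]=0$. The main obstacle is precisely this careful bookkeeping of the error terms — ensuring that neither the $|\cbf_k|$ coupling nor the logarithmic loss in the cubic remainder can overturn the sign of the quadratic driving term — and the preliminary lemma controlling $|\cbf_k|_{R_k}$ in terms of $|\tilde c_k|$ along the dyadic tail. Once $\tilde c[R_1]=0$ is established, the final identity $u(t,x)=a[\cbf(R_1),R_1](t,x)$ for $|x|>R_1+|t|$ is immediate: by \eqref{even:id:defuap}, $u_{ap}[\cbf(R_1),0,R_1]=a[\cbf(R_1),R_1]$ (the cutoff $\chi_{MR_1}$ multiplies $\tilde\phi[0,R_1]=0$), and Lemma~\ref{even:lem:approximation} with $\tilde c=0$ gives $q_1=q_2=0$, i.e. $u=u_{ap}$ on $\{r>R_1+|t|\}$, after noting that the difference $v=u-u_{ap}$ vanishing in the norms $q_1,q_2$ together with the matching conditions \eqref{even:id:orthogonalityatR}--\eqref{even:id:orthogonalityDeltav} forces $v\equiv 0$ there.
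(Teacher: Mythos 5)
Your overall plan—iterate the dyadic recursion of Lemma~\ref{even:lem:dyadicdifferences} and play it against the decay of the parameters—is the correct one, and your concluding paragraph on how $\tilde c[R_1]=0$ finishes the proof is essentially what the paper does. However, both of your arguments for $\tilde c[R_1]=0$ have a genuine gap: you never establish that the error sequences $|\cbf_k|_{R_k}$ and $|\tilde c_k|^2$ are \emph{summable}, and without this the telescoping/bootstrap logic does not close.

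Concretely, in the $\beta=0$ case your claimed bootstrap ``$Q_k\le CQ_k\sup_{j\ge k}Q_j$'' does not follow from $|\tilde c_k|\le C\sum_{j\ge k}|\tilde c_j|Q_j$: the right-hand side is $C\sum_{j\ge k}Q_j^2$, and the decay $Q_j\to 0$ alone does not make this sum converge (let alone be dominated by $Q_k$). In the $\beta>0$ case the step ``$|\cbf_k|_{R_k}\lesssim|\tilde c_k|$ along the tail'' is asserted but not proved; it is perfectly consistent with the recursion that $|\cbf_k|_{R_k}\gg|\tilde c_k|$ for a range of $k$, in which case the error $C|\tilde c_k||\cbf_k|_{R_k}$ can overturn the sign of the quadratic driving term and your monotonicity argument collapses.

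The paper closes the gap with two quantitative inputs that your proposal is missing. First (Step 1 of the paper's proof), it decomposes the finite energy $\|\vec u(0)\|^2_{\Hc_{16R_0}}\lesssim\epsilon'^2$ over dyadic shells $\{R_k\le|x|\le R_{k+1}\}$ and uses the approximation estimates \eqref{even:bd:u-uap1}--\eqref{even:bd:u-uap3} to show
\[
\sum_{k\ge 1}\bigl(|\cbf_k|_{R_k}^2+\tilde c_k^2\bigr)\lesssim\epsilon'^2.
\]
Second (Step 2), it upgrades the $\cbf$-part to an $\ell^1$ bound $\sum_k|\cbf_k|_{R_k}\lesssim\epsilon'$ by exploiting the contraction $|\cbf|_{R_{k+1}}\le\tfrac12|\cbf|_{R_k}$ built into the scaling of the $|\cdot|_R$ norm, which turns \eqref{even:bd:cbf2-cbf1} into a geometrically decaying recursion seeded by the $\ell^2$ control on $\tilde c_k^2$. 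With both in hand, \eqref{even:bd:tildec2-tildec1} gives (in both the $\beta=0$ and $\beta>0$ cases) a lower bound of the form $|\tilde c_{k+1}|\ge(1-y_k)|\tilde c_k|$ with $\sum_k y_k<1$ for $\epsilon'$ small, and an elementary dichotomy (the paper's Step 3) then forces $|\tilde c_k|$ to be either identically zero from $k=1$ or bounded below by a positive constant; since $\tilde c_k\to 0$, only $\tilde c_1=0$ survives. You should add this energy-based summability step; it is the load-bearing part of the argument and is precisely what your two bootstraps are trying, unsuccessfully, to circumvent.
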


\begin{proof}

Our aim is to interpret Lemma \ref{even:lem:dyadicdifferences} as a difference inequality for the energy in dyadic zones. We shall then show that if $\tilde c[R_1]\neq 0$ then the initial data has infinite energy, which by contradiction shows $\tilde c[R_1]=0$.

We introduce $R_k=2^{3+k}R_0$ for $k\geq 1$ and write $\cbf_k=\cbf (R_k)$ and $\tilde c_k=\tilde c(R_k)$.\\

\noindent \textbf{Step 1.} \emph{An $\ell^2$ bound from energy considerations}. We claim that $|\cbf_k|_{R_k},\tilde c_k\in \ell^2$ with
\begin{equation}\label{even:bd:aprioricktildeckl2}
\sum_{k\geq 1}^\infty |\cbf_k|_{R_k}^2+|\tilde c_k|^2 \lesssim \| \vec u(0)\|_{\mathcal H_{16R_0}}^2\lesssim \epsilon^{'2}.
\end{equation}
To prove it,  for each $k\geq 1$ we let 
\be \label{even:id:decompositioninitialdata}
\sum_{0\leq l\leq l_0} c_{2l}[R_k] r^{2l+2-N}+\tilde c_k r^{1-N/2}=u_0+\tilde u_0+\bar u_0
\ee
where:
$$
\tilde u_0=\sum_{0\leq l\leq l_0}\frac{c_{2l}[R_k]}{r^{N-2l-2}}+\frac{\tilde c_k}{r^{N/2-1}}-u_{ap}[\cbf_k,\tilde c_k,R_k](0),\quad \bar u_0=u_{ap}[\cbf_k,\tilde c_k,R_k](0)-u_0.
$$
Combining \eqref{even:id:defuap}, \eqref{id:aF0} and \eqref{eq:vectildephi0} we find $\tilde u_0=-\tilde a[\cbf_k,R_k](0)-\beta |\tilde c_k|\tilde c_k \ln (r/R_k) r^{1-N/2}$, so that, by using \eqref{bd:tildeaWkappaN} and the Hardy inequality, and $\| r^{1-N/2}\|_{L^2(R_k\leq r\leq R_{k+1})}\lesssim R_k$ we have:
\begin{equation}\label{even:bd:tildeu0dyadic}
\| \tilde u_0\|_{L^2(R_k\leq r\leq R_{k+1})}\lesssim R_k(|\cbf_k|_{R_k}^2+\tilde c_k^2).
\end{equation}
Using \eqref{even:bd:u-uap3} and $\| r^{3/2-N/2}\|_{L^2(R_k\leq r\leq R_{k+1})}\lesssim R_k^{3/2}$ we have:
\begin{equation}\label{even:bd:baru0dyadic}
\| \bar u_0\|_{L^2(R_k\leq r\leq R_{k+1})}\lesssim R_k|\tilde c_k|(|\cbf_k|_{R_k}+\tilde c_k^2(1+\beta |\ln \tilde |c_k||)).
\end{equation}
Injecting \eqref{even:bd:tildeu0dyadic} and \eqref{even:bd:baru0dyadic} in \eqref{even:id:decompositioninitialdata} we infer that:
\begin{equation} \label{pingouin}
\| \sum_{0\leq l\leq l_0} c_{2l}[R_k] r^{2l+2-N}+\tilde c_k r^{1-N/2} \|_{L^2(R_k\leq r\leq R_{k+1})} \lesssim  \| u_0 \|_{L^2(R_k\leq r\leq R_{k+1})} +R_k(|\cbf_k|_{R_k}+|\tilde c_k|)^2.
\end{equation}
By rescaling, and using that $(r^{2-N},...,r^{2l_0+2-N},r^{1-N/2})$ are linearly independent, we have:
\begin{align}
\label{marsupial} & \| \sum_{0\leq l\leq l_0} c_{2l}[R_k] r^{2l+2-N}+\tilde c_k r^{1-N/2} \|_{L^2(R_k\leq |x|\leq R_{k+1})} \\
\nonumber &=\| \sum_{0\leq l\leq l_0} R_k^{2l+2-\frac N2}c_{2l}[R_k] r^{2l+2-N}+R_k \tilde c_k r^{1-\frac N2} \|_{L^2(1\leq |x|\leq 2)} \approx \sum_{0\leq l \leq l_0} R_k^{2l+2-\frac N2} |c_{2l}[R_k] |+R_k |\tilde c_k|.
\end{align}
Combining \eqref{pingouin} and \eqref{marsupial} we get:
\begin{equation}\label{even:bd:energyu0byctildec}
\sum_{0\leq l \leq l_0} R_k^{2l+2-N/2} |c_{2l}[R_k] |+R_k |\tilde c_k| \lesssim \| u_0 \|_{L^2(R_k\leq |x|\leq R_{k+1})} +R_k(|\cbf_k|_{R_k}+|\tilde c_k|)^2
\end{equation}
Very similarly, using \eqref{even:bd:u-uap1}, \eqref{even:id:defuap}, \eqref{id:aF0}, \eqref{eq:vectildephi0} and \eqref{bd:tildeaWkappaN} we get
\begin{equation}\label{even:bd:energyu0byctildec2}
 \sum_{0\leq l \leq l_1} R_k^{2l+2-N/2} |c_{2l+1}[R_k] |\lesssim \| u_1 \|_{L^2(R_k\leq |x|\leq R_{k+1})} +(|\cbf_k|_{R_k}+|\tilde c_k|)^2.
\end{equation}
Recalling the definition of the $|\cdot |_{R_k}$ norm, then combining \eqref{even:bd:energyu0byctildec} and \eqref{even:bd:energyu0byctildec2} we obtain:
\begin{align} \label{even:bd:energyu0byctildec3}
|\cbf_k|_{R_k}+|\tilde c_k| & \approx \sum_{0\leq l \leq l_0} R_k^{2l+1-N/2} |c_{2l}[R_k] |+ \sum_{0\leq l \leq l_1} R_k^{2l+2-N/2} |c_{2l+1}[R_k] |+|\tilde c_k| \\
\nonumber & \lesssim \| \frac 1r u_0\|_{L^2(R_k\leq |x|\leq R_{k+1})}+\| u_1\|_{L^2(R_k\leq |x|\leq R_{k+1})}+(|\cbf_k|_{R_k}+|\tilde c_k|)^2
\end{align}
Hence $|\cbf_k|_{R_k}+|\tilde c_k| \lesssim  \|  u_0/r\|_{L^2(R_k\leq |x|\leq R_{k+1})}+\| u_1\|_{L^2(R_k\leq |x|\leq R_{k+1})}$ by \eqref{even:bd:cbftildectildeR}. Squaring, summing over $k$ and using the Hardy inequality shows \eqref{even:bd:aprioricktildeckl2}.\\

\noindent \textbf{Step 2.} \emph{An improved $\ell^1$ bound for $|\cbf_k|_{R_k}$}. We claim that $|c_k|_{R_k}\in \ell^1$ with
\begin{equation}\label{even:bd:apriorickl1}
\sum_{k\geq 1}^\infty |\cbf_k|_{R_k} \lesssim \epsilon^{'}.
\end{equation}
Let $k\geq 1$. We notice by \eqref{even:bd:cbf2-cbf1} and \eqref{even:bd:cbftildectildeR} that $|\cbf_{k+1}|_{R_{k+1}}=|\cbf_{k}|_{R_{k+1}}(1+O(\epsilon'))+O(|\tilde c_k|^2)$. We have $|\cbf|_{R_{k+1}}\leq  |\cbf |_{R_{k}}/2$ for any $\cbf\in \mathbb R^{m_0+1}$ by definition of the $|\cdot |_{R}$ norm and since $R_{k+1}=R_k/2$. Thus for $\epsilon'$ small enough: 
$$
|\cbf_{k+1}|_{R_{k+1}}\leq \frac{2}{3} |\cbf_{k}|_{R_{k}}+C |\tilde c_k|^2
$$
for some $C>0$. Therefore, $|\cbf_{k}|_{R_{k}}\leq (\frac 23)^{k-1} |\cbf_{1}|_{R_{1}}+C \sum_{j=1}^{k-1} (\frac 23)^{k-j-1} |\tilde c_j|^2$, so that $\sum_{k\geq 1}|\cbf_{k}|_{R_{k}}\lesssim |\cbf_{1}|_{R_{1}}+\sum_{k\geq 1} |\tilde c_k|^2$. Using \eqref{even:bd:aprioricktildeckl2} this shows \eqref{even:bd:apriorickl1}.\\

\noindent \textbf{Step 3.} \emph{A dichotomy result for a recurrence relation for sequences}. We claim the following result: for any non-negative sequences $(x_n)_{n\geq 1}$ and $(y_n)_{n\geq 1}$ such that $\sum_1^\infty y_n<1$ and $x_{n+1}\geq (1-y_n) x_n$ for all $n\geq 1$, then either $x_1=0$ or $x_n\geq c>0$ for all $n\geq 1$ for some $c>0$.

Indeed, assume $x_1>0$ and let $z_n=\ln x_n$. Then for all $n\geq 1$ we have $z_{n+1}\geq z_n+\ln (1-y_n)$ so that $z_n\geq z_1+\sum_0^{n-1} \ln (1-y_k)$. As $y_n$ is non-negative with $\sum_1^\infty y_n<1$ we have that $\sum_1^{n-1} \ln (1-y_k)$ converges as $n\to \infty$. Hence $z_{n}$ is bounded from below, establishing the result.\\

\noindent \textbf{Step 4.} \emph{End of the proof}. By \eqref{even:bd:tildec2-tildec1}, if $\beta =0$, we have that for all $k\geq 1$:
$$
|\tilde c_{k+1}|=\left|\tilde c_k+O(|\tilde c_k|(|\cbf_k|_{R_k}+|\tilde c_k|^2)) \right| \geq |\tilde c_k| \left(1-C(|\cbf_k|_{R_k}+|\tilde c_k|^2) \right),
$$
for some universal constant $C>0$, and if $\beta>0$, using \eqref{even:bd:cbftildectildeR}:
$$
|\tilde c_{k+1}|=\left|\tilde c_k+\beta \ln 2 |\tilde c_k|\tilde c_k+O(|\tilde c_k|(|\cbf_k|_{R_k}+|\tilde c_k|^2(1+\beta |\ln |\tilde c_k||))) \right| \geq |\tilde c_k| \left(1-C(|\cbf_k|_{R_k}) \right).
$$
In both cases, applying the result of Step 3 to $x_k=|\tilde c_k|$ and $y_k=C(|\cbf_k|_{R_k}+|\tilde c_k|^2)$, using \eqref{even:bd:aprioricktildeckl2} and \eqref{even:bd:apriorickl1}, we get that for $\epsilon'$ small enough there must hold $\tilde c_1=0$. This ends the proof of the Lemma.

\end{proof}

\subsection{Proof of Proposition \ref{pr:uniquenesseven} in the case $N\equiv 4\mod 4$} \label{subsec:N=4}

We fix $N\equiv 4\mod 4$ and prove Proposition \ref{pr:uniquenesseven} in that case. The proof is mostly similar to that in the case $N\equiv 6\mod 4$ performed in Subsection \ref{subsec:N=6}. There are two major simplifications. First, the nonlinear resonance $\tilde \phi=\tilde{c}\phi_{m_0+1}+\tilde{c}^2\psi$ is such that the correction $\psi $ does not involve logarithmic growth, see \eqref{even:id:tidephigeneralised2}. This is a consequence of a simple orthogonality condition: $\phi_{m_0+1}$ is odd in time, while $\phi_{m_0+1}^2$ is even, see Section \ref{sec:resonance}. Second, the nonlinearity $\varphi$ is $\mathcal C^2$. This permits us to differentiate twice in time equation \eqref{eq:nonlinearwaveintro}, allowing a much simpler proof of the main approximation Lemma \ref{evenN=4:lem:approximation}, in comparison with that of the analogue Lemma \ref{even:lem:approximation} for $N\equiv 6\mod 4$. Therefore, in what follows we shall only give proofs when they differ from Subsection \ref{subsec:N=6}.

We fix $u$ a solution to \eqref{eq:nonlinearwaveintro} that is non-radiative for $|x|>R_0+|t|$ with $\int_{|x|> R} |\nabla_{t,x}u(0)|^2dx\leq \epsilon^{'2}$. We keep the notation of Subsection \ref{subsec:N=6}. We have a first gain of regularity. We let $R_1=16R_0$, $\varphi'=\frac{\partial \varphi}{\partial u}$.

\begin{lemma}[Gain of regularity] \label{evenN=4:lem:gainreg}

The functions $\indic (r> R_1+|t|)\pa_r\pa_t u $, $\indic (r>R_1+|t|)\pa_{tt} u$ and $\indic (r>R_1+|t|)\Delta u $ all belong to $ \mathcal C^0L^2 \cap L^\infty L^2$. In addition, $\pa_t u$ is a non-radiative solution for $r>R_1+|t|$ to
\begin{equation}\label{evenN=4:id:Boxpatu}
\left\{ \begin{array}{l l} \Box \pa_t u=\varphi'(u)\pa_t u,\\ (\pa_t u(0),\pa_{tt}u(0))=(u_1,\Delta u_0+\varphi(u_0),\end{array} \right. 
\end{equation}
and $\varphi'(u)\pa_t u\in L^1L^2_{R_1+|t|}$. It satisfies:
\begin{equation}\label{evenN=4:aprioripattuL21}
\forall R >R_1, \qquad \| \pa_{t} u(0)\|_{\dot H^1_{R}}+\| \pa_{tt} u(0)\|_{L^2_{R}}\lesssim \frac{\epsilon' }{ R}.
\end{equation}

\end{lemma}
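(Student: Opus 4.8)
The proof will run exactly along the lines of Lemma~\ref{even:lem:gainreg} for $N\equiv 6\mod 4$, since the two ingredients used there --- the localised gain of regularity for free radiation profiles (Lemma~\ref{lem:gainregradiation2}) and the a priori control \eqref{even:bd:aprioriu} of $u$ --- are available in the same form here, with $\kappa_N$ now equal to $1$. In particular I expect no genuinely new difficulty; the only mildly delicate point, as in the $N\equiv 6\mod 4$ case, is the justification of the time differentiability of the truncated forcing term and the control of the boundary contribution created by the cutoff.

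First I would fix $R\geq R_0$ and split $u=v+w$ on $\{r>2R+|t|\}$, where $v$ solves $\Box v=\chi(r-2R-|t|)\varphi(u)=:f$ with zero data --- $\chi$ a smooth one-dimensional cutoff with $\chi(s)=0$ for $s\leq -R$ and $\chi(s)=1$ for $s\geq 0$ --- and $w$ is the free solution with data $\big(u_0\indic(r\geq R)+u_0(R)\indic(r<R),\,u_1\indic(r\geq R)\big)$. By \eqref{even:bd:aprioriu} one has $\vec w(0)\in\mathcal H$ with $\|\vec w(0)\|_{\mathcal H}\lesssim\epsilon'$, so by Proposition~\ref{construction:pr:radiation} $w$ has radiation profiles $G_\pm[w]\in L^2$ with $\|G_\pm[w]\|_{L^2}\lesssim\epsilon'$; and $f\in L^1L^2$ by Lemma~\ref{lem:lipschitz} together with \eqref{even:bd:aprioriu}, so $v$ has radiation profiles $G_\pm[v]\in L^2$ by Corollary~\ref{cor:inhomogeneousradiation}. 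Since $u$ is non-radiative for $r>R+|t|$ and $u=v+w$ for $r>2R+|t|$, finite speed of propagation forces $G_\pm[w](\rho)=-G_\pm[v](\rho)$ for $\rho\geq 2R$.

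Next I would differentiate $f$ in time, writing $\pa_t f=f_1+f_2$ with $f_1=-\textup{sgn}(t)\chi'(r-|t|-2R)\varphi(u)$ supported in $R+|t|\leq r\leq 2R+|t|$, and $f_2=\chi(r-2R-|t|)\varphi'(u)\pa_t u$. Using the pointwise bounds $|\varphi(u)|\lesssim\epsilon^{'2}r^{-N/2-1}$ and $|\varphi'(u)|\lesssim\epsilon' r^{-2}$ coming from \eqref{even:bd:aprioriu} (see \eqref{NLLipsch1}), together with H\"older's inequality and elementary weighted $L^2$ integrations, one gets $\|f_1\|_{L^1L^2}\lesssim\epsilon^{'2}R^{-1}$ and the bound $\|\varphi'(u)\pa_t u\|_{L^1L^2_{R+|t|}}\lesssim\epsilon^{'2}R^{-1}$ (the analogue of \eqref{even:gainreg:bd:upatuL1L2}), hence $\pa_t f\in L^1L^2$ with $\|\pa_t f\|_{L^1L^2}\lesssim\epsilon^{'2}R^{-1}$. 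By \cite{DuKeMaMe22} this upgrades $G_\pm[v]$ to $H^1$ with $\|\pa_\rho G_\pm[v]\|_{L^2}\lesssim\epsilon^{'2}R^{-1}$, hence $G_\pm[w]\in H^1(\rho\geq 2R)$ with the same bound, and Lemma~\ref{lem:gainregradiation2} applied to $w$ yields $(u_1,\Delta u_0)=(w_1,\Delta w_0)\in\mathcal H_{16R}$ with $\|(u_1,\Delta u_0)\|_{\mathcal H_{16R}}\lesssim\epsilon' R^{-1}$. Since $\pa_{tt}u(0)=\Delta u_0+\varphi(u_0)$ with $\|\varphi(u_0)\|_{L^2_R}\lesssim\epsilon^{'2}R^{-1}$, this gives \eqref{evenN=4:aprioripattuL21}.

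Finally, taking $R=R_0$ gives $(\pa_t u(0),\pa_{tt}u(0))\in\mathcal H_{R_1}$, and the remaining assertions --- that $\pa_t u$ solves \eqref{evenN=4:id:Boxpatu} and is non-radiative for $r>R_1+|t|$, that $\varphi'(u)\pa_t u\in L^1L^2_{R_1+|t|}$, and that $\indic(r>R_1+|t|)\pa_r\pa_t u$, $\indic(r>R_1+|t|)\pa_{tt}u$, $\indic(r>R_1+|t|)\Delta u$ all lie in $\mathcal C^0L^2\cap L^\infty L^2$ --- follow from the standard propagation-of-regularity argument combined with energy estimates using the bound on $\|\varphi'(u)\pa_t u\|_{L^1L^2_{R_1+|t|}}$ and finite speed of propagation, exactly as in Lemma~\ref{even:lem:gainreg}.
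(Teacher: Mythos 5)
Your proposal is correct and takes essentially the same approach as the paper: the paper's own proof consists precisely of the remark that the argument of Lemma~\ref{even:lem:gainreg} is valid for any dimension $N\geq 3$, and you have accurately reproduced that argument (same decomposition $u=v+w$, same cutoff, same use of Corollary~\ref{cor:inhomogeneousradiation}, \cite{DuKeMaMe22}, and Lemma~\ref{lem:gainregradiation2}).
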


\begin{proof}

The proof is exactly that of Lemma \ref{even:lem:gainreg}, which is valid regardless of the dimension $N\geq 3$.

\end{proof}

\begin{lemma} \label{evenN=4:lem:gainreg2}

For all $R>R_1+|t|$ there holds
\begin{equation}\label{evenN=4:aprioripatudotH1}
\forall R >R_1+|t|, \qquad \| \pa_{t} u(t)\|_{\dot H^1_{R}}\lesssim \frac{\epsilon' }{ R}.
\end{equation}

\end{lemma}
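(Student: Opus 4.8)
The plan is to follow the strategy of the proof of Lemma \ref{even:lem:gainreg2}, splitting into two regimes according to whether $R$ is large or merely comparable to $R_1+2|t|$.

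First, in the regime $R>R_1+2|t|$, I would apply standard energy estimates together with finite speed of propagation to equation \eqref{evenN=4:id:Boxpatu} satisfied by $\partial_t u$, exactly as in the proof of Lemma \ref{evenN=4:lem:gainreg}. The data estimate \eqref{evenN=4:aprioripattuL21} controls $\|(\partial_t u(0),\partial_{tt}u(0))\|_{\mathcal H_{R-|t|}}\lesssim \epsilon'/(R-|t|)$, while the forcing term $\varphi'(u)\partial_t u$ is controlled in $L^1L^2$ on the relevant cone by $\lesssim \epsilon'^2/(R-|t|)$, exactly as in the estimate \eqref{even:gainreg:bd:upatuL1L2} established in the course of proving Lemma \ref{evenN=4:lem:gainreg}. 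Since $R>R_1+2|t|$ forces $R-|t|\approx R$, this yields $\|(\partial_t u(t),\partial_{tt}u(t))\|_{\mathcal H_R}\lesssim \epsilon'/R$, and in particular \eqref{evenN=4:aprioripatudotH1} in this regime.

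Second, in the regime $R_1+|t|\le R\le R_1+2|t|$, I would set $v(\bar t)=\partial_t u(t+\bar t)$, so that $v(0)=\partial_t u(t)$ and, by Lemma \ref{evenN=4:lem:gainreg}, $v$ is a non-radiative solution for $r>R+|\bar t|$ of $\Box v=\varphi'(u(t+\bar t))\partial_t u(t+\bar t)$, whose localised radiation profiles therefore vanish for $\rho\ge R$. The point specific to $N\equiv 4\bmod 4$ is that the channels of energy estimate \eqref{bd:channels2} controls precisely the $\dot H^1$ component of the Cauchy data $v(0)=\partial_t u(t)$, giving $\|\Pi^\perp_{\dot H^1,R}\partial_t u(t)\|_{\dot H^1_R}\lesssim \epsilon'/R$ (using that the right-hand side is $O(\epsilon'^2/R)$ in $L^1L^2$ on the cone). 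It then remains to absorb the finite-dimensional piece $\Pi_{\dot H^1,R}\partial_t u(t)\in \mathrm{Span}\big((r^{2l+2-N})_{0\le l\le l_0}\big)$ from \eqref{id:defPidotH1}: writing $\Pi_{\dot H^1,R}\partial_t u(t)=\partial_t u(t)-\Pi^\perp_{\dot H^1,R}\partial_t u(t)$ and invoking Step 1 at $R=R_1+2|t|$ (legitimate by continuity of the estimate in $R$) bounds its $\dot H^1_{R_1+2|t|}$ norm by $\lesssim \epsilon'/R$; since $R\approx R_1+2|t|$, finite dimensionality and homogeneity of the $\dot H^1$ norm on nested exterior regions upgrade this to the $\dot H^1_R$ bound, and adding the two contributions concludes.

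The only mildly delicate point is the matching of the two regimes at $R=R_1+2|t|$ combined with the fact that \eqref{bd:channels2} controls only one half of the Cauchy data; for $N\equiv 4\bmod 4$ this half is exactly the $\dot H^1$ component of $\vec v(0)=(\partial_t u(t),\partial_{tt}u(t))$ that we need to estimate, which is why — unlike in the $N\equiv 6\bmod 4$ analysis leading to Lemma \ref{even:lem:gainreg2} — no second differentiation in time is needed here. The remaining steps (energy estimates, the $L^1L^2$ bound on $\varphi'(u)\partial_t u$, and the homogeneity comparison of $\dot H^1$ norms) are routine and proceed as in Subsection \ref{subsec:N=6}.
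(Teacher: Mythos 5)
Your proof is correct and follows essentially the same route as the paper: the same auxiliary solution $v(\bar t)=\partial_t u(t+\bar t)$, the same split into $R>R_1+2|t|$ (energy estimates) and $R_1+|t|\le R\le R_1+2|t|$ (channels of energy from Proposition \ref{pr:channels} applied to $v$, then finite-dimensional absorption of $\Pi_{\dot H^1,R}\partial_t u(t)$ using Step 1 at $R_1+2|t|$ and $R\approx R_1+|t|$). The only minor quibble is the closing remark: Lemma \ref{even:lem:gainreg2} for $N\equiv 6\bmod 4$ also uses the same once-differentiated $v$ — the difference is merely which component of $\vec{v}(0)$ the channels estimate \eqref{bd:channels2} controls — and in fact it is the $N\equiv4\bmod4$ case where the paper subsequently \emph{does} differentiate a second time (Lemmas \ref{evenN=4:lem:gainreg3}–\ref{evenN=4:lem:gainreg4}); this has no bearing on the correctness of the present proof.
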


As a corollary, by \eqref{evenN=4:aprioripatudotH1} and the radial Sobolev embedding, we obtain for all $r>R_1+|t|$:
\begin{equation}\label{evenN=4:bd:patu}
|\pa_t u(t,r)|\lesssim \frac{\epsilon'}{r^{N/2}}
\end{equation}

\begin{proof}

The proof is very similar to the proof of Lemma \ref{even:lem:gainreg2}. Let $t\in \mathbb R$ and $R>R_1+|t|$. Let $v(\bar t)=\pa_t u(t+\bar t)$. We have $v(0)=\pa_t u(t)$. By Lemma \ref{evenN=4:lem:gainreg}, $v$ is a non-radiative solution for $r>R+|\bar t|$ of $(\pa_{\bar t}^2-\Delta) v=\varphi'(u(t+\bar t))\pa_t u(t+\bar t)$. Applying the channels of energy estimate \eqref{bd:channels2} to $v$, using \eqref{even:gainreg:bd:upatuL1L2} (which is valid regardless of the dimension $N\geq 3$), we obtain:
\begin{equation}\label{evenN=4:regdecay:bd:tech2}
\| \Pi_{\dot H^1,R}^\perp \pa_{t} u(t)\|_{L^2_{R}}=\| \Pi_{\dot H^1,R}^\perp  v(0)\|_{L^2_{R}}\lesssim \| \varphi'(u(t+\cdot))\pa_t u(t+\cdot )\|_{L^1L^2_{R+|\bar t-t|}}\lesssim \frac{\epsilon'}{R}.
\end{equation}
This was the major difference between the current proof and that of Lemma \ref{even:lem:gainreg2}: that the application of the channels of energy estimate \eqref{bd:channels2} differs in dimensions $N\equiv 4\mod 4$ and $N\equiv 6\mod 4$. The rest of the proof, showing \eqref{evenN=4:aprioripatudotH1} using \eqref{evenN=4:aprioripattuL21} and \eqref{evenN=4:regdecay:bd:tech2}, is the same.

\end{proof}

We have a second gain of regularity, relying on the fact that $\varphi$ is $\mathcal C^2$. The following two Lemmas \ref{evenN=4:lem:gainreg3} and \ref{evenN=4:lem:gainreg4} are new in comparison with Subsection \ref{subsec:N=6} for $N\equiv 6\mod 4$. They will greatly simplify the proof of the forthcoming approximation Lemma \ref{evenN=4:lem:approximation}. Let $R_2=256R_0$ and $\varphi''=\frac{\pa^2}{\pa u^2}\varphi(u)$.

\begin{lemma}[Second gain of regularity] \label{evenN=4:lem:gainreg3}

The functions $\indic (r> R_2+|t|)\pa_r^{j}\pa_t^{3-j} u $ for $j=0,1,2,3$ all belong to $ \mathcal CL^2 \cap L^\infty L^2$. In addition, $\pa_{tt} u$ is a non-radiative solution for $r>R_2+|t|$ to
\begin{equation} \label{evenN=4:id:Boxpattu}
\left\{ \begin{array}{l l} \Box \pa_{tt} u=\varphi''(u)(\pa_t u)^2+\varphi'(u)\pa_{tt}u,\\ (\pa_{tt} u(0),\pa_{ttt}u(0))=(\Delta u_0+\varphi(u_0),\Delta u_1+\varphi'(u_0)u_1),\end{array} \right. 
\end{equation}
and above $\varphi''(u)(\pa_t u)^2+\varphi'(u)\pa_{tt}u\in L^1L^2_{R_2+|t|}$. It satisfies:
\begin{equation}\label{evenN=4:aprioripattuL212}
\forall R >R_2, \qquad \| \pa_{tt} u(0)\|_{\dot H^1_{R}}+\| \pa_{ttt} u(0)\|_{L^2_{R}}\lesssim \frac{\epsilon' }{ R^2}.
\end{equation}
\end{lemma}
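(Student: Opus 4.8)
The plan is to run the radiation-field regularity-gain scheme of Lemmas~\ref{even:lem:gainreg} and~\ref{evenN=4:lem:gainreg2} one derivative higher, now applied to $\pa_t u$ rather than to $u$. The structural fact that makes this possible is that, since $N\equiv 4\mod 4$, the nonlinearity $\varphi$ is of class $\mathcal C^2$ (it equals $u^3$ for the power nonlinearity with $N=4$, and is smooth in the analytic case), so one may differentiate the equation $\Box \pa_t u=\varphi'(u)\pa_t u$ of~\eqref{evenN=4:id:Boxpatu} once more in time to get, at least formally,
\[
\Box \pa_{tt}u=\pa_t\bigl(\varphi'(u)\pa_t u\bigr)=\varphi''(u)(\pa_t u)^2+\varphi'(u)\pa_{tt}u ,
\]
with the initial data displayed in~\eqref{evenN=4:id:Boxpattu}. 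Throughout I will use what is already known about $\pa_t u$: it is non-radiative for $r>R_1+|t|$, solves~\eqref{evenN=4:id:Boxpatu} with $\vec{\pa_t u}(0)=(u_1,\Delta u_0+\varphi(u_0))\in\mathcal H_{R_1}$ obeying~\eqref{evenN=4:aprioripattuL21}, and $\varphi'(u)\pa_t u\in L^1L^2_{R_1+|t|}$ with the bound~\eqref{even:gainreg:bd:upatuL1L2}; moreover $\|\pa_t u(t)\|_{\dot H^1_{R+|t|}}\lesssim \epsilon'/(R+|t|)$ and $|\pa_t u(t,r)|\lesssim \epsilon' r^{-N/2}$ by Lemma~\ref{evenN=4:lem:gainreg2} and~\eqref{evenN=4:bd:patu}. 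Finally, energy estimates and finite speed of propagation applied to~\eqref{evenN=4:id:Boxpatu}, together with~\eqref{evenN=4:aprioripattuL21} and~\eqref{even:gainreg:bd:upatuL1L2}, give the uniform-in-time bound $\|\pa_{tt}u(t)\|_{L^2_{R+|t|}}\lesssim \epsilon'/R$ for $R\geq R_1$.

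The first step is to prove that the new forcing lies in $L^1L^2_{R_2+|t|}$ with one extra power of decay in $R$, namely $\bigl\|\varphi''(u)(\pa_t u)^2+\varphi'(u)\pa_{tt}u\bigr\|_{L^1L^2_{R+|t|}}\lesssim \epsilon^{'2}/R^2$ for $R\geq R_1$. For the first summand I would use $|\varphi''(u)|\lesssim r^{N/2-3}$ (from~\eqref{NLLipsch1} and $|u|\lesssim \epsilon' r^{1-N/2}$) together with $|\pa_t u|\lesssim \epsilon' r^{-N/2}$ and integrate; for the second I would combine $|\varphi'(u)|\lesssim \epsilon' r^{-2}$ with the uniform bound $\|\pa_{tt}u(t)\|_{L^2_{R+|t|}}\lesssim \epsilon'/R$ above. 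These are the same computations as behind~\eqref{even:gainreg:bd:upatuL1L2}, the gain of one power of $r$ (hence of $R$) being precisely the extra decay of $\pa_t u$ over $u$.

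The second step is the radiation-field argument of Lemma~\ref{even:lem:gainreg} carried out on $\pa_t u$. Fix $R\geq R_1$, let $w$ be the free wave whose data is the truncation of $\vec{\pa_t u}(0)=(u_1,\Delta u_0+\varphi(u_0))$ to $\{r\geq R\}$ (constant on $\{r<R\}$ in the first component), and let $v$ solve $\Box v=\chi(r-2R-|t|)\varphi'(u)\pa_t u$ with zero data, so that $\pa_t u=v+w$ on $\{r>2R+|t|\}$ and, since $\pa_t u$ is non-radiative there, the radiation profiles of $v$ and $w$ cancel for $\rho\geq 2R$. Writing $\pa_t\bigl(\chi(r-2R-|t|)\varphi'(u)\pa_t u\bigr)$ as a boundary term supported in $\{R+|t|\leq r\leq 2R+|t|\}$ plus $\chi$ times the bulk forcing above, the Step~1 bound and a direct estimate of the boundary term (as for $f_1$ in Lemma~\ref{even:lem:gainreg}, now one power of $r$ smaller) give $\pa_t\bigl(\chi\varphi'(u)\pa_t u\bigr)\in L^1L^2$ with norm $\lesssim \epsilon^{'2}/R^2$; by \cite{DuKeMaMe22} this forces $\pa_\rho G_{\pm}[v]\in L^2$, hence $\pa_\rho G_{\pm}[w]\in L^2(\rho\geq 2R)$ with the same bound. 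Applying Lemma~\ref{lem:gainregradiation2} to $w$, and using $\|\vec w(0)\|_{\mathcal H}=\|\vec{\pa_t u}(0)\|_{\mathcal H_R}\lesssim \epsilon'/R$ from~\eqref{evenN=4:aprioripattuL21} together with $\vec w(0)=\vec{\pa_t u}(0)$ on $\{r\geq R\}$, I obtain
\[
\bigl\|(\pa_{tt}u(0),\Delta u_1)\bigr\|_{\mathcal H_{16R}}\lesssim \|\pa_\rho G_+[w]\|_{L^2(\rho\geq 2R)}+\|\pa_\rho G_-[w]\|_{L^2(\rho\geq 2R)}+\tfrac1R\|G_+[w]\|_{L^2(\Rb)}\lesssim \frac{\epsilon'}{R^2},
\]
the extra power $R^{-1}$ relative to~\eqref{evenN=4:aprioripattuL21} coming both from the improved forcing and from $\|G_+[w]\|_{L^2}\lesssim\epsilon'/R$. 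Since $\pa_{ttt}u(0)=\Delta u_1+\varphi'(u_0)u_1$ with $\|\varphi'(u_0)u_1\|_{L^2_{16R}}\lesssim \epsilon^{'2}/R^2$ by the pointwise bounds and radial Sobolev, applying this with $R$ replaced by $R/16$ (admissible once $R>R_2=256R_0=16R_1$) yields~\eqref{evenN=4:aprioripattuL212} for all $R>R_2$.

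With $R=R_1$ one in particular gets $(\pa_{tt}u(0),\pa_{ttt}u(0))\in\mathcal H_{R_2}$; combined with $\varphi''(u)(\pa_t u)^2+\varphi'(u)\pa_{tt}u\in L^1L^2_{R_2+|t|}$, a standard propagation-of-regularity argument, energy estimates and finite speed of propagation then give $\indic(r>R_2+|t|)\pa_r^{j}\pa_t^{3-j}u\in\mathcal CL^2\cap L^\infty L^2$ for $j=0,\dots,3$ and that $\pa_{tt}u$ solves~\eqref{evenN=4:id:Boxpattu} on $\{r>R_2+|t|\}$; it is non-radiative there because its radiation profiles are the $-\pa_\rho$ of those of $\pa_t u$ and hence vanish for $\rho\geq R_2$ (alternatively, one repeats the time-translated channels-of-energy argument of Lemma~\ref{evenN=4:lem:gainreg2}). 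The step I expect to be delicate is the sharp bookkeeping in the first step, in particular securing the uniform-in-time decay $\|\pa_{tt}u(t)\|_{L^2_{R+|t|}}\lesssim \epsilon'/R$ needed to turn the $\varphi'(u)\pa_{tt}u$ term into an $R^{-2}$ bound; the rest is a routine repetition of Lemmas~\ref{even:lem:gainreg} and~\ref{evenN=4:lem:gainreg2}.
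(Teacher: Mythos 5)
Your proof is correct and follows essentially the same route as the paper's: differentiate $\Box \pa_t u=\varphi'(u)\pa_t u$ once more in time, estimate $\pa_t\bigl(\chi \varphi'(u)\pa_t u\bigr)$ in $L^1L^2$ with an $\epsilon'^{2}R^{-2}$ bound by splitting it into a boundary term and the bulk $\varphi''(u)(\pa_t u)^2+\varphi'(u)\pa_{tt}u$ (controlling the latter via the uniform bound $\|\pa_{tt}u(t)\|_{L^2_{R+|t|}}\lesssim \epsilon'/R$ from energy estimates on \eqref{evenN=4:id:Boxpatu}), and then rerun the radiation-profile truncation argument of Lemma~\ref{even:lem:gainreg} applied to $\pa_t u$ together with Lemma \ref{lem:gainregradiation2}. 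This is precisely the paper's proof, which phrases it as proving the single estimate \eqref{evenN=4:gainreg2:bdpatf} and then citing the scheme of Lemma~\ref{even:lem:gainreg}.
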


\begin{proof}

The proof is very similar to that of Lemma \ref{even:lem:gainreg}. By Lemma \eqref{evenN=4:lem:gainreg}, $\pa_t u$ is a non-radiative solution of $ \Box \pa_t u=\varphi'(u)\pa_t u$. Let $R\geq R_2$ and consider $f=\chi(r-2R-|t|)\varphi'(u)\pa_t u$, where $\chi$ is a smooth one-dimensional cut-off function with $\chi(s)=0$ for $s\leq -R$ and $\chi(s)=1$ for $s\geq 0$. We claim that $\pa_t f\in L^1L^2$ with
\begin{equation}\label{evenN=4:gainreg2:bdpatf}
\| \pa_t f\|_{L^1L^2}\lesssim \frac{\epsilon^{'2}}{R^{2}}.
\end{equation}
Admitting the estimate \eqref{evenN=4:gainreg2:bdpatf}, the rest of proof of Lemma \ref{evenN=4:lem:gainreg2} then follows the same lines as that of Lemma \ref{even:lem:gainreg} (up to replacing $u$ and $f$ there by $\pa_t u$ and the current definition of $f$ respectively).

There remains to prove \eqref{evenN=4:gainreg2:bdpatf}. Note that $\varphi$, given either by \eqref{eq:nonlinearityanalytic} or \eqref{eq:nonlinearitypower} and $N=4$, is a $C^2$ function of the variable $u$ for $|u|\ll r^{1-N/2}$, with
\be
\label{even:bd:varphi5}  |\varphi''(u)|\lesssim r^{\frac{N-6}{2}}.
\end{equation}
Also, $\pa_t u$ is differentiable in time by Lemma \ref{evenN=4:lem:gainreg}. Hence $f$ is differentiable in time for $t\neq 0$ with $\pa_t f=f_1+f_2$ where
$$
 f_1=-\textup{sgn}(t)\chi'(r-|t|-2R)\varphi'(u)\pa_t u, \ \ f_2= \chi(r-2R-|t|)\left(\varphi''(u)(\pa_{t}u)^2+\varphi'(u) \pa_{tt}u\right).
$$
We estimate using the support property of $\chi$, \eqref{even:bd:varphi3}, \eqref{even:bd:aprioriu} and \eqref{evenN=4:bd:patu} that $| f_1|\lesssim \epsilon^{'2}(R+|t|)^{-N/2-2}\indic(R+|t|\leq r\leq 2R+|t|)$. Hence by direct estimates using \eqref{even:controle:bd:universal5}:
\begin{equation}\label{evenN=4:gainreg2:bdpatftech1}
\| f_1\|_{L^1L^2_{R+|t|}}\lesssim \frac{\epsilon^{'2}}{R^2}.
\end{equation}
By \eqref{even:bd:varphi5}, \eqref{evenN=4:bd:patu}, \eqref{even:bd:varphi3} and \eqref{even:bd:aprioriu} we have for $r>R_2+|t|$:
\begin{equation}\label{evenN=4:gainreg2:bdpatftech4}
\left|\varphi''(u)(\pa_{t}u)^2+\varphi'(u) \pa_{tt}u\right|\lesssim \frac{\epsilon^{'2}}{r^{N/2+3}}+\frac{\epsilon'|\pa_{tt}u|}{r^2}.
\end{equation}
Applying standard energy estimates to \eqref{evenN=4:id:Boxpatu}, using \eqref{evenN=4:aprioripattuL21} and \eqref{even:gainreg:bd:upatuL1L2} (which is valid regardless of the dimension $N\geq 3$) we obtain for all $t\in \mathbb R$:
\begin{equation}\label{evenN=4:gainreg2:bdpatftech2}
\| \pa_{tt} u(t)\|_{L^2_{R+|t|}}\lesssim \frac{\epsilon'}{R}.
\end{equation}
Hence by \eqref{even:controle:bd:universal2},  \eqref{even:controle:bd:universal5} and \eqref{evenN=4:gainreg2:bdpatftech2}:
\begin{equation}\label{evenN=4:gainreg2:bdpatftech3}
\| \varphi''(u)(\pa_{t}u)^2+\varphi'(u) \pa_{tt}u \|_{L^1L^2_{R+|t|}}\lesssim \frac{\epsilon^{'2}}{R^2}+\epsilon'\| \frac{1}{r^2}\|_{L^1L^\infty_{R+|t|}}  \| \pa_{tt} u\|_{L^\infty L^2}  \lesssim \frac{\epsilon^{'2}}{R^2}.
\end{equation}
Combining \eqref{evenN=4:gainreg2:bdpatftech1} and \eqref{evenN=4:gainreg2:bdpatftech3}, using the support properties of $\chi$, shows the desired estimate \eqref{evenN=4:gainreg2:bdpatf}.

\end{proof}

\begin{lemma} \label{evenN=4:lem:gainreg4}

For all $R >R_2+|t|$ there holds:
\begin{equation}\label{evenN=4:aprioripattudotH1}
\| \pa_{tt} u(t)\|_{\dot H^1_{R}}\lesssim \frac{\epsilon' }{ R^2}.
\end{equation}
Consequently, by the radial Sobolev embedding, for $r\geq R_2+|t|$:
\begin{equation}\label{evenN=4:bd:pattu}
|\pa_{tt} u(t,r)|\lesssim \frac{\epsilon'}{r^{N/2+1}}.
\end{equation}

\end{lemma}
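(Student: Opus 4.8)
The plan is to mimic the argument used for $\pa_t u$ in Lemma \ref{evenN=4:lem:gainreg2}, now one derivative higher, exploiting the second gain of regularity furnished by Lemma \ref{evenN=4:lem:gainreg3}. By Lemma \ref{evenN=4:lem:gainreg3}, $\pa_{tt}u$ is a non-radiative solution on $\{r>R_2+|t|\}$ of $\Box\pa_{tt}u=\varphi''(u)(\pa_t u)^2+\varphi'(u)\pa_{tt}u$, with forcing in $L^1L^2_{R_2+|t|}$ and initial data satisfying \eqref{evenN=4:aprioripattuL212}. First I would split into the two regimes $R>R_2+2|t|$ and $R_2+|t|\le R\le R_2+2|t|$, exactly as in the proof of Lemma \ref{even:lem:gainreg2}.

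In the first regime $R>R_2+2|t|$ I would apply standard energy estimates together with finite speed of propagation to the equation \eqref{evenN=4:id:Boxpattu}, using \eqref{evenN=4:aprioripattuL212} for the data term on $\{r>R-|t|\}$ (and $R\approx R-|t|$ here), and bounding the contribution of the source $\varphi''(u)(\pa_t u)^2+\varphi'(u)\pa_{tt}u$ on $\{r>R-|t|+|t'|\}$ via the pointwise bounds \eqref{even:bd:varphi3}, \eqref{even:bd:varphi5}, \eqref{even:bd:aprioriu}, \eqref{evenN=4:bd:patu} and the $L^\infty L^2$ bound on $\pa_{tt}u$ from Lemma \ref{evenN=4:lem:gainreg3}, together with \eqref{even:controle:bd:universal5}; this reproduces the estimate $\lesssim \epsilon'/R^2$ and in particular $\|\pa_{tt}u(t)\|_{\dot H^1_R}\lesssim \epsilon'/R^2$.

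In the second regime $R_2+|t|\le R\le R_2+2|t|$ I would set $v(\bar t)=\pa_{tt}u(t+\bar t)$, which by Lemma \ref{evenN=4:lem:gainreg3} is a non-radiative solution on $\{r>R+|\bar t|\}$ of $\Box v=\varphi''(u(t+\bar t))(\pa_t u(t+\bar t))^2+\varphi'(u(t+\bar t))\pa_{tt}u(t+\bar t)$. Since $N\equiv 4\bmod 4$, the channels of energy estimate \eqref{bd:channels2} controls $\|\Pi_{\dot H^1,R}^\perp v(0)\|_{\dot H^1_R}=\|\Pi_{\dot H^1,R}^\perp \pa_{tt}u(t)\|_{\dot H^1_R}$ by the $L^1L^2_{R+|\bar t-t|}$ norm of the source, which I would again bound by $\lesssim \epsilon'/R^2$ using the pointwise nonlinearity estimates above and \eqref{even:controle:bd:universal5}. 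For the complementary piece one uses $\Pi_{\dot H^1,R}\pa_{tt}u(t)=\pa_{tt}u(t)-\Pi_{\dot H^1,R}^\perp\pa_{tt}u(t)$ together with the first-regime bound evaluated at $R=R_2+2|t|$, and then the finite-dimensionality of $\mathrm{Span}(r^{-N+2l+2})_{0\le l\le l_0}$ (on which $\Pi_{\dot H^1,R}\pa_{tt}u(t)$ lives) to transfer the estimate from the annulus starting at $R_2+2|t|$ back to $R\approx R_2+|t|$, giving $\|\Pi_{\dot H^1,R}\pa_{tt}u(t)\|_{\dot H^1_R}\lesssim \epsilon'/R^2$. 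Adding the two pieces yields \eqref{evenN=4:aprioripattudotH1}, and \eqref{evenN=4:bd:pattu} follows by the radial Sobolev embedding.

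The main obstacle, such as it is, is verifying that the nonlinear source $\varphi''(u)(\pa_t u)^2+\varphi'(u)\pa_{tt}u$ indeed obeys an $L^1L^2$ bound with the sharp $R^{-2}$ decay; this is where one must use \eqref{evenN=4:bd:patu} (which gives the extra power of $r$ compared with \eqref{even:bd:aprioriu}) to handle the quadratic term $\varphi''(u)(\pa_t u)^2$, and the $L^\infty L^2$-in-time bound \eqref{evenN=4:gainreg2:bdpatftech2} on $\pa_{tt}u$ combined with the integrability of $r^{-2}$ in time to handle $\varphi'(u)\pa_{tt}u$ — but both of these were already carried out in the proof of Lemma \ref{evenN=4:lem:gainreg3}, so the estimate here is essentially a repetition. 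I expect the proof to be short, with the only genuine point being the correct bookkeeping of the time-shift $\bar t$ versus $t$ in the finite-speed-of-propagation and channels-of-energy steps.
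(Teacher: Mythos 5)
Your first regime $R>R_2+2|t|$ is handled exactly as in the paper, so that part is fine. The gap is in the second regime $R_2+|t|\le R\le R_2+2|t|$: you assert that the source $\varphi''(u)(\pa_t u)^2+\varphi'(u)\pa_{tt}u$ obeys the sharp bound $\|\cdot\|_{L^1L^2_{R+|\bar t|}}\lesssim \epsilon'/R^2$ by ``essentially repeating'' the estimate \eqref{evenN=4:gainreg2:bdpatftech3} from the proof of Lemma \ref{evenN=4:lem:gainreg3}. That repetition does not go through for the time-shifted cone with vertex at time $t$. The problematic piece is $\varphi'(u)\pa_{tt}u$. On the region $\{r>R+|\bar t|\}$ one needs an $L^\infty_{\bar t}$ bound for $\|\pa_{tt}u(t+\bar t)\|_{L^2_{R+|\bar t|}}$, and the only a priori control available before this lemma is \eqref{evenN=4:gainreg2:bdpatftech2}, which is anchored at the origin; when you rewrite $\{r>R+|\bar t|\}$ as $\{r>R'+|t+\bar t|\}$, the effective parameter is $R'=R+|\bar t|-|t+\bar t|$, and since $R\approx R_2+|t|$, $R'$ can be as small as $R-|t|\approx R_2$. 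So \eqref{evenN=4:gainreg2:bdpatftech2} only gives $\|\pa_{tt}u(t+\bar t)\|_{L^2_{R+|\bar t|}}\lesssim \epsilon'/R_2$, uniformly in $\bar t$, not $\epsilon'/R$. Combined with $\|r^{-2}\|_{L^1L^\infty_{R+|\bar t|}}\lesssim 1/R$ from \eqref{even:controle:bd:universal5}, this yields only $\|\varphi'(u)\pa_{tt}u\|_{L^1L^2_{R+|\bar t|}}\lesssim \epsilon'^2/(RR_2)$, which is far larger than $\epsilon'/R^2$ once $|t|\gg R_2$. Feeding this into the channels of energy step gives $\|\Pi^\perp_{\dot H^1,R}\pa_{tt}u(t)\|_{\dot H^1_R}\lesssim \epsilon'^2/(RR_2)$, which is not enough to conclude \eqref{evenN=4:aprioripattudotH1}.

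The paper closes this gap with a bootstrap that you are missing. Setting $q=\|\pa_{tt}u(t)\|_{\dot H^1_R}$, it first proves (Step 1) the a priori estimate $\|\pa_{tt}u(t+t')\|_{L^2_{R+|t'|}}\lesssim Rq+\epsilon'/R$, by splitting the radial integral on the slice at time $t$ into $[R,R_2+2|t|]$ (where the Sobolev embedding turns $q$ into a pointwise decay $\lesssim qr^{1-N/2}$) and $[R_2+2|t|,\infty)$ (where the already-proved first regime gives $\lesssim \epsilon' r^{-1-N/2}$), then propagating in $t'$ via energy estimates for $v(t')=\pa_t u(t+t')$ using \eqref{evenN=4:gainreg4:tech2}. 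This makes the source bound $\|f\|_{L^1L^2_{R+|t'|}}\lesssim \epsilon'^2/R^2+\epsilon' q$, so the channels of energy step and the finite-dimensionality transfer give $q\lesssim \epsilon'/R^2+\epsilon' q$, and one absorbs for $\epsilon'$ small. Without this self-improving structure your argument does not yield \eqref{evenN=4:aprioripattudotH1}.
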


\begin{proof}

We first treat the case $R>R_2+2| t|$. Applying standard energy estimates to \eqref{evenN=4:id:Boxpattu}, using \eqref{evenN=4:aprioripattuL212}, \eqref{evenN=4:gainreg2:bdpatftech3} and finite speed of propagation:
$$
\| \pa_{tt}u(t)\|_{\dot H^1_R}\lesssim \| (\pa_{tt} u(0),\pa_{ttt}u(0))\|_{\mathcal H_{R-|t|}}+\| \varphi''(u)(\pa_{t}u)^2+\varphi'(u) \pa_{tt}u\|_{L^1L^2_{R-|t|+|t'|}}\lesssim \frac{\epsilon'}{R^2}
$$ 
where we used that $R\approx R-|t|$. This shows \eqref{evenN=4:aprioripattudotH1} in that case. We now turn to the second case and let $R_2+|t|\leq R\leq R_2+2| t|$ and aim at estimating
\begin{equation}\label{evenN=4:gainreg4:defq}
q=\| \pa_{tt} u(t)\|_{\dot H^1_R}.
\end{equation}

\noindent \textbf{Step 1}. \emph{An priori estimate}. We claim that for all $t'\in \mathbb R$:
\begin{equation}\label{evenN=4:gainreg4:tech3}
\| \pa_{tt} u(t+t')\|_{L^2}\lesssim Rq+\frac{\epsilon'}{R}.
\end{equation}
For $R\leq r \leq R_2+2|t|$ we have $|\pa_{tt}u(t,r)|\lesssim qr^{1-N/2}$ by \eqref{evenN=4:gainreg4:defq} and Sobolev. For $r>R_2+2|t|$ we have $|\pa_{tt}u(t,r)|\lesssim \epsilon' r^{-1-N/2}$ from the estimate \eqref{evenN=4:aprioripattudotH1} that we have already proved for $R\geq R_2+2|t|$ and Sobolev. Combining, and using $R\approx R_2+|t|$, we obtain $\| \pa_{tt}u(t)\|_{L^2_R}\lesssim Rq+\epsilon'R^{-1}$. Using \eqref{evenN=4:aprioripatudotH1} this gives:
\begin{equation}\label{evenN=4:gainreg4:tech1}
\| (\pa_tu(t),\pa_{tt}u(t))\|_{\mathcal H_{R}}\lesssim Rq+\frac{\epsilon'}{R}.
\end{equation}
Let $v(t')=\pa_{t}u(t+t')$. Then $v$ solves $\Box v(t')=\varphi'(u(t+t'))\pa_t u(t+t')$ with $\vec v(0)=(\pa_tu(t),\pa_{tt}u(t))$. By \eqref{even:bd:varphi3}, \eqref{even:bd:aprioriu} and \eqref{evenN=4:bd:patu} we have $|\varphi'(u(t+t'))\pa_t u(t+t')|\lesssim \epsilon^{'2}r^{-N/2-2}$ for $r>R+|t|$. Hence
\begin{equation}\label{evenN=4:gainreg4:tech2}
\|\varphi'(u(t+t'))\pa_t u(t+t')\|_{L^1L^2_{R+|t'|}}\lesssim \frac{\epsilon^{'2}}{R}
\end{equation}
using \eqref{even:controle:bd:universal2}. Applying standard energy estimates to $v$, using finite speed of propagation, \eqref{evenN=4:gainreg4:tech1} and \eqref{evenN=4:gainreg4:tech2}, we get $\| \pa_t v(t')\|_{L^2_{R+|t'|}}\lesssim Rq+\epsilon' R^{-1}$ for all $t'\in \mathbb R$. As $\pa_t v(t')=\pa_{tt}u(t+t')$ this is the desired estimate \eqref{evenN=4:gainreg4:tech3}.\\

\noindent \textbf{Step 2}. \emph{End of the proof}. Let $w=\pa_{t'}v$. Then by Lemma \ref{evenN=4:lem:gainreg3} $w$ is a non-radiative solutions to $\Box w=\varphi''(u(t+t'))(\pa_t u(t+t'))^2+\varphi'(u(t+t'))\pa_{tt}u(t+t')=:f$. We have using \eqref{evenN=4:gainreg2:bdpatftech4}, \eqref{even:controle:bd:universal2} and \eqref{evenN=4:gainreg4:tech3}:
\begin{equation}\label{evenN=4:gainreg4:bdpatftech3}
\| f(t') \|_{L^1L^2_{R+|t'|}}\lesssim \frac{\epsilon^{'2}}{R^2}+\epsilon'\| \frac{1}{r^2}\|_{L^1L^\infty_{R+|t'|}}  \| \pa_{tt} u(t+t')\|_{L^\infty L^2_{R+|t'|}}  \lesssim \frac{\epsilon^{'2}}{R^2}+\epsilon' q.
\end{equation}
Applying the channels of energy estimate \eqref{bd:channels2} to $w$, using \eqref{evenN=4:gainreg2:bdpatftech3}, we obtain:
\begin{equation}\label{even:regdecay4:bd:tech222}
\| \Pi_{\dot H^1,R}^\perp \pa_{tt} u(t)\|_{\dot H^1_{R}}=\| \Pi_{\dot H^1,R}^\perp w(0)\|_{\dot H^1_{R}}\lesssim \| f(t')\|_{L^1L^2_{R+|t'|}}\lesssim \frac{\epsilon^{'2}}{R^2}+\epsilon' q.
\end{equation}
Then using the identity
$$
\Pi_{\dot H^1,R} \pa_{tt} u(t)= \pa_{tt} u(t)-\Pi_{\dot H^1,R}^\perp \pa_{tt} u(t),
$$
the bounds \eqref{even:regdecay:bd:tech2} and \eqref{evenN=4:aprioripattudotH1} for $R=R_2+2|t|$ (that has already been showed) we get $\| \Pi_{\dot H^1,R} \pa_{tt} u(t)\|_{\dot H^1_{R_2+2|t|}}\lesssim \epsilon' R^{-2}+\epsilon'q$. Note that $\| \Pi_{\dot H^1,R} \pa_{tt} u(t)\|_{\dot H^1_{R}}\approx \| \Pi_{\dot H^1,R} \pa_{tt} u(t)\|_{\dot H^1_{R_2+2|t|}}$ since $\Pi_{\dot H^1,R} \pa_{tt} u(t)\in \textup{Span}(r^{-N+2l+2})_{0\leq l \leq l_0}$ and $R\approx R_2+|t|$. Therefore, 
\begin{equation}\label{evenN=4:regdecay:bd:tech4}
\| \Pi_{\dot H^1,R} \pa_{tt} u(t)\|_{\dot H^1_{R}}\lesssim \frac{\epsilon'}{R^2}+\epsilon'q.
\end{equation}
Injecting \eqref{even:regdecay4:bd:tech222} and \eqref{evenN=4:regdecay:bd:tech4} in the definition \eqref{evenN=4:gainreg4:defq} of $q$ implies $q\lesssim \epsilon'R^{-2}+\epsilon'q$. Hence $q\lesssim \epsilon' R^{-2}$, implying the desired result \eqref{evenN=4:aprioripattudotH1} in this second case as well.
\end{proof}

The rest of the Subsection is now similar to Subsection \ref{subsec:N=6} for $N\equiv 6$, to which we refer for explanations. Recall the notation \eqref{even:id:defuap} and \eqref{even:id:defE} for the approximate solution $u_{ap}[\cbf,\tilde c,R]$ and the error $E$ it generates. The following lemma gathers estimates on $u_{ap}$ and $E$.

\begin{lemma} \label{evenN=4:lem:estimatesuap}

For all $R>0$, $\cbf \in \mathbb R^{m_0+1}$, $\tilde c\in \mathbb R$ with $|\cbf|_{R}+|\tilde c|\lesssim \epsilon'$ and $M>1$, $u_{ap}$ is well-defined for $r>R+|t|$ and satisfies the following estimates.
\begin{itemize}
\item \emph{Pointwise estimates}. For all $r>R+|t|$ one has:
\begin{equation}\label{evenN=4:bd:uap}
|u_{ap}|\lesssim \frac{\epsilon'}{r^{N/2-1}},
\end{equation}
\begin{equation}\label{evenN=4:bd:patuap}
|\pa_t u_{ap}|\lesssim \frac{\epsilon'}{r^{N/2}}.
\end{equation}
\item \emph{Averaged estimates in space}. For all $\bar R\geq R$:
\begin{align}
 \label{evenN=4:L2E} \|E(0)\|_{L^2_{\bar R}}\lesssim \frac{|\tilde c|\left( |\cbf|_R+\tilde c^2+\frac{1}{M^{1/2}}\right)}{\bar R^{1/2}R^{1/2}}.
\end{align}
For all $\bar R\geq R+|t|$:
\begin{equation}\label{evenN=4:bd:patuapmathcalH}
\| (\pa_t u_{ap}(t),\pa_{tt}u_{ap}(t))\|_{\mathcal H_{\bar R}} \lesssim \frac{\epsilon'}{\bar R},
\end{equation}
\begin{equation}\label{evenN=4:bd:patE}
\|\pa_t E(t)\|_{L^2_{\bar R}}\lesssim \frac{|\tilde c|(|\cbf|_R+\tilde c^2+\frac{1}{M^{1/2}})}{\bar R^{3/2}R^{1/2}}.
\end{equation}
\item \emph{Averaged estimates in time and space}.
\begin{equation}\label{evenN=4:bd:pattE}
\|\pa_{tt} E\|_{W_R^{'3/2}}\lesssim \frac{|\tilde c|(|\cbf|_R+\tilde c^2+\frac{1}{M^{1/2}})}{R^{1/2}}.
\end{equation}

\end{itemize}

\end{lemma}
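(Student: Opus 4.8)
The plan is to prove every estimate by direct computation, following the scheme used for the $N\equiv 6\mod 4$ analogue, Lemma~\ref{even:lem:estimatesuap}, whose proof is carried out in Appendix~\ref{A:approximate}. The present case is in fact simpler: since $\beta=0$ for $N\equiv 4\mod 4$ (see \eqref{def:beta}), the resonant correction $\psi$ in \eqref{even:id:tidephigeneralised2} carries no logarithmic growth, so every remainder term below is genuinely cubic in $\tilde c$ rather than $\tilde c^{3}\ln(\cdot)$, and the constraint $\ln M\lesssim|\ln|\tilde c||$ of Lemma~\ref{even:lem:estimatesuap} is not needed.

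First I would record the elementary building blocks. By \eqref{id:defphim0+1} and \eqref{even:id:tidephigeneralised2} the functions $\phi_{m_0+1}$ and $\psi$ are $r^{1-N/2}$ times a polynomial in $\sigma=t/r$ which, with all its $\sigma$-derivatives, is bounded on $|\sigma|\le 1$; hence on $\{r>R+|t|\}$ one has, for $|\tilde c|$ small, $|\partial_t^{j}\partial_x^{k}\tilde\phi[\tilde c,R]|\lesssim |\tilde c|\,r^{1-N/2-j-k}$, while $\chi_{MR}$ only localises to $\{r+|t|\le 2MR\}$ and, when differentiated, produces factors $\lesssim (MR)^{-1}$ supported in $\{r+|t|\approx MR\}$. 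For the nonlinear building block $a=a[\cbf,R]$ I would use Proposition~\ref{pr:constructionnonradia} and the scaling Definition~\ref{def:constructionnonradia} together with the radial Sobolev embedding \eqref{bd:SobolevW} (which gives $|a(t,r)|\lesssim R|\cbf|_{R}r^{-N/2}$), and for the time derivatives of $a$ invoke the gain-of-regularity statements of Lemmas~\ref{evenN=4:lem:gainreg}, \ref{evenN=4:lem:gainreg2} and \ref{evenN=4:lem:gainreg4} applied to $a$ itself, which is legitimate since $a$ is a non-radiative solution of \eqref{eq:nonlinearwave2} with $\|\vec a(0)\|_{\mathcal H_{R}}\lesssim|\cbf|_{R}\lesssim\epsilon'$. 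Combining these, with $R\le r$ and $|\cbf|_{R}+|\tilde c|\lesssim\epsilon'$, yields at once the pointwise bounds \eqref{evenN=4:bd:uap}--\eqref{evenN=4:bd:patuap} and the averaged bound \eqref{evenN=4:bd:patuapmathcalH}.

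Next I would treat $E$. The structural point is that $a$ solves the nonlinear equation, so $-\Box a+\varphi(a)=0$ on $\{r>R+|t|\}$ and therefore
\[
E=-\Box\bigl(\tilde\phi\,\chi_{MR}\bigr)+\varphi\bigl(a+\tilde\phi\,\chi_{MR}\bigr)-\varphi(a).
\]
On $\{\chi_{MR}=1\}$ one has $\Box(\tilde\phi\,\chi_{MR})=\Box\tilde\phi=\varphi(\tilde\phi)+\mathcal R$, with $\mathcal R$ the right-hand side of \eqref{even:id:boxtildephi}; since $\beta=0$ one gets in every case $|\mathcal R|\lesssim|\tilde c|^{3}r^{-N/2-1}$ (for the analytic nonlinearity bounding the tail $\varphi_{c}(\tilde\phi)$ by a convergent series; for the $N=4$ power nonlinearity, which there equals $|u|^{2}u=u^{3}$, reading off the explicit cubic term). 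The leftover $\varphi(a+\tilde\phi)-\varphi(a)-\varphi(\tilde\phi)$ is controlled by the quadratic bound \eqref{even:bd:varphi2}, hence is $\lesssim r^{(N-6)/2}|a||\tilde\phi|\lesssim R|\cbf|_{R}|\tilde c|\,r^{-N/2-2}$; and on the boundary layer the derivatives of $\chi_{MR}$ contribute $\lesssim|\tilde c|(MR)^{-N/2-1}$ supported in $\{r+|t|\approx MR\}$. Integrating these three contributions in $r$ at $t=0$, resp. in $(t,r)$ over translated cones for the $W^{'3/2}_{R}$ norm, and using $\bar R\ge R$ together with $\bar R\lesssim MR$ on the support of $E$, gives \eqref{evenN=4:L2E}, and then \eqref{evenN=4:bd:patE} and \eqref{evenN=4:bd:pattE} after differentiating the identity above once, resp. twice, in $t$. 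Here one uses that $\varphi$ is real-analytic (or a polynomial in the power case), so $\partial_{tt}\varphi(u)=\varphi''(u)(\partial_t u)^{2}+\varphi'(u)\partial_{tt}u$ makes sense and, by \eqref{even:bd:varphi3}, \eqref{even:bd:varphi5} and the decay \eqref{evenN=4:bd:patu}, \eqref{evenN=4:bd:pattu} of $\partial_t u,\partial_{tt}u$ — together with the analogous explicit decay of the building blocks $a$ and $\tilde\phi$ — inherits the same cubic-in-$\tilde c$ smallness with one extra power of $r^{-1}$ per time derivative. Note $\chi_{MR}$ is smooth and even in $t$, so no $t=0$ singularity appears in the time differentiation.

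The only genuine difficulty is the bookkeeping: one must keep track of the three independent parameters $\bar R$, $R$, $M$ and of the exact power of $\tilde c$ attached to each term, so that the cubic cancellation encoded in \eqref{even:id:boxtildephi} is exploited in full. Since, in contrast with Section~\ref{subsec:N=6}, no logarithmic corrections occur, this is a routine though lengthy computation, which for that reason we carry out in Appendix~\ref{A:approximate}.
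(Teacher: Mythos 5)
Your proposal is correct and follows essentially the same route as the paper: the paper's own proof simply observes that the estimates \eqref{even:pointwiseuap}, \eqref{even:L2E} and the bound on $\partial_tE$ from the proof of Lemma \ref{even:lem:estimatesuap} never used whether $N\equiv 4$ or $6\bmod 4$, sets $\beta=0$, and obtains \eqref{evenN=4:bd:patuapmathcalH} by applying the gain-of-regularity Lemmas \ref{evenN=4:lem:gainreg2} and \ref{evenN=4:lem:gainreg4} to $a$ plus a direct computation on $\tilde\phi\chi_{MR}$, exactly as you do. Your decomposition of $E$ (using $\Box a=\varphi(a)$ and the cubic cancellation \eqref{even:id:boxtildephi}) is just a coarser regrouping of the paper's $E_1+\dots+E_5$.
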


\begin{remark}

Notice the following differences between Lemma \ref{evenN=4:lem:estimatesuap} and the analogue Lemma \ref{even:lem:estimatesuap} for $N\equiv 6\mod 4$. Lemma \ref{evenN=4:lem:estimatesuap} contains less estimates, and no time symmetrisation of the form $f_+$ or $f_-$ is performed, due to the simpler proof of the forthcoming Lemma \ref{evenN=4:lem:approximation}. Note that no requirement $\ln M\lesssim |\ln |\tilde c||$ is needed, and that no logarithmic corrections appear in the right-hand sides of \eqref{evenN=4:L2E}, \eqref{evenN=4:bd:patE} and \eqref{evenN=4:bd:pattE}, because the nonlinear resonance $\tilde \phi$ defined by \eqref{def:tildephi} does not contain logarithmic corrections if $N\equiv 4\mod 4$.
 
\end{remark}

\begin{proof}

The proof is done in Appendix \ref{A:approximate}.

\end{proof}

We choose:
\begin{equation}\label{evenN=4:id:defM}
M=\frac{1}{\tilde c^4}.
\end{equation}
Notice that this choice is easier than \eqref{evenN=4:id:defM}, due to a simpler proof of the forthcoming Lemma \ref{evenN=4:lem:approximation}. We approximate $u$ for $|x|>R+|t|$ by $u_{ap}[\cbf,\tilde c,R]$ whose parameters are chosen as follows:

 \begin{lemma} \label{lem:N=4defctildec}

For all $R>R_2$, there exists $\cbf[R]\in \mathbb R^{m_0+1}$ and $\tilde c[R]\in \mathbb R$ with
\begin{equation}\label{evenN=4:bd:cbftildectildeR}
|\cbf[R]|_{R}+|\tilde c[R]|\lesssim \epsilon' \quad \mbox{and}\quad \lim_{R\to \infty} |\cbf[R]|_{R}+|\tilde c[R]|=0
\end{equation}
such that, $u_{ap}=u_{ap}[\cbf[R],\tilde c[R],R]$ satisfies:
\begin{align}
\label{evenN=4:id:orthogonalityatR}& u(0,R)-u_{ap}(0,R)=0,\\
\label{evenN=4:id:orthogonalitypatv}& \Pi_{\mathcal H,R}\left(\pa_t u(0)-\pa_t u_{ap}(0),\pa_{tt} u(0)-\pa_{tt} u_{ap}(0)\right)=0.
\end{align}

 \end{lemma}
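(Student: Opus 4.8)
The plan is to mimic the proof of Lemma \ref{lem:defctildec}, replacing the $L^2$--projection used there in the ``velocity'' slot by $\Pi_{\dot H^1,R}$, as dictated by \eqref{evenN=4:id:orthogonalitypatv}. Fix $R>R_2$; all quantities below are meaningful for $u$ thanks to the regularity and decay of Lemmas \ref{evenN=4:lem:gainreg}--\ref{evenN=4:lem:gainreg4} (in particular $\pa_tu(0)\in\dot H^1_R$, $\pa_{tt}u(0)\in L^2_R$ with \eqref{evenN=4:aprioripattuL21}, \eqref{evenN=4:aprioripattuL212}, and \eqref{even:bd:aprioriu}), and for $u_{ap}$ thanks to Proposition \ref{pr:constructionnonradia}, the same regularity lemmas applied to the non-radiative solution $a[\cbf,R]$, and the explicit form of $\tilde\phi[\tilde c,R]$ via \eqref{def:tildephi}, \eqref{eq:vectildephi0}.

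\emph{Step 1: linear projection.} I would introduce the bounded linear map $\Psi=(\Psi_0,\Psi_1,\Psi_2)$ sending a function $f$ with $\vec f(0)\in\Hc_R$, $\pa_{tt}f(0)\in L^2_R$ to the triple $f(0,R)$, the coordinates of $\Pi_{\dot H^1,R}(\pa_tf(0))$, and the coordinates of $\Pi_{L^2,R}(\pa_{tt}f(0))$, so that $\Psi(u_{ap})=\Psi(u)$ is precisely \eqref{evenN=4:id:orthogonalityatR}--\eqref{evenN=4:id:orthogonalitypatv}; by Cauchy--Schwarz, $|\Psi(f)|\lesssim R^{\frac N2-1}|f(0,R)|+R\|\pa_tf(0)\|_{\dot H^1_R}+R\|\pa_{tt}f(0)\|_{L^2_R}$, the powers read off by rescaling to $R=1$. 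With $\Phi(\cbf,\tilde c)=a_F[\cbf]+\tilde c\,\phi_{m_0+1}$, I would compute, using \eqref{id:aF0}, Lemma \ref{lem:resonancelinear} and the fact that $p_{N/2-1}$ is odd (so $p_{N/2-1}(0)=\pa_\sigma^2p_{N/2-1}(0)=0$): $\Phi(\cbf,\tilde c)(0)$ is a combination of $(r^{2+2l-N})_{0\le l\le l_0}$ with coefficients $(c_{0,l})$; $\pa_t\Phi(\cbf,\tilde c)(0)$ a combination of the same family with coefficients $(c_{1,0},\dots,c_{1,l_1},\tilde c)$ --- here one uses the key identity $N-2-2l_0=N/2$ valid for $N\equiv4\mod4$, so that the resonant velocity $r^{-N/2}$ is exactly the $l_0$-th projection direction; and $\pa_{tt}\Phi(\cbf,\tilde c)(0)$ a combination of $(r^{2l-N})_{1\le l\le l_0}$ with coefficients proportional to $(c_{0,l})_{1\le l\le l_0}$. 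Since $l_0=l_1+1$, this displays $\Psi\circ\Phi$ as a block-triangular, hence invertible, endomorphism of $\mathbb R^{m_0+2}$ ($\Psi_1$ recovers $(c_{1,0},\dots,c_{1,l_1},\tilde c)$, then $\Psi_2$ and $\Psi_0$ recover $(c_{0,l})_{1\le l\le l_0}$ and $c_{0,0}$). Inverting gives, for admissible $f$, $|\tilde c|+|\cbf|_R\lesssim R^{\frac N2-1}|f(0,R)|+R\|\pa_tf(0)\|_{\dot H^1_R}+R\|\pa_{tt}f(0)\|_{L^2_R}$.

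\emph{Step 2: the nonlinear fixed point.} Write $\tilde\Phi(\cbf,\tilde c)=\tilde a[\cbf,R]+\chi_{MR}\tilde\phi[\tilde c,R]-\tilde c\,\phi_{m_0+1}$ with $M$ as in \eqref{evenN=4:id:defM}, so $u_{ap}[\cbf,\tilde c,R]=\Phi(\cbf,\tilde c)+\tilde\Phi(\cbf,\tilde c)$ and $(\cbf,\tilde c)$ solves the lemma iff $(\cbf,\tilde c)=(\Psi\circ\Phi)^{-1}(\Psi(u)-\Psi(\tilde\Phi(\cbf,\tilde c)))$. The differentiability statement of Proposition \ref{pr:constructionnonradia} and radial Sobolev give that $\cbf\mapsto\tilde a[\cbf,R]$ contributes to $\Psi\circ\tilde\Phi$ a term $O(|\cbf|_R^2)$ with differential $O(|\cbf|_R)$; and since $\tilde\phi[\tilde c,R]-\tilde c\,\phi_{m_0+1}=\hat c^2\psi$ with $\psi$ free of logarithmic growth for $N\equiv4\mod4$ (see \eqref{even:id:tidephigeneralised2}), a direct computation with \eqref{eq:vectildephi0}, the support of $\chi_{MR}$ and \eqref{evenN=4:id:defM} gives $\Psi\circ\tilde\Phi(\cbf,\tilde c)=O(|\tilde c|(|\tilde c|+|\cbf|_R))$ with differential of norm $O(\epsilon')$. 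Combined with Step 1 and the a priori smallness of $u$, the right-hand side maps the ball $\{|\cbf|_R+|\tilde c|\le C\epsilon'\}$ to itself and is a contraction for $\epsilon'$ small; Banach's fixed-point theorem produces the unique $(\cbf[R],\tilde c[R])$ with $|\cbf[R]|_R+|\tilde c[R]|\lesssim\epsilon'$.

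\emph{Step 3: decay as $R\to\infty$, and main obstacle.} Applying the inversion of Step 1 to $f=u-\tilde\Phi(\cbf[R],\tilde c[R])$ and using $u(0,R)=o(R^{1-N/2})$, $\|\pa_tu(0)\|_{\dot H^1_R}=o(R^{-1})$, $\|\pa_{tt}u(0)\|_{L^2_R}=o(R^{-1})$ as $R\to\infty$ (from $\vec u(0)\in\Hc_{R_0}$ and \eqref{evenN=4:aprioripattuL21}, \eqref{evenN=4:aprioripattuL212} via radial Sobolev) yields the second half of \eqref{evenN=4:bd:cbftildectildeR}. The only real difficulty is the familiar one tied to the resonance $\phi_{m_0+1}\notin\Hc$: one must verify that after truncation by $\chi_{MR}$ with $M$ as in \eqref{evenN=4:id:defM}, $\tilde\Phi$ is a genuinely small perturbation, Lipschitz with small constant, in the $\Psi$-seminorm. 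As flagged in Subsection \ref{subsec:N=4}, this is markedly simpler than for $N\equiv6\mod4$, since here $\psi$ has no logarithm and $\varphi\in\mathcal C^2$, so no delicate cancellation beyond the parity one built into \eqref{even:id:tidephigeneralised2} is required.
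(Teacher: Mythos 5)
Your proposal follows the same strategy the paper uses: define the vector $\Psi$ of the boundary value $f(0,R)$ together with the $\Pi_{\dot H^1,R}$ and $\Pi_{L^2,R}$ projections of $(\partial_tf(0),\partial_{tt}f(0))$, check that $\Psi\circ\Phi$ is block-invertible (here the paper's computation uses precisely $l_0=l_1+1$, equivalent to your identity $N-2-2l_0=N/2$, placing the resonance $r^{-N/2}$ on the top $\dot H^1$-projection direction), and then close the nonlinear part by a Banach fixed point with the small Lipschitz constants supplied by Proposition \ref{pr:constructionnonradia} and the explicit $\psi$ of \eqref{even:id:tidephigeneralised2}. Steps~1 and 2 are essentially identical to the paper's Step~1 and to the ``omitted Step~2'' that the paper refers back to Lemma \ref{lem:defctildec} for; your elaborations are correct.

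The one place to be careful is the justification of the limit in \eqref{evenN=4:bd:cbftildectildeR}. You claim $\|\partial_tu(0)\|_{\dot H^1_R}=o(R^{-1})$ and $\|\partial_{tt}u(0)\|_{L^2_R}=o(R^{-1})$, citing \eqref{evenN=4:aprioripattuL21} and \eqref{evenN=4:aprioripattuL212}; but those lemmas only assert $\lesssim \epsilon'/R$, not the little-$o$, and a bound of the form $\epsilon'/R$ alone does not force the inverted $(\cbf,\tilde c)$ to vanish as $R\to\infty$ (the factor $R\|\partial_tu(0)\|_{\dot H^1_R}$ is then only $O(\epsilon')$). To make the $o(R^{-1})$ rigorous you would need to reopen the proof of Lemma \ref{evenN=4:lem:gainreg} and observe that its right-hand side is comparable to $\frac 1R\|\vec u(0)\|_{\Hc_R}+\frac 1R\|\pa_t u\|_{L^\infty L^2_{R+|t|}}\|\varphi'(u)\|_{L^1L^\infty_{R+|t|}}$ with both factors going to zero, which is an extra step, not a citation. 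Alternatively (and this is arguably what the paper has in mind, since it also states the limit without proof at this stage) the limit follows a fortiori from the $\ell^2$ summability $\sum_k(|\cbf_k|_{R_k}^2+\tilde c_k^2)<\infty$ established in Lemma \ref{lemN=4:criticalnondispersivestabilitycontrad}, which is proved by energy considerations independently of the decay claim and then propagated to continuous $R$ via Lemma \ref{evenN=4:lem:dyadicdifferences}. Either route works; just don't quote \eqref{evenN=4:aprioripattuL21}--\eqref{evenN=4:aprioripattuL212} as if they directly gave a little-$o$.
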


\begin{proof}

The proof is very similar to that of Lemma \ref{lem:defctildec}. Let $\Phi(\cbf,\tilde c)=a_{F}[\cbf]+\tilde c \phi_{m_0+1}$. Fix $R>R_2$ and let $\Psi=(\Psi_0,\Psi_1)$ be the map defined by:
$$
\Psi_0(f)=\left(f(0,R),(\int_{r>R}\pa_{tt}f(0,r)r^{2l+1}dr)_{0\leq l\leq l_1})\right), \quad \Psi_2(f)= (\int_{r>R}\pa_r \pa_{t}f(0,r)r^{2l}dr)_{0\leq l\leq l_0}.
$$
By Cauchy-Schwarz, for $R=1$, for any $f$ with $\vec f(0)\in \mathcal H_1$, $\pa_t f(0)\in \dot H^1_R$ and $\pa_{tt} f(0)\in L^2_1$, $\Psi$ is well-defined with
\begin{equation}\label{evenN=4:bd:linearctildectech}
|\Psi_0(f)|\lesssim |f(0,1)|+\| \pa_{tt}f(0)\|_{L^2_1} \quad \mbox{and}\quad |\Psi_1(f)|\lesssim \| \pa_{t}f(0)\|_{\dot H^1_1}.
\end{equation}

\noindent \underline{Claim:} We claim that $\Psi \circ \Phi$ is a linear invertible map on $\mathbb R^{m_0+2}$, and that for any $f$ with $\vec f(0)\in \mathcal H_R$, $\pa_t f(0)\in \dot H^1_R$ and $\pa_{tt} f(0)\in L^2_R$, $(\cbf,\tilde c)=(\Psi \circ \Phi)^{-1}(\Psi (f))$ satisfies
\begin{align}
\label{evenN=4:bd:linearctildec}& |\tilde c|\lesssim R \| \pa_t f(0)\|_{\dot H^1_R}, \\
\label{evenN=4:bd:linearctildec2}& |\cbf|_R\lesssim R^{N/2-1}|f(0,R)|+R \|\pa_t f(0)\|_{\dot H^1_R}+R\| \pa_{tt}f(0)\|_{L^2_R}.
\end{align}
We now prove this Claim. By scaling, it suffices to consider the case $R=1$. We decompose $\Phi(\cbf)=\Phi_0(\cbf_0)+\Phi_1(\cbf_1,\tilde c)$ where $\Phi_0(\cbf_0)= \sum_{0\leq l\leq l_0} c_{0,l}\phi_{2l}$ and $\Phi_1(\cbf_1,\tilde c)= \sum_{0\leq l\leq l_1} c_{1,l}\phi_{2l+1}+\tilde c \phi_{m_0+1}$. We have $\Psi\circ \Phi=(\Psi_0\circ \Phi_0,\Psi_1\circ \Phi_1)$. Using \eqref{id:aF} and \eqref{id:defphim0+1} we obtain the identities:
\begin{align*}
& (\Phi(\cbf,\tilde c))(0,1)=\sum_{0\leq l\leq l_0}c_{0,l},\\
& \left(\pa_{t}\Phi(\cbf,\tilde c) \right)(0,r)=\sum_{0\leq l\leq l_1}c_{1,l}r^{-N+2l}+\tilde c r^{-N/2},\\
&  \left(\pa_{tt}\Phi(\cbf,\tilde c) \right)(0,r)=\sum_{1\leq l\leq l_0}2l(-N+2l+2)c_{0,l}r^{-N+2l},
\end{align*}
and deduce, since $l_0=l_1+1$ when $N\equiv 4\mod 4$, that both map $ \Psi_0\circ\Phi_0$ and $ \Psi_1\circ\Phi_1$ are invertible maps. Hence $\Psi\circ \Phi$ is invertible. Using \eqref{evenN=4:bd:linearctildectech}, the estimates \eqref{evenN=4:bd:linearctildec} and \eqref{evenN=4:bd:linearctildec2} follow. This proves the claim.\\

The rest of the proof, showing Lemma \ref{lem:N=4defctildec} using the above Claim, is very similar to Step 2 of the proof of Lemma \ref{lem:defctildec} using Step 1. We omit the details.

 \end{proof}

We next control $u-u_{ap}$ in terms of the parameters $\cbf[R]$ and $\tilde c,R]$.

\begin{lemma} \label{evenN=4:lem:approximation}

For all $R>R_2$, let $\cbf=\cbf[ R]$ and $\tilde c=\tilde c[R]$ be given by Lemma \ref{lem:N=4defctildec}. Then:
\begin{align}
\label{evenN=4:bd:u-uap1} & R\| \pa_t u(0)-\pa_t u_{ap}[\cbf,\tilde c,R](0)\|_{\dot H^1_{R}} +R\| \pa_{tt} u(0)-\pa_{tt} u_{ap}[\cbf,\tilde c,R](0)\|_{L^2_{\tilde R}}\\
\nonumber &\qquad \qquad \qquad \qquad+R^{1/2}\sup_{r>R} r^{\frac{N-3}{2}}|u_0(r)-u_{ap}[\cbf,\tilde c,R](0,r)| \qquad  \lesssim |\tilde c|( |\cbf|_{R}+\tilde c^2).
\end{align}

\end{lemma}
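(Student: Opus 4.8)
The plan is to mimic the structure of the proof of Lemma \ref{even:lem:approximation} for $N\equiv 6\mod 4$, but in the simpler setting afforded by the extra regularity estimates \eqref{evenN=4:aprioripatudotH1}, \eqref{evenN=4:aprioripattudotH1} and their pointwise consequences \eqref{evenN=4:bd:patu}, \eqref{evenN=4:bd:pattu}. Set $v=u-u_{ap}$; by \eqref{even:id:defE} it is a non-radiative solution for $r>R+|t|$ of $\Box v=E+\varphi(u)-\varphi(u_{ap})$ with initial data determined by Lemma \ref{lem:N=4defctildec}. Because $\varphi$ is $\mathcal C^2$ we may differentiate the equation twice in time: $\pa_{tt}v$ solves, for $r>R+|t|$, an equation of the form $\Box \pa_{tt}v=\pa_{tt}E+\varphi'(u)\pa_{tt}v+(\text{terms in }v,\pa_tv,\pa_{tt}v_{ap},u_{ap})$, and it is non-radiative there since both $\pa_{tt}u$ and $\pa_{tt}u_{ap}$ are (by Lemma \ref{evenN=4:lem:gainreg3} and Lemma \ref{evenN=4:lem:estimatesuap}). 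The key quantity to control is
$$
q=R\,\| \pa_{tt}v(0)\|_{L^2_R}+R^{3/2}\,\|\pa_{tt}v\|_{W^{'3/2}_R}^{\text{(dual bound)}},
$$
or more precisely $q=R\sup_{t}\sup_{\bar R>R+|t|}\bar R^{3/2}\| \pa_{tt}v(t)\|_{L^2_{\bar R}}$ together with the Sobolev-controlled pointwise norm; I will show $q\lesssim |\tilde c|(|\cbf|_R+\tilde c^2)$ and then recover the $\dot H^1$ and pointwise bounds of the statement by elliptic regularity (the weighted Hardy inequality \eqref{bd:hardyoutside}) from the equation $\Delta v=\pa_{tt}v+\varphi(u_{ap})-\varphi(u)+E$ at $t=0$, using the orthogonality $v(0,R)=0$ from \eqref{evenN=4:id:orthogonalityatR}.

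First I would record a priori bounds: $|v|\lesssim \epsilon' r^{1-N/2}$, $|\pa_t v|\lesssim \epsilon' r^{-N/2}$, $|\pa_{tt}v|\lesssim \epsilon' r^{-1-N/2}$ for $r>R+|t|$, which follow from \eqref{even:bd:aprioriu}, \eqref{evenN=4:bd:patu}, \eqref{evenN=4:bd:pattu}, \eqref{evenN=4:bd:uap}, \eqref{evenN=4:bd:patuap} and the corresponding bounds for $u_{ap}$; so $q\lesssim \epsilon'$ to start. Then, using the pointwise nonlinear estimates \eqref{even:bd:varphi1}--\eqref{even:bd:varphi4} together with \eqref{even:bd:varphi5}, I bound each source term in the equation for $\pa_{tt}v$: the term $\varphi'(u)\pa_{tt}v$ is absorbed into a Gronwall/small-constant argument since $|\varphi'(u)|\lesssim \epsilon' r^{-2}$; the terms involving $u_{ap}$ and its derivatives against $v,\pa_t v$ are bounded via Hölder in space and the weighted $L^1_t$-decay lemma \eqref{even:controle:bd:universal5}, gaining the factor $|\tilde c|$ from the pointwise size $|u_{ap}|\lesssim (|\cbf|_R R + |\tilde c|)r^{1-N/2}$ on $\{r+|t|\le 2MR\}$ (and $|\cbf|_R R r^{-N/2}$ outside); the cut-off/boundary errors are packaged in $\|\pa_{tt}E\|_{W^{'3/2}_R}\lesssim |\tilde c|(|\cbf|_R+\tilde c^2+M^{-1/2})R^{-1/2}$ from \eqref{evenN=4:bd:pattE}, and with the choice $M=\tilde c^{-4}$ from \eqref{evenN=4:id:defM} one has $M^{-1/2}=\tilde c^2$, so this contributes exactly $|\tilde c|(|\cbf|_R+\tilde c^2)$. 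Applying the weighted channels-of-energy estimate \eqref{bd:weightedchannels} (valid since $v$, hence $\pa_{tt}v$, is even in time and non-radiative, using $\Pi_{\dot H^1,R}\pa_{tt}v(0)=0$ — the $\dot H^1$-part of \eqref{evenN=4:id:orthogonalitypatv}) yields $q\lesssim |\tilde c|(|\cbf|_R+\tilde c^2)+\epsilon' q$, whence $q\lesssim |\tilde c|(|\cbf|_R+\tilde c^2)$ after absorbing the last term for $\epsilon'$ small.

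Finally I recover the three pieces of \eqref{evenN=4:bd:u-uap1}. The $L^2$ bound for $\pa_{tt}v(0)$ is immediate from $q$. For $\pa_t v(0)$, note $\pa_t v$ is odd in time and non-radiative, solving $\Box \pa_t v=\pa_t E+\varphi'(u)\pa_t u-\varphi'(u_{ap})\pa_t u_{ap}$; applying the channels estimate \eqref{bd:channels2} in the form appropriate to $N\equiv 4\mod4$ (which controls $\Pi_{\dot H^1,R}^\perp$), together with $\Pi_{\dot H^1,R}\pa_t v(0)=0$ and finite-dimensionality on $\mathrm{Span}(r^{-N+2l+2})$, plus the source bounds already obtained, gives $R\|\pa_t v(0)\|_{\dot H^1_R}\lesssim |\tilde c|(|\cbf|_R+\tilde c^2)$; alternatively one integrates $\pa_t v(0)=\int$-free and uses $\Delta(\pa_t v)(0)=\pa_{tt}\pa_t v(0)+\dots$, but the channels route is cleaner. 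For the pointwise bound on $v(0,r)$, decompose $\Delta v(0)=\pa_{tt}v(0)+\varphi(u_{ap})-\varphi(u)+E$, estimate the right side in $L^2_{\bar R}$ by $\lesssim |\tilde c|(|\cbf|_R+\tilde c^2)\bar R^{-3/2}R^{-1/2}$ (using the $q$-bound, \eqref{even:bd:varphi1}, and \eqref{evenN=4:L2E}), then invoke the weighted Hardy inequality \eqref{bd:hardyoutside} with $\kappa=1/2$ and $v(0,R)=0$ to conclude $R^{1/2}\sup_{r>R}r^{(N-3)/2}|v(0,r)|\lesssim |\tilde c|(|\cbf|_R+\tilde c^2)$. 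The main obstacle, as in the $N\equiv6$ case, is bookkeeping the nonlinear interaction of $v$ with the non-energy-space resonance $\tilde\phi\chi_{MR}$: one must verify that the optimisation $M=\tilde c^{-4}$ keeps both the boundary errors ($\sim M^{-1/2}$) and the interaction terms ($\sim$ powers of $\log M$ times $|\tilde c|$, here with \emph{no} log since $\psi$ has no logarithmic correction for $N\equiv4\mod4$) at the level $|\tilde c|(|\cbf|_R+\tilde c^2)$; this is exactly where the $N\equiv4$ case is genuinely easier than $N\equiv6$, and I would emphasise that simplification rather than reproving the delicate four-parameter balance of Lemma \ref{even:lem:approximation}.
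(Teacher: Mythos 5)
Your route via the weighted channels estimate \eqref{bd:weightedchannels} has a genuine gap. That estimate requires the solution and the forcing to be even in time when $N\equiv 4\bmod 4$, but $v=u-u_{ap}$ has no time parity: $u$ is a generic non-radiative solution, and $u_{ap}=a+\tilde\phi\chi_{MR}$ mixes the (generally non-symmetric) $a$ with the odd piece $\tilde c\,\phi_{m_0+1}$ and the even piece $\hat{c}^2\psi$ of $\tilde\phi$. So your parenthetical claim that ``$v$, hence $\pa_{tt}v$, is even in time'' is false, as is the later assertion that $\pa_t v$ is odd in time. To use a channels estimate you would have to split $v=v_++v_-$ as in Lemma \ref{even:lem:approximation} for $N\equiv 6\bmod 4$ --- exactly the extra layer of bookkeeping the $N\equiv4\bmod4$ argument is designed to avoid. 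You also misread the orthogonality: the $\dot H^1$-component of \eqref{evenN=4:id:orthogonalitypatv} is $\Pi_{\dot H^1,R}\bigl(\pa_t u(0)-\pa_t u_{ap}(0)\bigr)=0$, a condition on $\pa_t v(0)$, not on $\pa_{tt}v(0)$. Finally there is a norm mismatch: $\tilde W^{\kappa}_R$ in \eqref{id:deftildeWkappa} controls $\dot H^1$-decay for $N\equiv4\bmod4$, whereas your $q$ asks for weighted $L^2$-decay of $\pa_{tt}v$; even with the right parity you would be controlling the wrong component.

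The paper's proof is structurally different. It works with $w=\pa_t v$, writes $\Box w=F$, and --- instead of a channels estimate --- invokes the uniqueness half of Proposition \ref{pr:nonradiativeforcingmaineven}. That proposition requires no time symmetry; it compensates by demanding an extra time derivative on the forcing, namely $F\in W^{'1/2}_R$ and $\pa_t F\in W^{'3/2}_R$. Those bounds are exactly what the second gain-of-regularity Lemmas \ref{evenN=4:lem:gainreg3}--\ref{evenN=4:lem:gainreg4} (available because $\varphi$ is $\mathcal C^2$ for $N\equiv4\bmod4$) and the error estimates \eqref{evenN=4:bd:patE}--\eqref{evenN=4:bd:pattE} provide. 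Since $w$ is non-radiative and satisfies $\vec w(0)=\Pi^\perp_{\mathcal H,R}\vec w(0)$ by \eqref{evenN=4:id:orthogonalitypatv}, it coincides with the unique non-radiative solution from Proposition \ref{pr:nonradiativeforcingmaineven}, which transfers the $R^{1/2}\|(\pa_t v,\pa_{tt}v)\|_{W_R^{1/2}}$ bound to $q\lesssim|\tilde c|\bigl(|\cbf|_R+\tilde c^2\bigr)$; the three pieces of \eqref{evenN=4:bd:u-uap1} then follow (the elliptic step via \eqref{bd:hardyoutside} that you sketch is fine). The engine controlling $q$ is an existence-uniqueness argument for the inhomogeneous non-radiative problem, not a channels estimate --- that, rather than any parity claim, is the genuine simplification over $N\equiv6\bmod4$.
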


\begin{proof}

We let $v=u-u_{ap}$, which solves \eqref{even:id:equationu-uap}. We aim at estimating $\pa_t v$ which, by \eqref{even:id:equationu-uap} and Lemma \ref{evenN=4:lem:gainreg}, is a non-radiative solution of:
\begin{equation}  \label{evenN4:defF} 
\left\{ \begin{array}{l l} \Box \pa_t v  =\pa_t E +\varphi'(u)\pa_t u-\varphi'(u_{ap})\pa_t u_{ap} =: F \\
(\pa_t v(0),\pa_{tt}v(0))=(\pa_t u(0)-\pa_t u_{ap}(0),\pa_{tt} u(0)-\pa_{tt} u_{ap}(0)).
\end{array}
\right.
\end{equation}
More precisely, we will bound the quantity
\begin{equation}
 \label{evenN4:controle:id:q} q = R^{1/2}\| (\pa_t v,\pa_{tt}v)\|_{W_R^{1/2}}.
\end{equation}
Because of Lemmas \ref{evenN=4:lem:gainreg2} and \ref{evenN=4:lem:gainreg4}, and of \eqref{evenN=4:bd:patuapmathcalH}, $q$ is finite with
\begin{equation}\label{evenN=4:controle:bd:aprioriq1q2}
q\lesssim \epsilon'.
\end{equation}

We will use several times the following inequality for $x,y\in \mathbb R$ with $|x|,|y|\ll r^{1-N/2}$ (we recall $\varphi $ is either given by \eqref{eq:nonlinearityanalytic} or by \eqref{eq:nonlinearitypower} and $N=4$):
\begin{align}
\label{even:bd:varphi6} & |\varphi''(x+y)-\varphi''(x)|\lesssim r^{N-4}|y| .
\end{align}

\noindent \textbf{Step 1}. \emph{Preliminary estimates}. We first control $v$, $\pa_tv$ and $\pa_{tt}v$ by $q$ and claim that for all $r>R+|t|$:
\begin{align} 
& \label{N=4even:bdv} |v(t,r)|\lesssim \frac{q+|\tilde c|(|\cbf|+\tilde c^2+\frac{1}{M^{1/2}})}{r^{N/2-3/2}R^{1/2}},\\
& \label{N=4even:bdpatv} |\pa_t v(t,r)|\lesssim \frac{q}{r^{N/2-1/2}R^{1/2}},
\end{align}
and that for all $t\in \mathbb R$ and $\bar R\geq R+|t|$:
\begin{equation}
\label{N=4even:bdpattv} \| \pa_{tt} v(t)\|_{L^2_{\bar R}}\lesssim \frac{q}{\bar R^{1/2}R^{1/2}}.
\end{equation}

\noindent \underline{Proof of \eqref{N=4even:bdpatv}} This is a consequence of \eqref{evenN4:controle:id:q}, of the definition of the $ W^{1/2}_{R}$ norm \eqref{id:defWkappa}, and of the radial Sobolev embedding.

\smallskip

\noindent \underline{Proof of \eqref{N=4even:bdpattv}} This is from the very definition of the $ W^{1/2}_{R}$ norm \eqref{id:defWkappa}.

\smallskip

\noindent \underline{Proof of \eqref{N=4even:bdv}:} Using \eqref{even:id:equationu-uap}, $v(0)$ satisfies:
\begin{align} \label{evenN=4:id:Deltav+}
 \Delta v(0) = &\pa_{tt}v(0)+(\varphi(u_{ap})-\varphi(u_{ap}+v))-E(0).
\end{align}
Let $r>\bar R$. Using \eqref{even:bd:varphi1} and \eqref{even:pointwiseuap}:
\begin{equation}\label{evenN=4:controle:tech2}
|\varphi(u_{ap})-\varphi(u_{ap}+v)|\lesssim r^{\frac{N-6}{2}}|v|\,|u_{ap}|\lesssim \frac{\epsilon'}{r^2}|v|.
\end{equation}
Injecting \eqref{N=4even:bdpattv}, \eqref{evenN=4:controle:tech2} and \eqref{evenN=4:L2E} in \eqref{evenN=4:id:Deltav+} we get:
\begin{equation}\label{evenN=4:controle:tech4}
\| \Delta v(0)\|_{L^2_{\bar R}}\lesssim \frac{q+|\tilde c|\left(|\cbf|_{R}+\tilde c^2+\frac{1}{M^{1/2}}\right)}{\bar R^{1/2}R^{1/2}} +\epsilon' \| \frac{v_+(0)}{r^2}\|_{L^2_{\bar R}}.
\end{equation}
Since $v(0,R)=0$ by \eqref{evenN=4:id:orthogonalityatR}, injecting \eqref{evenN=4:controle:tech4} in the weighted Hardy inequality \eqref{bd:hardyoutside} with $\kappa=1/2$ shows:
\begin{align*}
\sup_{\bar R\geq R} \bar R^{\frac 12} \| \frac{v(0)}{r^2}\|_{L^2_{\bar R}}+\sup_{r\geq {R}} r^{\frac{N-3}{2}} |v(0,r)| & \lesssim \frac{q_2+|\tilde c|\left(|\cbf|_{R}+\tilde c^2+\frac{1}{M^{1/2}}\right)}{R^{1/2}}\\
&\qquad +\epsilon' \sup_{\bar R\geq R}\bar R^{\frac 12} \| \frac{v(0)}{r^2}\|_{L^2_{\bar R}} .
\end{align*}
This implies \eqref{N=4even:bdv} for $t=0$ for $\epsilon'$ small enough. From \eqref{N=4even:bdv} for $t=0$ and \eqref{N=4even:bdpatv} we obtain \eqref{N=4even:bdv} for $t\neq 0$ using $v(t)=v(0)+\int_0^t \pa_t v(t')dt'$.\\

\noindent \textbf{Step 2}. \emph{Bound for $F$}. In this step we prove:
\begin{equation}\label{evenN=4:bd:F} 
R^{1/2}\sup_{\tilde R>R+|t|}\tilde R^{3/2}\| F(t)\|_{L^2_{\tilde R}}\lesssim \epsilon'q+|\tilde c|\left(|\cbf|_R+\tilde c^2+\frac{1}{M^{1/2}}\right).
\end{equation}
Recall $F$ is given by \eqref{evenN4:defF}. We decompose using \eqref{evenN4:defF} and $u=u_{ap}+v$:
\begin{equation}\label{evenN=4:id:Finter1} 
F=\pa_t E+\underbrace{\varphi'(u)\pa_t v}_{=I}+\underbrace{(\varphi'(u_{ap}+v)-\varphi'(u_{ap}))\pa_t u_{ap}}_{=II}.
\end{equation}
Let $r>R+|t|$. For $I$, using first \eqref{even:bd:varphi3}, then \eqref{even:bd:aprioriu} and \eqref{N=4even:bdpatv}:
\begin{equation}\label{evenN=4:bd:Finter2} 
|I|\lesssim r^{\frac{N-6}{2}}|u| |\pa_t v|\lesssim r^{\frac{N-6}{2}} \frac{\epsilon'}{r^{N/2-1}} \frac{q}{r^{N/2-1/2}R^{1/2}}\lesssim \frac{\epsilon' q}{r^{N/2+3/2}R^{1/2}}.
\end{equation}
For $II$, using first \eqref{even:bd:varphi4}, then \eqref{N=4even:bdv} and \eqref{evenN=4:bd:patuap}:
\begin{equation}\label{evenN=4:bd:Finter3} 
|II| \lesssim r^{\frac{N-6}{2}}|v||\pa_t u_{ap}| \lesssim r^{\frac{N-6}{2}} \frac{q+|\tilde c|(|\cbf|_R+\tilde c^2+\frac{1}{M^{1/2}})}{r^{N/2-3/2}R^{1/2}}\frac{\epsilon'}{r^{N/2}}\lesssim \frac{\epsilon'q+\epsilon'|\tilde c|(|\cbf|_R+\tilde c^2+\frac{1}{M^{1/2}})}{r^{N/2+3/2}R^{1/2}}.
\end{equation}
Combining \eqref{evenN=4:bd:Finter2} and \eqref{evenN=4:bd:Finter3}, then applying \eqref{even:controle:bd:universal1} we get that for $\tilde R\geq R+|t|$:
\begin{equation}\label{evenN=4:bd:Finter4} 
\| I+II\|_{L^2_{\tilde R}}\lesssim  \frac{\epsilon'q+\epsilon'|\tilde c|(|\cbf|_R+\tilde c^2+\frac{1}{M^{1/2}})}{\tilde R^{3/2}R^{1/2}}.
\end{equation}
Injecting \eqref{evenN=4:bd:patE} and \eqref{evenN=4:bd:Finter4} in \eqref{evenN=4:id:Finter1} shows the desired inequality \eqref{evenN=4:bd:F}.\\

\noindent \textbf{Step 3}. \emph{Bound for $\pa_{t}F$}. In this step we show $\pa_t F\in W^{'3/2}_R$ with:
\begin{equation}\label{evenN=4:bd:patF} 
R^{1/2}\| \pa_t F\|_{W_R^{'3/2}}\lesssim \epsilon'q+|\tilde c|\left(|\cbf|_R+|\tilde c|^2+\frac{1}{M^{1/2}}\right).
\end{equation}
In order to prove \eqref{evenN=4:bd:patF}, we first compute using \eqref{evenN4:defF}:
\begin{align} \nonumber
\pa_{t}F & =\pa_{tt} E +\varphi''(u)(\pa_t u)^2+\varphi'(u)\pa_{tt}u-\varphi''(u_{ap})(\pa_t u_{ap})^2-\varphi'(u_{ap})\pa_{tt}u_{ap}\\
 \label{evenN=4:id:patF}  &=\pa_{tt} E+\underbrace{(\varphi''(u)-\varphi''(u_{ap}))(\pa_t u)^2}_{=I}+\underbrace{\varphi''(u_{ap})\pa_t v (\pa_t u+\pa_t u_{ap})}_{=II}\\
\nonumber&\qquad +\underbrace{(\varphi'(u)-\varphi'(u_{ap}))\pa_{tt}u}_{=III}+\underbrace{\varphi'(u_{ap})\pa_{tt}v}_{=IV} .
\end{align}
Let $r>R+|t|$. For $I$, using \eqref{even:bd:varphi6}, then \eqref{N=4even:bdv} and \eqref{evenN=4:bd:patu}:
\begin{equation}\label{evenN=4:bd:patFinter1} 
|I| \lesssim r^{N-4} |v||\pa_t u|^2\lesssim r^{N-4} \frac{q+|\tilde c|(|\cbf|+\tilde c^2+\frac{1}{M^{1/2}})}{r^{N/2-3/2}}\frac{\epsilon^{'2}}{r^N} \lesssim \frac{\epsilon^{'2}q+\epsilon^{'2}|\tilde c|(|\cbf|+\tilde c^2+\frac{1}{M^{1/2}})}{r^{N/2+5/2}}.
\end{equation}
For $II$, using \eqref{even:bd:varphi5}, then \eqref{N=4even:bdpatv}, \eqref{evenN=4:bd:patu} and \eqref{evenN=4:bd:patuap}:
\begin{equation}\label{evenN=4:bd:patFinter2} 
|II| \lesssim r^{\frac{N-6}{2}} |\pa_t v|(|\pa_t u|+|\pa_t u_{ap}|) \lesssim r^{\frac{N-6}{2}} \frac{q}{r^{N/2-1/2}}\frac{\epsilon'}{r^{N/2}}=\frac{\epsilon' q}{r^{N/2+5/2}}.
\end{equation}
For $III$, we have for $r>R+|t|$ using \eqref{even:bd:varphi4}, \eqref{N=4even:bdv} and \eqref{evenN=4:bd:pattu} that:
\begin{equation}\label{evenN=4:bd:patFinter2bis} 
|III|  \lesssim r^{\frac{N-6}{2}} |v| |\pa_{tt} u|\lesssim r^{\frac{N-6}{2}}\frac{q}{r^{N/2-3/2}}\frac{\epsilon'}{r^{N/2+1}}=\frac{\epsilon'q}{r^{N/2+5/2}}.
\end{equation}
Combining \eqref{evenN=4:bd:patFinter1}, \eqref{evenN=4:bd:patFinter2} and \eqref{evenN=4:bd:patFinter2bis}, and applying \eqref{even:controle:bd:universal2} we get for $\bar R\geq R+|\bar t|$:
\begin{equation}\label{evenN=4:bd:patFinter3} 
\| I+II+III\|_{L^1L^2_{\bar R+|t-\bar t|}}\lesssim \frac{\epsilon^{'}q+\epsilon^{'2}|\tilde c|(|\cbf|+\tilde c^2+\frac{1}{M^{1/2}})}{\bar R^{3/2}}.
\end{equation}
For $IV$, we have for $r>R+|t|$ using \eqref{even:bd:varphi3} and \eqref{evenN=4:bd:uap} that
$$
|IV|\lesssim r^{\frac{N-6}{2}} |u_{ap}| |\pa_{tt} v|\lesssim  r^{\frac{N-6}{2}}\frac{\epsilon'}{r^{N/2-1}} |\pa_{tt} v|=\frac{\epsilon' |\pa_{tt}v|}{r^{2}}.
$$
Hence by \eqref{even:controle:bd:universal5} and \eqref{N=4even:bdpattv}:
\begin{equation}\label{evenN=4:bd:patFinter5} 
\| IV\|_{L^1L^2_{\bar R+|t-\bar t|}}\lesssim \epsilon' \| \frac{1}{(\bar R+|t-\bar t|)^{1/2}r^{2}}\|_{L^1L^\infty_{\bar R+|t-\bar t|}}\| (\bar R+|t-\bar t|)^{1/2} \pa_{tt} v\|_{L^\infty L^2_{\bar R+|t-\bar t|}}\lesssim \frac{\epsilon' q}{\bar R^{3/2}}.
\end{equation}
Injecting \eqref{evenN=4:bd:pattE}, \eqref{evenN=4:bd:patFinter3} and \eqref{evenN=4:bd:patFinter5} in \eqref{evenN=4:id:patF} we obtain the desired inequality \eqref{evenN=4:bd:patF}.\\

\noindent \textbf{Step 4}. \emph{End of the proof}. We recall that $\pa_t v$ is a solution of $\Box \pa_t v=F$ that is non-radiative for $r>R+|t|$, and that satisfies $\vec v(0)=\Pi^\perp_{\mathcal H,R}\vec v(0)$ by \eqref{evenN=4:id:orthogonalitypatv}. Given that $F\in W^{'1/2}_R$ and $\pa_t F\in W^{'3/2}_R$, $v$ coincides with the solution provided by Proposition \ref{pr:nonradiativeforcingmaineven}, which shows:
$$
\frac{q}{R^{1/2}}=\| (\pa_t v,\pa_{tt}v)\|_{W_R^{1/2}}\lesssim \sup_{\tilde R>R+|t|}\tilde R^{3/2}\| F(t)\|_{L^2_{\tilde R}}+\| \pa_t F\|_{W_R^{'3/2}}\lesssim  \frac{\epsilon'q+|\tilde c|\left(|\cbf|_R+|\tilde c|^2+\frac{1}{M^{1/2}}\right)}{ R^{1/2}}
$$
Hence $ q\lesssim |\tilde c|\left(|\cbf|_R+|\tilde c|^2+\frac{1}{M^{1/2}}\right)$. Injecting \eqref{evenN=4:id:defM} shows $q\lesssim |\tilde c|\left(|\cbf|_R+\tilde c^2\right)$. Hence \eqref{evenN=4:bd:u-uap1} using \eqref{evenN4:controle:id:q}, \eqref{N=4even:bdv} and \eqref{evenN=4:id:defM}.

\end{proof}

Thanks to Lemma \ref{evenN=4:lem:approximation}, one can estimate how the parameters $\cbf$ and $\tilde c$ depend on $R$, allowing us to show $\tilde c[R_2]=0$.

 \begin{lemma} \label{evenN=4:lem:dyadicdifferences}

For all $R_2\leq R_3\leq R_4\leq 10 R_3$, there holds:
\begin{align}
\label{evenN=4:bd:cbf2-cbf1} & \cbf[R_4]=\cbf[R_3]+O_{|\cdot |_{R_3}}(|\tilde c[R_3]|(|\tilde c[R_3]|^2+|\cbf [R_3]|_{R_3})),\\
\label{evenN=4:bd:tildec2-tildec1}& \tilde c[R_4]=\tilde c[R_3]+O(|\tilde c[R_3]|(|\tilde c[R_3]|^2+|\cbf [R_3]|_{R_3})).
\end{align}

\end{lemma}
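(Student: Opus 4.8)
The plan is to follow closely the proof of Lemma~\ref{even:lem:dyadicdifferences}, which becomes simpler here: since $N\equiv 4\mod 4$ we have $\beta=0$ in \eqref{def:beta}, so by \eqref{even:id:tidephigeneralised} the corrected resonance $\tilde\phi[\tilde c,R]=\tilde c\phi_{m_0+1}+\tilde c^2\psi$ does not depend on $R$ (with $\psi=r^{1-N/2}\tilde p(t/r)$), no logarithmic factors occur, and by \eqref{evenN=4:id:defM} one simply has $M[R]=\tilde c[R]^{-4}$. Write $\cbf_i=\cbf[R_i]$, $\tilde c_i=\tilde c[R_i]$, $M_i=M[R_i]$ for $i=3,4$.

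First I would record the telescoping identity obtained by subtracting the approximate decompositions of $u$ at scales $R_3$ and $R_4$. Using $u_{ap}[\cbf,\tilde c,R]=a_F[\cbf]+\tilde a[\cbf,R]+\chi_{MR}\tilde\phi[\tilde c]$ (see \eqref{even:id:defuap}), the scale invariance $a_F[\cbf_4]-a_F[\cbf_3]=a_F[\cbf_4-\cbf_3]$, and $\tilde\phi[\tilde c_4]-\tilde\phi[\tilde c_3]=(\tilde c_4-\tilde c_3)\phi_{m_0+1}+(\tilde c_4^2-\tilde c_3^2)\psi$, one obtains, exactly as in \eqref{even:id:diffcR2-cR1} but with the $\beta$-term dropped,
\begin{equation*}
a_F[\cbf_4-\cbf_3]+(\tilde c_4-\tilde c_3)\phi_{m_0+1}=F,
\end{equation*}
where $F$ is the finite sum of $f_4:=u_{ap}[\cbf_4,\tilde c_4,R_4]-u$, $-f_3:=u-u_{ap}[\cbf_3,\tilde c_3,R_3]$, $g:=\tilde a[\cbf_3,R_3]-\tilde a[\cbf_4,R_4]$, the cutoff-mismatch terms $-(\chi_{M_4R_4}-\chi_{M_3R_4})\tilde\phi[\tilde c_4]$, $-(\chi_{M_3R_4}-\chi_{M_3R_3})\tilde\phi[\tilde c_3]$, $-(\chi_{M_3R_4}-1)(\tilde c_4-\tilde c_3)\phi_{m_0+1}$, and the quadratic-correction term $-\chi_{M_3R_4}(\tilde c_4^2-\tilde c_3^2)\psi$.

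Then I would apply the map $\Psi=(\Psi_0,\Psi_1)$ of Lemma~\ref{lem:N=4defctildec}: since $\Psi\circ\Phi$ is a linear isomorphism with the bounds \eqref{evenN=4:bd:linearctildec}--\eqref{evenN=4:bd:linearctildec2}, the identity gives $(\cbf_4-\cbf_3,\tilde c_4-\tilde c_3)=(\Psi\circ\Phi)^{-1}(\Psi(F))$, hence $|\tilde c_4-\tilde c_3|\lesssim R_4\|\pa_tF(0)\|_{\dot H^1_{R_4}}$ and $|\cbf_4-\cbf_3|_{R_4}\lesssim R_4^{N/2-1}|F(0,R_4)|+R_4\|\pa_tF(0)\|_{\dot H^1_{R_4}}+R_4\|\pa_{tt}F(0)\|_{L^2_{R_4}}$. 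I would bound each of these three quantities term by term: the $f_i$-contributions are $\lesssim|\tilde c_i|(|\cbf_i|_{R_i}+\tilde c_i^2)$ directly from the approximation Lemma~\ref{evenN=4:lem:approximation} (using $R_3\le R_4\le 10R_3$, so weighted norms at radius $R_4$ are controlled by those at radius $R_3$); the $g$-contribution is $\lesssim|\cbf_4-\cbf_3|_{R_3}(|\cbf_3|_{R_3}+|\cbf_4|_{R_3})+\epsilon'|\cbf_4-\cbf_3|_{R_3}$ by the differentiability of $\cbf\mapsto\tilde a[\cbf,R]$ from Proposition~\ref{pr:constructionnonradia} together with $R_3\approx R_4$ (as in \eqref{even:bd:linearctildectech5}), after splitting $g=(\tilde a[\cbf_3,R_3]-\tilde a[\cbf_4,R_3])+(\tilde a[\cbf_4,R_3]-\tilde a[\cbf_4,R_4])$; the cutoff-mismatch terms are supported in $\{r+|t|\gtrsim\min(M_3,M_4)R_3\}$ and, since $M_i^{-1}=\tilde c_i^4$, are negligible except for the contribution $\lesssim\epsilon'|\tilde c_4-\tilde c_3|$ coming from the term carrying the factor $\tilde c_4-\tilde c_3$; and the quadratic-correction term contributes $\lesssim|\tilde c_4^2-\tilde c_3^2|\lesssim\epsilon'|\tilde c_4-\tilde c_3|$ since $|\tilde c_i|\lesssim\epsilon'$ by \eqref{evenN=4:bd:cbftildectildeR}. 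Collecting these bounds and absorbing into the left-hand side all terms carrying a prefactor $\lesssim\epsilon'$ (licit by \eqref{evenN=4:bd:cbftildectildeR} and $|\cdot|_{R_3}\approx|\cdot|_{R_4}$) gives $|\cbf_4-\cbf_3|_{R_4}+|\tilde c_4-\tilde c_3|\lesssim|\tilde c_3|(|\cbf_3|_{R_3}+\tilde c_3^2)+|\tilde c_4|(|\cbf_4|_{R_4}+\tilde c_4^2)$; substituting $\tilde c_4=\tilde c_3+(\tilde c_4-\tilde c_3)$, $\cbf_4=\cbf_3+(\cbf_4-\cbf_3)$ and absorbing once more yields $|\cbf_4-\cbf_3|_{R_4}+|\tilde c_4-\tilde c_3|\lesssim|\tilde c_3|(|\cbf_3|_{R_3}+\tilde c_3^2)$, which is \eqref{evenN=4:bd:cbf2-cbf1}--\eqref{evenN=4:bd:tildec2-tildec1}.

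The step requiring the most care is not deep: it is the bookkeeping in the last paragraph, in particular the fact that the quadratic-correction term and one of the cutoff terms carry coefficients involving the unknown difference $\tilde c_4-\tilde c_3$ itself, so that one must verify their prefactors ($|\tilde c_3|+|\tilde c_4|$, resp.\ $\tilde c_3^4$) are $\lesssim\epsilon'$ before moving them to the left-hand side, together with the harmless scale mismatch in $g$, handled by the splitting indicated above. Everything else parallels the proof of Lemma~\ref{even:lem:dyadicdifferences}, the engine here being Lemma~\ref{evenN=4:lem:approximation} in place of Lemma~\ref{even:lem:approximation}.
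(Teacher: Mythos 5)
Your proposal follows exactly the route the paper intends: replay the proof of Lemma~\ref{even:lem:dyadicdifferences} with $\beta=0$, using the decomposition identity, the invertible map $\Psi\circ\Phi$ from the Claim of Lemma~\ref{lem:N=4defctildec} with its bounds \eqref{evenN=4:bd:linearctildec}--\eqref{evenN=4:bd:linearctildec2}, the approximation bound \eqref{evenN=4:bd:u-uap1} for the $f_i$-terms, and $M_i^{-1}=\tilde c_i^4$ for the cutoff-mismatch terms, and then absorb the $\epsilon'$-small contributions. Your telescoping identity is correct (I checked it directly against $u_{ap}[\cbf_4,\tilde c_4,R_4]-u_{ap}[\cbf_3,\tilde c_3,R_3]$ and it balances), and the treatment of the $f_i$-terms, the cutoff-mismatch terms, and the $\psi$-term is in line with what the paper does in \eqref{even:bd:diffcR2-cR1:tech1}--\eqref{even:bd:diffcR2-cR1:tech5}.

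However there is one genuine mis-step: your handling of the $g$-term. After the split
$g=\bigl(\tilde a[\cbf_3,R_3]-\tilde a[\cbf_4,R_3]\bigr)+\bigl(\tilde a[\cbf_4,R_3]-\tilde a[\cbf_4,R_4]\bigr)$,
the first piece is correctly controlled by the same-$R$ differentiability estimate \eqref{even:bd:linearctildectech5}. But you then assert that the second piece contributes $\lesssim\epsilon'|\cbf_4-\cbf_3|_{R_3}$; this cannot be right as stated, because $\tilde a[\cbf_4,R_3]-\tilde a[\cbf_4,R_4]$ has \emph{no dependence whatsoever on $\cbf_4-\cbf_3$} and hence cannot be bounded by a quantity that vanishes when $\cbf_4=\cbf_3$. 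To see the difficulty is real: the two corrections are built with different orthogonality constraints ($\Pi^\perp_{\mathcal H,R_3}$ at $r>R_3$ versus $\Pi^\perp_{\mathcal H,R_4}$ at $r>R_4$; in particular $\tilde a[\cbf_4,R_3](0,R_3)=0$ but not at $R_4$), so the second piece is generically nonzero and has size $\sim|\cbf_4|_{R_3}^{1+\delta}$, which is \emph{not} of the form $|\tilde c_3|(\cdots)$ required by \eqref{evenN=4:bd:cbf2-cbf1}--\eqref{evenN=4:bd:tildec2-tildec1}. A correct treatment of this piece needs its own argument — the paper's own (omitted) proof likewise invokes \eqref{even:bd:linearctildectech5} for $g=\tilde a[\cbf_2,R_2]-\tilde a[\cbf_3,R_3]$ with two different $R$'s and leaves this point implicit — so the subtlety is not of your own making, but your claimed bound is the wrong form to close the estimate, and you should either establish that $\Psi$ applied to $\tilde a[\cbf,R_3]-\tilde a[\cbf,R_4]$ is of admissible size (e.g.\ via a cancellation coming from the explicit structure of the projections, or by showing that this $a_F$-shift is incorporated into the definition of $\cbf_4$ in a way you can track), or flag this as a point requiring a further lemma.
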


\begin{proof}

The proof of Lemma \ref{evenN=4:lem:dyadicdifferences}, relying on the bound \eqref{evenN=4:bd:u-uap1} and of the Claim of the proof of Lemma \ref{lem:N=4defctildec}, is very similar to the proof of Lemma \ref{even:lem:dyadicdifferences}. We omit it.

\end{proof}

\begin{lemma} \label{lemN=4:criticalnondispersivestabilitycontrad}

There holds $\tilde c[R_2]=0$. In particular, as a consequence of Lemma \ref{evenN=4:lem:approximation} and of finite speed of propagation we get that for all $|x|>R_1+|t|$:
$$
u(t,x)=a[\cbf(R_1),R_1](t,x).
$$

\end{lemma}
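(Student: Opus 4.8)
\textbf{Proof plan for Lemma \ref{lemN=4:criticalnondispersivestabilitycontrad}.}

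The plan is to mirror, in the simpler setting $N\equiv 4\bmod 4$, the argument of Lemma \ref{lem:criticalnondispersivestabilitycontrad}: interpret the dyadic recurrences of Lemma \ref{evenN=4:lem:dyadicdifferences} as difference inequalities for the energy of $u$ in dyadic annuli, and then argue that a nonzero $\tilde c[R_2]$ forces the initial data to carry infinite energy. Set $R_k=2^{8+k}R_0$ for $k\geq 1$ (so $R_1=R_2=256R_0$ in the notation of this subsection, i.e. $R_k\geq R_2$), and write $\cbf_k=\cbf[R_k]$, $\tilde c_k=\tilde c[R_k]$, $M_k=M[R_k]$. The difference from the $N\equiv 6\bmod 4$ case is that no logarithmic growth appears (compare \eqref{evenN=4:bd:tildec2-tildec1} with \eqref{even:bd:tildec2-tildec1}), so $\beta=0$ throughout and the recurrence for $\tilde c_k$ is cleaner: $|\tilde c_{k+1}|\geq |\tilde c_k|(1-C(|\cbf_k|_{R_k}+|\tilde c_k|^2))$.

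First I would establish the $\ell^2$ bound $\sum_{k\geq 1}(|\cbf_k|_{R_k}^2+\tilde c_k^2)\lesssim \|\vec u(0)\|_{\Hc_{16R_0}}^2\lesssim \epsilon'^2$, exactly as in Step 1 of the proof of Lemma \ref{lem:criticalnondispersivestabilitycontrad}: decompose $u_0$ (resp. $u_1$) on the annulus $\{R_k\le r\le R_{k+1}\}$ as the sum of the non-radiative profile tails $\sum c_{2l}[R_k]r^{2l+2-N}$ (resp. the odd-index ones), the error $\tilde u_0=-\tilde a[\cbf_k,R_k](0)$ controlled by \eqref{bd:tildeaWkappaN} and Hardy, and the remainder $\bar u_0=u_{ap}[\cbf_k,\tilde c_k,R_k](0)-u_0$ controlled by \eqref{evenN=4:bd:u-uap1}. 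Using linear independence of $(r^{2-N},\dots,r^{2l_0+2-N})$ and of $(r^{1-N},\dots,r^{2l_1+1-N},r^{-N/2})$ after rescaling to $\{1\le r\le 2\}$, together with $\tilde c_k$ appearing in $\partial_t u$, one gets $|\cbf_k|_{R_k}+|\tilde c_k|\lesssim \|u_0/r\|_{L^2(R_k\le r\le R_{k+1})}+\|u_1\|_{L^2(R_k\le r\le R_{k+1})}+(|\cbf_k|_{R_k}+|\tilde c_k|)^2$; absorb the quadratic term by \eqref{evenN=4:bd:cbftildectildeR}, square, sum, and apply Hardy. Next, the improved $\ell^1$ bound $\sum_{k\ge1}|\cbf_k|_{R_k}\lesssim\epsilon'$ follows from \eqref{evenN=4:bd:cbf2-cbf1} and $|\cbf|_{R_{k+1}}\le\tfrac12|\cbf|_{R_k}$: one gets $|\cbf_{k+1}|_{R_{k+1}}\le\tfrac23|\cbf_k|_{R_k}+C\tilde c_k^2$, hence $\sum_k|\cbf_k|_{R_k}\lesssim|\cbf_1|_{R_1}+\sum_k\tilde c_k^2$, finite by the $\ell^2$ bound. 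Then I would quote verbatim the elementary dichotomy of Step 3 of the proof of Lemma \ref{lem:criticalnondispersivestabilitycontrad}: if $x_{n+1}\ge(1-y_n)x_n$, $x_n,y_n\ge0$, $\sum y_n<1$, then either $x_1=0$ or $\inf_n x_n>0$.

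Finally, apply the dichotomy with $x_k=|\tilde c_k|$ and $y_k=C(|\cbf_k|_{R_k}+\tilde c_k^2)$; the $\ell^2$ and $\ell^1$ bounds give $\sum_k y_k<1$ for $\epsilon'$ small, so either $\tilde c_1=\tilde c[R_2]=0$ — which is the desired conclusion — or $\inf_k|\tilde c_k|=:c_0>0$. In the latter case the $\ell^2$ summability $\sum\tilde c_k^2<\infty$ is immediately contradicted, so $\tilde c[R_2]=0$. With $\tilde c[R_2]=0$ the error $E$ in \eqref{even:id:defE} vanishes and $u_{ap}[\cbf(R_2),0,R_2]=a[\cbf(R_2),R_2]$; Lemma \ref{evenN=4:lem:approximation} then gives $\vec u(0)=\vec a[\cbf(R_2),R_2](0)$ on $\{r>R_2\}$, and finite speed of propagation (together with the fact that both sides solve \eqref{eq:nonlinearwaveintro} and are non-radiative, so are determined by their Cauchy data on $\{r>R_2\}$) yields $u(t,x)=a[\cbf(R_2),R_2](t,x)=a[\cbf(R_1),R_1](t,x)$ for all $|x|>R_1+|t|$, after adjusting the scale parameter via Proposition \ref{pr:constructionnonradia} and the inverse function theorem as in the proof of Proposition \ref{pr:uniquenesseven}. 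The main obstacle is bookkeeping: making sure the quadratic remainders in Lemma \ref{evenN=4:lem:dyadicdifferences} are genuinely $O(|\tilde c_k|(\tilde c_k^2+|\cbf_k|_{R_k}))$ uniformly (this is where the $N\equiv4\bmod4$ cancellation removing the $\beta$-logarithm is essential), so that the sequence $y_k$ is summable rather than merely bounded — everything else is a transcription of Subsection \ref{subsec:N=6}.
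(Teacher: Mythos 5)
Your proposal reproduces, with more detail spelled out, exactly the paper's proof: establish the $\ell^2$ bound on $(|\cbf_k|_{R_k},\tilde c_k)$ as in Step 1 of the proof of Lemma~\ref{lem:criticalnondispersivestabilitycontrad}, note that the recurrences of Lemma~\ref{evenN=4:lem:dyadicdifferences} are the $\beta=0$ case of \eqref{even:bd:cbf2-cbf1}--\eqref{even:bd:tildec2-tildec1}, and then run Steps 2--4 of that proof verbatim (the $\ell^1$ bound on $|\cbf_k|_{R_k}$, the dichotomy for the recurrence, and the contradiction with $\ell^2$ summability). Modulo a minor indexing typo ($R_k=2^{8+k}R_0$ gives $R_1=512R_0\neq R_2$; you evidently intended $R_k=2^{7+k}R_0$ or to start at $k=0$), the argument is correct and identical in substance to the paper's.
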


\begin{proof}

For $k\geq 0$ let $\tilde c_k=\tilde c[2^kR_2]$ and $\cbf_{k}=\cbf[2^kR_2]$. The first part of the proof is to establish that $\tilde c_k,|\cbf_k|_{R_k}\in \ell^2$ with $\sum_{k\geq 1}^\infty |\cbf_k|_{R_k}^2+|\tilde c_k|^2 \lesssim \| \vec u(0)\|_{\mathcal H_{16R_0}}^2\lesssim \epsilon^{'2}$. The proof of this bound is very similar to Step 1 of the proof of Lemma \ref{lem:criticalnondispersivestabilitycontrad}, relying on \eqref{evenN=4:bd:u-uap1}; we omit it. Then, notice that from \eqref{evenN=4:bd:cbf2-cbf1} and \eqref{evenN=4:bd:tildec2-tildec1}, $\cbf$ and $\tilde c$ satisfy \eqref{even:bd:cbf2-cbf1} and \eqref{even:bd:tildec2-tildec1} with $\beta=0$. Hence Step 2, 3 and 4 of the proof of Lemma \ref{lem:criticalnondispersivestabilitycontrad} can be applied without modifications, showing $\tilde c[R_2]=0$ as desired.

\end{proof}

\section{Maximal solution}
\label{sec:maximal}

In this section we prove Theorem \ref{th:maximal}. 
\subsection{Extension of the solution}
Theorem \ref{th:maximal} will follow from the following extension result:
\begin{proposition}[Extension of non-radiative solutions]
\label{pr:extension}
Assume that the nonlinearity $\varphi$ is of the form \eqref{eq:nonlinearityanalytic} and that $N\geq 4$, or that 
$\varphi$ is of the form \eqref{eq:nonlinearitypower} and $N\in\{4,6\}$ or $N$ is odd. 
 Let $R>0$ and $a$ be a radial nonradiative solution of \eqref{eq:nonlinearwaveintro} on $\{|x|>R+|t|\}$, with $
\vec{a}\in C^0(\Rb,\Hc_{R+|t|})$.
 Then there exists $\widetilde{R}<R$ and a radial nonradiative solution $b$ of \eqref{eq:nonlinearwaveintro} on $\{|x|>\widetilde{R}+|t|\}$ such that $a(t,x)=b(t,x)$ if $|x|>R+|t|$. 
\end{proposition}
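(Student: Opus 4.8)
\textbf{Proof strategy for Proposition \ref{pr:extension}.} The plan is to reduce the extension statement to the small-data theory (Theorem \ref{th:main}, or rather its building blocks: Proposition \ref{pr:constructionnonradia} and the uniqueness Propositions \ref{pr:uniqueness1}, \ref{pr:uniquenesseven}) by a compactness/rescaling argument at the tip of the cone. First, I would let $R_*=\inf\{\rho\geq 0: a \text{ extends as a non-radiative solution on } \{|x|>\rho+|t|\}\}$, so it suffices to show that if a non-radiative solution $a$ is given on $\{|x|>R+|t|\}$ with $\vec a\in C^0(\Rb,\Hc_{R+|t|})$, then it can be pushed down to some $\widetilde R<R$. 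The point is that on a thin annular cone $\{R+|t|<|x|<(1+\theta)R+|t|\}$ the trace data of $a$ at time $0$, namely $(a(0),\partial_t a(0))$ restricted to $R<|x|<(1+\theta)R$, has small energy once $\theta$ is small (by absolute continuity of the integral $\int |\nabla_{t,x}a(0)|^2$), and more importantly the radiation fields $G_\pm[a]$ restricted to $[R,(1+\theta)R]$ are small in $L^2$; this is exactly the regime where the fixed-point construction of small non-radiative solutions applies.

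The key steps, in order: (i) Fix a scale $R'\in(R_*,R)$ and consider the solution $a$ on $\{|x|>R'+|t|\}$. Using finite speed of propagation and Corollary \ref{cor:inhomogeneousradiation} (applied to the equation $\Box a=\varphi(a)$ with forcing $\indic(|x|>R'+|t|)\varphi(a)$, which lies in $L^1L^2$ by the Strichartz/weighted-Strichartz bounds of Lemmas \ref{lem:weighted_Strichartz}--\ref{lem:lipschitz} and the $C^0(\Rb,\Hc_{R'+|t|})$ assumption), produce radiation profiles $G_\pm[a]\in L^2([R',\infty))$ satisfying the relation \eqref{T11}. (ii) Apply Proposition \ref{pr:nonradiative_compactforcing} (or, for the purposes of the nonlinear problem, directly run the fixed-point scheme of Proposition \ref{pr:constructionnonradia} rescaled to scale $R'$): since $\|G_\pm[a]\|_{L^2([R',(1+\delta_0)R'])}$ and $\|\indic(|x|>R'+|t|)\varphi(a)\|_{L^1L^2}$ localized near the tip are small for $R'$ close to $R_*$ — here one uses that $a$ being a fixed finite-energy solution forces $\|\vec a(0)\|_{\Hc_{R'}}$ bounded and the nonlinear contribution $\lesssim \|\vec a(0)\|_{\Hc_{R'}}^{1+\delta}$ — we can construct a non-radiative solution $b$ on $\{|x|>\widetilde R+|t|\}$ for some $\widetilde R<R'$ whose data and radiation fields match those of $a$ on the overlap region. (iii) Invoke the uniqueness half of Theorem \ref{th:main} (Proposition \ref{pr:uniqueness1} in odd dimensions and the subcritical range, Proposition \ref{pr:aprioribound}+Subsection \ref{sub:uniqueness2} in the remaining odd-dimensional power case, Proposition \ref{pr:uniquenesseven} in even dimensions) on the thin cone $\{|x|>R'+|t|\}$ to conclude $b=a$ there, hence $b$ is a genuine extension of $a$ past $R'$, and in particular past $R$ if one starts the argument with $R'$ chosen just below $R$.

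\textbf{The main obstacle.} The delicate point is establishing the smallness needed to run the small-data construction at a scale $R'$ that is \emph{strictly below} the scale $R$ at which $a$ is originally given — i.e.\ showing $R_*<R$ rather than merely $R_*\leq R$. The quantity one must control is $\sup_{t}\int_{|x|>R'+|t|}|\nabla_{t,x}a(t,x)|^2dx$ as $R'\downarrow R_*$, and a priori this need not be small: it is here that the blow-up alternative in Theorem \ref{th:maximal}(ii) enters, so the correct statement is not unconditional extension but rather: \emph{either} $a$ extends to a strictly larger cone \emph{or} the $\Hc_{R_*}$-norm (or the $S$-norm) blows up as $R'\downarrow R_*$. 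Concretely, I would argue by contradiction: if $\vec a(0)\in\Hc_{R_*}$ and $a\in S(\{|x|>R_*+|t|\})$, then by the local well-posedness Proposition \ref{pr:well-posedness} applied at scale $R_*$ (prescribing data $(a(0),\partial_t a(0))$ on $|x|>R_*$), one gets a solution on $\{|x|>R_*+|t|, |t|<T_\pm\}$; combined with the forcing estimate and the radiation-matching above one upgrades this to a non-radiative solution on $\{|x|>R_*-\eta+|t|\}$ for some $\eta>0$, contradicting minimality of $R_*$. The technical care is in interlocking (a) the $L^1L^2$-bound on the localized nonlinearity (needing the pointwise decay $|a|\lesssim |x|^{(1-N)/2}$ or, in its absence, the Bulut--Czubak--Li--Pausader--Zhang well-posedness theory as flagged in the remark after Theorem \ref{th:maximal}), (b) the matching of radiation profiles on the overlap annulus, which must be done simultaneously for both $\pm$ using \eqref{construction:id:radiationrelation}, and (c) propagating the non-radiative property — i.e.\ the vanishing of $G_\pm$ on $[R_*-\eta,\infty)$ — through the fixed point, which is precisely the content already packaged in Propositions \ref{pr:nonradiativeforcingmainodd}--\ref{pr:nonradiativeforcingmaineven}.
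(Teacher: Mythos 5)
There is a genuine gap, and it concerns where the smallness that closes the fixed point actually comes from. Your plan is to apply the small-data machinery (Proposition~\ref{pr:constructionnonradia} for existence, Propositions~\ref{pr:uniqueness1} and~\ref{pr:uniquenesseven} for uniqueness) at a scale $R'$ slightly below $R$, and you correctly observe that this requires $\sup_t\int_{|x|>R'+|t|}|\nabla_{t,x}a|^2\,dx$ to be small — and then correctly observe, in your ``main obstacle'' paragraph, that this quantity need not be small. At that point you retreat to a conditional ``either extends or blows up'' assertion and invoke Theorem~\ref{th:maximal}(ii). But Proposition~\ref{pr:extension} is unconditional (it only assumes $\vec a\in C^0(\Rb,\Hc_{R+|t|})$), and the blow-up alternative of Theorem~\ref{th:maximal} is \emph{derived from} Proposition~\ref{pr:extension}, so importing it here is circular. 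Moreover your contradiction argument hinges on the phrase ``combined with the forcing estimate and the radiation-matching above one upgrades this to a non-radiative solution on $\{|x|>R_*-\eta+|t|\}$,'' which is precisely the extension statement under discussion and is left unargued.

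The mechanism the paper actually uses is different and requires no smallness of the data at all. Writing $b=\tilde a+u$, one first builds $\tilde a$ as a solution of the wave equation with forcing $\varphi(a)\indic(|x|>R+|t|)$ on the bigger cone $\{|x|>\widetilde R+|t|\}$, arranged via Proposition~\ref{pr:T10} and Corollary~\ref{cor:inhomogeneousradiation} to be non-radiative and to agree with $a$ for $|x|>R+|t|$. Then $u$ must solve $\Box u=\varphi(\tilde a+u)\indic(\widetilde R+|t|<|x|<R+|t|)$, a problem whose forcing is supported in a \emph{thin annular region}. The fixed point is run in the pointwise-decay space $V(\widetilde R)$ using Proposition~\ref{pr:nonradiative_compactforcing}, and the crucial estimate is
$$\big\|\varphi(\tilde a+v)\indic(\widetilde R+|t|<|x|<R+|t|)\big\|_{L^1L^2}\lesssim \frac{(R-\widetilde R)^{1/2}}{R^{N/(N-2)}}\,\|\tilde a+v\|_{V(\widetilde R)}^{\frac{N+2}{N-2}},$$
so the contraction constant $\lesssim (R-\widetilde R)R^{-N/(N-2)}\big(\|\tilde a\|_{V(\widetilde R)}^{4/(N-2)}+1\big)$ is made small by choosing $\widetilde R$ close to $R$ while $\|\tilde a\|_{V(\widetilde R)}$ stays merely bounded. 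The smallness parameter is the width $R-\widetilde R$ of the annulus, not the size of $a$. The one genuine prerequisite, which you mention only in passing, is the pointwise bound $|a(t,r)|\lesssim r^{(1-N)/2}$; for odd $N$ with power nonlinearity this is obtained in the paper from Proposition~\ref{pr:aprioribound} at large radius and propagated inward by a one-dimensional Cauchy--Schwarz argument, and it is needed precisely to make the $V(\widetilde R)$-norm of $\tilde a$ finite.
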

\begin{proof}
We prove the result when $N\geq 3$ is odd and $\varphi(u)=|u|^{\frac{4}{N-2}}u$. We will sketch the proof of the other cases below.

\noindent\textbf{Step 1.}  \emph{Uniform pointwise bound for $a$.}

We consider 
$$ V(\widetilde{R})=\left\{u\in C^0(\{(t,r),\; t\in \Rb,\; r\ge \widetilde{R}+|t|\}),\; \|u\|_{V_{\widetilde{R}}}<\infty\right\}.$$
where 
$$\|u\|_{V_{\widetilde{R}}}=
\sup_{\substack{t\in \Rb\\ r>\widetilde{R}+|t|}}|u(t,r)|r^{\frac{N-1}{2}}.$$
We first prove that $a\in V(R)$. Indeed, by  Proposition \ref{pr:aprioribound}, $a\in V(R')$ for some large $R'>R$.


Next, we consider $r$ such that $R+|t|<r<R'+|t|$. Then
\begin{multline}
\label{argumentVR'}
 \left|a(t,r)-a(t,R'+|t|)\right|=\left|\int_{r}^{R'+|t|}\partial_{\rho}a(t,\rho)d\rho\right|\\
 \leq \left( \int_r^{R'+|t|}\left(\partial_{\rho}a(t,\rho)\right)^2\rho^{N-1}d\rho \right)^{\frac{1}{2}}\left(\int_{r}^{R'+|t|}\frac{d\rho}{\rho^{N-1}}\right)^{\frac 12} \lesssim \frac{(R'-R)^{1/2}}{r^{\frac{N-1}{2}}} \|\partial_ra(t)\|_{L^2_{R+|t|}},
\end{multline}
and it follows from the bound $|a(t,R'+|t|)|\lesssim (R'+|t|)^{1-\frac{N}{2}}$, and the fact that 
$$\sup_{t\in \Rb}\|\vec{a}\|_{\Hc_{R}}<\infty,$$
that follows from the assumptions on $a$, that there exists a constant $C=C(a)$ such that
$$ R+|t|<r<R'+|t|\Longrightarrow |a(t,r)|\leq C r^{-\frac{N-1}{2}}.$$
Since $a\in V(R')$ we deduce $a\in V(R)$.

Note that the fact that $a\in V(R)$ implies $\varphi(r,a)\in L^1L^2(\{r>R+|t|\})$. Indeed, it implies
$$|\varphi(a)|\lesssim r^{-\frac{(N-1)(N+2)}{2(N-2)}}.$$
Noting that $|x|^{-\alpha}\in L^1L^2(\{r>R+|t|\})$ if and only if $\alpha>\frac{N}{2}+1$, we obtain that $\varphi(r,a)\in L^1L^2(\{r>R+|t|\})$.

\medskip

\noindent\textbf{Step 2.} \emph{Extension of the solution and reduction to a fixed point problem.}

 We let $\as$ be the solution of 
\begin{equation*} 
\left\{ 
\begin{array}{l l} \pa_t^2 \as-\Delta \as=\varphi(a) \indic(|x|>R+|t|),\\
\vec \as(0)=(\as_0,\as_1),
\end{array}
\right.
\end{equation*}
 where 
 $$
 \begin{cases}
  \as_0(r)=a_0(r),\; \as_1(r)=a_1(r) &\text{ if }r>R\\
 \as_0(r)=a_0(R),\; \as_1(r)=0 &\text{ if }r<R.
 \end{cases}
$$
 By Strichartz estimates, $\vec{\as}\in C^0(\Rb,\Hc)$, $\as\in S(\Rb\times \Rb^N)$. 
Since by finite speed of propagation $\as(t,x)=a(t,x)$ for $|x|>R+|t|$ and $a$ is nonradiative in this region, we have 
\begin{equation}
 \label{T70}`
 \lim_{t\to\pm\infty} \int_{|x|>R+|t|} \left|\nabla_{t,x}\as(t,x)\right|^2dx=0.
\end{equation} 
We fix $\widetilde{R}>0$, with $\widetilde{R}<R$, and let $h$ be the solution of the free wave equation \eqref{eq:freewave} on $|x|>\widetilde{R}+|t|$ given by Proposition \ref{pr:T10}, such that $\vec{h}(0,r)=0$ for $r>R$ and 
$$\lim_{t\to\infty} \int_{|x|>\widetilde{R}+|t|}|\nabla_{t,x}(h-\as)(t,x)|^2dx=0.$$
Note that this is possible because of Corollary \ref{cor:inhomogeneousradiation} and \eqref{T70}. 

Letting $\tilde{a}=\as-h$, we see that 
$$ \tilde{a}(t,x)=a(t,x),\; |x|>R+|t|,\quad \vec{\tilde{a}}(0,x)=\vec{a}(0,x),\; |x|>R.$$
and 
$$ \partial_t^2\tilde{a}-\Delta \tilde{a}=\varphi(a)\indic(|x|>R+|t|)=\varphi(\tilde{a})\indic(|x|>R+|t|),\quad |x|>\widetilde{R}+|t|.$$
Furthermore, $(\tilde{a},\partial_t\tilde{a})\in C^0(\Rb,\Hc_{R+|t|})$, $\tilde{a}\in S(\{r>R+|t|\})$
Thus $b$ satisfies the conclusion of Proposition \ref{pr:extension} if and only if $u=b-\tilde{a}$ satisfies
\begin{gather}
\label{T80}
 \partial_t^2u-\Delta u=\varphi(\tilde{a}+u)-\varphi(\tilde{a})\indic(|x|>R+|t|),\quad |x|>\widetilde{R}+|t|\\
 \label{T80'}
 \sum_{\pm} \lim_{t\to\pm\infty} \int_{|x|>\widetilde{R}+|t|} |\nabla_{t,x}u(t,x)|^2dx=0\\
 \label{T81}
 (u_0,u_1):=\vec{u}(0)\in \Hc_{\widetilde{R}},\quad u(t,x)=0\text{ for } |x|>R+|t|.
\end{gather}
Due to \eqref{T81}, we can write \eqref{T80} as
\begin{equation}
\label{goal_pointfixe}
  \partial_t^2u-\Delta u=\varphi(\tilde{a}+u)\indic(\widetilde{R}+|t|<|x|<R+|t|).
\end{equation} 
Also, if $t\in \Rb$, since $h(t,r)=0$ for $r>R+|t|$, we have that $\tilde{a}=a$ in this region, and thus $\tilde{a}\in V(R)$. Using the same argument than in \eqref{argumentVR'}, we see that $\tilde{a}\in V(\widetilde{R})$.

We let $\mathcal{B}(\widetilde{R})$ be the unit ball of $V(\widetilde{R})$ centered at the origin.

We let $\Psi$ be that maps $v\in \mathcal{B}(\widetilde{R})$ to the solution $u$ of 
\begin{equation}
 \label{eq_pointfixe}
\partial_t^2u-\Delta u=\varphi(\tilde{a}+v)\indic(\widetilde{R}+|t|<|x|<R+|t|)
 \end{equation} 
given by Proposition \ref{pr:nonradiative_compactforcing}. Taking $\widetilde{R}$ close enough to $R$, we will prove that $\Psi$ is a contraction of $\mathcal{B}(\widetilde{R})$. Note that the conclusion of Proposition \ref{pr:nonradiative_compactforcing} imposes that the fixed point $u$ of $\Psi$ is in $\Hc_{\widetilde{R},R}$, which implies that $u$ satisfies \eqref{T81}, and that $u$ is nonradiative, which yields \eqref{T80'}. 

\medskip

\noindent\textbf{Step 3.} \emph{Contraction property.}
Recall that we have assumed that $\varphi$ is of the form \eqref{eq:nonlinearitypower}. Thus
\begin{multline}
 \label{calcul_L1L2} 
 \Big\|\varphi(\tilde{a}+v)\indic(\widetilde{R}+|t|<|x|<R+|t|)\Big\|_{L^1L^2}\\
 \lesssim \|\tilde{a}+v\|^{\frac{N+2}{N-2}}_{V(\widetilde{R})}\int_{\Rb} \left(\int_{\widetilde{R}+|t|}^{R+|t|} \frac{1}{r^{\frac{(N-1)(N+2)}{N-2}}}r^{N-1}dr\right)^{1/2}dt
 \\ 
 \lesssim \frac{(R-\widetilde{R})^{1/2}}{R^{\frac{N}{N-2}}}\|\tilde{a}+v\|^{\frac{N+2}{N-2}}_{V(\widetilde{R})}.
\end{multline}
By Proposition \ref{pr:nonradiative_compactforcing}, standard energy estimates and finite speed of propagation we deduce that if $v\in V(\widetilde{R})$, then the nonradiative solution $u$ of \eqref{eq_pointfixe} satisfies 
\begin{equation}
\label{bound_vuT}
\forall t\in \Rb,\quad \|\vec{u}(t)\|_{\Hc_{\widetilde{R}+|t|}}\lesssim \frac{(R-\widetilde{R})^{1/2}}{R^{\frac{N}{N-2}}}\left(\|v\|_{V(\widetilde{R})}^{\frac{N+2}{N-2}}+\|\tilde{a}\|_{V(\widetilde{R})}^{\frac{N+2}{N-2}}\right). 
\end{equation} 
Using that $u(t,r)=0$ for $r>R+|t|$, we obtain, for $\widetilde{R}+|t|<r<R+|t|$,
\begin{multline}
\label{u_in_V}
|u(t,r)|=\left|\int_R^r \partial_ru(t,r)dr\right|\lesssim \left(\int_{R+|t|}^{r}|\partial_ru(t,r)|^2r^{N-1}dr\right)^{1/2}\left(\int_{R+|t|}^r\frac{d\sigma}{\sigma^{N-1}}\right)^{1/2}\\ \lesssim (R-\widetilde{R})^{1/2} r^{-\frac{N-1}{2}}\|\vec{u}(r)\|_{\Hc_{R_+|t|}},
\end{multline}
Combining \eqref{bound_vuT} and \eqref{u_in_V}, we obtain that $u\in V(\widetilde{R})$ and  
\begin{equation}
 \label{StabilityPsi}
\|\Psi(v)\|_{V(\widetilde{R})}=\|u\|_{V(\widetilde{R})}\lesssim \frac{R-\widetilde{R}}{R^{\frac{N}{N-2}}}\left(\|v\|^{\frac{N+2}{N-2}}+\|\tilde{a}\|^{\frac{N+2}{N-2}}\right).
 \end{equation} 
Also, the pointwise inequality 
$$ \left|\varphi(\tilde{a}+v)-\varphi(\tilde{a}+w)\right| \leq |v-w|\left(|\tilde{a}|^{\frac{4}{N-2}}+|v|^{\frac{4}{N-2}}\right)$$
and the same computation as before leads to:
\begin{equation}
 \label{ContractionPsi}
\|\Psi(v)-\Psi(w)\|_{V(\widetilde{R})}\lesssim \frac{R-\widetilde{R}}{R^{\frac{N}{N-2}}}\left(\|v\|^{\frac{4}{N-2}}+\|\tilde{a}\|^{\frac{4}{N-2}}\right)(\|v-w\|_{V(\widetilde{R})}).
 \end{equation} 
Taking $\widetilde{R}<R$ close enough to $R$, we obtain that $\Psi$ is a contraction of $\mathcal{B}$, yielding a function $u$ satisfying \eqref{T80}, \eqref{T80'} and \eqref{T81}, and concluding the proof of the proposition.

%
%

\medskip

\noindent\textbf{Step 4.} \emph{Other cases.}
Assume that the nonlinearity is of the form \eqref{eq:nonlinearityanalytic}. Then the proof above can be adapted as follows:
\begin{itemize}
 \item The a priori estimate $|a|\lesssim r^{\frac{1-N}{2}}$ obtained in Step 1 is not necessary. Noting that the $a\in S(\{|x|>R+|t|\})$, and that $\vec{a}$ is uniformly bounded in $\Hc_{R+|t|}$ by the well-posedness theory outside wave cones for \eqref{eq:nonlinearwaveintro}, we obtain immediately from Lemma \ref{lem:lipschitz} (with $v=0$) that $\varphi(a)\in L^1L^2(\{|x|>R+|t|\})$.
 \item Using the radial Sobolev inequality and Lemma \ref{lem:lipschitz}, one can show that the map $\Psi$ constructed in Step 2 is a contraction of the unit ball of the space of functions $u$ such that 
 $$ u\in S(\{|x|>\widetilde{R}+|t|\}),\quad \sup_{\substack{t\in \Rb\\ r>t+\widetilde{R}}} r^{\frac{N}{2}-1}|u(t,r)|<\infty,$$
\end{itemize}
The same strategy works for the power nonlinearity $\varphi(u)=|u|^{\frac{4}{N-2}}u$, when $3\leq N\leq 6$, replacing the space $S$ by $L^{\frac{N+2}{N-2}}L^{\frac{2(N+2)}{N-2}}$, and proving the contraction property in the unit ball of the space $L^{\frac{N+2}{N-2}}L^{\frac{2(N+2)}{N-2}}(\{|x|>\widetilde{R}+|t|\})$.

\end{proof}
\subsection{Construction of the maximal solution}
We next complete the proof of Theorem \ref{th:maximal}. 

We consider a non-radiative solution $u$ of \eqref{eq:nonlinearwaveintro} defined for $|x|>R_0+|t|$. We define $R_*$ as the infimum of $R\in [0,R_0]$ such that there exists a non-radiative solution $\tilde{u}$ of  \eqref{eq:nonlinearwaveintro} defined for $|x|>R+|t|$, and such that $u=\tilde{u}$ for $|x|>R_0+|t|$. We note that if it exists, such a non-radiative solution $\tilde{u}$ is unique (by \cite[Proposition 3.8]{DuKeMe21a}). As a consequence, there exists a function $u_*$, defined for $|x|>R_*+|t|$, and such that for any $R>R_*$, the restriction of $u_*$ to $\{|x|>R+|t|\}$ satisfies 
$$\vec{u}_*\in C^0(\Rb,\Hc_{R+|t|}),\quad u_*\in S(\{|x|>R+|t|\})$$
and is a non-radiative solution of \eqref{eq:nonlinearwaveintro}. 

We will conclude the proof of Theorem \ref{th:maximal} by contradiction, assuming that $R_*>0$, $u_*\in S(\{|x|>R_*+|t|\})$ and $\vec{u}_*(0)\in \Hc_{R_*}$. We will prove that $u_*$ is a nonradiative solution of \eqref{eq:nonlinearwaveintro} for $|x|>R_*+|t|$, yielding a contradiction with Proposition \ref{pr:extension} and the definition of $R_*$. 

We let $\tilde{u}$ be the solution of \eqref{eq:nonlinearwaveintro} for $\{|x|>R_*+|t|\}$ with initial data $\vec{u}_*(0)$ (given by Proposition \ref{pr:well-posedness}), and $I$ its maximal interval of existence. By uniqueness in the Cauchy problem outside wave cones, we see that $\tilde{u}$ coincide with $u_*$ in the set $\{|x|>R+|t|,\; t\in I\}$ for any $R>R_*$, and thus in $\{|x|>R_*+|t|,\; t\in I\}$. Since $u_*\in S(\{|x|>R_*+|t|\})$, this implies by the blow-up criterion that $I=\Rb$. As a consequence $u_*=\tilde{u}$ is a solution of \eqref{eq:nonlinearwaveintro} for $\{|x|>R_*+|t|\}$, which scatters to a linear solution in this set. Arguing as in Proposition \ref{pr:extension}, we obtain that $\varphi(u_*)\in L^1L^2(\{|x|>R_*+|t|\})$, and thus (by Corollary \ref{cor:inhomogeneousradiation} and finite speed of propagation) that there exists $G_{\pm}\in L^2([R_*,\infty))$ such that 
\begin{equation*}
\sum_{\pm} \lim_{t\rightarrow \pm \infty} \int_{R_*+|t|}^\infty \left|r^{\frac{N-1}{2}} (\pa_t u_*,\pa_r u_*)(t,r)-(G_\pm,\mp G_\pm)(r-|t|)\right|^2dr =0.
\end{equation*}
Let $R>R_*$. Since the restriction of $u_*$ to $\{|x|>R+|t|\}$ is nonradiative, we obtain that $G_{\pm}=0$ a.e. for $|x|>R+|t|$. As a consequence $G_{\pm}=0$ in $L^2([R_*,\infty)$, which implies that $u_*$ is non-radiative for $|x|>R_*+|t|$, concluding the proof of Theorem \ref{th:maximal}.

\begin{appendix}

\section{Hardy and Sobolev-type inequalities}
\label{A:Hardy}

\begin{lemma} \label{lem:Hardy}

Assume $N\geq 5$. For any $0\leq \kappa<3$, there exists $C>0$ such that for any $R>0$ and $u$ such that $r^{-2}u\in L^2_R$, $r^{-1}\partial_ru\in L^2_R$, $\Delta u\in L^2_R$:
\begin{multline}\label{bd:hardyoutside}
\sup_{\tilde R\geq R} \tilde R^\kappa \left(\| \pa_{rr}u\|_{L^2_{\tilde R}}+\| \frac{ \pa_{r}u}{r}\|_{L^2_{\tilde R}}+\| \frac{u}{r^2}\|_{L^2_{\tilde R}}\right)+\sup_{r\geq R} r^{\frac{N-4}{2}+\kappa} |u(r)|\\
\leq C\left( R^{\frac{N-4}{2}+\kappa} |u(R)| +\sup_{\tilde R\geq R}\tilde R^\kappa \| \Delta u\|_{L^2_{\tilde R}}  \right).
\end{multline}

\end{lemma}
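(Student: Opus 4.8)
The plan is to prove Lemma~\ref{lem:Hardy} by deriving an explicit integral representation of $u$ in terms of $f:=\Delta u$ and the boundary value $u(R)$, and then reading off every assertion in \eqref{bd:hardyoutside} from that formula via Cauchy--Schwarz on dyadic annuli together with the classical one-dimensional weighted Hardy inequalities on a half-line. Throughout one may assume $M:=\sup_{\tilde R\geq R}\tilde R^\kappa\|\Delta u\|_{L^2_{\tilde R}}<\infty$ (otherwise the right-hand side is infinite), and by scaling one may take $R=1$. The hypotheses $r^{-2}u,\ r^{-1}\partial_r u,\ \Delta u\in L^2_R$ ensure, through one-dimensional elliptic regularity for the radial Laplacian $\Delta u=r^{1-N}(r^{N-1}u')'$, that $u\in C^1((R,\infty))$ with $u'$ locally absolutely continuous and $u$ continuous up to $r=R$; all the integrations below are then legitimate. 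I will work in the range $\kappa<N/2$, which is automatic for $N\geq6$ since $\kappa<3$ (and is in any case needed for the statement to hold, e.g. $u=r^{2-N}$ when $N=5$, $\kappa\in[5/2,3)$).

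First I would record a priori bounds on $f$: from $\int_\rho^\infty|f|^2r^{N-1}dr\lesssim M^2\rho^{-2\kappa}$, Cauchy--Schwarz on $\{2^jR\leq r\leq 2^{j+1}R\}$ and summation of the resulting geometric series give, for all $\rho\geq R$,
\begin{equation*}
 \Big|\int_R^\rho s^{N-1}f(s)\,ds\Big|\lesssim M\rho^{\frac N2-\kappa},\qquad \Big|\int_\rho^\infty s\,f(s)\,ds\Big|\lesssim M\rho^{2-\frac N2-\kappa},
\end{equation*}
the second series converging because $2-\tfrac N2-\kappa<0$ for $N\geq5$. Integrating $(r^{N-1}u')'=r^{N-1}f$ twice then gives $u'(r)=Ar^{1-N}+r^{1-N}G(r)$ and $u(r)=u(R)+\tfrac{A}{N-2}(R^{2-N}-r^{2-N})+\int_R^r t^{1-N}G(t)\,dt$ with $A:=R^{N-1}u'(R)$ and $G(r):=\int_R^r s^{N-1}f(s)\,ds$. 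The bounds above force $u'\in L^1((R,\infty))$, so $u(r)$ has a finite limit as $r\to\infty$; since $N\geq5$ the weight $r^{N-5}$ is not integrable at infinity, so $r^{-2}u\in L^2_R$ forces that limit to vanish, which determines $A$. Carrying this out (using Fubini and $G(R)=0$) I expect to obtain
\begin{equation}\label{eq:hardyplanrepr}
 u(r)=c_0\,R^{N-2}r^{2-N}-\frac{r^{2-N}}{N-2}\int_R^r s^{N-1}f(s)\,ds-\frac{1}{N-2}\int_r^\infty s\,f(s)\,ds,\qquad c_0:=u(R)+\tfrac1{N-2}\int_R^\infty sf(s)\,ds,
\end{equation}
together with $u'(r)=-(N-2)c_0R^{N-2}r^{1-N}+r^{1-N}\int_R^r s^{N-1}f(s)\,ds$ (the last integral in \eqref{eq:hardyplanrepr} contributes a term that cancels upon differentiation), and $|c_0|R^{\frac{N-4}{2}+\kappa}\lesssim R^{\frac{N-4}{2}+\kappa}|u(R)|+M$ by the $f$-bounds.

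Next I would estimate each of the three terms in \eqref{eq:hardyplanrepr} (and in the companion formula for $u'$) in $L^2_{\tilde R}$ against the weights $r^{-2}$ resp. $r^{-1}$, uniformly in $\tilde R\geq R$. The explicit term $c_0R^{N-2}r^{2-N}$ is handled by the elementary computation $\int_{\tilde R}^\infty r^{-N-1}dr\approx\tilde R^{-N}$ combined with the bound on $c_0$ and $\tilde R^{\kappa-N/2}\leq R^{\kappa-N/2}$; the tail term $\int_r^\infty sf$ is handled by the classical Hardy inequality $\int_{\tilde R}^\infty\big(\int_r^\infty g\big)^2r^\alpha\,dr\lesssim_\alpha\int_{\tilde R}^\infty g^2r^{\alpha+2}\,dr$ (valid for $\alpha>-1$, with constant independent of $\tilde R$ by dilation invariance), applied with $\alpha=N-5>-1$ and $g(s)=sf(s)$, which returns exactly $\int_{\tilde R}^\infty|f|^2r^{N-1}dr\leq M^2\tilde R^{-2\kappa}$; the term $r^{2-N}G(r)$ is split as $G(\tilde R)+\int_{\tilde R}^r s^{N-1}f$, the first piece bounded by the $f$-bounds and the second by the dual Hardy inequality $\int_{\tilde R}^\infty\big(\int_{\tilde R}^r g\big)^2r^\beta\,dr\lesssim_\beta\int_{\tilde R}^\infty g^2r^{\beta+2}\,dr$ (valid for $\beta<-1$) with $\beta=-N-1$ and $g(s)=s^{N-1}f(s)$. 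This yields $\sup_{\tilde R\geq R}\tilde R^\kappa(\|u/r^2\|_{L^2_{\tilde R}}+\|\partial_ru/r\|_{L^2_{\tilde R}})\lesssim R^{\frac{N-4}{2}+\kappa}|u(R)|+M$; the bound on $\partial_{rr}u$ follows from $\partial_{rr}u=\Delta u-\tfrac{N-1}{r}\partial_ru$ and $\tilde R^\kappa\|\Delta u\|_{L^2_{\tilde R}}\leq M$; and the pointwise bound $\sup_{r\geq R}r^{\frac{N-4}{2}+\kappa}|u(r)|\lesssim R^{\frac{N-4}{2}+\kappa}|u(R)|+M$ is read off directly from \eqref{eq:hardyplanrepr} using $|G(r)|\lesssim Mr^{N/2-\kappa}$, $|\int_r^\infty sf|\lesssim Mr^{2-N/2-\kappa}$, and the identity $\tfrac{N-4}{2}+\kappa+(2-\tfrac N2-\kappa)=0$.

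The main obstacle is the derivation of \eqref{eq:hardyplanrepr}, i.e. pinning down the two constants of integration: one is fixed by $u(R)$, but the other is determined only indirectly, through the hypothesis $r^{-2}u\in L^2_R$, which kills the constant mode precisely because $N\geq5$. Making this rigorous — the $L^1$-integrability of $u'$, the existence and vanishing of $\lim_{r\to\infty}u(r)$, the absolute convergence of $\int_R^\infty sf$, and the Fubini manipulations that produce the clean form \eqref{eq:hardyplanrepr} — is where the actual work lies. By contrast the estimates in the last step are routine, the only point requiring care being that the Hardy constants are dilation invariant and hence uniform in $\tilde R$.
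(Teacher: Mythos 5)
Your approach is correct, and both your proof and the paper's hinge on the same explicit solution formula for the radial Poisson equation: your \eqref{eq:hardyplanrepr} is exactly the paper's \eqref{bd:hardyoutsidetech4}. The difference is how much work that formula is asked to do. The paper runs two passes: a $\kappa=0$ step using an integration-by-parts identity $N\int_1^\infty|\pa_ru|^2r^{N-3}dr=|\pa_ru(1)|^2+2\int_1^\infty\Delta u\,\pa_ru\,r^{N-2}dr$ combined with the one-dimensional Hardy and radial Sobolev inequalities (the explicit formula only entering to control $\pa_ru(1)$), and then a second step for $0<\kappa<N/2$ that extracts the pointwise bound from \eqref{bd:hardyoutsidetech4} by dyadic Cauchy--Schwarz and reduces back to the $\kappa=0$ energy estimate. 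You instead estimate every term of the representation directly with the two classical half-line Hardy inequalities, term by term, uniformly in $\kappa$ and without a separate $\kappa=0$ case; this is cleaner and slightly more self-contained, at the cost of putting more weight on pinning down the constant of integration via $u\to 0$ (which you handle correctly, noting that $r^{N-5}$ is non-integrable). One more thing worth saying explicitly: your observation that the inequality fails for $\kappa\geq N/2$ (e.g. $u=r^{2-N}$, $N=5$, $\kappa\in[5/2,3)$) is right, and in fact the paper's own proof is only carried out under $\kappa<N/2$ --- its Step 2 is headed ``Proof in the case $0<\kappa<N/2$'' --- so the range $\kappa<3$ in the lemma statement is slightly too generous for $N=5$. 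Since the lemma is only invoked later in even dimensions $N\geq 6$ with $\kappa$ well inside $(0,N/2)$, nothing downstream is affected, but your instinct to flag the constraint is sound.
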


\begin{proof}

By changing variables $ r=R\tilde r$, one notices that it suffices to prove the result for $R=1$ which we henceforth assume.\\

\noindent \textbf{Step 1}. \emph{Proof in the case $\kappa=0$.} We claim that:
\begin{equation}\label{bd:hardyoutsidetech1}
\| \pa_{rr}u\|_{L^2_1}+\| \frac{ \pa_{r}u}{r}\|_{L^2_1}+\| \frac{u}{r^2}\|_{L^2_1}+\sup_{r\geq 1}r^{\frac{N-4}{2}}|u(r)| \leq C\left( |u(1)| +\| \Delta u\|_{L^2_1}  \right).
\end{equation}

Writing $\Delta u=\pa_{rr}u+(N-1)r^{-1}\pa_r u$, we see that $\partial_r^2u\in L^2_R$ and thus that $\partial_r u$ is a continuous function of $r\geq 1$. Integrating by parts shows that
$$
N \int_{1}^\infty |\pa_r u|^2 r^{N-3} dr= |\pa_r u (1)|^2+2\int_1^\infty \Delta u \pa_r u r^{N-2}dr.
$$
Bounding $2|\Delta u \pa_r u|\leq r|\Delta u|^2+r^{-1}|\pa_r u|^2$ above, we find that:
\begin{equation}\label{bd:hardyoutsidetech2}
(N-1) \int_{1}^\infty |\pa_r u|^2 r^3 dr\leq  |\pa_r u (1)|^2+\int_1^\infty |\Delta u|^2 r^4dr.
\end{equation}
Also, since $\pa_{rr}u =\Delta u-\frac{N-1}{r}\pa_r u$ we find that
\begin{equation}\label{bd:hardyoutsidetech3}
\int_1^\infty |\pa_{rr} u|^2r^5 dr \lesssim \int_1^\infty |\Delta u|^2r^5 dr + \int_1^\infty |\pa_r u|^2r^{N-3} dr .
\end{equation}
Combining \eqref{bd:hardyoutsidetech2} and \eqref{bd:hardyoutsidetech3} with Hardy's inequality $(N-4)^2\int_1^\infty |u|^2r^{N-5}dr \leq 4 \int_1^\infty |\pa_r u|^2r^{N-3}dr$ and the radial Sobolev inequality $(N-4)u^2(r)\leq r^{4-N}\int_r^\infty |\pa_r u(\tilde r)|\tilde r^{N-3}d\tilde r$ one finds that:
\begin{equation}\label{bd:subcoercivityhardy}
\int_1^\infty \left(|\pa_{rr}u|^2+\frac{|\pa_{r}u|^2}{r^2}+\frac{|u|^2}{r^4}\right)r^{N-1} dr+\sup_{r\geq 1}r^{N-4}u^2(r) \lesssim  |\pa_r u|^2(1) +\int_1^\infty |\Delta u|^2r^{N-1}dr .
\end{equation}
Solving Laplace's equation we get:
\begin{equation}\label{bd:hardyoutsidetech4}
u(r)=\frac{1}{(2-N)r^{N-2}}\int_1^r \Delta u(\tilde r)\tilde r^{N-1}d\tilde r+\frac{1}{2-N} \int_r^\infty \Delta u(\tilde r)\tilde rd\tilde r+\frac{u(1)+\frac{1}{N-2} \int_1^\infty \Delta u(\tilde r)\tilde r d\tilde r}{r^{N-2}},
\end{equation}
and hence $\pa_r u(1)=(2-N)u(1)-\int_1^\infty \Delta u(\tilde r)\tilde r d\tilde r$, so that by Cauchy-Schwarz:
\begin{equation}\label{bd:hardyoutsidetech7}
|\pa_r u(1)|^2\lesssim u^2(1)+ \int_1^\infty \Delta u^2 r^{N-1} dr \int_1^\infty r^{3-N}dr\lesssim u^2(1)+ \int_1^\infty \Delta u^2 r^{N-1} dr .
\end{equation} 
Combining \eqref{bd:subcoercivityhardy} and \eqref{bd:hardyoutsidetech7} shows \eqref{bd:hardyoutsidetech1} and ends Step 1.\\

\noindent \textbf{Step 2}. \emph{Proof in the case $0<\kappa<N/2$}. Let $q=|u(1)| +\sup_{\tilde R\geq 1}\tilde R^\kappa \| \Delta u\|_{L^2_{\tilde R}} $. We claim that
\begin{equation}\label{bd:hardyoutsidetech5}
\sup_{r\geq 1} r^{\frac{N-4}{2}+\kappa} |u(r)|\lesssim q.
\end{equation}
To prove it, fix $r>1$ and let $k\in \mathbb N$ be such that $2^k\leq r \leq 2^{k+1}$. Then for the first term in the right-hand side of \eqref{bd:hardyoutsidetech4} one has by Cauchy-Schwarz:
\begin{align*}
|\int_1^r \Delta u(\tilde r)\tilde r^{N-1}d\tilde r| & \lesssim \sum_{l=0}^k \left(\int_{2^l}^{2^{l+1}} |\Delta u(\tilde r)|^2\tilde r^{N-1}d\tilde r\right)^{\frac 12}\left(\int_{2^l}^{2^{l+1}} \tilde r^{N-1}d\tilde r\right)^{\frac 12}\\
&\lesssim  \sum_{l=0}^k q 2^{-\kappa l} 2^{l\frac N2} \ \lesssim q 2^{k (\frac N2-\kappa)}\approx q r^{\frac N2-\kappa}.
\end{align*}
The second term in \eqref{bd:hardyoutsidetech4} can be treated similarly and the third can be estimated directly by Cauchy-Schwarz, yielding \eqref{bd:hardyoutsidetech5}.

Let now $R>1$. Applying \eqref{bd:hardyoutsidetech1} with $\kappa=0$ as proved in Step 1, then using \eqref{bd:hardyoutsidetech5}, we obtain:
\begin{equation}\label{bd:hardyoutsidetech6}
\| \pa_{rr}u\|_{L^2_R}+\| \frac{ \pa_{r}u}{r}\|_{L^2_R}+\| \frac{u}{r^2}\|_{L^2_R}\lesssim R^{\frac{N-4}{2}} |u(R)| + \| \Delta u\|_{L^2_{ R}}\lesssim R^{-\kappa} q .
\end{equation}
Combining \eqref{bd:hardyoutsidetech5} and \eqref{bd:hardyoutsidetech6} shows \eqref{bd:hardyoutside} and ends the proof of the Lemma.

\end{proof}

\section{Gain of regularity for radiation profiles}

\label{sec:reggrain}

We first prove Proposition \ref{construction:pr:radiation}.

\begin{proof}[Proof of Proposition \ref{construction:pr:radiation}]
We only prove the last gain of regularity property, that is not proved in \cite{Friedlander62,Friedlander80,DuKeMe19}. Assume $G_\pm \in H^1$. 
Let $v$ be the solution to \eqref{eq:freewave} that has radiation profile $-\pa_\rho G_{+}$ as $t \to \infty$. Then $\| \vec v(0)\|_{\mathcal H}= \sqrt{2|\mathbb S^{N-1}|} \| \pa_\rho G_+\|_{L^2(\mathbb R)}$. For any $h>0$, consider $u_h(t,x)=h^{-1}(u(t+h,x)-u(t,x))$, and $G_{+,-h}(\rho)=h^{-1}(G_+(\rho-h)-G_+(\rho))$. Then, $u_h$ solves \eqref{eq:freewave} and has radiation profile $G_{+,-h}$ as $t \to \infty$. As $G_{+}\in H^1$ we have $G_{+,-h}\rightarrow -\pa_\rho G_+$ in $L^2(\mathbb R)$ as $h\to 0$. Therefore, by the isometry property of Proposition \ref{construction:pr:radiation}, $(u_h(0),\pa_t u_h(0))\rightarrow (v,\pa_t v(0))$ in $\dot H^1 \times L^2(\mathbb R^N)$. Since on the other hand $u_h(0)$ and $\pa_t u_h(0)$ converge to $\pa_t u(0)$ and $\pa_{tt}u(0)$ respectively in the distributional sense, we conclude that $(\pa_t u(0),\Delta u(0))=(v,\pa_t v(0))\in \mathcal H$. The other implication comes from standard propagation of regularity for the free wave equation.

\end{proof}

Next, we prove Lemma \ref{lem:gainregradiation2}, as a consequence of the following two lemmas.

\begin{lemma} \label{gainofreg:lem:hilbert}

Let $f\in L^2(\mathbb R)$ be such that $ f'\in L^2(\rho >1)$ and $ (\mathfrak H f)'\in L^2(\rho <-1)$. Then $ f'\in L^2(|\rho| >2)$ with
\begin{equation}\label{gainofreg:bd:hilbert}
\| f'\|_{L^2(|\rho| >2)}\lesssim \|  f'\|_{L^2(\rho >1)}+\| (\mathfrak H f)'\|_{L^2(\rho<-1)} +\|f\|_{L^2(\mathbb R)}.
\end{equation}

\end{lemma}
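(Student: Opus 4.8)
The plan is to reduce matters to controlling $f'$ on $\{\rho<-2\}$, since $f'\in L^2(\rho>1)$ is part of the hypothesis and already gives $f'\in L^2(\rho>2)$. I would set $g=\mathfrak{H}f$, so that $g\in L^2(\mathbb{R})$ with $\|g\|_{L^2(\mathbb{R})}=\|f\|_{L^2(\mathbb{R})}$, $g'\in L^2(\rho<-1)$ by hypothesis, and $f=-\mathfrak{H}g$ because $\mathfrak{H}^2=-\mathrm{Id}$. The key structural facts to exploit are that $\mathfrak{H}$ is an isometry of $L^2(\mathbb{R})$ commuting with $\partial_\rho$, so $\partial_\rho\mathfrak{H}v=\mathfrak{H}(v')$ whenever $v'\in L^2$. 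I would then fix a cut-off $\chi\in C^\infty(\mathbb{R})$ with $\chi\equiv 1$ on $(-\infty,-3/2]$, $\chi\equiv 0$ on $[-1,\infty)$, $0\le\chi\le 1$, and decompose $g=\chi g+(1-\chi)g$.

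For the ``near'' piece, both $\chi$ and $\chi'$ are supported in $\{\rho\le -1\}$, where $g'\in L^2$; hence $(\chi g)'=\chi' g+\chi g'\in L^2(\mathbb{R})$ with $\|(\chi g)'\|_{L^2(\mathbb{R})}\lesssim \|g\|_{L^2(\mathbb{R})}+\|g'\|_{L^2(\rho<-1)}$. Consequently $\mathfrak{H}(\chi g)\in \dot H^1(\mathbb{R})$ with $\partial_\rho\mathfrak{H}(\chi g)=\mathfrak{H}\big((\chi g)'\big)$ and $\|\partial_\rho\mathfrak{H}(\chi g)\|_{L^2(\mathbb{R})}=\|(\chi g)'\|_{L^2(\mathbb{R})}$. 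For the ``far'' piece $h=(1-\chi)g$, which is supported in $\{y\ge -3/2\}$ with $\|h\|_{L^2(\mathbb{R})}\le\|g\|_{L^2(\mathbb{R})}$: for $\rho<-2$ one has $|\rho-y|=y-\rho>y+2\ge 1/2$ for every $y$ in the support, so $\mathfrak{H}h(\rho)=\tfrac1\pi\int\frac{h(y)}{\rho-y}\,dy$ is an absolutely convergent integral, smooth in $\rho$ on $\{\rho<-2\}$, with classical derivative $\partial_\rho\mathfrak{H}h(\rho)=-\tfrac1\pi\int\frac{h(y)}{(\rho-y)^2}\,dy$. The kernel $K(\rho,y)=\tfrac1\pi\,\mathbf 1_{\{\rho<-2\}}(\rho)\,\mathbf 1_{\{y\ge -3/2\}}(y)\,(\rho-y)^{-2}$ satisfies $\sup_{\rho<-2}\int K(\rho,y)\,dy\le 2/\pi$ and $\sup_{y\ge-3/2}\int K(\rho,y)\,d\rho\le 2/\pi$ (in both cases substitute $s=y-\rho\ge 1/2$ and integrate $s^{-2}$), so Schur's test gives $\|\partial_\rho\mathfrak{H}h\|_{L^2(\rho<-2)}\lesssim \|h\|_{L^2(\mathbb{R})}\le\|g\|_{L^2(\mathbb{R})}$.

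Adding the two pieces, on $\{\rho<-2\}$ one has $f'=-\mathfrak{H}\big((\chi g)'\big)-\partial_\rho\mathfrak{H}h$, the first summand being the restriction of an $L^2(\mathbb{R})$ function and the second the classical derivative just estimated; hence $\|f'\|_{L^2(\rho<-2)}\lesssim \|g'\|_{L^2(\rho<-1)}+\|g\|_{L^2(\mathbb{R})}=\|(\mathfrak{H}f)'\|_{L^2(\rho<-1)}+\|f\|_{L^2(\mathbb{R})}$. Combining this with the hypothesis $f'\in L^2(\rho>1)$ yields $f'\in L^2(|\rho|>2)$ together with \eqref{gainofreg:bd:hilbert}. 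The only subtle point is the behaviour of the off-diagonal part of the Hilbert transform at the endpoint $\rho=-2$; this is defused entirely by choosing $\chi$ so that $\mathrm{supp}(1-\chi)$ is separated from $\{\rho<-2\}$ by a fixed distance, which turns $(\rho-y)^{-2}$ into a kernel dominated by an integrable function uniformly in each variable. Everything else is routine.
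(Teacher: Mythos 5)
Your proof is correct and follows essentially the same route as the paper: the same cut-off decomposition $g=\chi g+(1-\chi)g$ of $g=\mathfrak H f$, the same treatment of the near piece via commutation of $\mathfrak H$ with $\partial_\rho$, and the same off-diagonal kernel bound for the far piece (you invoke Schur's test where the paper uses Young's convolution inequality, an immaterial difference).
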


\begin{corollary} \label{gainofreg:cor:gainouter}

Assume $N$ is even, $R>0$ and $u$ is a radial solution of \eqref{eq:freewave} that has radiation profiles $G_\pm$ as $t\to \pm \infty$. If $G_+',G_-'\in L^2(\rho>R)$, then $G_+'\in L^2(|\rho|>2R)$ with 
$$
\|G_+' \|_{L^2(|\rho|>2R)}\lesssim \|G_+' \|_{L^2(\rho>R)}+\|G_-' \|_{L^2(\rho>R)}+\frac 1R \| G_+\|_{L^2(\mathbb R)}.
$$

\end{corollary}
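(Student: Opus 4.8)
The idea is to deduce the corollary from Lemma \ref{gainofreg:lem:hilbert} applied to $f=G_+$. For this I need to check the two hypotheses of the lemma, namely that $G_+'\in L^2(\rho>R)$ and that $(\mathfrak H G_+)'\in L^2(\rho<-R)$; the first is assumed, and the second must be extracted from the relation between $G_+$ and $G_-$. By \eqref{construction:id:radiationrelation}, for $N$ even one has $G_+(\rho)=(-1)^{N/2}(\mathfrak H G_-)(-\rho)$, equivalently (using $\mathfrak H^2=-\mathrm{Id}$ and that $\mathfrak H$ anticommutes with the reflection $\rho\mapsto -\rho$) $(\mathfrak H G_+)(\rho)=(-1)^{N/2+1}G_-(-\rho)$. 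Hence $(\mathfrak H G_+)'(\rho)=(-1)^{N/2}G_-'(-\rho)$, so that $(\mathfrak H G_+)'\in L^2(\rho<-R)$ precisely because $G_-'\in L^2(\rho>R)$, with equal norms.

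With both hypotheses verified, Lemma \ref{gainofreg:lem:hilbert} — after the obvious rescaling $\rho\mapsto \rho/R$ that turns the thresholds $1,2$ into $R,2R$ and is compatible with the homogeneity of the three norms appearing in \eqref{gainofreg:bd:hilbert} — gives
\[
\|G_+'\|_{L^2(|\rho|>2R)}\lesssim \|G_+'\|_{L^2(\rho>R)}+\|(\mathfrak H G_+)'\|_{L^2(\rho<-R)}+\frac1R\|G_+\|_{L^2(\mathbb R)},
\]
and substituting $\|(\mathfrak H G_+)'\|_{L^2(\rho<-R)}=\|G_-'\|_{L^2(\rho>R)}$ yields exactly the claimed estimate. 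I should be a little careful that the rescaling produces the factor $\tfrac1R$ in front of $\|G_+\|_{L^2(\mathbb R)}$ rather than $1$: writing $g(\rho)=f(R\rho)$, one has $\|g'\|_{L^2}=R^{-1/2}\|f'(R\,\cdot)\|_{L^2}\cdot R = R^{1/2}\|f'\|_{L^2(\text{scaled region})}$... more simply, all three terms $\|f'\|_{L^2}$ scale the same way while $\|f\|_{L^2}$ scales with one fewer power of $R$, which is the source of the $1/R$; I will record this scaling bookkeeping explicitly but briefly.

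The only genuine content is Lemma \ref{gainofreg:lem:hilbert} itself, whose proof is assumed available (it is stated just above and its proof is presumably the ``hard part'' of Appendix \ref{sec:reggrain}); for the corollary the one point that requires attention is getting the sign/reflection algebra in \eqref{construction:id:radiationrelation} right so that the derivative of $\mathfrak H G_+$ on the negative axis is controlled by the derivative of $G_-$ on the positive axis. Everything else — rescaling, and the final substitution — is routine. I therefore expect no real obstacle beyond careful bookkeeping of constants and of the interaction of $\mathfrak H$ with the reflection.

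\begin{proof}[Proof of Corollary \ref{gainofreg:cor:gainouter}]
By scaling we may assume $R=1$; indeed, if $u$ solves \eqref{eq:freewave} with radiation profiles $G_\pm$, then $u_{(1/R)}$ (with the energy scaling of Section \ref{sub:notations}) solves \eqref{eq:freewave} with radiation profiles $\rho\mapsto R^{N/2-1}G_\pm(R\rho)$ up to the fixed normalising constant, and the three seminorms $\|G_+'\|_{L^2(\rho>R)}$, $\|G_-'\|_{L^2(\rho>R)}$, $\|G_+'\|_{L^2(|\rho|>2R)}$ all carry the same power of $R$, while $R^{-1}\|G_+\|_{L^2(\mathbb R)}$ carries that same power; so the inequality is scale invariant and it suffices to prove it for $R=1$.

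We apply Lemma \ref{gainofreg:lem:hilbert} to $f=G_+$. By hypothesis $f'=G_+'\in L^2(\rho>1)$. For the second hypothesis we use the relation between the radiation profiles in even dimension from Proposition \ref{construction:pr:radiation}:
\begin{equation*}
G_+(\rho)=(-1)^{\frac N2}(\mathfrak H G_-)(-\rho).
\end{equation*}
Since the Hilbert transform satisfies $\mathfrak H^2=-\mathrm{Id}$ and anticommutes with the reflection $(\mathcal R h)(\rho)=h(-\rho)$, i.e. $\mathfrak H\mathcal R=-\mathcal R\mathfrak H$, applying $\mathfrak H$ to the displayed identity gives
\begin{equation*}
(\mathfrak H G_+)(\rho)=(-1)^{\frac N2}\mathfrak H\big((\mathcal R\,\mathfrak H G_-)(\rho)\big)=(-1)^{\frac N2}(-1)\,(\mathcal R\,\mathfrak H^2 G_-)(\rho)=(-1)^{\frac N2+1}(-1)G_-(-\rho)=(-1)^{\frac N2}G_-(-\rho).
\end{equation*}
Hence $(\mathfrak H G_+)'(\rho)=(-1)^{\frac N2+1}G_-'(-\rho)$, so the assumption $G_-'\in L^2(\rho>1)$ yields $(\mathfrak H G_+)'\in L^2(\rho<-1)$ with
\begin{equation*}
\|(\mathfrak H G_+)'\|_{L^2(\rho<-1)}=\|G_-'\|_{L^2(\rho>1)}.
\end{equation*}
Lemma \ref{gainofreg:lem:hilbert} then applies and gives $G_+'\in L^2(|\rho|>2)$ with
\begin{equation*}
\|G_+'\|_{L^2(|\rho|>2)}\lesssim \|G_+'\|_{L^2(\rho>1)}+\|(\mathfrak H G_+)'\|_{L^2(\rho<-1)}+\|G_+\|_{L^2(\mathbb R)}= \|G_+'\|_{L^2(\rho>1)}+\|G_-'\|_{L^2(\rho>1)}+\|G_+\|_{L^2(\mathbb R)}.
\end{equation*}
This is the claimed inequality for $R=1$, and by the scaling reduction above it holds for all $R>0$.
\end{proof}
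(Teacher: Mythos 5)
Your proof is correct and is essentially the paper's own argument fully spelled out — the paper's proof of this corollary is the single sentence that it follows by rescaling from Lemma \ref{gainofreg:lem:hilbert} and the identity \eqref{construction:id:radiationrelation} for $N$ even; you supply the (correct) sign/reflection bookkeeping $\mathfrak H\mathcal R=-\mathcal R\mathfrak H$, $\mathfrak H^2=-\mathrm{Id}$ to turn $G_+=(-1)^{N/2}\mathcal R\mathfrak H G_-$ into $\|(\mathfrak H G_+)'\|_{L^2(\rho<-R)}=\|G_-'\|_{L^2(\rho>R)}$, and the dilation homogeneity that produces the $1/R$.
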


\begin{remark} \label{gainofreg:reg:gainouter}

A similar but stronger result holds for $N$ odd for $u$ as in Corollary \ref{gainofreg:cor:gainouter}. Indeed, by \eqref{construction:id:radiationrelation} we have the identity $\|G_+' \|_{L^2(|\rho|>R)}^2= \|G_+' \|_{L^2(|\rho|>R)}^2+\|G_-' \|_{L^2(|\rho|>R)}^2$.

\end{remark}

\begin{proof}[Proof of Corollary \ref{gainofreg:cor:gainouter}] It follows, by rescaling, as a direct consequence of Lemma \ref{gainofreg:lem:hilbert} and of the identity \eqref{construction:id:radiationrelation} for $N$ even.

\end{proof}

\begin{proof}[Proof of Lemma \ref{gainofreg:lem:hilbert}]

Let $h=\mathfrak H f$. Then $ h' \in L^2(\rho<-1)$ and $h\in L^2$ by isometry of $\mathfrak H$. Let $\chi\in \mathcal C^\infty(\mathbb R)$ with $\chi(\rho)=1$ for $\rho\leq -3/2$ and $\chi(\rho)=0$ for $\rho\geq -1$. Then 
\begin{equation}\label{gainofreg:lem:hilberttech1}
f=-\mathfrak H h= -\mathfrak H(\chi h)-\mathfrak H((1-\chi) h).
 \end{equation}
First, as $\| (\chi h)'\|_{L^2(\mathbb R)}\lesssim \| h\|_{L^2(\rho\leq -1)}+\|  h'\|_{L^2(\rho\leq -1)}$ we have:
 \begin{equation}\label{gainofreg:lem:hilberttech2}
 \| (\mathfrak H(\chi h))'\|_{L^2(\mathbb R)}\lesssim \| h\|_{L^2(\rho\leq -1)}+\|  h'\|_{L^2(\rho\leq -1)}.
 \end{equation}
Second, for $\rho\leq -2$ we have
$$
\mathfrak H((1-\chi)h)(\rho)=\int_{\mathbb R} \frac{1}{\pi(\rho-\eta)}(1-\chi)(\eta)h(\eta)d\eta
$$
and $\eta\geq -3/2$ in the domain of integration. Thus, $|( \mathfrak H((1-\chi)h))'(\rho)|\lesssim \int_{\eta\geq -3/2} \frac{1}{|\rho-\eta|^2+1}|h(\eta)|d\eta$ so that by Young's inequality for convolution
\begin{equation}\label{gainofreg:lem:hilberttech3}
 \| (\mathfrak H((1-\chi)h))'(\rho)\|_{L^2(\rho \leq -2)} \lesssim \| h\|_{L^2(\rho \geq -3/2)}.
\end{equation}
Injecting \eqref{gainofreg:lem:hilberttech2}, \eqref{gainofreg:lem:hilberttech3} and $\| h\|_{L^2(\mathbb R)}=\| f\|_{L^2(\mathbb R)}$ in \eqref{gainofreg:lem:hilberttech1} shows $\|  f'\|_{L^2(\rho<-2)}\lesssim \| f\|_{L^2(\mathbb R)} +\|  h'\|_{L^2(\rho \geq -3/2)}$, and \eqref{gainofreg:bd:hilbert} follows.

\end{proof}

\begin{lemma} \label{gainofreg:lem:compactradiation}

Assume $N$ is even, $R>0$, and $u$ is a radial solution of \eqref{eq:freewave} whose radiation profile $G_+$ as $t\to \infty$ satisfies $\textup{supp}(G_+)\subset [-R,R]$. Then $( u_1,\Delta u_0)\in \mathcal H_{2R}$ with
\begin{equation}\label{gainofreg:bd:compactradiation}
\| (u_1,\Delta u_0)\|_{\mathcal H_{2R}}\lesssim \frac{1}{R} \| G_+\|_{L^2}.
\end{equation}

\end{lemma}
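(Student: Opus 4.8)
The statement is a compactly supported analogue of the gain of regularity results, so the natural route is to use the explicit representation of a free radial wave in terms of its radiation profile. First I would recall from Proposition \ref{construction:pr:radiation} (and its proof in \cite{Friedlander62,Friedlander80,DuKeMe19}) the integral formula recovering $(u_0,u_1)$ from $G_+$, or equivalently the Radon/Friedlander transform inversion. The key structural fact is that, since $N$ is even, $G_+$ and $G_-$ are related by the Hilbert transform via \eqref{construction:id:radiationrelation}, $G_+(\rho)=(-1)^{N/2}(\mathfrak H G_-)(-\rho)$; combined with $\operatorname{supp}(G_+)\subset[-R,R]$ this gives information on $G_-$ for $|\rho|>R$ through the nonlocal operator $\mathfrak H$. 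Because $G_+$ is supported in $[-R,R]$ it trivially satisfies $G_+'\in L^2(\rho>R)$ (it vanishes there), and the Hilbert transform of a compactly supported $L^2$ function decays and is smooth away from the support, so $G_-'=\pm(\mathfrak H G_+)'(-\cdot)$ is also controlled — in fact $\|G_-'\|_{L^2(|\rho|>R)}\lesssim R^{-1}\|G_+\|_{L^2}$ by a direct kernel estimate $|(\mathfrak H G_+)'(\rho)|\lesssim \int_{|\eta|\le R}|\rho-\eta|^{-2}|G_+(\eta)|\,d\eta$ together with Young's inequality, exactly as in the proof of Lemma \ref{gainofreg:lem:hilbert}.

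Next, I would invoke Lemma \ref{lem:gainregradiation2}, whose hypothesis is precisely $\partial_\rho G_+\in L^2(\rho\ge R')$ and $\partial_\rho G_-\in L^2(\rho\ge R')$ for some $R'>0$: taking $R'=R$ here, the bound \eqref{bd:gainregradiation2} gives $(u_1,\Delta u_0)\in\mathcal H_{8R}$ with
$$
\|(u_1,\Delta u_0)\|_{\mathcal H_{8R}}\lesssim \|\partial_\rho G_+\|_{L^2(\rho\ge R)}+\|\partial_\rho G_-\|_{L^2(\rho\ge R)}+\frac1R\|G_+\|_{L^2(\mathbb R)}.
$$
The first term vanishes, the second is $\lesssim R^{-1}\|G_+\|_{L^2}$ by the Hilbert transform estimate above, and the third is already of the desired form, so this yields \eqref{gainofreg:bd:compactradiation} but on $\mathcal H_{8R}$ rather than $\mathcal H_{2R}$. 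To sharpen the exterior radius from $8R$ to $2R$, I would not use the blunt Lemma \ref{lem:gainregradiation2} for the region $2R<|x|<8R$ but instead argue directly: for $\rho$ in any bounded annulus away from $[-R,R]$ the profiles $G_\pm$ are actually smooth with all derivatives bounded by $R^{-k}\|G_+\|_{L^2}$ (kernel estimates for $\mathfrak H$), and finite speed of propagation/the explicit formula shows $\vec u(0)$ restricted to $\{|x|>2R\}$ depends only on $G_\pm$ restricted to $\{|\rho|>2R\}$; a rescaled version of the local representation then gives the $\mathcal H_{2R}$ bound. Alternatively, one replaces the cutoff $\chi$ in the proof of Lemma \ref{gainofreg:lem:hilbert} by one adapted to scale $R$ and tracks the constants.

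\textbf{Main obstacle.} The delicate point is the passage from the easily obtained $\mathcal H_{8R}$ estimate to the claimed $\mathcal H_{2R}$ estimate: Lemma \ref{lem:gainregradiation2} loses a fixed dilation factor (here $8$), which is harmless for qualitative gain of regularity but must be eliminated here. This requires either re-running the proof of Lemma \ref{lem:gainregradiation2}/Lemma \ref{gainofreg:lem:hilbert} with cutoffs localised at scale $2R$ instead of a universal $O(1)$ scale, keeping explicit track of how the support of $G_+$ and the convolution kernel $1/(\rho-\eta)$ interact, or exploiting the finite speed of propagation to reduce to the region $|x|,|\rho|\gtrsim 2R$ from the start so that the only contributions come from $\eta\in[-R,R]$, at distance $\ge R$. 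The arithmetic is routine but one must be careful that the $\log$-free, $R^{-1}$-weighted constant survives; I expect this bookkeeping, rather than any conceptual difficulty, to be the bulk of the work.
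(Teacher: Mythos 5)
Your argument is circular: the key tool you invoke, Lemma \ref{lem:gainregradiation2}, is itself proved in the paper \emph{by means of} the present lemma. In that proof one splits $G_+=\chi G_+ +(1-\chi)G_+$ and handles the compactly supported piece $\chi G_+$ precisely by appealing to Lemma \ref{gainofreg:lem:compactradiation}; the Hilbert-transform machinery (Lemma \ref{gainofreg:lem:hilbert}, Corollary \ref{gainofreg:cor:gainouter}) only controls the part of the profile away from a neighbourhood of the support. So you cannot use Lemma \ref{lem:gainregradiation2} here without first supplying an independent proof of the very statement you are trying to establish. Two further steps are also flawed. First, the bound $\|G_-'\|_{L^2(|\rho|>R)}\lesssim R^{-1}\|G_+\|_{L^2}$ is false: with $G_+=\indic_{[-R,R]}$ one has $\mathfrak H G_+(\rho)=\pi^{-1}\log|(\rho+R)/(\rho-R)|$, whose derivative behaves like $(\rho-R)^{-1}$ and is not square integrable near $\rho=R$; the kernel estimate you quote only works at distance $\gtrsim R$ from the support, i.e.\ on $\{|\rho|>2R\}$. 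Second, and most importantly, the claim that $\vec u(0)$ restricted to $\{|x|>2R\}$ ``depends only on $G_\pm$ restricted to $\{|\rho|>2R\}$'' is false in even dimensions, where the strong Huygens principle fails: the whole point of the lemma is to control the tail influence, at radii $r>2R$, of a radiation profile supported in $[-R,R]$. Finite speed of propagation gives no such localisation for the inverse radiation map.

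The paper's proof is instead a direct computation. Starting from the explicit representation
$u(r,t)=C_N r^{1-N/2}\int_0^\infty\int_{-1}^1 \rho^{-1/2}G(r\omega-\rho+t)P_N(\omega)(1-\omega)^{-1/2}\,d\omega\,d\rho$
of \cite{LiShenWei21P}, one differentiates twice in $r$ (after reducing to $R=1$ by scaling and to $G\in C^\infty_0$ by density), splits the $(\rho,\omega)$-integral into the regions $\omega<-7/8$, $|\omega|\le 7/8$, $\omega>7/8$, and integrates by parts in $\omega$ and $\rho$ to move all derivatives off $G$; the compact support of $G$ then forces $\rho\gtrsim r$ or $|r\omega|\lesssim 1$ on each remaining piece, yielding the pointwise decay $|\partial_r^2 u_0(r)|\lesssim r^{-2}\|G\|_{L^2}$ modulo the Hardy-controlled lower-order terms, and hence the $\mathcal H_{2R}$ bound. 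If you want to pursue your route, you would need to replace the appeal to Lemma \ref{lem:gainregradiation2} by an argument of this kind for the compactly supported profile; the Hilbert-transform considerations alone cannot produce it.
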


\begin{remark}

In the setting of Lemma \ref{gainofreg:lem:compactradiation}, one can in fact prove that $(u_0,u_1)\in \dot H^m\times \dot H^{m-1}(r>2R)$ for any $m$.

\end{remark}

\begin{remark} \label{gainofreg:rem:compactradiation}

A stronger result holds for $N$ odd for $u$ as in Lemma \ref{gainofreg:lem:compactradiation}: one has $\vec u(0,r)=\vec a_F[\cbf](0,r)$ for all $r>R$ for some $\cbf \in \mathbb R^{m_0+1}$ with $|\cbf|_R\lesssim \| G\|_{L^2(\mathbb R)}$.

Indeed, by \eqref{construction:id:radiationrelation}, $\textup{supp}(G_-)\subset [-R,R]$, so that $u$ is non-radiative for $r>R+|t|$, hence $\vec u(0,r)=\vec a_F[\cbf](0,r)$ for all $r>R$ by Classification \ref{pr:nonradiativefree}. Then, by \eqref{bd:energyaFiscbfR2} and Proposition \ref{construction:pr:radiation}, $|\cbf|_R\approx \| \vec u(0)\|_{\mathcal H_R}\lesssim \| G\|_{L^2(\mathbb R)}$.

\end{remark}

\begin{proof}[Proof of Lemma \ref{gainofreg:lem:compactradiation}]

We prove the result in the case $R=1$, and the general case follows by rescaling.

We recall from the proof of Lemma 4.3. in \cite{LiShenWei21P} that (with $G=G_+(u)$):
\begin{equation}\label{gainofreg:id:compactradiationtech1}
u(r,t)=\frac{C_N }{r^{N/2-1}} \int_{\rho=0}^\infty \int_{\omega=-1}^1 \frac{G(r\omega-\rho+t)}{\sqrt{\rho}} P_N(\omega)(1-\omega)^{-1/2}d\rho d\omega
\end{equation}
where $P_N$ is a polynomial of degree $N/2-1$, defined by $\pa_\omega^{N/2-1}((1-\omega^2)^{\frac{N-3}{2}})=P_N(\omega)(1-\omega^2)^{-1/2}$. We will prove \eqref{gainofreg:bd:compactradiation} with $R=1$ assuming $G\in C^\infty_0((|\rho|<1))$ and a standard limiting argument gives the general case of $G\in L^2((|\rho|<1))$. We thus consider first $\pa_r^2u(r,0)$. By \eqref{gainofreg:id:compactradiationtech1}:
\begin{equation}\label{gainofreg:id:compactradiationtech2}
 \pa_r^2 u_0= \frac{(1-\frac N2)(\frac N2-2)}{r^2}u_0-\frac{N-2}{r}\pa_r u_0+\frac{1}{r^{\frac{N-2}{2}}} \int_{\rho=0}^\infty \int_{\omega=-1}^1 \frac{G''(r\omega-\rho)}{\sqrt{\rho}} \tilde P_N(\omega)(1-\omega)^{-1/2}d\rho d\omega
\end{equation}
By the Hardy inequality and Proposition \ref{construction:pr:radiation}, $\| r^{-1}u_0\|_{L^2(\mathbb R^d)}\lesssim \|\pa_r u_0\|_{L^2(\mathbb R^d)}\lesssim \| G\|_{L^2(\mathbb R)}$, hence
\begin{equation}\label{gainofreg:bd:compactradiationtech1}
\| \frac{1}{r^2} u_0\|_{L^2(r\geq 2)}+\| \frac{1}{r} \pa_r u_0\|_{L^2(r\geq 2)}\lesssim \| G_+\|_{L^2(\mathbb R)}.
\end{equation}
We split the integral in \eqref{gainofreg:id:compactradiationtech2}:
\begin{align*}
\int_{\rho=0}^\infty \int_{\omega=-1}^1 \frac{G''(r\omega-\rho)}{\sqrt{\rho}} \tilde P_N(\omega)(1-\omega)^{-1/2}d\rho d\omega= I(r)+II(r)+III(r)
\end{align*}
where the regions of integration are $[0,\infty)\times [-1,-7/8)$, $[0,\infty)\times [-7/8,7/8)$ and $[0,\infty)\times (7/8,1]$ respectively. For $I(r)$, since $\omega<-7/8$ and $|r\omega -\rho|<1$ on the support of the integrand, $0<r<8/7<2$ so for $r>2$:
\begin{equation}\label{gainofreg:bd:compactradiationtech2} 
I(r)=0.
\end{equation}
For $II(r)$, integrating by parts twice with respect to $\omega$, and once with respect to $\rho$:
\begin{align*}
II(r) &  =\int_0^\infty \int_{-7/8}^{7/8} \frac 1r \pa_\omega (G'(r\omega-\rho))\tilde P_N(\omega)d\omega \frac{d\rho}{\sqrt{\rho}}\\
& \quad =\int_0^\infty \frac 1r G'(r\frac 78-\rho)\tilde P_N(\frac 78) \frac{d\rho}{\sqrt{\rho}}-\frac 1r \int_0^\infty \int_{-7/8}^{7/8} \frac 1r  G'(r\omega-\rho) \tilde P_N'(\omega)d\omega \frac{d\rho}{\sqrt{\rho}}\\
&\qquad   = -\frac{1}{2r} \int_0^\infty G(r\frac 78-\rho)\tilde P_N(\frac 78) \frac{d\rho}{\rho^{3/2}}-\frac{1}{r^2} \int_0^\infty  G(r\frac 78-\rho) \tilde P_N'(\frac78) \frac{d\rho}{\sqrt{\rho}}\\
&\qquad \qquad +\frac{1}{r^2} \int_0^\infty \int_{-7/8}^{7/8} G(r\omega-\rho) \tilde P_N''(\omega)d\omega \frac{d\rho}{\sqrt{\rho}} \qquad  \qquad = \qquad a+b+c
\end{align*}
where we used that $G(\rho)=0$ for $|\rho|\geq 1$ so that all boundary terms were zero. Note that in $a$ and $b$ we have, for $r>2$, that $\rho\geq 3r/8$ on the support of the integrand. Thus,
$$
|a|+|b|\lesssim \frac{1}{r^{5/2}}\| G\|_{L^1}\lesssim \frac{1}{r^{5/2}} \| G\|_{L^2}
$$
since $\textup{supp}(G)\subset [-1,1]$. For $c$, we split the $\omega$ integral into $[-7/8,7/8]\cap \{|r\omega|>2\}$ and $[-7/8,7/8]\cap \{|r\omega|\leq 2\}$ and denote by $c_1$ and $c_2$ the corresponding integrals. In $c_1$, since $|r\omega-\rho|\leq 1$ we have $\rho \geq 1$ so that $|c_1|\lesssim r^{-2}\| G\|_{L^1}\lesssim r^{-2}\| G\|_{L^2}$. For $c_2$ we have $\rho \leq 1+2=3$. Thus,
$$
|c_2|\lesssim \frac{1}{r^2} \int_0^3 \int_{|r\omega|\leq 2} |G(r\omega-\rho)|d\omega\frac{d\rho}{\sqrt{\rho}}=\frac{1}{r^3}\int_0^3 \int_{|s|\leq 2}|G(s-\rho)|ds\frac{d\rho}{\sqrt{\rho}}\lesssim \frac{1}{r^3} \| G\|_{L^1}\lesssim \frac{1}{r^3}\| G\|_{L^2}.
$$
All in all, for $r>2$:
\begin{equation}\label{gainofreg:bd:compactradiationtech3}
|II(r)|\lesssim \frac{1}{r^2}\| G\|_{L^2}.
\end{equation}
For $III$, for $r>2$, since $|r\omega-\rho|<1$ and $\omega \geq 7/8$ we have $\rho>|r\omega|-1\geq 7/8\cdot r -1/2\cdot r>3r/8$. Thus, $|III(r)| =\frac 34 \int_{3r/8}^\infty \int_{7/8}^1 G(r\omega-\rho)\tilde P(\omega)d\omega \rho^{-5/2}d\rho$ after integrating by parts in $\rho$, so
\begin{equation}\label{gainofreg:bd:compactradiationtech4}
|III(r)|\lesssim \frac{1}{r^{5/2}} \int_{3r/8}^\infty \int_{7/8}^1 |G(r\omega-\rho) \tilde P_N(\omega)|d\omega d\rho\lesssim \frac{1}{r^{5/2}} \| G\|_{L^1}\int_{7/8}^1 |\tilde P_N(\omega) d\omega  \lesssim \frac{1}{r^{5/2}}\| G\|_{L^2}
\end{equation}
since $\tilde P_N\in L^1(|\omega|<1)$. Injecting \eqref{gainofreg:bd:compactradiationtech1}, \eqref{gainofreg:bd:compactradiationtech2}, \eqref{gainofreg:bd:compactradiationtech3} and \eqref{gainofreg:bd:compactradiationtech4} in \eqref{gainofreg:id:compactradiationtech2} shows:
\begin{equation}\label{gainofreg:bd:compactradiationtech5}
\| \pa_{rr}u_0\|_{L^2(r\geq 2)}\lesssim \| G_+\|_{L^2(\mathbb R)}.
\end{equation}
Since $\Delta u_0=\pa_{rr}u_0+\frac{N-1}{r}\pa_r u_0$, using \eqref{gainofreg:bd:compactradiationtech1} and \eqref{gainofreg:bd:compactradiationtech5} one obtains the desired inequality \eqref{gainofreg:bd:compactradiation} for $\Delta u_0$. The estimate for $\pa_r\pa_t u(0)$ follows similarly and the lemma follows.

\end{proof}

\begin{proof}[Proof of Lemma \ref{lem:gainregradiation2}]

We prove the result in the case $R=1$, and the general case follows by rescaling. We consider first the more difficult case of $N$ even. By Corollary \ref{gainofreg:cor:gainouter}, $G_+'\in L^2(|\rho|>2)$ with:
\begin{equation}\label{gainregradiation2:bd:tech1}
\| G_+'\|_{L^2(|\rho|>2)}\lesssim \| G_+'\|_{L^2(\rho>1)}+ \| G_-'\|_{L^2(\rho>1)}+ \| G_+\|_{L^2(\mathbb R)}.
\end{equation}
Let $\chi\in \mathcal C^\infty(\mathbb R)$ with $\chi(\rho)=1$ for $|\rho|\leq 2 $ and $\chi(\rho)=0$ for $\rho \geq 4$. Let $G^{(1)}_+=\chi G_+$ and $G^{(2)}_+=(1-\chi)G_+$. Let $u^{(1)}$ and $u^{(2)}$ be the solutions of \eqref{eq:freewave} with radiation profiles $G^{(1)}_+$ and $G^{(2)}_+$ respectively as $t\to \infty$, given by Proposition \ref{construction:pr:radiation}. We have
\begin{equation}\label{gainregradiation2:id:tech1}
u=u^{(1)}+u^{(2)}.
\end{equation}
Note that $\textup{supp}(G^{(1)}_+)\subset \{|\rho|\leq 4\}$ and $\| G^{(1)}_+\|_{L^2(\mathbb R)}\lesssim \| G_+\|_{L^2(\mathbb R)}$ so that by Lemma \ref{gainofreg:lem:compactradiation}
\begin{equation}\label{gainregradiation2:bd:tech2}
\| (u^{(1)}_1,\Delta u^{(1)}_0)\|_{\mathcal H_8}\lesssim \| G_+\|_{L^2(\mathbb R)}.
\end{equation}
We have $G^{(2)'}_+=-\chi' G_++(1-\chi)G_+'$. Note that $\| \chi' G_+\|_{L^2(\mathbb R)}\lesssim \| G_+\|_{L^2(\mathbb R)}$ and that $1-\chi(\rho)\neq 0$ only when $|\rho|\geq 2$, so that $\| (1-\chi)G_+'\|_{L^2(\mathbb R)}\lesssim \| G_+'\|_{L^2(|\rho|\geq 2)} $. Hence by \eqref{gainregradiation2:bd:tech1}:
\begin{equation}\label{gainregradiation2:bd:tech3}
\| G^{(2)'}_+\|_{L^2}\lesssim\| G_+'\|_{L^2(\rho>1)}+ \| G_-'\|_{L^2(\rho>1)}+ \| G_+\|_{L^2(\mathbb R)}.
\end{equation}
By the last property of Proposition \ref{construction:pr:radiation}, we deduce from \eqref{gainregradiation2:bd:tech3} that $(\pa_t u^{(2)}(0),\pa_{tt}u^{(2)}(0))=(u_1^{(2)},\Delta u_0^{(2)})\in \mathcal H$ with
\begin{equation}\label{gainregradiation2:bd:tech4}
\| (u_1^{(2)},\Delta u_0^{(2)})\|_{\mathcal H}\lesssim \| G_+'\|_{L^2(\rho>1)}+ \| G_-'\|_{L^2(\rho>1)}+ \| G_+\|_{L^2(\mathbb R)}.
\end{equation}
Injecting \eqref{gainregradiation2:bd:tech2} and \eqref{gainregradiation2:bd:tech4} in \eqref{gainregradiation2:id:tech1} shows the desired inequality \eqref{bd:gainregradiation2} for $N$ even.

For $N$ odd, the proof of \eqref{bd:gainregradiation2} is similar but easier, using Remarks \ref{gainofreg:reg:gainouter} and \ref{gainofreg:rem:compactradiation}. This ends the proof of the Lemma.

\end{proof}

\section{Technical estimates in even dimensions} \label{A:approximate}

\begin{lemma}
For any $\alpha,R>0$ and $M>2$:
\begin{align} 
 \label{even:controle:bd:universal1}
& \| \frac{1}{r^{N/2+\alpha}}\|_{L^2_{\bar R}}\leq \frac{C_\alpha}{ R^\alpha},  \\
& \label{even:controle:bd:universal5} \int_{\mathbb R} \frac{dt}{(R+|t|)^{1+\alpha}}\leq \frac{C_\alpha}{R^\alpha}  ,\\
& \label{even:controle:bd:universal6} \int_{\mathbb R} \frac{\indic(R+|t|\leq 2MR)}{(R+|t|)^{1-\alpha}}dt \leq CM^{\alpha}R^{\alpha} ,\\
& \label{even:controle:bd:universal2} \| \frac{1}{r^{N/2+1+\alpha}}\|_{L^1 L^2_{R+|t|}}\leq \frac{C_\alpha}{R^\alpha},\\
& \label{even:controle:bd:universal4} \| \frac{1}{r^{N/2+1}}\indic(R+|t|\leq r \leq 2MR) \|_{L^1 L^2}\leq C \log M.
\end{align}
\end{lemma}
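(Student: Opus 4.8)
The plan is to establish all five bounds by direct integration: once the norms are unwound, each is an explicit integral of a negative power with an elementary antiderivative. The only preliminary facts I would record are that for a radial function $w$ on $\Rb^N$ and $\rho>0$ one has $\|w\|_{L^2_\rho}^2=|\mathbb S^{N-1}|\int_\rho^\infty |w(r)|^2 r^{N-1}\,dr$, and that $\|f\|_{L^1L^2}=\int_{\Rb}\|f(t)\|_{L^2}\,dt$; after this, everything reduces to computing $\int r^{-\gamma}\,dr$ and $\int(R+|t|)^{-\gamma}\,dt$. All implicit constants are allowed to depend on $N$ and on the exponent $\alpha$, but not on $R$, $M$, or $t$.

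For \eqref{even:controle:bd:universal1} I would compute $\int_{\bar R}^\infty r^{-(N+2\alpha)}r^{N-1}\,dr=\tfrac{1}{2\alpha}\bar R^{-2\alpha}$, so the left-hand side is $\lesssim_\alpha \bar R^{-\alpha}$; since $\bar R\geq R$ in every application (a convention the statement implicitly uses), this gives $\lesssim_\alpha R^{-\alpha}$. Estimate \eqref{even:controle:bd:universal5} is simply $\int_{\Rb}(R+|t|)^{-1-\alpha}\,dt=2\int_0^\infty(R+t)^{-1-\alpha}\,dt=\tfrac{2}{\alpha}R^{-\alpha}$, and \eqref{even:controle:bd:universal6} follows the same way once one notes that the indicator restricts $t$ to $|t|\leq(2M-1)R$: then $\int_{\Rb}\indic(R+|t|\leq 2MR)(R+|t|)^{\alpha-1}\,dt=2\int_0^{(2M-1)R}(R+t)^{\alpha-1}\,dt=\tfrac{2}{\alpha}\big((2MR)^\alpha-R^\alpha\big)\leq\tfrac{2}{\alpha}(2MR)^\alpha$.

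For the two mixed space-time bounds I would reuse the previous one-variable computations. In \eqref{even:controle:bd:universal2}, the inner $L^2_{R+|t|}$ norm of $r^{-N/2-1-\alpha}$ is, by the computation behind \eqref{even:controle:bd:universal1} with $\alpha$ replaced by $1+\alpha$, bounded by $\lesssim_\alpha(R+|t|)^{-(1+\alpha)}$; integrating in $t$ and invoking \eqref{even:controle:bd:universal5} with exponent $\alpha$ yields $\lesssim_\alpha R^{-\alpha}$. In \eqref{even:controle:bd:universal4}, the region of integration is nonempty only for $|t|\leq(2M-1)R$, and for such $t$ one has $\int_{R+|t|}^{2MR}r^{-N-2}r^{N-1}\,dr=\tfrac12\big((R+|t|)^{-2}-(2MR)^{-2}\big)\leq\tfrac12(R+|t|)^{-2}$, so the inner $L^2$ norm is $\lesssim(R+|t|)^{-1}$; then $\int_{|t|\leq(2M-1)R}(R+|t|)^{-1}\,dt=2\ln(2M)$, which is $\lesssim\ln M$ for $M>2$ (for instance $\ln(2M)\leq 2\ln M$ there).

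I do not expect a genuine obstacle here: these are all convergent integrals of negative powers over half-lines or over $\Rb$, handled by explicit antiderivatives. The only points deserving a line of care are recording the convention $\bar R\geq R$ underlying \eqref{even:controle:bd:universal1}, dropping the (nonnegative) subtracted endpoint terms $R^\alpha$ and $(2MR)^{-2}$ in \eqref{even:controle:bd:universal6} and \eqref{even:controle:bd:universal4} so as not to carry spurious quantities, and checking the uniform comparison $\ln(2M)\leq C\ln M$ for $M>2$ that is responsible for the logarithmic factor in \eqref{even:controle:bd:universal4}.
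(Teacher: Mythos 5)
Your proposal is correct, and it is precisely the approach the paper intends: the paper's own proof simply states "These inequalities can be proved by direct computations that we omit," and your explicit antiderivative computations fill in exactly those omitted details, including the correct handling of the $\bar R\geq R$ convention in \eqref{even:controle:bd:universal1} and the $\ln(2M)\lesssim\ln M$ comparison in \eqref{even:controle:bd:universal4}.
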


\begin{proof}
These inequalities can be proved by direct computations that we omit.
\end{proof}

\begin{proof}[Proof of Lemma \ref{even:lem:estimatesuap}]

Note first the following properties for $\chi_{MR}$. For any $i,j\geq 0$:
\begin{align}
\label{even:controleE:boundschi}& |\pa_r^i\pa_t^j \chi_{MR}|\lesssim M^{-i-j}R^{-i-j}, \qquad \textup{supp}(\chi_{MR})\subset \{ r+|t|\leq 2MR\},\\
\label{even:controleE:boundschi2} & \textup{supp}(\chi_{MR}^2-\chi_{MR}) \ , \ \textup{supp}(\pa_r^i\pa_t^j \chi_{MR})\subset \{MR\leq r+|t|\leq 2MR\} \quad \mbox{ (if }i+j\geq 1).
\end{align}

\noindent \textbf{Step 1}. \emph{Estimates on $a$, $\tilde \phi \chi_{MR}$, and $u_{ap}$}.

\smallskip

\noindent \underline{Proof of \eqref{even:pointwisea}:} Let $r>R+|t|$. From the identity \eqref{id:aF}, and using that the function $r\mapsto r|\cbf|_r$ is non-increasing, we get $|a_F[\cbf](t,r)|\lesssim |\cbf|_r r^{-\frac{N}{2}+1}\leq R|\cbf|_R r^{-\frac{N}{2}}$. By \eqref{bd:tildeaWkappaN} and \eqref{bd:SobolevW} (recalling $\kappa_N=1$ as $N$ is even) we have $|\tilde a(t,r)|\lesssim |\cbf|_R^{2} r^{-\frac{N}{2}+1}\lesssim |\cbf|_R r^{-\frac{N}{2}+1}$ as $|\cbf|_R\lesssim \epsilon'\ll 1$. Combining, we get \eqref{even:pointwisea}.

\smallskip

\noindent \underline{Proof of \eqref{even:pointwisetildephi}:} 
The bound \eqref{even:pointwisetildephi} is a direct consequence of the identity \eqref{def:tildephi}, \eqref{even:controleE:boundschi}, and of the fact that $\tilde c^2\lesssim |\tilde c|$ and $|\tilde c|^2\ln M\lesssim |\tilde c|$ as $|\tilde c|\lesssim \epsilon'\ll1$ and $\ln M\lesssim \ln |\tilde c|^{-1}$.

\smallskip

\noindent \underline{Proof of \eqref{even:pointwiseuap}:} As $u_{ap}=a+\tilde \phi \chi_{RM}$, \eqref{even:pointwiseuap} follows from \eqref{even:pointwisea} and \eqref{even:pointwisetildephi} and the fact that $|\cbf|_{R}+|\tilde c|\lesssim \epsilon'$.

\smallskip

\noindent  \underline{Proof of  \eqref{even:L2patuap}:} We have $\pa_t u_{ap}=\pa_t a +\pa_t \tilde \phi \chi_{MR}+\tilde \phi \pa_t \chi_{MR}$. Let $\bar R\geq R+|t|$. By \eqref{add_decay_aF} (as $\kappa_N=1$) and \eqref{bd:tildeaWkappaN} we have
\begin{equation}\label{even:controleE:pata}
\|\pa_t a\|_{L^2_{\bar R}}\lesssim \frac{R|\cbf|_R}{\bar R}
\end{equation}
By \eqref{def:tildephi}, \eqref{even:controleE:boundschi}, \eqref{even:controleE:boundschi2} and $ \ln M\lesssim |\ln |\tilde c||$ we have $|\pa_t \tilde \phi \chi_{MR}+\tilde \phi \pa_t \chi_{MR}|\lesssim |\tilde c| r^{-N/2}$ for $r>R+|t|$. Hence $\|\pa_t \tilde \phi \chi_{MR}+\tilde \phi \pa_t \chi_{MR}\|_{L^2_{\bar R}}\lesssim |\tilde c|/\bar R$ by \eqref{even:controle:bd:universal1}. Combining, we get \eqref{even:L2patuap}.

\smallskip

\noindent \underline{Proof of \eqref{even:L2L4uap-}:} Since both $\tilde \phi$ and $\chi$ are even in time, $u_{ap,-}=a_-$. Using this and \eqref{even:pointwisea}:
$$
 \| r^{\frac{N-6}{4}}(u_{ap}(t)-u_{ap}(-t))\|_{L^4_{R+|t|}}^2 \lesssim R^2 |\cbf|_{R}^2  \left( \int_{R+t}^\infty r^{N-6}r^{-2N}r^{N-1}dr\right)^{1/2}\lesssim \frac{R^2 |\cbf|_{R}^2}{(R+|t|)^3}.
$$
Applying \eqref{even:controle:bd:universal5} then shows \eqref{even:L2L4uap-}.\\

\noindent  \textbf{Step 2.} \emph{Estimates on $E$}. We first decompose the nonlinearity into its quadratic and cubic (and higher order) parts: $\varphi=\varphi_q+\varphi_c$ where
\begin{align*}
\varphi_q (u) &=\left\{\begin{array}{l l l} \varphi_2 r^{\frac{N-6}{2}} u^2  & \mbox{for } \eqref{eq:nonlinearityanalytic},\\ 
|u|u &  \mbox{for } \eqref{eq:nonlinearitypower} \mbox{ and }N=6, \end{array} \right.\\
\varphi_c (u) &=\left\{ \begin{array}{l l l} \sum_{k\geq 3}^\infty \varphi_k |x|^{(k-1)(\frac N2-1)-2}u^k  & \mbox{for } \eqref{eq:nonlinearityanalytic},\\ 
0 &  \mbox{for } \eqref{eq:nonlinearitypower}. \end{array} \right.
\end{align*}
We claim that for all $u,v$:
\begin{align}
&\label{bd:varphiq2} |\varphi_q(u+v)-\varphi_q(u)| \lesssim r^{\frac{N-6}{2}}|v|(|u|+|v|),\\
& \label{bd:varphiq}
|\varphi_q(u+v)-\varphi_q(u)-\varphi_q (v)| \lesssim r^{\frac{N-6}{2}}|u||v|,\\
&  \label{bd:varphiq4}|\varphi_q'(u)| \lesssim r^{\frac{N-6}{2}}|u|,\\
& \label{bd:varphiq3}|\varphi_q'(u+v)-\varphi_q'(u)| \lesssim r^{\frac{N-6}{2}}|v|,
\end{align}
and that for $|u|,|v|\ll r^{N/2-1}$:
\begin{align} \label{bd:varphic}
& |\varphi_c(u+v)-\varphi_c(u)|  \lesssim r^{N-4} |v|(u^2+v^2) ,\\
\label{bd:varphic2}
& |\varphi_c(u+v)-\varphi_c(u)-\varphi_c(v)|  \lesssim r^{N-4} |u||v|(|u|+|v|) ,\\
\label{bd:varphic3}
& |\varphi_c'(u+v)-\varphi_c'(u)|  \lesssim r^{N-4} |v|(|u|+|v|) ,\\
\label{bd:varphic4}
& |\varphi_c'(u)|  \lesssim r^{N-4} |u|^2
\end{align}
and omit the proofs that follow from standard arguments. We compute the identities using \eqref{even:id:tidephigeneralised}:
\begin{align}
\label{even:id:defpattidephi} \pa_t \tilde \phi =&\frac{1}{r^{N/2}} \left((\tilde c+\beta \hat c^2 \ln \frac r R)\pa_\sigma p_{\frac N 2-1}+\tilde c^2 \pa_\sigma \tilde p \right)(\frac{t}{r}),\\
\label{even:id:defpartildephi} \pa_r \tilde \phi =&\frac{1}{r^{N/2}} \Biggl( \left(\frac{2-N}{2}(\tilde c+\hat c^2 \ln \frac r R)+\beta \hat c^2\right)p_{\frac N2-1}\\
\nonumber&\qquad \qquad \qquad -(\tilde c+\beta \hat c^2 \ln \frac r R)\sigma \pa_\sigma  p_{\frac N2-1}+\hat c^2 (\frac{2-N}{2}\tilde p-\sigma \pa_\sigma \tilde p) \Biggr)(\frac{t}{r}),\\
\label{even:id:defpatttidephi} \pa_{tt} \tilde \phi =&\frac{1}{r^{N/2 +1}} \left((\tilde c+\beta \hat c^2 \ln \frac r R)\pa_\sigma^2 p_{\frac N 2-1}+\hat c^2 \pa_\sigma^2 \tilde p \right)(\frac{t}{r}),
\end{align}
and using \eqref{rigidity|u|u:id:eqA2} and \eqref{rigidity|u|u:id:eqA}:
\begin{align*}
 \Box (\tilde \phi \chi_{MR})=&\Box \tilde \phi \chi_{MR} +2 \pa_t \tilde \phi \pa_t \chi_{MR}-2\pa_r \tilde \phi \pa_{r}\chi_{MR} +\tilde \phi \left(\pa_{tt}-\pa_{rr}-\frac{N-1}{r} \pa_r\right) \chi_{MR}  \\
 =&\chi_{MR} \ \varphi_q(\tilde c\phi_{m_0+1})+2 \pa_t \tilde \phi \pa_t \chi_{MR}-2\pa_r \tilde \phi \pa_{r}\chi_{MR} +\tilde \phi \left(\pa_{tt}-\pa_{rr}-\frac{N-1}{r} \pa_r\right) \chi_{MR} , 
\end{align*}
and get the following expression for the error defined by \eqref{even:id:defE}:
\begin{equation}\label{even:id:defE2}
  E  =-\Box (a+\tilde \phi \chi_{MR})+\varphi(a+\tilde \phi \chi_{MR}) =E_1+E_2+E_3+E_4+E_5
\end{equation}
where
\begin{align}
\label{even:id:defE3} E_1 & =  \varphi_c(a+\tilde \phi \chi_{MR})-\varphi_c(a), \qquad \qquad E_2=\varphi_q(a+\tilde \phi \chi_{MR})-\varphi_q(a)-\varphi_q(\tilde \phi \chi_{MR}), \\
\label{even:id:defE4}E_3&= \varphi_q(\tilde \phi)(\chi_{MR}^2-\chi_{MR}), \qquad \qquad E_4=\chi_{MR}(\varphi_q(\tilde \phi)-\varphi_q(\tilde c\phi_{m_0+1})),\\
\label{even:id:defE5}E_5&=2\pa_r \tilde \phi \pa_{r}\chi_{MR}+2 \pa_t \tilde \phi \pa_t \chi_{MR}-\tilde \phi \left(\pa_{rr}+\frac{N-1}r \pa_r-\pa_{tt}\right) \chi_{MR}.
\end{align}

\smallskip

\noindent \underline{Proof of \eqref{even:L2E}:} 
We will prove:
\begin{multline}
\label{even:pointwiseE} |E|\lesssim \left(\frac{R |\cbf|_{R}|\tilde c|}{r^{N/2+2}}+\frac{|\tilde c|^3(1+\beta|\ln |\tilde c||)}{r^{N/2+1}}\right)\indic (r+|t|\leq 2MR)\\+\frac{|\tilde c|}{r^{N/2+1}}\indic (MR\leq r+|t|\leq 2MR)
\end{multline}
 
The inequality \eqref{even:L2E} follows, using \eqref{even:controle:bd:universal1}.

Let $r>R+|t|$. We have by \eqref{bd:varphic}, \eqref{even:pointwisea}, \eqref{even:pointwisetildephi} and \eqref{even:bd:cbftildectildeR}:
\begin{align} \nonumber
|E_1| & \lesssim r^{N-4}\frac{|\tilde c|\indic(r+|t|\leq 2MR)}{r^{N/2-1}}\left(\frac{|\cbf|_R^2R^2}{r^{N}}+\frac{|\tilde c|^2}{r^{N-2}} \right)\\
\label{even:bdE1} &\lesssim \left(\frac{\epsilon'|\cbf|_R R}{r^{N/2+2}}+\frac{|\tilde c|^3}{r^{N/2+1}} \right)\indic(r+|t|\leq 2MR)
\end{align}
We have by \eqref{bd:varphiq}, \eqref{even:pointwisea} and \eqref{even:pointwisetildephi} :
\begin{equation}\label{even:bdE2}
|E_2|\lesssim r^{\frac{N-6}{2}}\frac{|\tilde c|\indic(r+|t|\leq 2MR)}{r^{N/2-1}} \frac{|\cbf|_R R}{r^{N/2}} \lesssim \frac{|\tilde c||\cbf|_R R}{r^{N/2+2}}\indic(r+|t|\leq 2MR).
\end{equation}
By \eqref{bd:varphiq}, \eqref{even:controleE:boundschi2} and \eqref{even:pointwisetildephi}:
\begin{equation}\label{even:bdE3}
|E_3|\lesssim \frac{\tilde c^2}{r^{N/2+1}}\indic(RM\leq r+|t|\leq 2RM).
\end{equation}
Using first \eqref{even:controleE:boundschi}, \eqref{bd:varphiq2} and \eqref{def:tildephi}, and then \eqref{even:id:tidephigeneralised}, \eqref{even:id:tidephigeneralised2} and $\ln M\lesssim |\ln |\tilde c||$:
\begin{equation}\label{even:bdE4}
|E_4|\lesssim \indic(r+|t|\leq 2MR)r^{\frac{N-6}{2}}\tilde c^2|\psi|(|\tilde \phi|+|\tilde c||\phi_{m_0+1}|) \lesssim \frac{|\tilde c|^3(1+\beta |\ln |\tilde c||)}{r^{N/2+1}} \indic(r+|t|\leq 2MR) 
\end{equation}
By \eqref{even:controleE:boundschi2}, \eqref{even:id:tidephigeneralised}, \eqref{even:id:defpattidephi}, \eqref{even:id:defpartildephi} and $\ln M\lesssim |\ln |\tilde c||$:
\begin{equation}\label{even:bdE5}
|E_5|\lesssim \frac{|\tilde c|}{r^{N/2+1}} \indic(MR\leq r+|t|\leq 2MR).
\end{equation}
Injecting \eqref{even:bdE1}, \eqref{even:bdE2}, \eqref{even:bdE3}, \eqref{even:bdE4} and \eqref{even:bdE5} in \eqref{even:id:defE2} shows \eqref{even:pointwiseE} .

\smallskip

\noindent \underline{Proof of \eqref{even:L1L2E-}:} Using \eqref{even:id:defE2} and the fact that both $\tilde \phi$, $\phi_{m_0+1}$ and $\chi$ are even in time:
\begin{equation}\label{even:id:E-}
E_-(t)=\frac 12 (E(t)-E(-t))= E_{1,-}+E_{2,-}.
\end{equation}
Again, since $\tilde \phi$ and $\chi$ are even in time:
$$
2E_{1,-}(t)= [\varphi_c(a+\tilde \phi \chi_{MR})-\varphi_c(a)-\varphi_c(\tilde \phi\chi_{MR})](t)-[\varphi_c(a+\tilde \phi \chi_{MR})-\varphi_c(a)-\varphi_c(\tilde \phi\chi_{MR})](-t).
$$
Let $r>R+|t|$. Using first \eqref{bd:varphic2}, \eqref{even:pointwisea}, \eqref{even:pointwisetildephi}, then \eqref{even:bd:cbftildectildeR}:
\begin{equation}\label{even:bdE1-}
|E_{1,-}|\lesssim r^{N-4}\frac{|\cbf|_RR}{r^{N/2}} \frac{|\tilde c|}{r^{N/2-1}}\indic (r+|t|\leq 2MR)\left(\frac{|\cbf|_RR}{r^{N/2}} +\frac{|\tilde c|}{r^{N/2-1}} \right)\lesssim \frac{|\cbf|_R|\tilde c|}{r^{N/2+2}}.
\end{equation}
Injecting \eqref{even:bdE2} and \eqref{even:bdE1-} in \eqref{even:id:E-} shows $|E_-|\lesssim r^{-N/2-2}|\cbf|_R|\tilde c|$. The bound \eqref{even:L1L2E-} then follows from \eqref{even:controle:bd:universal2}.

\smallskip

\noindent \smallskip \underline{Proof of  \eqref{even:L2patE}:} From \eqref{even:id:defE2}, \eqref{even:id:defE3}, \eqref{even:id:defE4} and \eqref{even:id:defE5} one deduces:
\begin{align*}
 \pa_t E_1 & =  (\varphi_c'(a+\tilde \phi \chi_{MR})-\varphi_c'(a))\pa_t a +\varphi_c'(a+\tilde \phi \chi_{MR}) \pa_t(\tilde \phi \chi_MR),\\
\pa_t E_2 & =(\varphi_q'(a+\tilde \phi \chi_{MR})-\varphi_q'(a))\pa_t a+(\varphi_q'(a+\tilde \phi \chi_{MR})-\varphi_q'(\tilde \phi \chi_{MR}))\pa_t (\tilde \phi \chi_{MR}), \\
\pa_t E_3&= \varphi_q'(\tilde \phi)\pa_t\tilde \phi(\chi_{MR}^2-\chi_{MR})+ \varphi_q(\tilde \phi) \pa_t \chi_{MR}(2\chi_{MR}-1),\\
\pa_t E_4&=\pa_t \chi_{MR}(\varphi_q(\tilde \phi)-\varphi_q(\tilde c\phi_{m_0+1}))+\chi_{MR}(\varphi_q'(\tilde \phi)\hat c^2\pa_t\psi-(\varphi_q'(\tilde \phi)-\varphi_q'(\tilde c\phi_{m_0+1}))\tilde c \pa_t\phi_{m_0+1}),\\
\pa_t E_5&=2\pa_r\pa_t \tilde \phi \pa_{r}\chi_{MR}+2\pa_r\tilde \phi \pa_{r}\pa_t\chi_{MR}+2 \pa_{tt} \tilde \phi \pa_t \chi_{MR}-\pa_t \tilde \phi \left(\pa_{rr}+\frac{N-1}r \pa_r-\pa_{tt}\right) \chi_{MR}\\
& \quad -\tilde \phi \left(\pa_{rr}\pa_t+\frac{N-1}r \pa_r\pa_t-\pa_{ttt}\right) \chi_{MR}
\end{align*}
Let $r>R+|t|$. By \eqref{bd:varphic3}, \eqref{even:pointwisea} and \eqref{even:pointwisetildephi}, and by \eqref{bd:varphic4}, \eqref{even:pointwisea}, \eqref{even:pointwisetildephi}, \eqref{even:id:tidephigeneralised}, \eqref{even:id:defpattidephi}, \eqref{even:controleE:boundschi} and \eqref{even:controleE:boundschi2}:
\begin{equation}\label{even:bdpatE1}
|\pa_t E_1|\lesssim \frac{|\tilde c|(|\tilde c|+|\cbf|_R)}{r^2}|\pa_t a| \indic(r+|t|\leq 2MR)+\frac{|\tilde c|(|\cbf|_R^2+\tilde c^2)}{r^{N/2+2}}\indic(r+|t|\leq 2MR).
\end{equation}
By \eqref{bd:varphiq3} and \eqref{even:pointwisetildephi}, and by \eqref{bd:varphiq3}, \eqref{even:pointwisea}, \eqref{even:id:tidephigeneralised}, \eqref{even:id:defpattidephi}, \eqref{even:controleE:boundschi} and \eqref{even:controleE:boundschi2}:
\begin{equation}\label{even:bdpatE2}
|\pa_t E_2|\lesssim \frac{|\tilde c|}{r^2}|\pa_t a| \indic(r+|t|\leq 2MR)+\frac{|\tilde c| |\cbf|_RR}{r^{N/2+3}}\indic(r+|t|\leq 2MR).
\end{equation}
By \eqref{even:id:tidephigeneralised}, \eqref{even:id:defpattidephi} and \eqref{even:controleE:boundschi2}, and by \eqref{even:id:tidephigeneralised}, \eqref{even:controleE:boundschi} and \eqref{even:controleE:boundschi2}:
\begin{equation}\label{even:bdpatE3}
|\pa_t E_3|\lesssim \frac{\tilde c^2}{r^{N/2+2}}\indic (MR\leq r+|t|\leq 2MR).
\end{equation}
By \eqref{even:controleE:boundschi}, \eqref{even:controleE:boundschi2} and \eqref{bd:varphiq2}, and by \eqref{even:controleE:boundschi}, \eqref{bd:varphiq4}, \eqref{even:id:tidephigeneralised} and \eqref{bd:varphiq3}:
\begin{equation}\label{even:bdpatE4}
|\pa_t E_4|\lesssim \frac{|\tilde c|^3(1+\beta |\ln |\tilde c||)}{r^{N/2_2}}\indic (r+|t|\leq 2MR).
\end{equation}
By \eqref{even:id:tidephigeneralised}, \eqref{even:id:defpattidephi}, \eqref{even:id:defpartildephi}, \eqref{even:id:defpatttidephi}, \eqref{even:controleE:boundschi} and \eqref{even:controleE:boundschi2}:
\begin{equation}\label{even:bdpatE5}
|\pa_t E_5|\lesssim \frac{|\tilde c|}{r^{N/2+2}}\indic (MR\leq r+|t|\leq 2MR).
\end{equation}
Injecting \eqref{even:bdpatE1}, \eqref{even:bdpatE2}, \eqref{even:bdpatE3}, \eqref{even:bdpatE4} and \eqref{even:bdpatE5} in \eqref{even:id:defE2}, using \eqref{even:bd:cbftildectildeR} and $r>R$, shows:
\begin{equation}\label{even:bdpatEpointwise}
| \pa_t E|\lesssim \frac{ |\tilde c||\pa_t a|}{r^2} +\frac{|\tilde c|(|\cbf|_{R}+|\tilde c|^2(1+\beta |\ln |\tilde c||))}{r^{N/2+2}}+\frac{|\tilde c|}{r^{N/2+2}}\indic (MR\leq r+|t|\leq 2MR)
\end{equation}
Let now $t\in \mathbb R$ and $\bar R\geq R+|t|$. We compute for the first term using H\"older and \eqref{even:controleE:pata}:
\begin{equation}\label{even:bdpatEaveraged1}
\|  \frac{ |\tilde c||\pa_t a|}{r^2} \|_{L^2_{\bar R}}  \lesssim  |\tilde c|\| \pa_t a \|_{L^2_{\bar R}}  \| \frac{1}{r^2 } \|_{L^\infty_{\bar R}} \lesssim  \frac{  |\tilde c| |\cbf|_R R}{\bar R^3} \lesssim \frac{  |\tilde c| |\cbf|_R }{\bar R^{3/2}R^{1/2}}
\end{equation}
By \eqref{even:controle:bd:universal1} one finds:
\begin{align}
\label{even:bdpatEaveraged2}&\| \frac{|\tilde c|(|\cbf|_{R}+\tilde c^2(1+\beta |\ln |\tilde c||))}{r^{N/2+2}}\|_{L^2_{\bar R}}\lesssim \frac{|\tilde c|(|\cbf|_{R}+\tilde c^2(1+\beta |\ln |\tilde c||))}{\bar R^2}\lesssim  \frac{|\tilde c|(|\cbf|_{R}+\tilde c^2(1+\beta |\ln |\tilde c||))}{\bar R^{3/2}R^{1/2}},\\
\label{even:bdpatEaveraged3}&\| \frac{|\tilde c|}{r^{N/2+2}}\indic (MR\leq r+|t| \leq 2MR)\|_{L^2_{\bar R}} \lesssim \frac{|\tilde c|}{M^2R^2}\indic(\bar R\leq 2MR)\lesssim \frac{|\tilde c|}{\bar R^{3/2}R^{1/2}M^{1/2}}
\end{align}
Injecting \eqref{even:bdpatEaveraged1}, \eqref{even:bdpatEaveraged2} and \eqref{even:bdpatEaveraged3} in \eqref{even:bdpatEpointwise} shows 
\begin{equation}\label{even:bdpatEaveraged4}
\| \pa_t E\|_{L^2_{\bar R}}\lesssim \bar R^{-3/2}R^{-1/2}|\tilde c|(|\cbf|_R+\tilde c^2(1+\beta |\ln|\tilde c||)+M^{-1/2})
\end{equation}
Applying \eqref{even:controle:bd:universal5} then shows \eqref{even:L2patE} and ends the proof of the Lemma.

\end{proof}

\begin{proof}[Proof of Lemma \ref{evenN=4:lem:estimatesuap}]

\underline{Proof of \eqref{evenN=4:bd:uap}, \eqref{evenN=4:L2E} and \eqref{evenN=4:bd:patE}:} The inequalities \eqref{even:pointwiseuap} and \eqref{even:L2E} of Lemma \ref{even:lem:estimatesuap}, and \eqref{even:bdpatEaveraged4} of its proof, are actually valid for all even dimensions $N\geq 4$, since their proofs never relied on wether $N\equiv 4\mod 4$ or $N\equiv 6\mod 4$. Injecting $\beta=0$ (due to \eqref{def:beta}), in \eqref{even:pointwiseuap}, \eqref{even:L2E} and \eqref{even:bdpatEaveraged4} shows \eqref{evenN=4:bd:uap}, \eqref{evenN=4:L2E} and \eqref{evenN=4:bd:patE} respectively.

\smallskip

\noindent \underline{Proof of \eqref{evenN=4:bd:patuapmathcalH} and \eqref{evenN=4:bd:patuap}}: We apply Lemmas \ref{evenN=4:lem:gainreg2} and \ref{evenN=4:lem:gainreg4} to $a$, using \eqref{bd:tildeaWkappaN} and \eqref{even:controle:bd:universal1}, showing that $\| \pa_t a(t)\|_{\dot H^1_{\bar R+|t|}}\lesssim |\cbf|_R \bar R^{-1}$ and $\| \pa_{tt} a(t)\|_{L^2_{\bar R+|t|}}\lesssim |\cbf|_R \bar R^{-1}$ respectively, for any $\bar R\geq R_1+|t|$. By a direct computation using \eqref{even:id:defpattidephi}, \eqref{even:id:defpatttidephi} and $\beta=0$, $$\| (\pa_t (\tilde \phi \chi_{MR})(t),\pa_{tt}(\tilde \phi \chi_{MR})(t))\|_{\mathcal H_{\bar R}} \lesssim |\tilde c|\bar R^{-1}.$$ Combining, we get \eqref{evenN=4:bd:patuapmathcalH}. \eqref{evenN=4:bd:patuap} follows from \eqref{evenN=4:bd:patuapmathcalH} using the radial Sobolev embedding.

\smallskip

\noindent \underline{Proof of \eqref{evenN=4:bd:pattE}}. This follows from straightforward computations, very similar to that proving \eqref{evenN=4:bd:patE}. We omit them.

\end{proof}

\end{appendix}

\bibliographystyle{apalike}
\bibliography{toto}

\end{document}